\newcommand{\seq}[1]{\pmb{\mathsf{#1}}}
\newcommand{\Xseq}[1]{\mathsf{#1}}
\newcommand{\lift}[2][]{{\rm L}_{#1}(#2)}
\newcommand{\ball}[2][]{{\rm N}_{#2}\ifthenelse{\equal{A#1}{A}}{}{^{#1}}}
\newcommand{\bbbn}{\mathbb{N}}
\newcommand{\bbbz}{\mathbb{Z}}
\newcommand{\bbbr}{\mathbb{R}}
\definecolor{shadecolor}{gray}{.85}%
\definecolor{tintedcolor}{gray}{.80}%
\definecolor{mytintedcolor}{gray}{.95}%
\newdimen\svparindent
\newenvironment{mytinted}{%
  \MakeFramed {\FrameRestore}}%
{\endMakeFramed}
\newenvironment{great}{%
\fboxsep=12pt\relax

        \vbox\bgroup\begin{mytinted}%
        \list{}{\leftmargin=12pt\rightmargin=2\leftmargin\leftmargin=\z@\topsep=\z@\relax}%
        \expandafter\item\parindent=\svparindent
        \relax}%
{\endlist\end{mytinted}\egroup}
\theoremstyle{plain}
\newtheorem{theorem}{Theorem}
\newtheorem{lemma}{Lemma}
\newtheorem{corollary}{Corollary}
\theoremstyle{definition}
\newtheorem{definition}{Definition}
\newtheorem{example}{Example}
\theoremstyle{remark}
\newtheorem{fact}{Fact}
\newtheorem{remark}{Remark}
\newcounter{tmpthm}
\title{Cluster Analysis of Local Convergent Sequences of Structures}
\thanks{Supported by grant ERCCZ LL-1201 
and CE-ITI, and by the European Associated Laboratory ``Structures in
Combinatorics'' (LEA STRUCO) P202/12/G061}
\author{Jaroslav Ne{\v s}et{\v r}il}
\address{Jaroslav Ne{\v s}et{\v r}il\\
Computer Science Institute of Charles University (IUUK and ITI)\\
   Malostransk\' e n\' am.25, 11800 Praha 1, Czech Republic}
\email{nesetril@kam.ms.mff.cuni.cz}
\author{Patrice~Ossona~de~Mendez}
\address{Patrice~Ossona~de~Mendez\\
Centre d'Analyse et de Math\'ematiques Sociales (CNRS, UMR 8557)\\
  190-198 avenue de France, 75013 Paris, France
  and
     Computer Science Institute of Charles University (IUUK)\\
   Malostransk\' e n\' am.25, 11800 Praha 1, Czech Republic}
 \email{pom@ehess.fr}
 \date{\today}
\subjclass[2010]{Primary  03C13 (Finite structures), 03C98 (Applications of model theory), 05C99 (Graph theory),  06E15 (Stone spaces and related structures), Secondary 28C05 (Integration theory via linear functionals)}
 \keywords{Graph \and Relational structure \and Graph limits \and Structural limits \and Radon measures \and Stone space \and Model theory \and First-order logic \and Measurable graph}
\begin{document}
 \begin{abstract}
 The cluster analysis of very large objects is an important problem, which spans several theoretical as well as applied branches of mathematics and computer science.
 Here we suggest a novel approach: under assumption of local convergence of a sequence of finite structures we derive an asymptotic clustering. This is achieved by a blend of analytic and geometric techniques, and particularly by a new interpretation of the authors' representation theorem for limits of local convergent sequences, which serves as a guidance for the whole process. Our study may be seen as an effort to describe connectivity structure at the limit (without having a defined explicit limit structure) and to pull this connectivity structure 
 back to the finite structures in the sequence in a continuous way. 
 \end{abstract}
 \maketitle
 \tableofcontents
  \section{Introduction}
  Cluster analysis (being established part of statistics, computer science and mathematics) is a core method for database mining. It
initiated in the thirties in social sciences, particularly in anthropology and psychology. 
 While the abstract notion of a cluster is somehow vague, some canonical types of cluster models have been considered, which allow to construct meaningful partitions of large data sets. Among these models, let us mention two principal extreme models:
{\em density models} --- where clusters correspond to connected dense regions, and {\em distribution models} --- where clusters are defined by means of statistical distributions.
For a comprehensive review of cluster analysis, we refer the reader to~\cite{Everitt2011}.

In this paper --- which extends and precise some ideas introduced by the authors in \cite{modeling} to study structural limits of trees ---
we propose a novel approach based on an interplay of these two models: knowing a limit statistical distribution associated to structures in a convergent sequence,
we compute the parameters driving a density clustering 
of each of the structures in the sequence, in a seemingly ``continuous'' way. 
 We believe that the cluster analysis presented here has a broader impact than the analysis of structural limits (which was our original motivation), and that it 
highlights a duality of the density and distribution models.
Our analysis found immediate applications to the study of structural limits and we hope that more will come.

The convergence notion we use is the convergence of the distribution of the local properties of random vectors of elements. The limit distribution is used to drive a segmentation process, which can be seen as a marking of the elements of each structure in the sequence. The consistency of these markings is ensured by the requirement that the sequence of marked structures is still local convergent (see Fig~\ref{fig:segment} for a schematic visualization of this segmentation method).

\begin{figure}[ht]
	\begin{center}
		\includegraphics[width=\textwidth]{}
	\end{center}
	\caption{Segmentation of structures in a convergent sequence based on cluster analysis}
	\label{fig:segment}
\end{figure}

Our approach is a natural one: if instead of considering a single snapshot of an evolving system we consider a significant part of the full movie, then clusters appear in a more obvious way, and meaningful parameters are much more easily defined and estimated. However the details are involved and lead to a new taxonomy.

Note that the notion of convergence considered here is a generalization  of the notion of {\em local convergence} introduced by Benjamini and Schramm for graphs with bounded degrees \cite{Benjamini2001}. 
In the general structural setting, introduced by the authors in \cite{CMUC}, there is no restriction on the degrees of the considered graphs or structures. Informally, a sequence $\seq{A}=(\mathbf A_n)_{n\in\bbbn}$ of structures is {\em local convergent} if the probability 
$\langle\phi, \mathbf A_n\rangle$ (the {\em Stone pairing} of $\phi$ and $\mathbf{A}_n$)
of satisfaction of every local first-order formula $\phi$  in structure $\mathbf A_n$ (for a random assignment of the free variables) converges as $n$ grows to infinity. (Recall that a {\em local formula} is a formula whose satisfaction only depends on a bounded neighborhood of its free variables.)
The limit of a local convergent sequence can thus be described by the (infinite) vectors of limit satisfaction probabilities $\lim_{n\rightarrow\infty}\langle\phi, \mathbf A_n\rangle$ indexed by all local first-order formulas $\phi$. This can also be represented as a probability measure, as stated in the general representation theorem (Theorem~\ref{thm:rep}), in a way extending Aldous-Hoover representation of left limits of dense graphs by infinite exchangeable graphs \cite{Aldous1981,Hoover1979} and
Benjamini-Schramm representation of local limits of graphs with bounded degree by an unimodular distribution on rooted connected countable graphs \cite{Benjamini2001}.

Our cluster analysis allows to meaningfully partition the structures in a local convergent sequence into dense connected clusters (plus an additional residual sparse cluster). It also show 
how this clustering is related to an imaginary connectivity structure of the limit (although no {\em bona fide} limit structure is generally available). More: our cluster analysis will be a central tool to construct limit structures for sequences of graphs with locally few cycles (meaning that the number of cycles in the $d$-neighborhood of every vertex in every graph in the sequence is bounded by some fixed function of $d$). This will be the subject of a forthcoming paper \cite{modeling}.
 
 \begin{figure}[ht]
 	\begin{center}
\includegraphics[width=\textwidth]{}
\caption{Typical shape of a structure  continuously segmented by a clustering: dense spots correspond to globular clusters, and the background to the residual cluster.
Biggest globular clusters appear first and then move apart from each other, while new (smaller) globular clusters appear and residual cluster becomes sparser and sparser.}
\label{fig:milk}
 	\end{center}
 \end{figure}
 
 Let us take time for a more detailed description both of our main result (Theorem~\ref{thm:main}) and of the main difficulties that we have to overcome to prove it. The first (surprising, at least at first glance) aspect, which already appears when considering Benjamini-Schramm limit of connected graphs with bounded degrees, is that the limit of a sequence of connected graphs needs not to be connected: if $\seq{G}=(\mathbf G_n)_{i\in\bbbn}$ is a local convergent sequence of finite connected graphs with degree at most $D$ and with orders growing to infinity, then for every integers $k,r$ the probability that a random subset of $k$ vertices contains two vertices at distance at most $r$ tends to $0$, which ultimately shows that the limit cannot have finitely many connected components. Actually every limit {\em graphing} will have uncountably many connected components. 
  
 When considering general local convergent sequences of finite structures, even if we don't have a  limit structure, it makes sense to talk about the limit connected components and some of their properties. For instance, we prove that it is possible to determine the measure of all the limit connected components and, for those with non-zero measures, their associated statistics. This is basically done by using  Fourier analysis. Using this information, we prove that it is possible to track the limit connected components back to the structures of a local convergent sequence, by marking consistently the elements of all the structures in the sequence (see also schematic Fig.~\ref{fig:milk}).  
  The component structure of the limit is very complex and it has been repeatedly asked as a problem (by Lov\'asz and others) how sequences of connected structures disconnect at the limit.
      Here we solve this problem at a general level, by showing that we can trace limit connected components with positive measure back in the sequence and how they gradually disconnect themselves from the remaining of the structures.

 This analysis leads to interesting new notions (see Fig~\ref{fig:thesaurus}):
  {\em globular cluster} (corresponding to a limit non-zero measure connected component), {\em residual cluster} (corresponding to all the zero-measure connected components taken as a whole), and {\em negligible cluster} (corresponding to the stretched part connecting the other clusters, which eventually disappears at the limit). The marking of each of all these types of clusters will be explained in the second part of the paper. But let us mention that 
  the main issue here is that we require that the marking of all these (countably many) clusters should  preserve local convergence. This means that even if we consider local formulas using these marks, the satisfaction probabilities will still converge.
  
  \begin{figure}
  \begin{center}
	\includegraphics[width=\textwidth]{}
\end{center}
  \caption{Semantic connections of new notions considered in this paper.}
  	\label{fig:thesaurus}
  \end{figure}
 
 The main result of this paper reads as follows:
 \begin{great}
 \begin{theorem}
 	\label{thm:main}
 	Let $\seq{A}$ be a local convergent sequence of $\sigma$-structures. Then there exists a signature
  $\sigma^+$ obtained from $\sigma$ by the addition of countably many unary symbols $M_R$ and $M_{i,j}$ ($i\in\bbbn$, $1\leq j\leq N_i$) and a clustering $\seq{A}^+$ of $\seq{A}$ with the following properties:
 	\begin{itemize}
 		\item For every $i\in\bbbn$, $\bigl(\bigcup_{j=1}^{N_i}M_{i,j}(\mathbf A_n^+)\bigr)_{n\in\bbbn}$  is a universal cluster; 
 		\item For every $i\in\bbbn$ and every $1\leq j\leq N_i$, $\bigl(M_{i,j}(\mathbf A_n^+)\bigr)_{n\in\bbbn}$  is a globular cluster; 
 		\item Two clusters $\bigl(M_{i,j}(\mathbf A_n^+)\bigr)_{n\in\bbbn}$ and $\bigl(M_{i',j'}(\mathbf A_n^+)\bigr)_{n\in\bbbn}$ are interweaving if and only  if $i=i'$;
 		\item $\bigl(M_R(\mathbf A_n^+)\bigr)_{n\in\bbbn}$  is a residual cluster.
 	\end{itemize}
(all undefined notions are explained below.)
 \end{theorem}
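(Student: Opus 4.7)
The overall strategy follows the natural decomposition suggested by the introduction: first identify the ``connected components'' of the limit using the representation theorem, then transport this information back to the finite structures via carefully chosen markings. Specifically, I would invoke Theorem~\ref{thm:rep} to encode the limit of $\seq{A}$ as a probability measure $\mu$ on the appropriate Stone space of types. The task is then to isolate, inside $\mu$, the connectivity data. Following the hint in the introduction, Fourier analysis applied to the Stone pairing values $\lim_n \langle\phi,\mathbf A_n\rangle$, for suitable local formulas $\phi$ detecting short paths and distance equivalence classes, should recover the measure of each non-zero mass limit component (yielding the globular clusters, indexed by pairs $(i,j)$) together with the combined mass of all zero-measure components (the residual cluster $M_R$). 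The partition into families indexed by $i$ then corresponds to the interweaving equivalence: clusters which, at the finite level, remain entangled via the negligible connecting pieces even while separating at the limit.

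Second, I would define the markings on each $\mathbf A_n$. For each globular pair $(i,j)$, the marked set $M_{i,j}(\mathbf A_n^+)$ is chosen so that its local statistics (frequencies of local first-order types) match those of the corresponding limit component, up to an error tending to zero. A natural construction is thresholding on a local quantitative proxy extracted from the Fourier analysis above — elements whose $r$-ball type concentrates on the relevant limit component are assigned to $M_{i,j}$, while vertices in the stretched bridges between clusters are left unmarked (the negligible part), and elements whose types fall into the diffuse continuous part of $\mu$ are gathered into $M_R(\mathbf A_n^+)$.

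The crux of the proof, and what I expect to be the main obstacle, is the consistency requirement that the enriched sequence $\seq{A}^+$ in the extended signature $\sigma^+$ remain local convergent. Because $\sigma^+$ contains countably many new unary symbols, one has to verify that $\langle\psi,\mathbf A_n^+\rangle$ converges for every local $\sigma^+$-formula $\psi$. Fortunately each such $\psi$ uses only finitely many $M_{i,j}$, so a diagonal refinement — choosing the thresholds defining $M_{i,j}$ and the truncation level $i\leq I(n)$ to grow slowly enough with $n$ — combined with the compactness of the Stone space of $\sigma^+$, should yield simultaneous convergence on a countable dense family of local formulas, and hence on all of them. Once this is in place, the properties claimed in the theorem (universality of $\bigcup_j M_{i,j}$, globularity of each $M_{i,j}$, interweaving iff $i=i'$, and residuality of $M_R$) can be read off directly from the measure-theoretic construction of Stage~1.
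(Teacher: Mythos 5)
Your overall architecture matches the paper's: compute the ``spectrum'' of limit component measures by Fourier analysis of Stone pairings (the paper uses exactly the formulas ${\rm dist}(x_1,x_2)\leq d$ and L\'evy's continuity/inversion theorems), mark approximate components in each $\mathbf A_n$, and handle the countably many marks by a slowly growing truncation (the paper's ``clip'' in the Cluster Comb Lemma). However, there is a genuine gap in your Stage~2. The local quantitative proxy available to a vertex $v$ is essentially $D_{d,n}(v)=\nu_{\mathbf A_n}(\ball[d]{\mathbf A_n}(v))$, and thresholding on it can only select the \emph{union} of all limit components having a given limit measure $\lambda$. It cannot assign $v$ to one of the $N_i$ individual clusters of that measure, because distinct limit components of equal measure may be locally indistinguishable --- this is precisely why the theorem can only assert that the $M_{i,j}$ with fixed $i$ are interweaving, and why only the union $\bigcup_j M_{i,j}$ is canonical (universal). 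The paper bridges this gap with two steps you do not supply: (a) a proof that the spectrum is purely atomic with each atom satisfying $p(\lambda)\geq\lambda$ and in fact $p(\lambda)/\lambda\in\bbbn$ (Lemmas~\ref{lem:Fdisc} and~\ref{lem:integ}), which pins down the \emph{number} of components of measure $\lambda$; and (b) a geometric packing argument selecting a maximal set $S_n^\lambda$ of $p(\lambda)/\lambda$ centers pairwise at distance $>7\delta_z$ inside the thresholded set, and taking $C_n^\lambda=\ball[2\delta_z]{\mathbf A_n}(S_n^\lambda)$. Your phrase ``elements whose $r$-ball type concentrates on the relevant limit component are assigned to $M_{i,j}$'' presumes a way of telling the components apart that local types do not provide.

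A secondary point: verifying that each $\Xseq{C}^\lambda$ is actually a cluster --- i.e.\ that $\seq{A}[\Xseq{C}^\lambda]$ is local convergent --- is not something that ``can be read off'' from the measure-theoretic data. It requires expressing $\langle\phi,\mathbf A_n[C_n^\lambda]\rangle$ as an expectation of local Stone pairings conditioned on the threshold events and invoking the joint distributional convergence of vectors of local Stone pairings (Lemma~\ref{lem:locSP}); this is the most technical lemma of the paper (Lemma~\ref{lem:phiC}). Likewise, your description of $M_R$ as collecting ``the diffuse continuous part of $\mu$'' is slightly off: the spectrum has no continuous part away from $0$; the residual cluster is what remains after removing the atoms $\lambda>0$ and a negligible set, and its residuality ($\sup_v\nu_{\mathbf A_n}(\ball[d]{\mathbf A_n}(v))\to 0$ on it) must itself be derived from the discreteness of the spectrum.
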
	
 \end{great}

The paper is organized in three parts, each subdivided into sections: 

In the first part of the paper we introduce (in Section~\ref{sec:prelim}) the main definitions and notations used in this paper, and present (in Section~\ref{sec:local})
a reduction
argument showing  that considering {\em strongly local formulas} (that is local formulas whose satisfaction requires that all the free variables are assigned to vertices which are close) is sufficient to compute (exact) statistics component-wise, possibly after deletion of some set with negligible impact.  For this purpose, a ``weak algebra'' of strongly formulas is developed. 

 The second part of the paper is devoted to the theoretical study of an abstract notion of ``cluster''.
 Section~\ref{sec:neg} is devoted to the study of
  sets with negligible impact, called {\em negligible sets} and to sequence of more and more negligible sets, called {\em negligible sequences}. Deletion of subsets forming a negligible sequence does not change the limit statistics of a local convergent sequence. Ultimately, our goal is thus to consider a negligible sequence, whose deletion will disconnect the graphs in the sequence into clusters. The formal notion of a {\em cluster}  is discussed in Section~\ref{sec:cluster}.
For us, a cluster will be a ``continuous'' sequence of subsets that correspond to a ``stable entity'' and that is ``well separated'' from the rest of the structure. This is expressed by the property that marking a cluster (formalized by considering a lift) preserves the local convergence, and that the frontier of a cluster forms a negligible sequence. 
	Several types of clusters are defined and discussed in this section, in particular {\em universal} clusters and {\em strongly atomic} clusters. These last clusters corresponding to expanding parts of the structures in a local convergent sequence, and their properties are close to those of expander graphs.
 	It follows from the definition of a cluster that iteratively marking finitely many clusters preserves local convergence. However, if we want to mark countably many clusters then the situation becomes more tricky. The conditions under which countably many clusters can be marked, sometimes modulo a limited modification, is discussed in Section~\ref{sec:comb}, and is the purpose of the Cluster Comb Lemma (Lemma~\ref{lem:comb}).

Particular clusters are intrinsically defined by the local convergence, which allow to mark dense spots in the structures of a local convergent sequence. These clusters, called {\em globular clusters}, ultimately represent the non-zero measure imaginary connected components of the limit. To the opposite, a  {\em residual} cluster represents a group of zero-measure imaginary connected components.
 	
The third part of the paper is devoted to effective density clustering
into countably many clusters and a residual cluster.
In Section~\ref{sec:rep} we review the general representation theorem for limits of convergent sequences, and prove a general random rooting theorem using Fourier analysis.	This result allows us to compute the {\em spectrum} of the sequence, from which we derive the asymptotic measures of the globular clusters. Using these informations, the actual computation of the clustering is done, and we deduce a complete characterization of all the globular clusters of the sequence, the computed globular clusters serving as a ``globular basis''.
in Section~\ref{sec:glob}.

\part{Preliminaries}
  \section{Basic Definitions and Notations}. 
  \label{sec:prelim}
 The theory of graph (and structure) convergence gained recently a substantial attention. Various notions of convergence were proposed, adapted to different contexts. Let us mention:
 \begin{itemize}
 \item the theory of dense graph limits \cite{Borgs20081801,Lov'asz2006} based on the notion of {\em left convergence},
 \item the theory of bounded degree graph limits 
 \cite{Benjamini2001} based on the notion of {\em local convergence}.
 \end{itemize}
 These approaches have been (partly) unified by the authors in the setting of 
 {\em structural limits} \cite{CMUC}. This last approach relies on a
 balance of model theoretic and functional analysis aspects.
 For a signature $\sigma$ and a fragment $X$ of the set of first-order formulas over the language generated by $\sigma$, we define for a finite $\sigma$-structure $\mathbf A$ and a formula $\phi\in X$ with free variables $x_1,\dots,x_p$ the {\em Stone pairing} of $\phi$ and $\mathbf A$ as
 $$
 \langle\phi,\mathbf A\rangle=\frac{|\phi(\mathbf A)|}{|A|^p},
 $$
 where $\phi(\mathbf A)=\{(v_1,\dots,v_p)\in A^p:\ \mathbf{A}\models\phi(v_1,\dots,v_p)\}$. In other words, $\langle\phi,\mathbf{A}\rangle$ is the probability that $\phi$ is satisfied in $\mathbf A$ for a random (uniform independent) assignment of the free variables $x_1,\dots,x_p$ to elements of $A$. 
 
 The above setting naturally extends to the case where a structure
 $\mathbf A$ is equipped with a probability measure $\nu_{\mathbf A}$ on its domain. In this case, we define the Stone pairing as
 $$
 \langle\phi,\mathbf A\rangle=\nu_{\mathbf A}^{\otimes p}(\mathbf A),
 $$
 where $\nu_{\mathbf A}^{\otimes p}$ stands for the product measure on $A^p$. 
 In this paper we deal with finite structures endowed with a probability measure (which we briefly call {\em structures} for the sake of simplicity); when not defined, the probability measure considered on a finite structure is meant to be the uniform measure. The class of all the finite structures with signature $\sigma$ will be denoted by ${\rm Rel}(\sigma)$.
 
 \begin{table}
 \begin{tabular}{|c|l|}
 \hlx{hv}
Symbol&\multicolumn{1}{c|}{Signification}\\
\hlx{vhv}
${\rm Rel}(\sigma)$&Set of all finite $\sigma$-structures (endowed with probability measure)\\
$\mathbf{A}$&A structure\\
$A$&The domain of structure $\mathbf A$\\
$\nu_{\mathbf A}$&Probability measure on the domain $A$ of $\mathbf A$\\
$X,Y$&Subsets of $A$\\
$X\cap Y, X\cup Y, X\setminus Y$&Set operations\\
$X\subseteq Y$&Set inclusion\\
\hlx{vhv}
$\phi(\mathbf A)$&Set of tuples satisfying $\phi$ in $\mathbf A$\\
$\langle\phi,\mathbf A\rangle$&Stone pairing of $\phi$ and $\mathbf A$\\
$\mathbf A[X]$&Substructure of $\mathbf A$ induced by $X$\\
$\mathbf A-X$&Substructure of $\mathbf A$ induced by $A\setminus X$\\
$\ball[d]{\mathbf A}(X)$&Closed $d$-neighborhood of subset $X$ in $\mathbf A$\\
$\partial_{\mathbf A}(X)$&Outer boundary of $X$ in $\mathbf A$: $\partial_{\mathbf A}(X)=\ball{\mathbf A}(X)\setminus X$\\
\hlx{vhhv}
$\seq{A}$&Sequence $(\mathbf A_n)_{n\in\bbbn}$ of structures\\
$\Xseq{A}$&Sequence $(A_n)_{n\in\bbbn}$ of the domains of structures in $\seq{A}$\\
$\Xseq{X}$&Sequence $(X_n)_{n\in\bbbn}$ of subsets, cluster\\
$\Xseq{0}$&Sequence of empty sets: sequence $\Xseq{X}$, where $X_n=\emptyset$\\
$\seq{A}_f$&Subsequence $(\mathbf A_{f(n)})_{n\in\bbbn}$ of $\seq{A}$\\
$\Xseq{X}_f$&Subsequence $(X_{f(n)})_{n\in\bbbn}$ of $\Xseq{X}$\\
\hlx{vhv}
$\nu_{\seq{A}}(\Xseq{X})$&Sequence $(\nu_{\mathbf A_n}(X_n))_{n\in\bbbn}$ of
measures of subsets\\
$\Xseq{X}\cap\Xseq{Y}, \Xseq{X}\cup\Xseq{Y}, \Xseq{X}\setminus\Xseq{Y}$&Sequences $(X_n\cap Y_n)_{n\in\bbbn}, (X_n\cup Y_n)_{n\in\bbbn}$, and  $(X_n\setminus Y_n)_{n\in\bbbn}$\\
$\Xseq{X}\subseteq \Xseq{Y}$&Pointwise sequence inclusion: $\Xseq{X}\subseteq \Xseq{Y}\ \iff\ (\forall n)\ X_n\subseteq Y_n$\\
$\seq{A}[\Xseq{X}]$&Sequence $(\mathbf A_n[X_n])_{n\in\bbbn}$ of induced substructures\\
$\seq{A}-\Xseq{X}$&Sequence $(\mathbf A_n[A_n\setminus X_n])_{n\in\bbbn}$ of induced substructures\\
$\ball[d]{\seq{A}}(\Xseq{X})$&Sequence $(\ball[d]{\mathbf A_n}(X_n))_{n\in\bbbn}$ of closed $d$-neighborhoods\\
$\partial_{\seq{A}}\Xseq{X}$&Sequence $(\partial_{\mathbf A_n}(X_n))_{n\in\bbbn}$ of outer boundaries\\
$\phi(\seq{A})$&Sequence $(\phi(\mathbf A_n))_{n\in\bbbn}$ of satisfaction sets of $\phi$\\
$\lift[\Xseq{X}]{\seq{A}}$&Lifted sequence obtained by marking $\Xseq{X}$ in $\seq{A}$ \\
\hlx{vhhv}
$\lim\seq{A}$&Limit of $\seq{A}$ (as an abstract object)\\
$\langle\phi,\lim\seq{A}\rangle$&Limit Stone pairing: $\langle\phi,\lim\seq{A}\rangle=\lim_{n\rightarrow\infty}\langle\phi,\mathbf A_n\rangle$\\
\hlx{vhhv[1]}
\multicolumn{2}{|c|}{Introduced in Section~\ref{sec:rep}}\\
\hlx{v[1]hv}
$S_\sigma$&Stone space associated to $\sigma$-structures\\
$P(S)$&Space of probability measures on space $S$\\
$\mathfrak M_\sigma$&Closure of the space of representation measures of finite\\
&$\sigma$-structures in $P(S_\sigma)$\\
$\mu_{\mathbf A}$&Representation measure of structure $\mathbf A$\\
$\mu_{\lim\seq{A}}$&Representation measure of the limit of sequence $\seq{A}$\\
$k(\phi)$&Function representing $\phi$, s.t. $\langle\phi,\mathbf A\rangle=\int k(\phi)\,{\rm d}\mu_{\mathbf A}$\\
\hlx{vhhv[1]}
\multicolumn{2}{|c|}{Introduced in Section~\ref{sec:local}}\\
\hlx{v[1]hv}
$\phi\oplus\psi$&addition: $\phi\vee\psi$, defined if $\phi\wedge\psi=0$\\
$\phi\ominus\psi$&subtraction: $\phi\wedge\neg\psi$, defined if $\phi\rightarrow\psi$\\
$\phi\otimes\psi$&free product of $\phi$ and $\psi$\\
\hlx{vhhv[1]}
\multicolumn{2}{|c|}{Introduced in Section~\ref{sec:neg}}\\
\hlx{v[1]hv}
$\Xseq{X}\approx\Xseq{Y}$&Equivalent sequences ($\Xseq{X}\Delta\Xseq{Y}$ negligible)\\
\hlx{vhhv[1]}
\multicolumn{2}{|c|}{Introduced in Section~\ref{sec:cluster}}\\
\hlx{v[1]hv}
$\Xseq{X}\between\Xseq{Y}$&Interweaving clusters ($\lim\lift[\Xseq{X}]{\seq{A}}=\lim\lift[\Xseq{Y}]{\seq{A}}$)\\
\hlx{vhss}
\end{tabular}
 	\caption{Main symbols and notations of this paper}
 	\label{tab:not}
 \end{table}
 
In the following, we shall use the following convention (see Table~\ref{tab:not}):
\begin{itemize}
\item Structures are denoted by boldface capital letters $\mathbf{A}$;
\item Sets are denoted by plain roman capital letters $X,Y$;
\item Sequences of structures are denoted by boldface capital sans serif letter $\seq{A}=(\mathbf{A}_n)_{n\in\bbbn}$;
\item Sequence of sets are denoted by plain capital sans serif letter $\Xseq{X}=(X_n)_{n\in\bbbn}$.
\end{itemize} 

Let $\mathbf A$ be a $\sigma$-structure and let $X$ be a subset of the domain $A$ of $\mathbf A$.
If $\nu_{\mathbf A}(X)>0$ we define $\mathbf A[X]$ as the substructure of $\mathbf A$ induced by $X$ endowed with probability measure defined by $\nu_{\mathbf A[X]}(Y)=\nu_{\mathbf A}(Y)/\nu_{\mathbf A}(X)$ (for every $Y\subseteq X$). 
If $\nu_{\mathbf A}(X)<1$ we define 
$\mathbf A-X=\mathbf A[A\setminus X]$. 
 We denote by ${\rm Gaif}(\mathbf{A})$ the Gaifman graph of $\mathbf A$. The {\em distance} between two elements of a structure will always refer to the graph distance in the Gaifman graph of the structure. 

For a subset $X\subseteq A$,  the {\em closed neighborhood} of $X$ in $\mathbf{A}$ is $\ball{\mathbf{A}}(X)$. Consequently the set of all elements of $A$ at distance at most $d$ from an element of $X$ is
$\ball[d]{\mathbf{A}}(X)$.
The {\em outer vertex boundary} (or simply the {\em outer boundary}) of $X$ in $\mathbf{A}$ 
is the set of vertices in $A\setminus X$ with at least one neighbor in $X$ \cite{bobkov2000vertex}:
$$\partial_{\mathbf{A}}X=\ball{\mathbf{A}}(X)\setminus X.$$
Note, in particular, that if $X$ is the domain of a union of connected components of $\mathbf{A}$, then $\partial_{\mathbf{A}}X=\emptyset$.

Furthermore, we extend all operations defined on structures and subsets to sequences coordinate-wise:
The sequence $\seq{A}$ has domain $\Xseq{A}$ (meaning $\mathbf{A}_n$ has domain $A_n$); for
$\Xseq{X}\subseteq \Xseq{A}$ (meaning $X_n\subseteq A_n$) we denote by $\seq{A}[\Xseq{X}]$ the sequence $(\mathbf{A}_n[X_n])_{n\in\bbbn}$, by $\partial_{\seq{A}}\Xseq{X}$ the sequence $(\partial_{\mathbf{A}_n}X_n)_{n\in\bbbn}$, 
by $\Xseq{X}\subseteq\Xseq{Y}$ the inclusions $X_n\subseteq Y_n$, 
by $\phi(\seq{A})$ the sequence of the sets $\phi(\mathbf{A}_n)$, 
by $\nu_{\seq{A}}(\Xseq{X})$ the sequence of the measures
$\nu_{\mathbf A_n}(X_n)$,
etc.
Also, for increasing $f:\bbbn\rightarrow\bbbn$ we denote by 
$\seq{A}_f$ the subsequence $(\mathbf{A}_{f(n)})_{n\in\bbbn}$ of $\seq{A}$ and by $\Xseq{X}_f$ the subsequence $(X_{f(n)})_{n\in\bbbn}$ of $\Xseq{X}$. 	
For instance,  $\seq{A}$ denotes a sequence of structures whose $n$th term is
$\mathbf{A}_n$, and $\Xseq{A}$ denotes the sequence of the domains $A_n$ of the structures $\mathbf{A}_n$.

\begin{definition}
For $\sigma$-structures $\mathbf A_1,\mathbf A_2, \dots$
and not negative reals $\lambda_1,\lambda_2, \dots$
with sum $1$ we define $\sum_i \lambda_i\mathbf A_i$ as the $\sigma$-structure
$\mathbf A$ obtained by endowing the disjoint union of the 
$\sigma$-structures $\mathbf A_1,\mathbf A_2, \dots$ with the probability measure
$\nu_{\mathbf A}=\sum_i \lambda_i \nu_{\mathbf A_i}$.
\end{definition}

Note that although this allows us to define $\langle\phi,\sum_i \lambda_i\mathbf A_i\rangle$, in general we have
$\langle\phi,\sum_i \lambda_i\mathbf A_i\rangle\neq\sum_i \lambda_i\langle\phi,\mathbf A_i\rangle$. However, equality holds in the very particular case where $\phi$ is a local formula with a single free variable. When $\phi$ is a general local formula with $p$ free variables, it is possible to express $\langle\phi,\sum_i \lambda_i\mathbf A_i\rangle$ as a polynomial of degree at most $p$ in terms of the form $\langle\phi_j,\mathbf A_i\rangle$, for some strongly local formulas $\phi_j$ depending on $\phi$ (see Corollary~\ref{cor:slocP}).

To deal marking we introduce the following notion of lift:
\begin{definition}
Let $\sigma\subset\sigma^+$ be countable signatures,
let $\seq{A}$ be a sequence of $\sigma$-structures, and let $\seq{B}$ be a sequence of $\sigma^+$-structures.

The sequence $\seq{A}$ is the {\em shadow} of the sequence $\seq{B}$ if, for each $n\in\bbbn$, the structure $\mathbf A_n$ is the structure obtained from $\mathbf B_n$ by ``forgetting'' about all relations not in $\sigma$.
Conversely, the sequence $\seq{B}$ is a {\em lift} of the sequence $\seq{A}$ if $\seq{A}$ is the shadow of $\seq{B}$.
The sequence  $\seq{B}$ is a {\em conservative lift} of the sequence $\seq{A}$ if, for each $n\in\bbbn$, the structures $\mathbf A_n$ and $\mathbf B_n$ have the same Gaifman graph.
\end{definition}
In this paper, a lift of 
a sequence $\seq{A}$ will usually be denoted by 
$\lift{\seq{A}}$, with possibly adding some subscripts to differentiate different lifts of a same sequence. In particular, if $\Xseq{X}$ is a sequence of subsets of $\seq{A}$ (i.e. $X_n\subseteq A_n$) and $\sigma^+$ is the signature obtained from $\sigma$ by adding a single unary symbol $M$, we shall denote by $\lift[\Xseq{X}]{\seq{A}}$ the lift  of $\seq{A}$ such that $M(\lift[\Xseq{X}]{\seq{A}})=\Xseq{X}$.

For the benefit of the reader we included in Table~\ref{tab:not} a list of the main symbols and notations used throughout this paper.
\section{Reduction from Local Formulas to Strongly Local Formulas}
\label{sec:local}
Recall that a first-order formula $\phi$ is {\em local} if there is some integer $r$ such that the satisfaction of $\phi$ only depends on the distance $r$ neighborhood of its free variables. 
Let ${\rm FO}^{\rm local}(\sigma)$ be the fragment of local
first-order formulas (for given signature $\sigma$).
The following is the key definition.

\begin{definition}
A sequence $\seq{A}=(\mathbf{A}_n)_{n\in\bbbn}$ of $\sigma$-structure is
{\em local-convergent} if $(\langle\phi,\mathbf A_n\rangle)_{n\in\bbbn}$ convergences for every local formula $\phi$. 
\end{definition}
Note that for bounded degree graphs our notion of local convergence is equivalent to the notion of local convergence introduced in \cite{Benjamini2001} (see \cite{CMUC}).  For general graphs (or regular hypergraphs), local convergence is stronger than the left convergence considered by \cite{Lov'asz2006,ElekSze}.

Before discussing the notion of local convergence in greater detail, we take time for few definitions.

\begin{fact}
\label{fact:bord}
Let $X,Y$ be  subsets of the domain $A$ of a structure $\mathbf{A}$, let $d$ be an integer and let $Z$ be any of 
$X\cap Y$, $X\cup Y$, $X\setminus Y$, $Y\setminus X$, $X\,\Delta\,Y$, and their complements in $A$. Then
it holds
$$
\ball[d]{\mathbf{A}}(\partial_{\mathbf{A}}Z)\subseteq
\ball[d+1]{\mathbf{A}}(\partial_{\mathbf{A}}X)\cup
\ball[d+1]{\mathbf{A}}(\partial_{\mathbf{A}}Y).
$$
\end{fact}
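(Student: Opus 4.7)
The plan is to prove a single uniform statement that covers all the listed cases at once: for any set $Z$ obtainable from $X$ and $Y$ by Boolean operations (which includes the intersection, union, differences, symmetric difference, and all complements listed in the fact), one has
$$
\partial_{\mathbf{A}}Z \subseteq \ball{\mathbf{A}}(\partial_{\mathbf{A}}X) \cup \ball{\mathbf{A}}(\partial_{\mathbf{A}}Y).
$$
From this, the statement of the fact follows immediately by applying the operator $\ball[d]{\mathbf{A}}(\cdot)$ to both sides and using $\ball[d]{\mathbf{A}}(\ball{\mathbf{A}}(S)) = \ball[d+1]{\mathbf{A}}(S)$, a basic metric identity in the Gaifman graph.

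To prove the reduced inclusion, I would argue as follows. Pick any $u \in \partial_{\mathbf{A}}Z$. By definition of the outer boundary, $u \notin Z$ while some neighbor $v \in A$ of $u$ lies in $Z$. Since $Z$ is a Boolean combination of $X$ and $Y$, its indicator function factors as $\chi_Z = f(\chi_X, \chi_Y)$ for some Boolean function $f$. The inequality $\chi_Z(u) \neq \chi_Z(v)$ therefore forces $\chi_X(u) \neq \chi_X(v)$ or $\chi_Y(u) \neq \chi_Y(v)$; in particular, the edge $uv$ crosses the boundary of $X$ or that of $Y$.

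If $\chi_X(u) \neq \chi_X(v)$, then exactly one of $u,v$ lies outside $X$ and has the other as a neighbor inside $X$, so that endpoint lies in $\partial_{\mathbf{A}}X$; since $u$ is at distance at most one from it, we get $u \in \ball{\mathbf{A}}(\partial_{\mathbf{A}}X)$. The case $\chi_Y(u) \neq \chi_Y(v)$ is symmetric and yields $u \in \ball{\mathbf{A}}(\partial_{\mathbf{A}}Y)$. This establishes the uniform inclusion above.

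There is essentially no obstacle here; the only thing to be careful about is that complements are correctly handled by the same Boolean-function argument (the identity $\chi_{A\setminus X} = 1-\chi_X$ makes $A\setminus X$ a Boolean combination of $X$ and $Y$), so that a single case-free argument replaces a laborious enumeration of the ten or so sets $Z$ mentioned in the statement.
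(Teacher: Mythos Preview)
Your proof is correct. The paper states this as a \emph{Fact} without supplying any proof, so there is nothing to compare against; your argument fills that gap cleanly. The Boolean-indicator observation---that $\chi_Z(u)\neq\chi_Z(v)$ forces $(\chi_X(u),\chi_Y(u))\neq(\chi_X(v),\chi_Y(v))$---is exactly the right uniform device to avoid enumerating the ten cases, and the passage from the inclusion $\partial_{\mathbf{A}}Z\subseteq\ball{\mathbf{A}}(\partial_{\mathbf{A}}X)\cup\ball{\mathbf{A}}(\partial_{\mathbf{A}}Y)$ to the stated $d$-neighborhood version via monotonicity of $\ball[d]{\mathbf{A}}(\cdot)$ and the identity $\ball[d]{\mathbf{A}}(\ball{\mathbf{A}}(S))=\ball[d+1]{\mathbf{A}}(S)$ is sound.
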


 A first-order formula $\phi$ with free variables $x_1,\dots,x_p$ is {\em $r$-local} if, for every structure $\mathbf{A}$ and elements $v_1,\dots,v_p\in A$ it holds
$$
\mathbf{A}\models \phi(v_1,\dots,v_p)\quad\iff\quad \mathbf{A}[\ball[r]{\mathbf{A}}(\{v_1,\dots,v_p\})]\models \phi(v_1,\dots,v_p).
$$
A formula is called {\em local} if it is $r$-local for some $r$. The set of all local first-order formulas (in the language of the considered signature) is denoted by ${\rm FO}^{\rm local}$, and we simply use the term of {\em local convergence} for  ${\rm FO}^{\rm local}$-convergence. Note that 
this notion of convergence extends Benjamini-Schramm's notion of local convergence:
\begin{fact}[\cite{CMUC}]
A sequence $(G_n)_{n\in\bbbn}$ of graphs  with maximum degree at most $D$ is local-convergent (in the sense of Benjamini-Schramm) if and only if it is local-convergent (in the sense of ${\rm FO}^{\rm local}$-convergence).
\end{fact}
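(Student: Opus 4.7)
The plan is to exploit the fact that the degree bound forces only finitely many isomorphism types of rooted $r$-neighborhoods, so both notions of convergence reduce to convergence of a finite family of real sequences (one family per radius $r$). Benjamini--Schramm convergence asks, for every $r$ and every isomorphism type $\tau$ of a rooted connected graph of radius at most $r$ and maximum degree at most $D$, that the frequency $p_n(\tau)$ of vertices of $G_n$ whose rooted $r$-ball is isomorphic to $\tau$ converges as $n\to\infty$.

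For the direction ${\rm FO}^{\rm local}$-convergence $\Rightarrow$ Benjamini--Schramm, I would write down, for each such $\tau$, an explicit $r$-local first-order formula $\phi_\tau(x)$ in one free variable asserting ``the rooted $r$-ball at $x$ is isomorphic to $\tau$''. One obtains $\phi_\tau$ by existentially quantifying over witnesses $y_1,\dots,y_k$ of a rooted isomorphism with $\tau$ (where $k$ is bounded above by $D^{r+1}$), asserting that they exactly enumerate the vertices within distance $r$ of $x$, and recording all adjacencies and non-adjacencies dictated by $\tau$; locality is immediate since $\phi_\tau$ only probes $\ball[r]{G}(x)$. Then $\langle \phi_\tau, G_n\rangle = p_n(\tau)$, so ${\rm FO}^{\rm local}$-convergence forces each $p_n(\tau)$ to converge.

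For the converse direction, given an $r$-local formula $\phi(x_1,\dots,x_p)$, I would split the sum defining $\langle \phi, G_n\rangle$ into \emph{spread} tuples, where all pairwise distances among $v_1,\dots,v_p$ exceed $2r$, and \emph{clustered} tuples otherwise. The degree bound yields a uniform constant $C=C(D,r)$ such that any vertex has at most $C$ vertices within distance $2r$; hence the number of clustered tuples is at most $\binom{p}{2}\,C\,|A_n|^{p-1}$, contributing $O(|A_n|^{-1})$ to $\langle \phi, G_n\rangle$. For a spread tuple, the balls $\ball[r]{G_n}(v_i)$ are pairwise disjoint, so $r$-locality forces the truth value of $\phi(v_1,\dots,v_p)$ to depend only on the ordered tuple of rooted isomorphism types $(\tau(v_1),\dots,\tau(v_p))$. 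The spread contribution therefore equals
$$
\sum_{(\tau_1,\dots,\tau_p)} \varepsilon(\phi;\tau_1,\dots,\tau_p)\, \prod_{i=1}^p p_n(\tau_i) \;+\; O(|A_n|^{-1}),
$$
where $\varepsilon(\phi;\cdot)\in\{0,1\}$ encodes whether $\phi$ is forced true by that type tuple, and the sum ranges over a finite set of type tuples.

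Each factor $p_n(\tau_i)$ converges by Benjamini--Schramm convergence, so the finite sum of products converges and $\langle \phi, G_n\rangle$ converges. The main obstacle, I expect, is the uniform bookkeeping for clustered tuples combined with the tacit hypothesis $|A_n|\to\infty$: on bounded-order subsequences the $O(|A_n|^{-1})$ error does not vanish, but there both notions degenerate to convergence of finitely many reals and the reduction becomes trivial after passing to a subsequence. The nontrivial content of the argument is the factorization over spread tuples together with the finiteness of the type space, both of which rely essentially on the degree bound.
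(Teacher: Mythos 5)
Your argument is correct and matches the paper's: the spread/clustered decomposition you carry out by hand is exactly the Gaifman-type factorization $\langle\phi,G\rangle=\prod_{i=1}^p\langle\psi_i,G\rangle+o(1)$ that the paper obtains by citing Gaifman's locality theorem, with the single-variable formulas $\psi_i$ being (Boolean combinations of) your neighborhood-type formulas $\phi_\tau$. Note only that the $o(1)$ error (yours and the paper's alike) tacitly requires $|G_n|\to\infty$, which is part of the standard Benjamini--Schramm setting; your proposed fix for bounded-order subsequences does not actually work (e.g.\ the sequence $K_2,\,2K_2,\,K_2,\,2K_2,\dots$ is Benjamini--Schramm convergent while $\langle x_1\sim x_2,\cdot\rangle$ alternates between $1/2$ and $1/4$), but that degenerate case is excluded by convention.
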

Indeed, by Gaifman locality theorem \cite{Gaifman1982}, for every local formula $\phi$ with $p$ free variables there exist $p$ formulas $\psi_1,\dots,\psi_p$ with single free variable, such that for every graph $G$ with bounded degrees it holds
$$
\langle\phi,G\rangle=\prod_{i=1}^p\langle\psi_i,G\rangle+o(1).
$$

The main interest of our definition of local-convergence is that it does not need any restriction on the degrees.

The notion of local formula can be strengthened by requiring that all the free variables are at bounded distance from each other. Precisely,
 a first-order formula $\phi$ with free variables $x_1,\dots,x_p$ is {\em strongly $r$-local} if it is $r$-local and the following entailment holds:
 $$
 \phi\vdash\bigwedge_{i=1}^p ({\rm dist}(x_i,x_j)\leq r).
 $$

A formula is called {\em strongly  local} if it is strongly $r$-local for some $r$. 

We now introduce a notion of "weak algebra" of formulas.
In the following definition, a formula $\phi$ is {\em packed} if its free variables are $x_1,\dots,x_p$ (for some $p\in\bbbn$). For a formula $\phi$ with free variables $x_{i_1},\dots,x_{i_p}$ and an injection $\iota:\bbbn\rightarrow\bbbn$, $\iota(\phi)$ denotes the formula $\phi$ where all the  occurrences of $x_j$ are replaced by $x_{\iota(j)}$. We denote by $\tau$ be the injection $i\mapsto i+1$.
\begin{definition}
A {\em weak algebra} of formula is a set $\mathcal S$ of 
(logical equivalence class of) formulas which is closed under the following (partially defined) operations:
\begin{enumerate}
\item If $\phi,\psi\in\mathcal{S}$ and $\phi\wedge\psi=0$ then
$\phi\oplus\psi:=\phi\vee\psi$ belongs to $\mathcal{S}$;
\item if $\phi,\psi\in\mathcal{S}$ and $\phi\rightarrow\psi$ then
$\phi\ominus\psi:=\phi\wedge\neg\psi$ belongs to $\mathcal{S}$;
\item if $\iota:\bbbn\rightarrow\bbbn$ is an injection and $\phi\in\mathcal{S}$ then $\iota(\phi)\in\mathcal{S}$;
\item if $\phi,\psi\in\mathcal{S}$ are packed and $\phi$ has $p$ free variables, then $\phi\otimes\psi:=\phi\wedge\tau^p(\psi)$ belongs to $\mathcal S$.
\end{enumerate}
\end{definition}

Note that for every $\phi,\psi$, we have:
\begin{itemize}
\item If $\phi\oplus\psi$ is defined then for every structure $\mathbf{A}$ it holds
\begin{align*}
(\phi\oplus\psi)(\mathbf A)&\equiv\phi(\mathbf{A})\cup\psi(\mathbf{A})\\
\langle\phi\oplus\psi,\mathbf A\rangle&=\langle\phi,\mathbf A\rangle+\langle\psi,\mathbf A\rangle
\end{align*}
\item If $\phi\ominus\psi$ is defined then for every structure $\mathbf{A}$ it holds
\begin{align*}
(\phi\ominus\psi)(\mathbf A)&\equiv\phi(\mathbf{A})\setminus\psi(\mathbf{A})\\
\langle\phi\ominus\psi,\mathbf A\rangle&=\langle\phi,\mathbf A\rangle-\langle\psi,\mathbf A\rangle
\end{align*}
\item If $\phi\otimes\psi$ is defined then for every structure $\mathbf{A}$ it holds
\begin{align*}
(\phi\otimes\psi)(\mathbf A)&=\phi(\mathbf{A})\times\psi(\mathbf{A})\\
\langle\phi\otimes\psi,\mathbf A\rangle&=\langle\phi,\mathbf A\rangle\,.\,\langle\psi,\mathbf A\rangle
\end{align*}
\end{itemize}

Here, the equivalence $X\equiv Y$ (with respect to domain $A$) means, for $X\subseteq A^p$ and $Y\subseteq A^q$ that there exist a permutation $\iota$ of $[p+q]$ such that
$$\iota(X\times A^q)=Y\times A^p.$$
Also, note that if $\phi\oplus\psi$ is defined then
$\phi=(\phi\oplus\psi)\ominus\psi$.
\begin{great}
\begin{theorem}
\label{thm:walg}
The smallest weak algebra containing all strongly local formulas is the weak algebra of all local formulas.
\end{theorem}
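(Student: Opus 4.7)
The plan is to prove the two inclusions separately. The easy inclusion, that the weak algebra of strongly local formulas is contained in the weak algebra of all local formulas, is immediate: strongly local formulas are local by definition, and each of $\oplus$, $\ominus$, $\iota$, $\otimes$ preserves locality of its arguments (for instance $\phi\otimes\psi$ is $\max(r,s)$-local when $\phi$ is $r$-local and $\psi$ is $s$-local). The substantial direction is the reverse: given an $r$-local formula $\phi(x_1,\dots,x_p)$ of quantifier rank $q$ using finitely many symbols of $\sigma$, I must show that $\phi$ belongs to the weak algebra $\mathcal A$ generated by the strongly local formulas.

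My strategy is to split $A^p$ into finitely many ``profile classes'' on which $\phi$ is constantly true or constantly false, and then to build each profile class in $\mathcal A$ by induction. For $\bar v\in A^p$, let $\pi^*(\bar v)$ be the partition of $\{1,\dots,p\}$ into the connected components of the graph on $\{1,\dots,p\}$ whose edges are the pairs $(i,j)$ with $\mathrm{dist}(v_i,v_j)\leq 2r+1$, and for each class $C$ of $\pi^*(\bar v)$ let $\tau_C(\bar v)$ be the complete $q$-type of the pointed structure $(\mathbf A[\ball[r]{\mathbf A}(\bar v_C)],\bar v_C)$, restricted to the finite sub-signature used by $\phi$. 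There are finitely many partitions of $[p]$ and finitely many such $q$-types, so the profile $(\pi^*(\bar v),(\tau_C(\bar v))_C)$ ranges over a finite index set $I$. Let $T_{\pi,\bar\tau}(\bar x)$ denote the formula ``$\pi^*(\bar x)=\pi$ and the $q$-type on each class $C$ is $\tau_C$''; the sets $T_{\pi,\bar\tau}(\mathbf A)$ partition $A^p$. Since $\phi$ is $r$-local of rank $q$ and, when $\pi^*(\bar v)=\pi$, the substructure $\mathbf A[\ball[r]{\mathbf A}(\bar v)]$ is the disjoint union of the class pieces, a Feferman--Vaught analysis shows that $\phi(\bar v)$ depends only on the profile, so $\phi=\bigoplus_{(\pi,\bar\tau)\in S_\phi}T_{\pi,\bar\tau}$ for some finite $S_\phi\subseteq I$. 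It remains to show each $T_{\pi,\bar\tau}\in\mathcal A$.

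I argue this by induction on $\pi$, ordered by refinement from coarsest to finest. In the coarsest case $\pi_0=\{\{1,\dots,p\}\}$, the formula $T_{\pi_0,\tau}$ forces all variables to be chain-connected in the $(2r+1)$-distance graph, hence pairwise within distance $(p-1)(2r+1)$, and to realize a fixed $q$-type; this is strongly local and thus in $\mathcal A$. For a general $\pi=\{C_1,\dots,C_k\}$, set $\Phi_{\pi,\bar\tau}:=\iota\bigl(\psi_{\tau_1,C_1}\otimes\cdots\otimes\psi_{\tau_k,C_k}\bigr)$, where $\psi_{\tau_j,C_j}$ is the strongly local formula stating ``$\bar x_{C_j}$ is chain-connected in the $(2r+1)$-distance graph and the pointed $r$-neighborhood type is $\tau_j$'', and $\iota$ relabels the $\otimes$-packed variables to the positions prescribed by the $C_j$'s. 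Then $\Phi_{\pi,\bar\tau}\in\mathcal A$ by construction. A tuple satisfying $\Phi_{\pi,\bar\tau}$ has every class internally correct, but some cross-class pair may additionally be at distance $\leq 2r+1$, so its actual partition $\pi^*(\bar v)$ can be a strictly coarser $\pi'>\pi$; a Feferman--Vaught analysis of class merging yields the finite disjoint decomposition $\Phi_{\pi,\bar\tau}=T_{\pi,\bar\tau}\oplus\bigoplus_{\pi'>\pi}\bigoplus_{\bar\tau'\text{ compatible}}T_{\pi',\bar\tau'}$, where compatibility means $\bar\tau'$ arises by gluing the constituents of $\bar\tau$ along the inter-class distance-$\leq 2r+1$ witnesses. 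By induction every $T_{\pi',\bar\tau'}$ with $\pi'>\pi$ already lies in $\mathcal A$, so $T_{\pi,\bar\tau}=\Phi_{\pi,\bar\tau}\ominus\bigoplus_{\pi'>\pi,\bar\tau'}T_{\pi',\bar\tau'}\in\mathcal A$, completing the induction.

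The main difficulty I expect is the Feferman--Vaught bookkeeping in the induction step: one must argue cleanly that, for each coarsening $\pi'>\pi$, the ``compatible'' $q$-types $\bar\tau'$ on $\pi'$-classes form a finite set that precisely enumerates the tuples in $\Phi_{\pi,\bar\tau}$ whose actual partition is $\pi'$, and that the resulting union is genuinely disjoint so that $\oplus$ and $\ominus$ apply as stated. A secondary point is checking that $\psi_{\tau_j,C_j}$ is genuinely strongly local: this follows because the chain-connectivity requirement bounds all within-class distances by $(|C_j|-1)(2r+1)$, and the $q$-type is then captured by a single first-order formula whose radius is bounded in terms of $r$, $q$ and $p$.
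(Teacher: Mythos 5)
Your proof is correct and follows essentially the same route as the paper's: decompose $\phi$ according to the connected components of the ``closeness'' graph on the free variables, realize each single-component piece as a strongly local formula, and recover each multi-component piece as an $\otimes$-product of per-component formulas minus (via $\ominus$) the terms where components merge, which have strictly fewer components and are handled by induction. Your explicit Feferman--Vaught/$q$-type bookkeeping is a rigorous expansion of the step the paper dispatches with ``by locality property, we further assume that $\phi$ has the following form,'' but the underlying argument is identical.
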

\end{great}
\begin{proof}
One direction is obvious (as local formulas form a weak algebra).
For the other direction, consider an $r$-local formula $\phi$ with free variables $x_i$ for $i\in I$.
Let $\mathfrak{F}_I$ be the set of all graphs with vertex set $I$.
Obviously it holds
$$
\phi=\bigoplus_{F\in\mathfrak{F}_I} \left(
\bigwedge_{ij\in E(F)}({\rm dist}(x_i,x_j)\leq 2r)\,\wedge\,\bigwedge_{ij\notin E(F)}({\rm dist}(x_i,x_j)>2r)\,\wedge\,\phi\right).
$$
It follows that we can restrict our attention to formulas of the form
used in the above sum for some $F$. Moreover, we can assume that
$I=[p]$ and that  the vertex sets $I_1,\dots,I_k$ of the 
connected components $F_1,\dots,F_k$ of $F$
are intervals of $[p]$. 
By locality property, we further assume that $\phi$ has the following form:
$$
\phi=\bigwedge_{ij\in E(F)}({\rm dist}(x_i,x_j)\leq 2r)\,\wedge\,\bigwedge_{ij\notin E(F)}({\rm dist}(x_i,x_j)>2r)\,\wedge\,\bigwedge_{z=1}^k \rho_z,
$$
where $\rho_z$ is $r$-local with set of free variables $\{x_i: i\in I_z\}$.

 We proceed by induction on $k$.
If $k=1$ then the formula is $2pr$-strongly local so the lemma holds.
Assume that $k>1$ and that the statement holds for less than $k$ connected components. 
For $1\leq z\leq k$ define
$$\phi_z= \bigwedge_{ij\in E(F_z)}({\rm dist}(x_i,x_j)\leq 2r)\,\wedge\,\rho_z.$$
Then it holds
$$
\bigotimes_{z=1}^k\phi_z=\bigoplus_{H\in\mathfrak{F}'}\left(\bigwedge_{ij\in E(H)}({\rm dist}(x_i,x_j)\leq 2r)\,\wedge\,\bigwedge_{ij\notin E(H)}({\rm dist}(x_i,x_j)>2r)\,\wedge\,\bigwedge_{z=1}^k \rho_z\right),
$$
where $\mathfrak{F}'$ is the set of all graphs $H$ with vertex set $[p]$ such that $H[I_z]=F_z$ for every $1\leq z\leq k$. Note that $\mathfrak{F}'$ contains exactly one graph with $k$ connected components, namely $F$, all the other ones containing strictly less than $k$ connected components. Thus, if we denote
$$\psi=\bigoplus_{H\in\mathfrak{F}'\setminus\{F\}}\left(\bigwedge_{ij\in E(H)}({\rm dist}(x_i,x_j)\leq 2r)\,\wedge\,\bigwedge_{ij\notin E(H)}({\rm dist}(x_i,x_j)>2r)\,\wedge\,\bigwedge_{z=1}^k \rho_z\right),
$$
it holds
$$
\phi=\bigotimes_{z=1}^k\phi_z\ominus\psi.
$$
By induction, $\psi$ belongs to the weak algebra generated by strongly local formulas, hence so does $\phi$. 
\end{proof}
We now take time for three important corollaries.

\begin{corollary}
\label{cor:slocP}
For every $r$-local formula $\phi$ with $p$ free variables, there exist finitely many $(2pr)$-strongly local formulas $\phi_{i}$ ($1\leq i\leq N$) and a polynomial
$P\in\bbbz[X_1,\dots,X_N]$ of degree at most $p$ such that for every structure $\mathbf{A}$ it holds
$$
\langle\phi,\mathbf{A}\rangle=P(\langle\phi_1,\mathbf{A}\rangle,\dots,\langle\phi_N,\mathbf{A}\rangle).
$$
\end{corollary}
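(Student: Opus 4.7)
The plan is to extract the corollary directly from the constructive proof of Theorem~\ref{thm:walg} by tracking how the Stone pairing interacts with the four weak algebra operations. The key observation is that, by the formulas displayed just before Theorem~\ref{thm:walg}, Stone pairing intertwines the operations as follows: $\oplus$ becomes addition, $\ominus$ becomes subtraction, $\iota$ leaves the pairing unchanged (relabeling free variables does not affect $|\phi(\mathbf{A})|/|A|^p$), and crucially $\otimes$ becomes multiplication. Consequently, once $\phi$ has been expressed inside the weak algebra generated by strongly local formulas $\phi_1,\dots,\phi_N$, the value $\langle\phi,\mathbf{A}\rangle$ is automatically a polynomial with integer coefficients in $\langle\phi_1,\mathbf{A}\rangle,\dots,\langle\phi_N,\mathbf{A}\rangle$.

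The substantive content is therefore controlling the \emph{degree} of this polynomial and the strong locality \emph{radius} of the $\phi_i$. I would revisit the proof of Theorem~\ref{thm:walg}: starting from an $r$-local $\phi$ with $p$ free variables, the first step rewrites $\phi$ as a finite $\oplus$ over graphs $F\in\mathfrak{F}_I$ on vertex set $I=[p]$, where each summand fixes the pattern of small/large pairwise distances among the free variables. These summands are still $r$-local, and $\oplus$ preserves the degree of the resulting polynomial. The induction on the number $k$ of connected components of $F$ then uses $\otimes$ to glue together component formulas $\phi_1,\dots,\phi_k$, and $\ominus$ to subtract off the contribution of graphs $H$ with fewer components. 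Since $\otimes$ multiplies pairings (and is applied over connected components of $F$, which are at most $p$ in number), each monomial produced in the induction is a product of at most $p$ factors $\langle\phi_i,\mathbf{A}\rangle$. Hence the resulting polynomial $P\in\bbbz[X_1,\dots,X_N]$ has degree at most $p$.

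For the locality radius, the base case $k=1$ of the induction yields a formula in which all free variables lie in a single connected component of some graph on $[p]$, with consecutive vertices along an edge forced to be at distance $\leq 2r$ in the Gaifman graph of $\mathbf{A}$. Since the component has at most $p$ vertices, any two free variables are at distance at most $2r(p-1)\leq 2pr$, giving strong $(2pr)$-locality. The components built at deeper stages of the induction are subformulas of the original $\phi$, so no larger radius is introduced. Combining these ingredients, the $\phi_i$ that appear in the final expression are all $(2pr)$-strongly local.

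The only potentially delicate point is bookkeeping: one must check that applications of $\ominus$ in the inductive step subtract quantities whose polynomial representations already have degree at most $p$ (so that the degree bound is preserved), and that the injection operation $\iota$ used to align variable indices before taking $\otimes$ does not alter the Stone pairings of the factors. Both are immediate from the displayed identities for $\ominus$ and from the definition of $\langle\cdot,\mathbf{A}\rangle$, which depends only on the isomorphism type of the free variable tuple. With these verifications, assembling the polynomial $P$ by induction on $k$ (exactly mirroring the induction in the proof of Theorem~\ref{thm:walg}) completes the argument.
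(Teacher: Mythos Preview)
Your proposal is correct and follows exactly the intended route: the paper states this corollary without proof, leaving it implicit that one reads off the polynomial expression by tracking the Stone pairing through the weak algebra operations in the proof of Theorem~\ref{thm:walg}. Your bookkeeping for the degree bound (at most $p$ factors in any $\otimes$-product, and $\ominus$ only subtracts terms whose inductive hypothesis already gives degree $<k\leq p$) and for the strong locality radius (each connected component has at most $p$ vertices, so the chain of distance-$2r$ constraints forces strong $2pr$-locality) is precisely the content the paper expects the reader to supply.
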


Note also the following corollary, which allows to check that first-order definable subsets of (the power of) a measurable structure are measurable (with respect to product measure) by reduction to sets definable by strongly local formulas.

\begin{corollary}
Assume $\mathbf A$ is an infinite structure, whose domain is a measurable space. If, for every strongly local formula $\phi$ the
set $\phi(\mathbf{A})$ is measurable (with respect to product measure) then for every first-order formula $\psi$ the
set $\psi(\mathbf{A})$ is measurable (with respect to product measure).
\end{corollary}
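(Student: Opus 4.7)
The plan is to reduce the statement to two existing ingredients: Theorem~\ref{thm:walg} and Gaifman's locality theorem. The first step is to observe that each of the four operations defining the weak algebra of formulas translates into a measurability-preserving operation on satisfaction sets, once $\mathbf A$ has measurable domain and $A^p$ is equipped with the product $\sigma$-algebra. Indeed $\phi\oplus\psi$ corresponds to a disjoint union of measurable sets, $\phi\ominus\psi$ to a set-theoretic difference, $\phi\otimes\psi$ to a Cartesian product of measurable sets (which is product-measurable by definition of the product $\sigma$-algebra), and $\iota(\phi)(\mathbf A)$ to the preimage of a measurable set under a coordinate relabeling composed with projection to a larger power of $A$; all of these are routinely measurable.

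Combining this observation with Theorem~\ref{thm:walg} and proceeding by induction on the construction of $\phi$ in the weak algebra generated by strongly local formulas, I conclude that for every \emph{local} formula $\phi$ the set $\phi(\mathbf A)$ is measurable with respect to the product measure. The base case is exactly the hypothesis of the corollary, and each inductive step is one of the four measurability-preserving operations above.

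It then remains to pass from local formulas to arbitrary first-order formulas, which is handled by Gaifman's locality theorem \cite{Gaifman1982}: every first-order formula $\psi$ with free variables $x_1,\dots,x_p$ is logically equivalent to a Boolean combination of local formulas (with free variables among $x_1,\dots,x_p$) and so-called basic local sentences. Basic local sentences are closed formulas, hence their truth value in $\mathbf A$ is fixed, and each of them contributes either the whole product space $A^p$ or the empty set to the Boolean combination, both of which are measurable. Together with the measurability of local formulas established in the previous step, this expresses $\psi(\mathbf A)$ as a finite Boolean combination of measurable subsets of $A^p$, which is itself measurable.

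The only mildly delicate point is the $\iota$ operation, which can increase the number of free variables and therefore moves us between different powers of $A$; verifying that the corresponding cylindrical relabeling is measurable with respect to the product measure on the larger power is routine measure theory but must be carried out explicitly to conclude the induction. Beyond that, the proof is a clean combination of Theorem~\ref{thm:walg} with Gaifman's theorem.
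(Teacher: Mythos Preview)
Your proposal is correct and follows exactly the approach the paper takes: the paper's proof is the single sentence ``This is a direct consequence of Theorem~\ref{thm:walg} and Gaifman locality theorem,'' and you have simply unpacked that sentence into the natural two-step argument (weak-algebra induction for local formulas, then Gaifman to reach all first-order formulas). Your explicit verification that each of the four weak-algebra operations preserves product-measurability of satisfaction sets, including the care taken with the $\iota$ operation and the handling of basic local sentences as constants, is the intended fleshing-out of the paper's one-line proof.
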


\begin{proof}
This is a direct consequence of Theorem~\ref{thm:walg} and Gaifman locality theorem \cite{Gaifman1982}.
\end{proof}
\begin{corollary}
\label{cor:sloc}
A sequence $\seq{A}$ is local-convergent if and only if it is strong-local-convergent.
\end{corollary}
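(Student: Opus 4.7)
The plan is to derive this corollary directly from Corollary~\ref{cor:slocP}, which was obtained as a consequence of Theorem~\ref{thm:walg}. The forward implication (local-convergence implies strong-local-convergence) is immediate since every strongly local formula is by definition a local formula, so convergence of $\langle\phi,\mathbf{A}_n\rangle$ for all local $\phi$ in particular gives convergence for all strongly local $\phi$.

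For the nontrivial reverse direction, I would fix an arbitrary local formula $\phi$, say $r$-local with $p$ free variables, and invoke Corollary~\ref{cor:slocP} to obtain finitely many strongly local formulas $\phi_1,\dots,\phi_N$ and a polynomial $P\in\bbbz[X_1,\dots,X_N]$ such that
$$
\langle\phi,\mathbf{A}_n\rangle=P(\langle\phi_1,\mathbf{A}_n\rangle,\dots,\langle\phi_N,\mathbf{A}_n\rangle)
$$
holds for every $n$. Assuming strong-local-convergence of $\seq{A}$, each of the sequences $(\langle\phi_i,\mathbf{A}_n\rangle)_{n\in\bbbn}$ converges to some limit $\ell_i\in[0,1]$. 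Since $P$ is a polynomial in finitely many variables it is continuous on $\bbbr^N$, so the right-hand side converges to $P(\ell_1,\dots,\ell_N)$, which proves that $(\langle\phi,\mathbf{A}_n\rangle)_{n\in\bbbn}$ converges.

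There is no real obstacle here: all the substantive work has already been done in proving Theorem~\ref{thm:walg} and deriving Corollary~\ref{cor:slocP}. The only thing to check is the continuity of the evaluation map, which is trivial for a polynomial with integer coefficients evaluated at bounded real arguments. Thus the corollary is essentially just a restatement of Corollary~\ref{cor:slocP} at the level of entire sequences rather than individual structures.
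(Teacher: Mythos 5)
Your proof is correct and is exactly the argument the paper intends: the corollary is stated without proof immediately after Corollary~\ref{cor:slocP}, and the implicit justification is precisely your reduction — one direction is trivial, and the other follows by expressing $\langle\phi,\mathbf{A}_n\rangle$ as a fixed integer polynomial in finitely many Stone pairings of strongly local formulas and using continuity of that polynomial.
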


\part{Clustering Local Convergent Sequences}

The notion of clustering we develop in this part is based on the stability of the convergence of a sequence when marking certain subsets of the domains of the structures in the sequence. This justifies to relate it to the notion of lift introduced in Section~\ref{sec:prelim}.

\section{Negligible Sets and Sequences}
\label{sec:neg}
The following notion of negligible set  corresponds intuitively to parts of the graph  one can  remove, without a great modification of the statistics of the graph.

\begin{definition}
Let $\mathbf{A}$ be a structure, let $d\in\bbbn$ and let $\epsilon>0$.
A subset $X\subset A$ of elements of $\mathbf{A}$ is {\em $(d,\epsilon)$-negligible} in $\mathbf{A}$ if
 $$\nu_{\mathbf A}(\ball[d]{\mathbf{A}}(X))<\epsilon.$$
\end{definition}

The main property of $(d,\epsilon)$-negligible sets is the following:
\begin{great}
\begin{lemma}
\label{lem:approx0}
Let $\phi\in {\rm FO}_p^{\rm local}$ be $r$-local with $r<d$, and let $X$ be a 
$(d,\epsilon)$-negligible set of a structure $\mathbf{A}$. Then
$$
|\langle\phi,\mathbf{A}\rangle-\langle\phi,\mathbf{A}-X\rangle|<2p\epsilon.
$$
Moreover, suppose $\mathbf{B}$ is a structure with same domain  as $\mathbf{A}$ such that $\mathbf{A}-X=\mathbf{B}-X$ then
$$
|\langle\phi,\mathbf{A}\rangle-\langle\phi,\mathbf{B}\rangle|<p\epsilon.
$$
\end{lemma}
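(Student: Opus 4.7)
The plan is to isolate a ``safe zone'' where the modification on $X$ has no effect on the satisfaction of $\phi$. Set $Z := A \setminus \ball[r]{\mathbf{A}}(X)$. Since $r < d$, we have $\ball[r]{\mathbf{A}}(X) \subseteq \ball[d]{\mathbf{A}}(X)$, so by the negligibility hypothesis $\nu_{\mathbf A}(A \setminus Z) < \epsilon$, and a union bound over the $p$ coordinates gives $\nu_{\mathbf A}^{\otimes p}(A^p \setminus Z^p) < p\epsilon$.

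The crucial locality observation is that, for every $(v_1, \ldots, v_p) \in Z^p$, each $v_i$ is at distance $> r$ from $X$ in $\mathbf A$; hence any path of length $\le r$ starting at some $v_i$ is confined to $A \setminus X$, and is therefore also a path in $\mathbf A - X$ (and in $\mathbf B$, when $\mathbf A - X = \mathbf B - X$). Consequently the induced substructures on $\ball[r]{\mathbf A}(\{v_1, \ldots, v_p\})$ coincide in $\mathbf A$, $\mathbf A - X$ and $\mathbf B$, and by $r$-locality of $\phi$ one obtains the key identity $\phi(\mathbf A) \cap Z^p = \phi(\mathbf A - X) \cap Z^p = \phi(\mathbf B) \cap Z^p$.

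With this in hand, both inequalities follow by splitting the Stone pairings along $Z^p$ and its complement. For the second one, the hypothesis $\mathbf A - X = \mathbf B - X$ forces $\nu_{\mathbf A}$ and $\nu_{\mathbf B}$ to agree on $A \setminus X \supseteq Z$, so the $Z^p$-contributions to $\langle \phi, \mathbf A\rangle$ and $\langle \phi, \mathbf B\rangle$ cancel exactly; only the boundary terms remain, each nonnegative and bounded by $p\epsilon$, giving $|\langle \phi, \mathbf A\rangle - \langle \phi, \mathbf B\rangle| \le p\epsilon$. For the first one, setting $\alpha := \nu_{\mathbf A}(X) < \epsilon$, the $Z^p$-contribution to $\langle \phi, \mathbf A - X\rangle$ equals that to $\langle \phi, \mathbf A\rangle$ multiplied by the renormalization factor $1/(1-\alpha)^p$; the main (purely arithmetic) step is then to bound $1/(1-\alpha)^p - 1$ by roughly $p\epsilon$, which combined with the boundary contributions produces the extra factor of $2$ in the bound $2p\epsilon$. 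The structural content of the lemma is entirely captured by the invariance on $Z^p$; everything else is bookkeeping around the renormalization, which is the only mild obstacle.
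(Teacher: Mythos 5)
Your proof is correct and takes essentially the same route as the paper's: there the safe zone is realized by marking $\ball[d]{\mathbf{A}}(X)$ with a new unary symbol $M$ and subtracting off the formula $\psi=\bigvee_i M(x_i)$, whose Stone pairing is $<1-(1-\epsilon)^p<p\epsilon$, while the induced-substructure case is handled by the same renormalization estimate $1-\nu_{\mathbf A}(A\setminus X)^p<p\epsilon$ that produces your factor of $2$. The one step that deserves a word more than you (or the paper) give it is the $\mathbf A$ versus $\mathbf B$ comparison: one must also check that the $r$-balls of points of $Z$ \emph{computed in} $\mathbf B$ avoid $X$ (a priori $\mathbf B$ could have Gaifman edges joining $X$ to points far from $X$ in $\mathbf A$), which is exactly the implicit content of the paper's unjustified identity $(\phi\wedge\neg\psi)(\lift{\mathbf{A}})=(\phi\wedge\neg\psi)(\lift{\mathbf{B}})$.
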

\end{great}
\begin{proof}
We first prove the second inequality.

Consider the lift  $\lift{\mathbf{A}}$ (resp. $\lift{\mathbf{B}}$) of $\mathbf{A}$ (resp. $\mathbf{B}$) where all elements in $\ball[d]{\mathbf{A}}(X)$ (resp. $\ball[d]{\mathbf{A}}(X)$) are marked with new unary relation $M$.
Let $\psi(x_1,\dots,x_p):=\bigvee M(x_i)$.
Then
it holds
\begin{align*}
0\leq\langle\phi,\lift{\mathbf{A}}\rangle-\langle\phi\wedge\neg\psi,\lift{\mathbf{A}}\rangle&<\langle \psi,\lift{\mathbf{A}}\rangle<1-(1-\epsilon)^p<p\epsilon \\
0\leq\langle\phi,\lift{\mathbf{B}}\rangle-\langle\phi\wedge\neg\psi,\lift{\mathbf{B}}\rangle&<\langle \psi,\lift{\mathbf{B}}\rangle<p\epsilon
\intertext{Thus, as $\phi(\lift{\mathbf A})=\phi(\mathbf A)$, 
$\phi(\lift{\mathbf B})=\phi(\mathbf B)$ and
$(\phi\wedge\neg\psi)(\lift{\mathbf A})=(\phi\wedge\neg\psi)(\lift{\mathbf B})$ it holds}
|\langle\phi,\mathbf A\rangle-\langle\phi,\mathbf B\rangle|<p\epsilon
\end{align*}

For the second inequality, we have likewise
$$
0\leq \langle\phi,\lift{\mathbf{A}}-X\rangle-\langle\phi\wedge\neg\psi,\lift{\mathbf{A}}-X\rangle<\langle \psi,\lift{\mathbf{A}}-X\rangle<\frac{p\epsilon}{\nu_{\mathbf A}(A\setminus X)^p}\\ 
$$
Moreover, as $\phi$ is $r$-local, it holds $(\phi\wedge\lnot\psi)(\lift{\mathbf{A}})=(\phi\wedge\lnot\psi)(\lift{\mathbf{A}}-X)$ hence
$$
\langle\phi\wedge\neg\psi,\lift{\mathbf{A}}\rangle=\nu_{\mathbf A}(A\setminus X)^p
\langle\phi\wedge\neg\psi,\lift{\mathbf{A}}-X\rangle.
$$
Thus
$$
|\langle\phi,\lift{\mathbf{A}}\rangle-\nu_{\mathbf A}(A\setminus X)^p\langle\phi,\lift{\mathbf{A}}-X\rangle|<p\epsilon
$$

Hence
\begin{align*}
	|\langle\phi,\lift{\mathbf A}\rangle-\langle\phi,\lift{\mathbf A}-X\rangle|
	&\leq |\langle\phi,\lift{\mathbf A}\rangle-\nu_{\mathbf A}(A\setminus X)^p\langle\phi,\lift{\mathbf A}-X\rangle|+1-\nu_{\mathbf A}(A\setminus X)^p\\
	&<p\epsilon+1-(1-\epsilon)^p\\
	&<2p\epsilon.
\end{align*}

hence the result, as $\langle\phi,\lift{\mathbf{A}}\rangle=\langle\phi,\mathbf{A}\rangle$ and $\langle\phi,\lift{\mathbf{A}}-X\rangle=\langle\phi, \mathbf{A}-X\rangle$.
\end{proof}

\begin{definition}
A {\em $(d,\epsilon)$-fragmentation} of a structure $\mathbf{A}$ is a (at most) countable partition $(S,X_1,X_2,\dots)$ of $A$ such that no element in $X_i$ has a neighbor in $X_j$ for $i\neq j$ and $S$ is $(d,\epsilon)$-negligible in $\mathbf{A}$.
\end{definition}
\begin{great}
\begin{lemma}
\label{lem:approx1}
Assume $(S,X_1,X_2,\dots)$ is a $(d,\epsilon)$-fragmentation of $\mathbf{A}$ and let $\phi$ be a strongly $r$-local formula ($r\leq d$) with free variables $x_1,\dots,x_p$.
Then
$$
\Bigl|\langle\phi,\mathbf{A}\rangle-
\sum_{i\geq 1}\nu_{\mathbf A}(X_i)^p\langle\phi,\mathbf{A}[X_i]\rangle\Bigr|
<2p\epsilon.
$$
\end{lemma}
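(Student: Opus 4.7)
The plan is to isolate the ``boundary effect'' of $S$ as a small error and then to argue that outside this boundary the satisfaction of $\phi$ decouples cleanly across the pieces $X_j$. Set $Z:=\ball[r]{\mathbf A}(S)$, and let $Z^{(p)}\subseteq A^p$ denote the set of $p$-tuples having at least one coordinate in $Z$. Since $r\le d$, the $(d,\epsilon)$-negligibility of $S$ gives $\nu_{\mathbf A}(Z)<\epsilon$, and Bernoulli's inequality then yields $\nu_{\mathbf A}^{\otimes p}(Z^{(p)})\le 1-(1-\epsilon)^p\le p\epsilon$.

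The key geometric step is to show that any tuple $(v_1,\dots,v_p)\in\phi(\mathbf A)\setminus Z^{(p)}$ lies entirely in one piece $X_j$, and that for such a tuple $\mathbf A\models\phi(v_1,\dots,v_p)$ holds if and only if $\mathbf A[X_j]\models\phi(v_1,\dots,v_p)$. Strong $r$-locality of $\phi$ forces all coordinates to lie within pairwise distance $r$. If some $v_i$ and $v_k$ belonged to distinct $X_a,X_b$, the separation property of the fragmentation would force any short $v_i$-$v_k$ path to traverse $S$, producing an element of $S$ within distance $r$ of $v_i$ and contradicting $v_i\notin Z$. Since moreover each $v_i$ is at distance $>r$ from $S$, every walk of length $\le r$ starting at $v_i$ avoids $S$; but a walk in $\mathbf A$ that begins in $X_j$ and never hits $S$ must remain inside $X_j$. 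Therefore the $r$-ball of $\{v_1,\dots,v_p\}$ in $\mathbf A$ coincides with its $r$-ball inside $\mathbf A[X_j]$, and plain $r$-locality of $\phi$ gives the stated equivalence.

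The two points above combine to the decomposition
$$
\langle\phi,\mathbf A\rangle=\nu_{\mathbf A}^{\otimes p}\bigl(\phi(\mathbf A)\cap Z^{(p)}\bigr)+\sum_{j\ge 1}\nu_{\mathbf A}^{\otimes p}\bigl(\phi(\mathbf A[X_j])\setminus Z^{(p)}\bigr),
$$
the sum being well defined since the sets $X_j^p$ are pairwise disjoint. The first term is at most $p\epsilon$. Each summand on the right differs from $\nu_{\mathbf A}^{\otimes p}\bigl(\phi(\mathbf A[X_j])\bigr)=\nu_{\mathbf A}(X_j)^p\langle\phi,\mathbf A[X_j]\rangle$ by the nonnegative quantity $\nu_{\mathbf A}^{\otimes p}\bigl(\phi(\mathbf A[X_j])\cap Z^{(p)}\bigr)$, and again by disjointness of the $X_j^p$ these errors sum to at most $\nu_{\mathbf A}^{\otimes p}(Z^{(p)})\le p\epsilon$. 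A triangle inequality delivers the bound $2p\epsilon$.

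The main obstacle is the geometric argument of the second paragraph, which requires all three hypotheses (strong locality of $\phi$, the inequality $r\le d$, and $S$-separation of distinct $X_j$) simultaneously. With only plain $r$-locality the coordinates of a satisfying tuple could straddle several pieces, introducing cross-terms of order $\nu_{\mathbf A}(X_j)\nu_{\mathbf A}(X_{j'})$ that are not controlled by $\epsilon$; and without $r\le d$ the walk from $v_i$ might reach $S$, invalidating the identification of the two $r$-balls.
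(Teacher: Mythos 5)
Your proof is correct, and it is a fleshed-out, self-contained version of the same underlying idea the paper uses: discard the $p$-tuples that meet a small neighborhood of $S$ (measure at most $p\epsilon$) and observe that strong $r$-locality confines every remaining satisfying tuple to a single piece $X_j$, on which satisfaction in $\mathbf A$ and in $\mathbf A[X_j]$ coincide. The paper's own proof is a one-liner: it invokes Lemma~\ref{lem:approx0} to get $|\langle\phi,\mathbf A\rangle-\langle\phi,\mathbf A-S\rangle|<2p\epsilon$ and then notes that $\mathbf A-S$ splits as a disjoint union of the $\mathbf A[X_i]$. That route, taken literally, expresses $\langle\phi,\mathbf A-S\rangle$ with the renormalized weights $\bigl(\nu_{\mathbf A}(X_i)/(1-\nu_{\mathbf A}(S))\bigr)^p$ rather than $\nu_{\mathbf A}(X_i)^p$, so a further (omitted) estimate is needed to reach the stated form; your direct bookkeeping with the unnormalized product measure $\nu_{\mathbf A}^{\otimes p}$ sidesteps this and also accommodates $r\le d$ without relying on the strict inequality $r<d$ appearing in Lemma~\ref{lem:approx0}. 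Your explicit verification that for a tuple avoiding $\ball[r]{\mathbf A}(S)$ the $r$-ball of $\{v_1,\dots,v_p\}$ in $\mathbf A$ equals its $r$-ball in $\mathbf A[X_j]$ is exactly the content hidden behind the paper's phrase ``$\phi(\mathbf A-S)$ is the disjoint union of the $\phi(\mathbf A[X_i])$'', and spelling it out (in particular, that the separation condition of the fragmentation forces any short path between distinct pieces through $S$) is where the hypotheses of strong locality and $r\le d$ are genuinely consumed.
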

\end{great}
\begin{proof}
This follows from Lemma~\ref{lem:approx0} and the fact that
$\phi(\mathbf{A}-S)$ is the disjoint union of the structures $\phi(\mathbf{A}[X_i])$.
\end{proof}

We now consider how the notion of $(d,\epsilon)$-negligible subset of a structure allows to define negligible sequences of subsets and equivalence of sequences.

\begin{definition}
Let $\seq{A}$ be a local-convergent sequence of structures.
A sequence $\Xseq{X}\subseteq\Xseq{A}$ is
{\em negligible} in $\seq{A}$ if
$$
\forall d\in\bbbn:\quad \limsup_{n\rightarrow\infty}\nu_{\mathbf A_n}(\ball[d]{\mathbf{A}_n}(X_n))=0,
$$
what we simply formulate as
$$
\forall d\in\bbbn:\quad \limsup\nu_{\seq{A}}(\ball[d]{\seq{A}}(\Xseq{X}))=0.
$$

Two sequences $\Xseq{X}$ and $\Xseq{Y}$ of subsets are {\em equivalent}
in $\seq{A}$ (and we note $\Xseq{X}\approx \Xseq{Y}$ if the sequence $\Xseq{X}\,\Delta\,\Xseq{Y}=(X_n\,\Delta\,Y_n)_{n\in\bbbn}$ is negligible in $\seq{A}$.

We denote by $\Xseq{0}$ the sequence of empty subsets. Hence $\Xseq{X}\approx\Xseq{0}$ is equivalent to the property that $\Xseq{X}$ is negligible.

We further define a partial order on sequences of subsets by $\Xseq{X}\preceq \Xseq{Y}$ if the sequence 
$\Xseq{X}\setminus \Xseq{Y}=(X_n\setminus Y_n)_{n\in\bbbn}$ is negligible in $\seq{A}$. Hence $\preceq$ has $\Xseq{0}$ for its minimum.

Two sequences  $\seq{A}$ and
$\seq{B}$  of structures are {\em equivalent} if there exists
a negligible sequence $\Xseq{X}$ of $\seq{A}$ and a
negligible sequence  $\Xseq{Y}$ of $\seq{B}$
 such that $\mathbf{A}_n-X_n$ is isomorphic to $\mathbf{B}_n-Y_n$ for every $n\in\bbbn$. 
\end{definition}

The following lemma is a straightforward consequence
of Lemma~\ref{lem:approx0}
but we think it nevertheless deserves to be explicitly stated.
\begin{lemma}
\label{lem:eqlim}
Let  $\seq{A}$ and
$\seq{B}$ be equivalent sequences of structures. 

Then 
$\seq{A}$ is local-convergent if and only if 
$\seq{B}$ is local-convergent. In this case, they have the same limit.
\end{lemma}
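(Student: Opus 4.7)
The plan is to bootstrap directly from Lemma~\ref{lem:approx0}: the equivalence of $\seq{A}$ and $\seq{B}$ provides negligible sequences $\Xseq{X}$ in $\seq{A}$ and $\Xseq{Y}$ in $\seq{B}$ with $\mathbf{A}_n-X_n\cong\mathbf{B}_n-Y_n$, so Stone pairings of any local formula in $\mathbf{A}_n$ and in $\mathbf{B}_n$ can both be compared to the common value computed on the quotient structure.

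Concretely, fix an $r$-local formula $\phi$ with $p$ free variables and an arbitrary $\epsilon>0$. Since $\Xseq{X}$ is negligible in $\seq{A}$ and $\Xseq{Y}$ is negligible in $\seq{B}$, applying the negligibility condition with $d=r+1$ yields an integer $N$ such that, for all $n\geq N$, the set $X_n$ is $(r+1,\epsilon)$-negligible in $\mathbf{A}_n$ and $Y_n$ is $(r+1,\epsilon)$-negligible in $\mathbf{B}_n$. Lemma~\ref{lem:approx0} then gives
\[
|\langle\phi,\mathbf{A}_n\rangle-\langle\phi,\mathbf{A}_n-X_n\rangle|<2p\epsilon
\quad\text{and}\quad
|\langle\phi,\mathbf{B}_n\rangle-\langle\phi,\mathbf{B}_n-Y_n\rangle|<2p\epsilon.
\]
Because $\mathbf{A}_n-X_n$ and $\mathbf{B}_n-Y_n$ are isomorphic as measured structures, we have $\langle\phi,\mathbf{A}_n-X_n\rangle=\langle\phi,\mathbf{B}_n-Y_n\rangle$, and the triangle inequality yields
\[
|\langle\phi,\mathbf{A}_n\rangle-\langle\phi,\mathbf{B}_n\rangle|<4p\epsilon \qquad(\forall n\geq N).
\]

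Since $\epsilon>0$ was arbitrary, $\langle\phi,\mathbf{A}_n\rangle-\langle\phi,\mathbf{B}_n\rangle\to 0$ as $n\to\infty$ for every local formula $\phi$. Thus the sequence $(\langle\phi,\mathbf{A}_n\rangle)_{n\in\bbbn}$ converges if and only if $(\langle\phi,\mathbf{B}_n\rangle)_{n\in\bbbn}$ does, and in that case they share the same limit. By the definition of local-convergence, this proves both claims of the lemma simultaneously. There is no real obstacle here: the only point requiring minimal care is to choose the radius $d$ in the negligibility condition strictly larger than the locality radius $r$ of $\phi$, which is precisely the hypothesis under which Lemma~\ref{lem:approx0} applies.
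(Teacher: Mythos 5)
Your proof is correct and follows exactly the route the paper intends: the paper states Lemma~\ref{lem:eqlim} without proof, calling it ``a straightforward consequence of Lemma~\ref{lem:approx0}'', and your argument (pass to the isomorphic quotients $\mathbf{A}_n-X_n\cong\mathbf{B}_n-Y_n$, control both differences by $2p\epsilon$ via Lemma~\ref{lem:approx0} with $d=r+1>r$, and conclude by the triangle inequality) is precisely that straightforward consequence spelled out. No issues.
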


\section{Clusters and Pre-Clusters}
\label{sec:cluster}
The notion of cluster of a local-convergent sequence we introduce now is a weak analog of the notion of union of connected components, or more precisely of the topological notion of ``clopen subset''.

\subsection{Clusters}

In our setting, where clustering is performed on a local convergent sequence $\seq{A}$, the term of ``cluster'', which we will now define, will be used to qualify a sequence $\Xseq{X}$ of sets, with $X_n\subseteq A_n$.

\begin{definition}
Let  $\seq{A}$ be a local-convergent sequence of structures.

A sequence $\Xseq{X}\subseteq\Xseq{A}$  is a 
{\em cluster} of  $\seq{A}$ if the following conditions hold:
\begin{enumerate}
\item the lifted sequence $\lift[\Xseq{X}]{\seq{A}}$ obtained by marking set $X_n$ in $\mathbf{A}_n$ by a new unary relation $M_{\Xseq{X}}$
is local-convergent;
\item the sequence $\partial_{\seq{A}}\Xseq{X}$ is negligible in $\seq{A}$.
\end{enumerate}

Condition (1) can be seen as a continuity requirement for the subset selection.
Condition (2) is stronger than the usual requirement that there are not too many connections leaving the cluster. We intuitively require that the (asymptotically arbitrarily large) ring around a cluster is very sparse zone. Note that no minimality nor connectivity assumption is made at this point.

We start our ``cluster analysis'' by means of the following notions (more will follow, see Fig.~\ref{fig:thesaurus}):
A cluster $\Xseq{X}$ is
{\em atomic} if, for every cluster $\Xseq{Y}$ of $\seq{A}$ such that $\Xseq{Y}\preceq \Xseq{X}$ either $\Xseq{Y}\approx\Xseq{0}$ or $\Xseq{Y}\approx \Xseq{X}$; the cluster $\Xseq{X}$ is {\em strongly atomic} if $\Xseq{X}_f$ is an atomic cluster of $\seq{A}_f$ for every increasing function $f:\bbbn\rightarrow\bbbn$.
To the opposite, the cluster $\Xseq{X}$ is a {\em nebula}
if, for every increasing function $f:\bbbn\rightarrow\bbbn$, every atomic cluster $\Xseq{Y}_f$ of $\seq{A}_f$ with $\Xseq{Y}_f\subseteq \Xseq{X}_f$ is trivial.
Finally,
a cluster $\Xseq{X}$ is {\em universal} for  $\seq{A}$ if $\Xseq{X}$ is a cluster of every conservative lift of $\seq{A}$. 
\end{definition}

\begin{lemma}
\label{lem:eqcore}
Let $\Xseq{X}$ be a cluster of $\seq{A}$ and let $\Xseq{Y}$ be a sequence of subsets. Then  $\Xseq{X}\approx\Xseq{Y}$ in $\seq{A}$ if and only if $\Xseq{Y}$ is a cluster of $\seq{A}$  and
$$\limsup\nu_{\seq{A}}(\Xseq{X}\,\Delta\,\Xseq{Y})=0.$$ 
\end{lemma}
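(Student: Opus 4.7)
The plan is to prove both directions by reducing everything to set-theoretic inclusions relating $X\Delta Y$, $\partial_{\mathbf A}X$, $\partial_{\mathbf A}Y$ and their $d$-neighborhoods, invoking Fact~\ref{fact:bord} and Lemma~\ref{lem:approx0} as the main workhorses.

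For the forward direction, I would assume $\Xseq{X}\approx\Xseq{Y}$, i.e., that $\Xseq{X}\Delta\Xseq{Y}$ is negligible. Taking $d=0$ in the definition of negligibility immediately yields $\limsup\nu_{\seq{A}}(\Xseq{X}\Delta\Xseq{Y})=0$, so it only remains to verify that $\Xseq{Y}$ is a cluster. For the negligibility of $\partial_{\seq{A}}\Xseq{Y}$, I would establish the pointwise inclusion $\partial_{\mathbf A}Y\subseteq \partial_{\mathbf A}X\cup \ball[1]{\mathbf A}(X\Delta Y)$ (a point $y\in\partial_{\mathbf A}Y$ either lies in $X\Delta Y$ itself, or lies in $\partial_{\mathbf A}X$, or is adjacent to some $y'\in Y\setminus X\subseteq X\Delta Y$); combined with the negligibility of $\partial_{\seq{A}}\Xseq{X}$ and of $\Xseq{X}\Delta\Xseq{Y}$ this gives the claim. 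For local convergence of $\lift[\Xseq{Y}]{\seq{A}}$, I would observe that $\lift[\Xseq{X}]{\mathbf A_n}$ and $\lift[\Xseq{Y}]{\mathbf A_n}$ share the Gaifman graph of $\mathbf A_n$ and coincide outside $X_n\Delta Y_n$ (the common marked set there being $X_n\cap Y_n$), so the second inequality of Lemma~\ref{lem:approx0} applied with the $(d,\epsilon_n)$-negligible set $X_n\Delta Y_n$ gives $|\langle\phi,\lift[\Xseq{X}]{\mathbf A_n}\rangle-\langle\phi,\lift[\Xseq{Y}]{\mathbf A_n}\rangle|\to 0$ for every local $\phi$ in the lifted signature, thus transferring convergence from $\lift[\Xseq{X}]{\seq{A}}$ to $\lift[\Xseq{Y}]{\seq{A}}$ with the same limit.

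For the converse, I would assume that $\Xseq{Y}$ is a cluster and that $\nu_{\seq{A}}(\Xseq{X}\Delta\Xseq{Y})\to 0$, and aim to show $\nu_{\seq{A}}(\ball[d]{\seq{A}}(\Xseq{X}\Delta\Xseq{Y}))\to 0$ for every $d$. The key decomposition is
\[
\ball[d]{\mathbf A}(X\Delta Y)\subseteq (X\Delta Y)\cup\ball[d]{\mathbf A}(\partial_{\mathbf A}(X\Delta Y)),
\]
obtained by tracing a shortest path from any external vertex of the $d$-ball back into $X\Delta Y$ and noting that it must cross $\partial_{\mathbf A}(X\Delta Y)$. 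Applying Fact~\ref{fact:bord} with $Z=X\Delta Y$ bounds the boundary term by $\ball[d+1]{\mathbf A}(\partial_{\mathbf A}X)\cup\ball[d+1]{\mathbf A}(\partial_{\mathbf A}Y)$. All three pieces have asymptotic measure zero ($\Xseq{X}\Delta\Xseq{Y}$ by hypothesis, the other two because $\Xseq{X}$ and $\Xseq{Y}$ are both clusters), yielding the desired negligibility of $\Xseq{X}\Delta\Xseq{Y}$.

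I expect the converse to be the main obstacle: the hypothesis controls only the raw measure of $\Xseq{X}\Delta\Xseq{Y}$, whereas negligibility asks for control of every $d$-neighborhood. The resolution is precisely that, via Fact~\ref{fact:bord}, these neighborhoods decompose into $\Xseq{X}\Delta\Xseq{Y}$ itself plus $(d{+}1)$-neighborhoods of $\partial\Xseq{X}$ and $\partial\Xseq{Y}$, which are exactly the quantities that being a cluster keeps small.
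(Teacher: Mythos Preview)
Your proposal is correct and follows essentially the same route as the paper's proof: the same inclusion $\partial_{\mathbf A}Y\subseteq\partial_{\mathbf A}X\cup\ball{\mathbf A}(X\Delta Y)$ for the forward direction, and the same decomposition $\ball[d]{\mathbf A}(X\Delta Y)\subseteq(X\Delta Y)\cup\ball[d]{\mathbf A}(\partial_{\mathbf A}(X\Delta Y))$ together with Fact~\ref{fact:bord} for the converse. Your handling of the local-convergence transfer via the second inequality of Lemma~\ref{lem:approx0} is exactly what underlies the paper's one-line appeal to $\lift[\Xseq{X}]{\seq{A}}\approx\lift[\Xseq{Y}]{\seq{A}}$ (through Lemma~\ref{lem:eqlim}), and your use of $d{+}1$ in the Fact~\ref{fact:bord} step is in fact more accurate than the paper's stated $d$.
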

\begin{proof}
Assume $\Xseq{Y}$ is a cluster and 
$\limsup\nu_{\seq{A}}(\Xseq{X}\,\Delta\,\Xseq{Y})=0$.
For every integer $d$ it holds
\begin{align*}
\ball[d]{\mathbf{A}_n}(X_n\,\Delta\,Y_n)&\subseteq (X_n\,\Delta\,Y_n)\cup
\ball[d]{\mathbf{A}_n}(\partial_{\mathbf{A}_n}(X_n\,\Delta Y_n))\\
&\subseteq (X_n\,\Delta\,Y_n)\cup
\ball[d]{\mathbf{A}_n}(\partial_{\mathbf{A}_n}X_n)\cup 
\ball[d]{\mathbf{A}_n}(\partial_{\mathbf{A}_n}Y_n)
\end{align*}
Thus $\Xseq{X}\,\Delta\,\Xseq{Y}$ is negligible in $\seq{A}$, that is
$\Xseq{X}\approx\Xseq{Y}$.

Conversely, assume $\Xseq{X}\approx\Xseq{Y}$. Then obviously
$\limsup\nu_{\seq{A}}(\Xseq{X}\,\Delta\,\Xseq{Y})=0$. 
As $\partial_{\mathbf{A}_n}Y_n\subseteq \partial_{\mathbf{A}_n}X_n\cup \ball{\mathbf{A}_n}(X_n\,\Delta\, Y_n)$, and as
$X\Delta Y$ is negligible since $X\approx Y$,
the sequence
$\partial_{\seq{A}}\Xseq{Y}$ is negligible. Moreover, as
$\lift[\Xseq{X}]{\seq{A}}\approx \lift[\Xseq{Y}]{\seq{A}}$ (considering we use the same symbol for both lifts), we deduce that
$\Xseq{Y}$ is a cluster of $\seq{A}$.
\end{proof}
In particular, if $\Xseq{X}$ is a cluster and $\Xseq{Y}\approx\Xseq{X}$ then
$\Xseq{Y}$ is a cluster.

We have the following alternative characterization of clusters:

\begin{lemma}
\label{lem:cluster}
Let  $\seq{A}$ be a local-convergent sequence of structures.

A sequence $\Xseq{X}\subseteq\Xseq{A}$  is a 
cluster of  $\seq{A}$ if either $\Xseq{X}\approx\Xseq{0}$ or
the following conditions hold:
\begin{enumerate}
\item the sequence $\seq{A}[\Xseq{X}]$ is local-convergent;
\item the limit $\lim \nu_{\seq{A}}(\Xseq{X})$  and is strictly positive;
\item the sequence $\partial_{\seq{A}}\Xseq{X}$ is negligible in $\seq{A}$.
\end{enumerate}
\end{lemma}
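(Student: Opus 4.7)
The plan is to treat the dichotomy in the statement directly. For the case $\Xseq{X}\approx\Xseq{0}$ I would observe that $\Xseq{0}$ is itself trivially a cluster---its boundary is empty and lifting by a never-satisfied unary predicate preserves Stone pairings of $\sigma^+$-formulas---and then invoke Lemma~\ref{lem:eqcore} to conclude that any sequence equivalent to $\Xseq{0}$ is also a cluster.

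Assume instead that conditions (1)--(3) hold. Clause (3) is literally the second axiom of the cluster definition, so I only need to verify that the lift $\lift[\Xseq{X}]{\seq{A}}$ is local-convergent. By Corollary~\ref{cor:sloc} this reduces to proving that $\langle\phi,\lift[\Xseq{X}]{\mathbf{A}_n}\rangle$ converges for every strongly $r$-local formula $\phi$ over $\sigma^+$ with, say, $p$ free variables. I would set up two auxiliary $\sigma$-formulas $\phi_M$ and $\phi_{\neg M}$ obtained from $\phi$ by substituting $\top$ and $\bot$ for the new unary predicate $M$; both remain strongly $r$-local.

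The core of the plan is a partition of the $p$-tuples of $A_n$ into those lying entirely in $X_n$ with $r$-neighborhood contained in $X_n$, those lying entirely outside with $r$-neighborhood contained in $A_n\setminus X_n$, and a remainder. Strong $r$-locality forces every remainder tuple (whether mixed, or homogeneous with $r$-neighborhood crossing $\partial_{\mathbf{A}_n}X_n$) to have at least one coordinate in $\ball[r]{\mathbf{A}_n}(\partial_{\mathbf{A}_n}X_n)$; by condition~(3) this contributes $O(p\,\nu_{\mathbf{A}_n}(\ball[r]{\mathbf{A}_n}(\partial_{\mathbf{A}_n}X_n)))=o(1)$. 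On the two homogeneous classes the $M$-column is constant and the relevant $r$-neighborhood coincides with the corresponding induced substructure, yielding
\[
\langle\phi,\lift[\Xseq{X}]{\mathbf{A}_n}\rangle
=\nu_{\mathbf{A}_n}(X_n)^p\,\langle\phi_M,\mathbf{A}_n[X_n]\rangle
+\nu_{\mathbf{A}_n}(A_n\setminus X_n)^p\,\langle\phi_{\neg M},\mathbf{A}_n-X_n\rangle
+o(1).
\]

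The hard part will be the second summand, because $\seq{A}-\Xseq{X}$ is not assumed to be local-convergent. My plan to circumvent this is to apply exactly the same decomposition to $\phi_{\neg M}$ now regarded as a strongly $r$-local $\sigma$-formula on $\seq{A}$:
\[
\langle\phi_{\neg M},\mathbf{A}_n\rangle
=\nu_{\mathbf{A}_n}(X_n)^p\,\langle\phi_{\neg M},\mathbf{A}_n[X_n]\rangle
+\nu_{\mathbf{A}_n}(A_n\setminus X_n)^p\,\langle\phi_{\neg M},\mathbf{A}_n-X_n\rangle
+o(1).
\]
Local-convergence of $\seq{A}$ gives a limit for the left-hand side, while conditions~(1) and~(2) provide limits for $\nu_{\mathbf{A}_n}(X_n)$ and $\langle\phi_{\neg M},\mathbf{A}_n[X_n]\rangle$, so $\nu_{\mathbf{A}_n}(A_n\setminus X_n)^p\,\langle\phi_{\neg M},\mathbf{A}_n-X_n\rangle$ must converge. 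Substituting this into the first displayed identity produces the desired limit for $\langle\phi,\lift[\Xseq{X}]{\mathbf{A}_n}\rangle$ and finishes the argument.
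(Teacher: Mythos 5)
Your proposal is correct and follows essentially the same route as the paper: reduce to strongly local formulas, split off the negligible neighborhood of $\partial_{\seq{A}}\Xseq{X}$ to get the two homogeneous summands, and then recover the limit of the term over $\seq{A}-\Xseq{X}$ by applying the same fragmentation to $\phi_{\neg M}$ on $\seq{A}$ itself and solving --- this is exactly the paper's final chain of equalities ending in $\alpha^p\langle\phi^+,\mathbf{A}_n[X_n]\rangle+\langle\phi^-,\mathbf{A}_n\rangle-\alpha^p\langle\phi^-,\mathbf{A}_n[X_n]\rangle+o(1)$. Your explicit invocation of Corollary~\ref{cor:sloc} makes precise a step the paper leaves implicit, but the argument is the same.
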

\begin{proof}
Assume $\Xseq{X}$ is a cluster of $\seq{A}$, and let
$\alpha=\langle M_{\Xseq{X}},\lim\lift[\Xseq{X}]{\seq{A}}\rangle$.
If $\alpha=0$ then $\Xseq{X}$ is negligible in $\seq{A}$ as for every $d,n\in\bbbn$ it holds $\ball[d]{\mathbf{A}_n}(X_n)= X_n\cup 
\ball[d-1]{\mathbf{A}_n}(\partial_{\mathbf{A}_n}X_n)$ and thus 
$$\limsup\nu_{\seq{A}}(\ball[d]{\seq{A}}(\Xseq{X}))\leq \lim\nu_{\seq{A}}(\Xseq{X})+\limsup\nu_{\seq{A}}(\ball[d-1]{\seq{A}}(\partial_{\seq{A}}\Xseq{X})=\alpha=0.$$
Otherwise, $\alpha>0$. To every local formula $\phi$ we associate the local formula $\phi\vert M_{\seq{X}}$ conditioning every variables with $M_{\seq{X}}$. Then it holds
\begin{align*}
\langle\phi,\mathbf{A}_n[X_n]\rangle
&=\langle\phi\vert M_{\seq{X}},\lift[\Xseq{X}]{\mathbf{A}_n}[X_n]\rangle\\
&=\alpha^{-p}\langle\phi\vert M_{\seq{X}},\lift[\Xseq{X}]{\mathbf{A}_n}-\partial_{\mathbf A_n}(X_n)\rangle+o(1)\\
&=\alpha^{-p}\langle\phi\vert M_{\seq{X}},\lift[\Xseq{X}]{\mathbf{A}_n}\rangle+o(1)
\end{align*}
It follows that the sequence $\seq{A}[\Xseq{X}]$ is local-convergent.

Conversely,   let $\Xseq{X}$ be a sequence satisfying the conditions of the lemma. Then either $\Xseq{X}\approx\Xseq{0}$ and $\Xseq{X}$ is a cluster (according to Lemma~\ref{lem:eqcore}), or 
$\seq{A}[\Xseq{X}]$ is local convergent, $\lim\nu_{\seq{A}}(\Xseq{X})>0$, and $\partial_{\seq{A}}\Xseq{X}\approx\Xseq{0}$.
Then, denoting $\alpha=\lim\nu_{\seq{A}}(\Xseq{X})$ it holds for every local formula $\phi$ (with respect to the language of
$\lift[\Xseq{X}]{\seq{A}}$), denoting $\phi^+$ (resp. $\phi^-$) the formula where $M_{\seq{X}}$ is replaced by true (resp. false) it holds:
\begin{align*}
\langle\phi,\lift[\Xseq{X}]{\mathbf{A}_n}\rangle
&=\langle\phi,\lift[\Xseq{X}]{\mathbf{A}_n}-\partial_{\mathbf{A}_n}X_n\rangle+o(1)\\
&=\alpha^p\langle\phi,\lift[\Xseq{X}]{\mathbf{A}_n}[X_n]\rangle
+(1-\alpha)^p\langle\phi,\lift[\Xseq{X}]{\mathbf{A}_n}[A_n\setminus X_n\setminus \partial_{\mathbf{A}_n}X_n]\rangle
+o(1)\\
&=\alpha^p\langle\phi^+,\lift[\Xseq{X}]{\mathbf{A}_n}[X_n]\rangle
+(1-\alpha)^p\langle\phi^-,\lift[\Xseq{X}]{\mathbf{A}_n}[A_n\setminus X_n\setminus \partial_{\mathbf{A}_n}X_n]\rangle
+o(1)\\
&=\alpha^p\langle\phi^+,\mathbf{A}_n[X_n]\rangle
+(1-\alpha)^p\langle\phi^-,\mathbf{A}_n[A_n\setminus X_n\setminus \partial_{\mathbf{A}_n}X_n]\rangle
+o(1)\\
&=\alpha^p\langle\phi^+,\mathbf{A}_n[X_n]\rangle
+\langle\phi^-,\mathbf{A}_n\rangle-\alpha^p\langle\phi^-,\mathbf{A}_n[X_n]\rangle
+o(1)
\end{align*}
Hence $\lift[\Xseq{X}]{\seq{A}}$ is local convergent.
\end{proof}

\begin{definition}
Two clusters $\Xseq{X}$ and $\Xseq{Y}$ of a local-convergent
sequence $\seq{A}$ are {\em interweaving}, and we note
$\Xseq{X}\between \Xseq{Y}$ if 
every sequence $\Xseq{Z}$ with $Z_n\in\{X_n,Y_n\}$ is a cluster of 
$\seq{A}$.
\end{definition}

Interweaving clusters allow to build many new clusters by weaving (hence the name ``interweaving''). Interweaving clusters have the following handy characterization:
\begin{lemma}
Let $\Xseq{X}$ and $\Xseq{Y}$ be two clusters of a local-convergent
sequence $\seq{A}$. The following are equivalent:
\begin{enumerate}
	\item $\Xseq{X}$ and $\Xseq{Y}$ are interweaving;
	\item $\lim\lift[\Xseq{X}]{\seq{A}}=\lim\lift[\Xseq{Y}]{\seq{A}}$;
	\item either $\Xseq{X}\approx\Xseq{Y}\approx\Xseq{0}$ or
the following two conditions hold:
\begin{enumerate}
\item $\lim\seq{A}[\Xseq{X}]=\lim\seq{A}[\Xseq{Y}]$;
\item $\lim\nu_{\seq{A}}(\Xseq{X})=\lim\nu_{\seq{A}}(\Xseq{Y})$.
\end{enumerate}
\end{enumerate}
\end{lemma}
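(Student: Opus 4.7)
The natural strategy is the cyclic chain (1) $\Rightarrow$ (2) $\Rightarrow$ (3) $\Rightarrow$ (1), leaning on Lemma~\ref{lem:cluster} to translate between statements about lifted limits and statements about induced substructure limits.

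For (1) $\Rightarrow$ (2), I will exploit the freedom in choosing a hybrid sequence. Define $\Xseq{Z}$ by $Z_n=X_n$ for even $n$ and $Z_n=Y_n$ for odd $n$. By hypothesis $\Xseq{Z}$ is a cluster, so $\lift[\Xseq{Z}]{\seq{A}}$ is local-convergent. Any subsequence of a convergent sequence has the same limit, so the even-indexed subsequence of $\lift[\Xseq{Z}]{\seq{A}}$, which coincides with the even-indexed subsequence of $\lift[\Xseq{X}]{\seq{A}}$, has limit $\lim\lift[\Xseq{X}]{\seq{A}}$; likewise the odd subsequence yields $\lim\lift[\Xseq{Y}]{\seq{A}}$. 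Hence the two limits coincide.

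For (2) $\Rightarrow$ (3), I will evaluate the common lift limit on well-chosen local formulas. Applying it to the single-variable formula $M_{\Xseq{X}}(x)$ (resp.\ $M_{\Xseq{Y}}(x)$ in the other lift, which for the purpose of comparing limits we can identify via the common marker symbol) immediately yields $\lim\nu_{\seq{A}}(\Xseq{X})=\lim\nu_{\seq{A}}(\Xseq{Y})$; call this common limit $\alpha$. If $\alpha=0$, then by the argument used in Lemma~\ref{lem:cluster} (since $\partial_{\seq{A}}\Xseq{X}$ and $\partial_{\seq{A}}\Xseq{Y}$ are negligible) both $\Xseq{X}$ and $\Xseq{Y}$ are themselves negligible, so $\Xseq{X}\approx\Xseq{Y}\approx\Xseq{0}$. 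If $\alpha>0$, the identity
\[
\langle\phi,\mathbf{A}_n[X_n]\rangle=\alpha^{-p}\langle\phi\vert M_{\Xseq{X}},\lift[\Xseq{X}]{\mathbf{A}_n}\rangle+o(1)
\]
established in the proof of Lemma~\ref{lem:cluster} (and its analogue for $\Xseq{Y}$) shows that $\lim\langle\phi,\seq{A}[\Xseq{X}]\rangle=\lim\langle\phi,\seq{A}[\Xseq{Y}]\rangle$ for every local formula $\phi$, hence $\lim\seq{A}[\Xseq{X}]=\lim\seq{A}[\Xseq{Y}]$.

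For (3) $\Rightarrow$ (1), I will verify the three hypotheses of Lemma~\ref{lem:cluster} for an arbitrary mixed sequence $\Xseq{Z}$ with $Z_n\in\{X_n,Y_n\}$. In the degenerate case $\Xseq{X}\approx\Xseq{Y}\approx\Xseq{0}$, every such $\Xseq{Z}$ is negligible (as $\Xseq{Z}\subseteq\Xseq{X}\cup\Xseq{Y}$), so trivially a cluster. Otherwise: the outer boundary satisfies $\partial_{\seq{A}}\Xseq{Z}\subseteq\partial_{\seq{A}}\Xseq{X}\cup\partial_{\seq{A}}\Xseq{Y}$, hence is negligible; the measure $\nu_{\mathbf A_n}(Z_n)$ equals either $\nu_{\mathbf A_n}(X_n)$ or $\nu_{\mathbf A_n}(Y_n)$ and both tend to the common value $\alpha>0$; and for any local formula $\phi$, $\langle\phi,\mathbf A_n[Z_n]\rangle$ is one of $\langle\phi,\mathbf A_n[X_n]\rangle$ or $\langle\phi,\mathbf A_n[Y_n]\rangle$, both converging to $\langle\phi,\lim\seq{A}[\Xseq{X}]\rangle=\langle\phi,\lim\seq{A}[\Xseq{Y}]\rangle$. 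Thus $\seq{A}[\Xseq{Z}]$ is local-convergent and Lemma~\ref{lem:cluster} concludes that $\Xseq{Z}$ is a cluster.

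The only mildly delicate step is (2) $\Rightarrow$ (3), where one has to reconcile the fact that the two lifts use different marker subsets while their limits live in the same Stone space; the computation from Lemma~\ref{lem:cluster} handles this cleanly once one fixes the convention that the marker symbol is the same in both lifts.
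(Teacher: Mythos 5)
Your proposal is correct and follows essentially the same route as the paper: (1)$\Rightarrow$(2) via an alternating hybrid sequence and passage to subsequences, (2)$\Rightarrow$(3) by relativizing local formulas to the mark $M$ and normalizing by $\alpha^p$ as in Lemma~\ref{lem:cluster}, and (3)$\Rightarrow$(1) by verifying the conditions of Lemma~\ref{lem:cluster} for an arbitrary mixed sequence (the paper phrases this last step as a contradiction via subsequence extraction, but the content is identical to your direct argument).
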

\begin{proof}
Let us prove (1)$\Rightarrow$(2)$\Rightarrow$(3)$\Rightarrow$(1).
\begin{itemize}
	\item[(1)$\Rightarrow$(2):] Let $Z_n$ be $X_n$ if $n$ is odd and $Y_n$ if $n$ is even. As $\Xseq{X}\between\Xseq{Y}$, the sequence $\Xseq{Z}$ is a cluster and
	(by considering the common subsequences) it holds
	$\lim\lift[\Xseq{X}]{\seq{A}}=\lim\lift[\Xseq{Z}]{\seq{A}}=\lim\lift[\Xseq{Y}]{\seq{A}}$.
	\item[(2)$\Rightarrow$(3):] Let $\alpha=\lim\langle M(x_1),\lift[X_n]{\mathbf{A}_n}\rangle$. Then either 
	$\alpha=0$ and $\Xseq{X}\approx\Xseq{Y}\approx\Xseq{0}$ or
	$\lim\nu_{\seq{A}}(\Xseq{X})=\lim\nu_{\seq{A}}(\Xseq{Y})\alpha>0$. In the later case,
	for any local formula $\phi$, let $\widetilde{\phi}$ be the formula where all the variables (free or bound) are constrained to belong to relation $M$. For sufficiently large $n$ (so that $\langle M(x_1),\lift[X_n]{\mathbf{A}_n}\rangle>0$ and $\langle M(x_1),\lift[Y_n]{\mathbf{A}_n}\rangle>0$) it holds
	$$
	\langle\phi,\mathbf A_n[X_n]\rangle=\frac{\langle\widetilde\phi,\lift[X_n]{\mathbf{A}_n}\rangle}{\langle M(x_1),\lift[X_n]{\mathbf{A}_n}\rangle}\text{ and }
	\langle\phi,\mathbf A_n[Y_n]\rangle=\frac{\langle\widetilde\phi,\lift[Y_n]{\mathbf{A}_n}\rangle}{\langle M(x_1),\lift[Y_n]{\mathbf{A}_n}\rangle}.
$$
	Thus $\lim\seq{A}[\Xseq{X}]=\lim\seq{A}[\Xseq{Y}]$.
	\item[(3)$\Rightarrow$(1):]
	If $\Xseq{X}\approx\Xseq{Y}\approx\Xseq{0}$ then obviously $\Xseq{X}\between\Xseq{Y}$. Otherwise, consider arbitrary $\Xseq{Z}$ with $Z_n\in\{X_n,Y_n\}$. Assume for contradiction that $\seq{A}[\Xseq{Z}$ does not converge. Then we can extract
	two subsequences $\Xseq{Z}_f$ and $\Xseq{Z}_g$ such that
	$\lim\seq{A}_f[\Xseq{Z}_f\neq \lim\seq{A}_g[\Xseq{Z}_g]$. By taking further subsequences if necessary we can assume that $\Xseq{Z}_f$ and $\Xseq{Z}_g$ are each
	either a subsequence of $\Xseq{X}$ or $\Xseq{Y}$. As 
	$\lim\seq{A}[\Xseq{X}]=\lim\seq{A}[\Xseq{Y}]$ we get a contradiction, so $\seq{A}[\Xseq{Z}$ converges. Similarly, $\nu_{\seq{A}}(\Xseq{Z})$ converges.
	As $\partial_{\mathbf A_n}(Z_n)\subseteq \partial_{\mathbf A_n}(X_n)\cup \partial_{\mathbf A_n}(Y_n)$ and
	$\partial_{\seq{A}}\Xseq{X}\approx\partial_{\seq{A}}\Xseq{Y}\approx\Xseq{0}$ we get $\partial_{\seq{A}}\Xseq{Z}\approx\Xseq{0}$. Altogether, this means that $\Xseq{Z}$ is a cluster of $\seq{A}$.
\end{itemize}
\end{proof}

Obviously, interweaving is a limit for the possibility to track clusters in a local-convergent sequence. In some sense, interweaving clusters cannot be distinguished.

We now prove that the families of all clusters of a local convergent sequence
is closed under complementation.

\begin{great}
\begin{lemma}
\label{lem:corecompl}
Let  $\seq{A}$ be a local-convergent sequence, and let $\Xseq{X}$ be a cluster of  $\seq{A}$.

Then $\Xseq{Y}=\Xseq{A}\setminus \Xseq{X}$ is a cluster of   $\seq{A}$.
\end{lemma}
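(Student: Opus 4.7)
The plan is to verify the two defining conditions of a cluster for $\Xseq{Y}=\Xseq{A}\setminus\Xseq{X}$ directly, exploiting the symmetric behaviour of both the boundary operation and the marking lift under complementation.

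First I would handle the boundary condition. An element $v\in\partial_{\mathbf A_n}(A_n\setminus X_n)$ must lie in $X_n$ and have a neighbour in $A_n\setminus X_n$; that neighbour lies in $\partial_{\mathbf A_n}X_n$, so $v\in\ball{\mathbf A_n}(\partial_{\mathbf A_n}X_n)$. Iterating, for any $d$
$$
\ball[d]{\mathbf A_n}(\partial_{\mathbf A_n}(A_n\setminus X_n))\subseteq\ball[d+1]{\mathbf A_n}(\partial_{\mathbf A_n}X_n),
$$
which is precisely the instance of Fact~\ref{fact:bord} applied to $Z=A_n\setminus X_n$. Since $\partial_{\seq{A}}\Xseq{X}$ is negligible in $\seq{A}$, we get $\limsup\nu_{\seq{A}}(\ball[d]{\seq{A}}(\partial_{\seq{A}}\Xseq{Y}))\leq\limsup\nu_{\seq{A}}(\ball[d+1]{\seq{A}}(\partial_{\seq{A}}\Xseq{X}))=0$, so $\partial_{\seq{A}}\Xseq{Y}$ is negligible as well.

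Next I would address the local convergence of the lift $\lift[\Xseq{Y}]{\seq{A}}$. Both $\lift[\Xseq{X}]{\seq{A}}$ and $\lift[\Xseq{Y}]{\seq{A}}$ live in the signature $\sigma\cup\{M\}$; the only difference is that $M$ is interpreted as $X_n$ in one case and as $A_n\setminus X_n$ in the other. Given any $r$-local formula $\phi$ in this extended signature, let $\phi^*$ be obtained from $\phi$ by replacing every atomic subformula $M(t)$ with $\neg M(t)$. Negation of a unary atom does not affect the distance-$r$ neighbourhood on which satisfaction depends, so $\phi^*$ is still $r$-local. Moreover, for every $n$ and every tuple $\bar v$ of elements of $A_n$,
$$
\lift[\Xseq{Y}]{\mathbf A_n}\models\phi(\bar v)\quad\Longleftrightarrow\quad\lift[\Xseq{X}]{\mathbf A_n}\models\phi^*(\bar v),
$$
hence $\langle\phi,\lift[\Xseq{Y}]{\mathbf A_n}\rangle=\langle\phi^*,\lift[\Xseq{X}]{\mathbf A_n}\rangle$. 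The right-hand side converges by condition (1) of the assumption that $\Xseq{X}$ is a cluster, so the left-hand side converges as well, and $\lift[\Xseq{Y}]{\seq{A}}$ is local-convergent.

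Both conditions of the cluster definition being verified, $\Xseq{Y}$ is a cluster of $\seq{A}$. There is no serious obstacle here: the proof is essentially an observation that the two cluster conditions are manifestly self-dual under set-complementation, with the boundary inclusion of Fact~\ref{fact:bord} taking care of condition (2) and the syntactic negation of $M$ taking care of condition (1). One could alternatively invoke the characterization in Lemma~\ref{lem:cluster}: local-convergence of $\seq{A}[\Xseq{Y}]$ and convergence of $\nu_{\seq{A}}(\Xseq{Y})=1-\nu_{\seq{A}}(\Xseq{X})$ follow from the corresponding facts for $\Xseq{X}$ together with negligibility of $\partial_{\seq{A}}\Xseq{X}$, but the lift-based argument above is more uniform and also covers the degenerate case $\Xseq{X}\approx\Xseq{A}$ without a separate subcase analysis.
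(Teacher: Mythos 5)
Your proof is correct and follows essentially the same route as the paper: the boundary inclusion $\ball[d]{\seq{A}}(\partial_{\seq{A}}\Xseq{Y})\subseteq \ball[d+1]{\seq{A}}(\partial_{\seq{A}}\Xseq{X})$ for the negligibility of $\partial_{\seq{A}}\Xseq{Y}$, and the observation that $\lift[\Xseq{Y}]{\seq{A}}$ is obtained from $\lift[\Xseq{X}]{\seq{A}}$ by interpreting the mark as the negation of $M_{\Xseq{X}}$ for the local convergence of the lift. Your write-up merely spells out both steps in more detail than the paper does.
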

\end{great}
\begin{proof}
First notice that for every integer $d$ it holds
$$\ball[d]{\seq{A}}(\partial_{\seq{A}}(\Xseq{Y}))\subseteq \ball[d+1]{\seq{A}}(\partial_{\seq{A}}(\Xseq{X}))$$
thus
$\lim\nu_{\seq{A}}(\ball[d]{\seq{A}}(\partial_{\seq{A}}\Xseq{Y}))=0$, that is $\Xseq{Y}\approx\Xseq{0}$.
As $L_{\Xseq{Y}}(\seq{A})$ can be obtained from $L_{\Xseq{X}}(\seq{A})$ by taking for $M_{\Xseq{Y}}$ the negation of $M_{\Xseq{X}}$ it is clear that $L_{\Xseq{Y}}(\seq{A})$ is local convergent.
\end{proof}

To the opposite, if $\Xseq{X}$ and $\Xseq{Y}$ are clusters, none of $\Xseq{X}\cap\Xseq{Y},
\Xseq{X}\setminus\Xseq{Y},\Xseq{Y}\setminus\Xseq{X},\Xseq{X}\,\Delta\,\Xseq{Y}, \Xseq{X}\cup\Xseq{Y}$ and their complements needs to be a cluster. For that consider the following:
\begin{example}
Consider a local-convergent sequence $\seq{E}$ of connected expanders, where $|E_n|=cn(1+o(1))$.
Define the sequence $\seq{A}$ as follows:
$$
\mathbf{A}_n=\begin{cases}
	\mathbf{E}_{5n}\cup\mathbf{E}_{6n}\cup\mathbf{E}_{8n}&\text{if $n$ is odd}\\
	\mathbf{E}_{2n}\cup\mathbf{E}_{3n}\cup\mathbf{E}_{4n}\cup\mathbf{E}_{10n}&\text{if $n$ is even}
\end{cases}
$$	
Then it is easily checked that $\seq{A}$ is local convergent, and that the only clusters of $\seq{A}$ are (up to equivalence) $\Xseq{0},\Xseq{X},\Xseq{Y},\Xseq{A\setminus X},\Xseq{A\setminus Y}$, and
$\Xseq{A}$, where
\begin{align*}
	{X}_n&=\begin{cases}
	{E}_{5n}&\text{if $n$ is odd}\\
	{E}_{2n}\cup {E}_{3n}&\text{if $n$ is even}
	\end{cases}\\
	{Y}_n&=\begin{cases}
	{E}_{6n}&\text{if $n$ is odd}\\
	{E}_{2n}\cup {E}_{4n}&\text{if $n$ is even}
	\end{cases}
\end{align*}
Also notice that the graphs $\mathbf{A}_n$ could be made connected by linking connected components using paths of lengths $\sqrt{n}$ without changing the conclusion.
\end{example}

Nevertheless, a simple necessary and sufficient condition for a family of clusters to generate a Boolean algebra of clusters can be given. 

\begin{great}
\begin{lemma}
	\label{lem:BAclust}
	Let $\seq{A}$ be a local convergent sequence, and let $\Xseq{C}^1,\dots,\Xseq{C}^i,\dots$ be countably many clusters of $\seq{A}$. Then the following conditions are equivalent:
	\begin{enumerate}
		\item\label{it:BA1} The lifted sequence $\lift{\seq{A}}$ defined by marking elements in $\Xseq{C}^i$ by mark $M_i$ is local convergent;
		\item\label{it:BA2} The clusters $\Xseq{C}^i$ generate a Boolean algebra of clusters, that is: every finite Boolean combination of $\Xseq{C}^i$'s is a cluster;
		\item\label{it:BA3} Every finite intersection of $\Xseq{C}^i$'s is a cluster.
	\end{enumerate}
\end{lemma}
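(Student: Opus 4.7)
The plan is to prove the cyclic chain $(\ref{it:BA1})\Rightarrow(\ref{it:BA2})\Rightarrow(\ref{it:BA3})\Rightarrow(\ref{it:BA1})$, handling the trivial implications first. The implication $(\ref{it:BA2})\Rightarrow(\ref{it:BA3})$ is immediate, since finite intersections are finite Boolean combinations. For $(\ref{it:BA1})\Rightarrow(\ref{it:BA2})$, any finite Boolean combination $\Xseq{B}$ of the $\Xseq{C}^i$'s is definable by a Boolean formula in the marks $M_1,\dots,M_k$ of the joint lift $\lift{\seq{A}}$, so $\lift[\Xseq{B}]{\seq{A}}$ arises as a reduct of $\lift{\seq{A}}$ and inherits local convergence; iterated application of Fact~\ref{fact:bord} bounds each $\ball[d]{\mathbf A_n}(\partial_{\mathbf A_n}B_n)$ inside a finite union of sets of the form $\ball[d+k]{\mathbf A_n}(\partial_{\mathbf A_n}C^i_n)$, hence a finite union of negligible sequences. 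Therefore $\Xseq{B}$ is a cluster.

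The substantive direction is $(\ref{it:BA3})\Rightarrow(\ref{it:BA1})$. Since every local formula uses only finitely many marks, local convergence of $\lift{\seq{A}}$ follows from local convergence of every finite sublift, so I fix $k$ and treat the lift marking $\Xseq{C}^1,\dots,\Xseq{C}^k$. For $S\subseteq[k]$, set $D_S=\bigcap_{i\in S}\Xseq{C}^i$ (a cluster by~(\ref{it:BA3})) and let the atoms $A_S=D_S\setminus\bigcup_{j\notin S}\Xseq{C}^j$ form a partition of $\Xseq{A}$. Marking the $M_i$'s and marking the atoms are interdefinable, so it suffices to show that jointly marking the atoms yields a local convergent lift. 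I split this into two sub-tasks: \textbf{(A)} every atom $A_S$ is itself a cluster of $\seq{A}$, and \textbf{(B)} jointly marking a disjoint family of clusters that covers $\Xseq{A}$ preserves local convergence.

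Step~(A) rests on the following sublemma: \emph{if $\Xseq{V}\subseteq\Xseq{U}$ are both clusters of $\seq{A}$, then $\Xseq{U}\setminus\Xseq{V}$ is a cluster of $\seq{A}$}. To prove it I would set $\seq{B}=\seq{A}[\Xseq{U}]$, local convergent by Lemma~\ref{lem:cluster}, verify the three conditions of Lemma~\ref{lem:cluster} for $\Xseq{V}$ inside $\seq{B}$ ($\partial_{\mathbf B_n}V_n\subseteq\partial_{\mathbf A_n}V_n$ remains negligible after normalization by the positive $\lim\nu_{\seq{A}}(\Xseq{U})$, and $\seq{B}[\Xseq{V}]=\seq{A}[\Xseq{V}]$ is already local convergent), apply Lemma~\ref{lem:corecompl} in $\seq{B}$ to obtain $\Xseq{U}\setminus\Xseq{V}$ as a cluster of $\seq{B}$, and finally pull it back to a cluster of $\seq{A}$ via one more use of Lemma~\ref{lem:cluster}; degenerate cases where one of the pieces is negligible are handled directly through Lemma~\ref{lem:eqcore}. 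Granted the sublemma, I induct downward on $|[k]\setminus S|$: the base case $A_{[k]}=D_{[k]}$ is a cluster by~(\ref{it:BA3}), and in the inductive step I peel the clusters $A_T$ for $T\supsetneq S$ off $D_S$ one at a time, each $A_T$ being a cluster by induction, contained in $D_S$, and disjoint from the previously peeled atoms, so the sublemma applies at every stage. After finitely many peelings $A_S=D_S\setminus\bigsqcup_{T\supsetneq S}A_T$ is recognized as a cluster.

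For Step~(B), take disjoint clusters $\Xseq{Y}^1,\dots,\Xseq{Y}^N$ covering $\Xseq{A}$, and for each $n$ set $B^{(n)}=\bigcup_i\partial_{\mathbf A_n}Y^i_n$; the resulting sequence is negligible. A short check shows $(B^{(n)},Y^1_n\setminus B^{(n)},\dots,Y^N_n\setminus B^{(n)})$ is a $(d,\epsilon)$-fragmentation of $\lift{\mathbf A_n}$ for $n$ large enough, since any edge between two distinct pieces would produce a boundary vertex inside $B^{(n)}$. By Corollary~\ref{cor:sloc} it suffices to check strongly $r$-local formulas $\phi$, and Lemma~\ref{lem:approx1} applied to $\lift{\mathbf A_n}$ gives
\[
\Bigl|\langle\phi,\lift{\mathbf A_n}\rangle-\sum_{i=1}^N\nu_{\mathbf A_n}(Y^i_n\setminus B^{(n)})^p\langle\phi,\lift{\mathbf A_n}[Y^i_n\setminus B^{(n)}]\rangle\Bigr|<2p\epsilon.
\]
Inside each piece every variable lives in a single atom, so all marks $M_j$ are constant and $\phi$ collapses to a formula $\phi_i$ in the original signature; local convergence of $\seq{A}[\Xseq{Y}^i]$ (Lemma~\ref{lem:cluster}), Lemma~\ref{lem:approx0} absorbing the negligible discrepancy $B^{(n)}\cap Y^i_n$, and convergence of $\nu_{\mathbf A_n}(Y^i_n)$ together force each summand to converge, whence $\langle\phi,\lift{\mathbf A_n}\rangle$ converges. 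I expect the main obstacle to be the sublemma of Step~(A): one must juggle the three defining conditions of a cluster while switching between the base structures $\seq{A}$ and $\seq{B}=\seq{A}[\Xseq{U}]$, and carefully treat the degenerate cases where one of $\Xseq{V}$, $\Xseq{U}$, or $\Xseq{U}\setminus\Xseq{V}$ is negligible, so that the positivity hypothesis of Lemma~\ref{lem:cluster} is invoked only when legitimate.
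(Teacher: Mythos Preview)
Your proof is correct and tracks the paper's argument closely: your Step~(A) is the paper's $(3)\Rightarrow(2)$ and your Step~(B) is its $(2)\Rightarrow(1)$, with the paper organizing the implications as $(1)\Leftrightarrow(2)$ and $(2)\Leftrightarrow(3)$ rather than a cycle. The one real difference is in the sublemma that $\Xseq{V}\subseteq\Xseq{U}$ clusters implies $\Xseq{U}\setminus\Xseq{V}$ is a cluster: the paper proves this by a direct computation, expanding $\langle\phi,\lift[\Xseq{U}\setminus\Xseq{V}]{\mathbf A_n}\rangle$ explicitly as a combination of $\langle\phi^0,\mathbf A_n\rangle$, $\langle\phi^{0},\mathbf A_n[U_n]\rangle$, $\langle\phi^{1},\mathbf A_n[U_n]\rangle$, $\langle\phi^{0},\mathbf A_n[V_n]\rangle$, $\langle\phi^{1},\mathbf A_n[V_n]\rangle$ (all of which converge), whereas you argue structurally by passing to $\seq{B}=\seq{A}[\Xseq{U}]$, invoking Lemma~\ref{lem:corecompl} there, and pulling the resulting cluster back to $\seq{A}$ via Lemma~\ref{lem:cluster}. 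Your route is a bit more conceptual and reuses existing lemmas cleanly; the paper's is more self-contained and avoids the bookkeeping of switching ambient sequences and handling the degenerate (negligible) cases separately.
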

 \end{great}
\begin{proof}
	We proceed by proving that \eqref{it:BA1} and \eqref{it:BA3} are  both equivalent to \eqref{it:BA2}.

	That \eqref{it:BA1}$\Rightarrow$\eqref{it:BA2} is obvious as every finite Boolean combination of $\Xseq{C}^i$'s is the set of solutions of the corresponding Boolean combination of $M_i$'s. 
	Let us now prove \eqref{it:BA2}$\Rightarrow$\eqref{it:BA1}. 
	According to Corollary~\ref{cor:sloc}, in order to prove \eqref{it:BA1} it is sufficient to prove that for every strongly local formula $\phi$ (with some $p$ free variables) the sequence $\langle\phi,\lift{\seq{A}}\rangle$ converges. Let $N$ be the maximum index of a mark symbol used in $\phi$. For $I\subseteq [N]$
	denote by $\phi^I$ the formula where every term
	$M_i(x)$ is replaced by {\tt true} if $i\in I$ and  {\tt false} if $i\notin I$. 
	Define $\theta_I$ as the formula $\bigwedge_{i\in I}M_i(x_1)\wedge\bigwedge_{i\notin I}\neg M_i(x_1)$. Let $S=\bigcup_{I\subseteq [N]}\partial_{\seq{A}}\theta_I(\seq{A})$. As each $\theta_I(\seq{A})$ defines a cluster, $S$ is negligible. Thus
	Then it holds
	\begin{align*}
		\langle\phi,\lift{\mathbf{A}_n}\rangle&=\langle\phi,\lift{\mathbf{A}_n}-S\rangle+o(1)\\
		&=\sum_{I\subseteq [N]}\nu_{\mathbf A_n}(\theta_I(\mathbf A_n))^p\,\langle \phi^I,\mathbf A_n[\theta_I(\mathbf A_n)]\rangle+o(1)
	\end{align*}
	thus $\langle\phi,\lift{\mathbf{A}_n}\rangle$ converges as $n\rightarrow\infty$. As this holds for every strongly local formula, we deduce that $\lift{\seq{A}}$ is local convergent.
	 
	That \eqref{it:BA2}$\Rightarrow$\eqref{it:BA3} is trivial. Let us now prove \eqref{it:BA3}$\Rightarrow$\eqref{it:BA2}. By following an easy induction and using the fact that the complement of a cluster is a cluster (Lemma~\ref{lem:corecompl}) we reduce easily the implication to the following statement to be proved: if $\Xseq{X}, \Xseq{Y}$, and $\Xseq{X}\cap  \Xseq{Y}$ are all clusters of $\seq{A}$ then so is $\Xseq{X}\setminus\Xseq{Y}$. To prove this, let $\Xseq{S}=\partial_{\seq{A}}\Xseq{X}\cup
	\partial_{\seq{A}}(\Xseq{X}\cap\Xseq{Y})$. Then $S$ is negligible and thus
     for every strongly local formula $\phi$ (with $p$ free variables) it holds
	\begin{align*}
		\langle\phi,\lift[X\setminus Y]{\mathbf{A}_n}\rangle&=\langle\phi^0,
		\mathbf A_,\rangle+\nu_{\mathbf A_n}(X_n)^p\, \bigl(\langle\phi^1,\mathbf A_n[X_n]\rangle-\langle\phi^0,\mathbf A_n[X_n]\rangle\bigr)\\
		&\quad+\nu_{\mathbf A_n}(X_n\cap Y_n)^p\, \bigl(\langle\phi^0,\mathbf A_n[X_n\cap Y_n]\rangle-\langle\phi^1,\mathbf A_n[X_n\cap Y_n]\rangle\bigr)+o(1),
	\end{align*}
	where $\phi^0$ (resp. $\phi^1$) stands for the formula obtained from $\phi$ by
	replacing each term of the form $M(x)$ by {\tt false} (resp. {\tt true}).
	Hence $\lift[X\setminus Y]{\mathbf{A}_n}$ is local convergent, and as 
	$\partial_{\seq{A}}(\Xseq{X}\setminus\Xseq{Y})\subseteq S$ is negligible, it follows that $\Xseq{X}\setminus\Xseq{Y}$ is a cluster.
\end{proof}
\begin{corollary}
	Let $\seq{A}$ be a local convergent sequence, and let $\Xseq{C}^1,\dots,\Xseq{C}^i,\dots$ be countably many weakly disjoint clusters of $\seq{A}$.
	
	Then the lifted sequence $\lift{\seq{A}}$ defined by marking elements in $\Xseq{C}^i$ by mark $M_i$ is local convergent.
\end{corollary}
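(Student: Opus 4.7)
The plan is to apply Lemma~\ref{lem:BAclust} directly. It will suffice to verify condition~\eqref{it:BA3} of that lemma, namely that every finite intersection $\Xseq{C}^{i_1}\cap\cdots\cap\Xseq{C}^{i_k}$ of the chosen clusters is itself a cluster of $\seq{A}$; the local convergence of $\lift{\seq{A}}$ then follows from the implication \eqref{it:BA3}$\Rightarrow$\eqref{it:BA1}.

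For $k=1$ nothing is to be done, each $\Xseq{C}^i$ being a cluster by hypothesis. For $k\geq 2$, I read \emph{weakly disjoint} in the natural way, namely $\Xseq{C}^i\cap\Xseq{C}^j\approx\Xseq{0}$ whenever $i\neq j$ (pairwise intersections are negligible in $\seq{A}$). From the pointwise inclusion
$$\Xseq{C}^{i_1}\cap\cdots\cap\Xseq{C}^{i_k}\;\subseteq\;\Xseq{C}^{i_1}\cap\Xseq{C}^{i_2}$$
and the monotonicity of $\ball[d]{\seq{A}}(\cdot)$ in its argument, the intersection on the left inherits negligibility from $\Xseq{C}^{i_1}\cap\Xseq{C}^{i_2}$, hence is itself equivalent to $\Xseq{0}$.

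The sequence $\Xseq{0}$ is trivially a cluster (it falls under the first alternative of Lemma~\ref{lem:cluster}, and the remark immediately after Lemma~\ref{lem:eqcore} then gives that any sequence equivalent to it is a cluster). Consequently every finite intersection of the $\Xseq{C}^i$'s is a cluster, condition~\eqref{it:BA3} of Lemma~\ref{lem:BAclust} is verified, and Lemma~\ref{lem:BAclust} supplies the local convergence of $\lift{\seq{A}}$. I expect essentially no obstacle: the only mildly delicate point is the formalization of ``weakly disjoint'', but under the natural reading as pairwise negligibility of intersections, the corollary reduces immediately to the preceding lemma.
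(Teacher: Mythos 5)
Your proof is correct and is exactly the intended derivation: the corollary is stated without proof immediately after Lemma~\ref{lem:BAclust} precisely because weak disjointness makes every finite intersection of two or more distinct clusters negligible (being contained in a negligible pairwise intersection), hence a cluster, so that condition~\eqref{it:BA3} of that lemma applies and yields~\eqref{it:BA1}. Your reading of \emph{weakly disjoint} as $\Xseq{C}^i\cap\Xseq{C}^j\approx\Xseq{0}$ is the right one --- the paper's displayed definition with $\Delta$ in place of $\cap$ is evidently a typo, since it would otherwise make ``disjoint'' mean ``equal''.
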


The ultimate goal would be to extend Lemma~\ref{lem:BAclust} to the $\sigma$-algebra generated by the $\Xseq{C}^i$'s. However, we do not expect this will be always the case, and we expect that some further conditions will be required.

For instance, in order to guarantee that countable unions will be clusters, it is natural to require that there is a negligible sequence including all the possible frontiers of countable Boolean combinations. Also, we shall need some ``continuity'' property for countable Boolean combinations. The simplest form for these conditions can be given when we further assume that the clusters $\Xseq{C}^i$'s are pairwise weakly disjoint, namely:
\begin{enumerate}
	\item The sequence $\bigcup_i\partial_{\seq{A}}\Xseq{C}^i$ is negligible;
	\item The clusters $\Xseq{C}^i$ form a stable partition of $\Xseq{A}$ in the sense that $\sum_i\lim\nu_{\seq{A}}(\Xseq{C}^i)=1$.
\end{enumerate}
\begin{lemma}
	Let $\seq{A}$ be a local convergent sequence, and let $\Xseq{C}^1,\dots,\Xseq{C}^i,\dots$ be countably many weakly disjoint clusters of $\seq{A}$.
	Assume  $\bigcup_i\partial_{\seq{A}}\Xseq{C}^i$ is negligible and
	$\sum_{i}\lim \nu_{\seq{A}}(\Xseq{C}^i)=1$. Then for every $I\subseteq\bbbn$, the sequence $\bigcup_{i\in I}\Xseq{C}^i$ is a cluster. In other words, the collection of all unions of clusters among the $\Xseq{C}^i$'s forms a $\sigma$-algebra of clusters.
\end{lemma}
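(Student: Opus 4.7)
The plan is to verify the two defining properties of a cluster for $\Xseq{Y}^I:=\bigcup_{i\in I}\Xseq{C}^i$: negligibility of $\partial_{\seq{A}}\Xseq{Y}^I$ and local convergence of the lift $\lift[\Xseq{Y}^I]{\seq{A}}$. The boundary condition is immediate from the pointwise inclusion $\partial_{\mathbf A_n}(Y_n^I)\subseteq\bigcup_{i\in I}\partial_{\mathbf A_n}(C_n^i)$ (any outer neighbor of $\bigcup_i C_n^i$ that is not itself in the union must be an outer neighbor of some particular $C_n^i$), together with the standing assumption that $\bigcup_i\partial_{\seq{A}}\Xseq{C}^i$ is negligible.

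For local convergence of $\lift[\Xseq{Y}^I]{\seq{A}}$, by Corollary~\ref{cor:sloc} it is enough to check that $\langle\phi,\lift[\Xseq{Y}^I]{\mathbf A_n}\rangle$ converges for every strongly $r$-local $\phi$ with $p$ free variables. I would approximate $\Xseq{Y}^I$ by its finite truncations $\Xseq{Y}^{I,N}:=\bigcup_{i\in I,\,i\leq N}\Xseq{C}^i$. Weak disjointness makes each pairwise intersection $\Xseq{C}^i\cap\Xseq{C}^j$ negligible and hence a cluster by Lemma~\ref{lem:eqcore}; more generally every intersection of two or more distinct $\Xseq{C}^i$'s is a cluster, so Lemma~\ref{lem:BAclust} applies and every Boolean combination of $\Xseq{C}^1,\dots,\Xseq{C}^N$ is a cluster. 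In particular $\Xseq{Y}^{I,N}$ is a cluster and $\langle\phi,\lift[\Xseq{Y}^{I,N}]{\mathbf A_n}\rangle$ converges as $n\to\infty$ for each fixed $N$.

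Given $\epsilon>0$, pick $N$ with $\sum_{i\leq N}\lim\nu_{\seq{A}}(\Xseq{C}^i)>1-\epsilon$; weak disjointness upgrades this to $\limsup_n\nu_{\mathbf A_n}(A_n\setminus\bigcup_{i\leq N}C_n^i)\leq\epsilon$. The tail $Z_n:=Y_n^I\setminus Y_n^{I,N}$ sits inside $A_n\setminus\bigcup_{i\leq N}C_n^i$, so $\limsup_n\nu_{\mathbf A_n}(Z_n)\leq\epsilon$. Moreover, for any $d>r$,
$$
\ball[d]{\mathbf A_n}(Z_n)\subseteq Z_n\cup\ball[d-1]{\mathbf A_n}(\partial_{\mathbf A_n}Z_n)\subseteq Z_n\cup\ball[d-1]{\mathbf A_n}\Bigl(\bigcup_i\partial_{\mathbf A_n}(C_n^i)\Bigr),
$$
so the global negligibility hypothesis forces $Z_n$ to be $(d,2\epsilon)$-negligible for all sufficiently large $n$. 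The two lifts $\lift[\Xseq{Y}^I]{\mathbf A_n}$ and $\lift[\Xseq{Y}^{I,N}]{\mathbf A_n}$ share the same underlying $\sigma$-structure and the same mark symbol, and they agree on $A_n\setminus Z_n$; thus the second inequality of Lemma~\ref{lem:approx0} yields $|\langle\phi,\lift[\Xseq{Y}^I]{\mathbf A_n}\rangle-\langle\phi,\lift[\Xseq{Y}^{I,N}]{\mathbf A_n}\rangle|<2p\epsilon$ eventually. Combined with convergence of the approximant, this bounds the oscillation of $\langle\phi,\lift[\Xseq{Y}^I]{\mathbf A_n}\rangle$ by $4p\epsilon$, and since $\epsilon$ is arbitrary the sequence converges.

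The only delicate point is the uniform control of the closed $d$-neighborhood of the tail $Z_n$ as $N$ grows: bare smallness of $\nu_{\mathbf A_n}(Z_n)$ would not suffice, since $d$-expansion could in principle inflate it substantially. It is exactly the negligibility of the \emph{full} union $\bigcup_i\partial_{\seq{A}}\Xseq{C}^i$, rather than of each individual boundary sequence separately, that provides the required uniform control, and this is precisely why this hypothesis is built into the statement of the lemma.
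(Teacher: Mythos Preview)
Your route differs from the paper's: the paper invokes the fragmentation Lemma~\ref{lem:approx1} to decompose $\langle\psi,\lift[I]{\mathbf A_n}\rangle$ directly as a weighted sum over all the pieces $\mathbf A_n[C_n^i]$ (with $\psi$ reduced to $\psi^1$ or $\psi^0$ according as $i\in I$ or not), and then controls the tail via the stability assumption; you instead approximate $\Xseq{Y}^I$ by the finite truncation $\Xseq{Y}^{I,N}$, invoke Lemma~\ref{lem:BAclust} to certify the truncation as a cluster, and compare via Lemma~\ref{lem:approx0}. Your approach is pleasantly modular and avoids reassembling the Stone pairing from scratch.

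There is, however, a genuine gap. The inclusion $Z_n\subseteq A_n\setminus\bigcup_{i\le N}C_n^i$ does not follow from weak disjointness: an element of $Z_n$ avoids every $C_n^j$ with $j\in I$, $j\le N$, but may well lie in some $C_n^j$ with $j\le N$ and $j\notin I$. For a concrete failure, take the edgeless sequence $A_n=[n]$ with uniform measure, $C_n^1=A_n$, and $C_n^i=\{i-1\}$ for $i\ge 2$: all hypotheses are met (pairwise intersections negligible, all boundaries empty, $\sum_i\lim\nu=1$), yet for $I=\{2,3,\dots\}$ and $N=1$ one has $Z_n=A_n$ while $A_n\setminus C_n^1=\emptyset$. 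In fact for $I\subseteq\{2,3,\dots\}$ of oscillating density the union $\bigcup_{i\in I}\Xseq{C}^i$ is not a cluster at all, so under this reading of ``weakly disjoint'' the lemma itself fails. The paper's own proof makes the same implicit leap, since Lemma~\ref{lem:approx1} requires a genuine partition with no cross-edges. Both arguments become valid once the clusters are assumed pairwise disjoint (not merely weakly so), which is how the phrase ``stable partition'' in the paragraph preceding the lemma suggests the statement is meant to be read.
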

\begin{proof}

Let $S=\bigcup_i\partial_{\seq{A}}\Xseq{C}^i$.
Let $\phi$ be an $r$-local strongly local formula with $p$ free variables. Then for every positive real $\epsilon>0$ there exists $n_0\in\bbbn$ such that for every $n\geq n_0$ it holds $\nu_{\seq{A}}(\ball[r+1]{\seq{A}}(S))<\epsilon/8p$. 
Let $I\subseteq\bbbn$ and let $\lift[I]{\seq{A}}$ be the sequence obtained from $\seq{A}$ by marking elements of $\bigcup_{i\in I}\Xseq{C}^i$ by a new mark $M$.
Let $\psi$ be an $r$-local strongly local formula in the extended signature, and let $\psi^0$ (resp. $\psi^1$) be the formula obtained from $\psi$ by replacing each term of the form $M(x)$ by {\tt false} (resp. {\tt true}). 
According to Lemma~\ref{lem:approx1} we deduce that for every $n\geq n_0$ it holds
$$
\Bigl|\langle\psi,\lift[I]{\mathbf{A}_n}\rangle-
\sum_{i\notin I}\nu_{\mathbf A_n}(C^i_n)^p\langle\psi^0,\mathbf{A}_n[C_n^i]\rangle
-\sum_{i\in I}\nu_{\mathbf A_n}(C^i_n)^p\langle\psi^1,\mathbf{A}_n[C_n^i]\rangle\Bigr|
<\epsilon/4.
$$
As $\sum_{i\geq 1}\lim \nu_{\seq{A}}(\Xseq{C}^i)=1$ there exists $i_0\in\bbbn$ such that $\sum_{i>i_0}\lim \nu_{\seq{A}}(\Xseq{C}^i)<\epsilon/8$ thus
$$
\sum_{i\notin I, i>i_0}\bigl(\lim\nu_{\seq{A}}(\Xseq{C}^i)\bigr)^p\,\lim\langle\psi^0,\seq{A}[\Xseq{C}^i]\rangle
+\sum_{i\in I, i>i_0}\bigl(\lim\nu_{\seq{A}}(\Xseq{C}^i)\bigr)^p\,\lim\langle\psi^1,\seq{A}[\Xseq{C}^i]\rangle
<\epsilon/8.$$
Moreover, the exists $n_1\geq n_0$ such that for every $n\geq n_1$, every $1\leq i\leq i_0$ and every $k\in\{0,1\}$ it holds
$$
\Bigl|\nu_{\mathbf A_n}(C^i_n)^p\langle\psi^k,\mathbf{A}_n[C_n^i]\rangle-\bigl(\lim\nu_{\seq{A}}(\Xseq{C}^i)\bigr)^p\,\lim\langle\psi^k,\seq{A}[\Xseq{C}^i]\rangle\Bigr|<\epsilon/4i_0
$$
and
$$
\bigl|\nu_{\mathbf A_n}(C^i_n)-\lim\nu_{\seq{A}}(\Xseq{C}^i)\bigr|<\epsilon/4i_0.
$$
We deduce that

$$
\sum_{i>i_0}\nu_{\mathbf A_n}(C^i_n)=1-\sum_{i=1}^{i_0}\nu_{\mathbf A_n}(C^i_n)
<1-\sum_{i=1}^{i_0}\lim\nu_{\seq{A}}(\Xseq{C^i})+\epsilon/4=\sum_{i>i_0}\lim\nu_{\seq{A}}(\Xseq{C^i})+\epsilon/4<3\epsilon/8.
$$
From this follows that 
$$\Bigl|\langle\psi,\lift[I]{\mathbf{A}_n}\rangle-
\sum_{i\notin I}\bigl(\lim\nu_{\seq{A}}(\Xseq{C}^i)\bigr)^p\,\lim\langle\psi^0,\seq{A}[\Xseq{C}^i]\rangle
-\sum_{i\in I}\bigl(\lim\nu_{\seq{A}}(\Xseq{C}^i)\bigr)^p\,\lim\langle\psi^1,\seq{A}[\Xseq{C}^i]\rangle
\Bigr|<\epsilon.$$
It follows that $\lift[I]{\seq{A}}$ is local convergent.
As $\partial_{\seq{A}}\Bigl(\bigcup_{i\in I}\Xseq{C}^i\Bigr)\subseteq 
\bigcup_{i}\partial_{\seq{A}}\Xseq{C}^i$ is negligible by assumption, we deduce that $\bigcup_{i\in I}\Xseq{C}^i$ is a cluster.
\end{proof}

Note that when we consider the complete Boolean algebra generated by non weakly-disjoint clusters $\Xseq{C}^i$ the situation is less clear.
\subsection{Universal Clusters}

The next lemma states that the cluster of a sequence remain the same when marking a universal cluster.

\begin{lemma}
Let $\Xseq{C}$ be a universal cluster of a local convergent sequence $\seq{A}$, and let $\lift[\Xseq{C}]{\seq{A}}$ be the lift of $\seq{A}$ obtained by marking $\Xseq{C}$ by a new unary symbol $M_{\Xseq{C}}$.

Then, a sequence $\Xseq{X}$ is a cluster of $\seq{A}$ if and only if it is a cluster  of $\lift[\Xseq{C}]{\seq{A}}$.
\end{lemma}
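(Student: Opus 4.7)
The plan is to exploit the observation that a conservative lift leaves both the domain-measure $\nu_{\mathbf A_n}$ and the Gaifman graph ${\rm Gaif}(\mathbf A_n)$ untouched. In particular, adding the unary predicate $M_{\Xseq{C}}$ creates no new Gaifman edges, so $\lift[\Xseq{C}]{\seq{A}}$ is automatically a conservative lift of $\seq{A}$; consequently the closed neighborhoods $\ball[d]{\mathbf A_n}(\cdot)$, the boundaries $\partial_{\mathbf A_n}(\cdot)$, and the notion of negligibility of a sequence of subsets agree whether computed in $\seq{A}$ or in $\lift[\Xseq{C}]{\seq{A}}$. Thus only clause (1) of the definition of a cluster (local convergence of the marked lift) needs to be transferred in either direction; the boundary clause (2) transfers for free.

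For the forward direction, suppose $\Xseq{X}$ is a cluster of $\seq{A}$. Then $\lift[\Xseq{X}]{\seq{A}}$ is locally convergent, and since $M_{\Xseq{X}}$ is unary, $\lift[\Xseq{X}]{\seq{A}}$ is itself a conservative lift of $\seq{A}$. By universality, $\Xseq{C}$ is a cluster of $\lift[\Xseq{X}]{\seq{A}}$, so marking $\Xseq{C}$ inside $\lift[\Xseq{X}]{\seq{A}}$ again yields a locally convergent sequence. Up to the (irrelevant) order of the two unary mark symbols, the result coincides with $\lift[\Xseq{X}]{\lift[\Xseq{C}]{\seq{A}}}$, which gives clause (1) of ``cluster of $\lift[\Xseq{C}]{\seq{A}}$''; clause (2) was already disposed of.

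For the converse, suppose $\Xseq{X}$ is a cluster of $\lift[\Xseq{C}]{\seq{A}}$. Then $\lift[\Xseq{X}]{\lift[\Xseq{C}]{\seq{A}}}$ is locally convergent. Forgetting the symbol $M_{\Xseq{C}}$ can only shrink the class of local first-order test formulas, so local convergence is inherited by the $(\sigma\cup\{M_{\Xseq{X}}\})$-reduct, which is precisely $\lift[\Xseq{X}]{\seq{A}}$. Combined with the already-observed equality of boundaries, this gives that $\Xseq{X}$ is a cluster of $\seq{A}$. I do not foresee any real obstacle: the whole argument is essentially a bookkeeping of the definitions once one realises that ``conservative lift'' changes nothing that is visible to the notions of neighborhood, boundary or negligibility.
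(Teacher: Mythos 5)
Your proof is correct and follows essentially the same route as the paper's: the easy direction by passing to the reduct, and the forward direction by noting that $\lift[\Xseq{X}]{\seq{A}}$ is a (local convergent) conservative lift, invoking universality of $\Xseq{C}$, and commuting the two markings. Your explicit treatment of the boundary/negligibility clause, which is invariant under conservative lifts, is a detail the paper leaves implicit but adds nothing new in substance.
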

\begin{proof}
	Of course, every cluster of $\lift[\Xseq{C}]{\seq{A}}$ is a cluster of $\seq{A}$.
	
	Assume $\Xseq{X}$ is a cluster of $\seq{A}$. Then, by definition, the sequence $\lift[\Xseq{X}]{\seq{A}}$ is a local convergent lift of $\seq{A}$. As $\seq{C}$ is universal, it is a cluster of $\lift[\Xseq{X}]{\seq{A}}$ hence the sequence
	$\lift[\Xseq{C}]{\lift[\Xseq{X}]{\seq{A}}}$ is local convergent.
	As $\lift[\Xseq{C}]{\lift[\Xseq{X}]{\seq{A}}}=\lift[\Xseq{X}]{\lift[\Xseq{C}]{\seq{A}}}$ we deduce that $\Xseq{X}$ is a cluster of $\lift[\Xseq{C}]{\seq{A}}$.
\end{proof}

Also, marking a universal cluster preserves universal clusters (but new universal cluster may appear).

\begin{remark}
Let $\Xseq{C}$ be a universal cluster of a local convergent sequence $\seq{A}$, and let $\lift[\Xseq{C}]{\seq{A}}$ be the lift of $\seq{A}$ obtained by marking $\Xseq{C}$ by a new unary symbol $M_{\Xseq{C}}$.

	Then, as every conservative lift of $\lift[\Xseq{C}]{\seq{A}}$ is a conservative lift of $\seq{A}$, it follows that every universal cluster of $\seq{A}$ is a universal cluster of $\lift[\Xseq{C}]{\seq{A}}$.
\end{remark}

The universal clusters of $\seq{A}$ are of a particular interest, as they form (as we shall prove in the next two lemmas) a Boolean algebra of clusters preserved by conservative lifts, which includes all definable clusters of $\seq{A}$.

\begin{lemma}
	Let $\seq{A}$ be a local convergent sequence and let $\phi$ be a local formula with single free variable $x_1$. 
	
	Then the following conditions are equivalent:
	\begin{enumerate}
	\item 	$\partial_{\seq{A}}\phi(\seq{A})\approx 0$;
		\item $\phi(\seq{A})$ is a cluster of $\seq{A}$;
		\item $\phi(\seq{A})$ is a universal cluster of $\seq{A}$.
	\end{enumerate}
\end{lemma}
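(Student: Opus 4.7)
The plan is to prove the cyclic chain $(2)\Rightarrow(1)\Rightarrow(3)\Rightarrow(2)$. The implications $(2)\Rightarrow(1)$ and $(3)\Rightarrow(2)$ are essentially tautological. The former is part of the very definition of a cluster (condition that $\partial_{\seq{A}}\Xseq{X}\approx\Xseq{0}$). For the latter, observe that $\seq{A}$ is a conservative lift of itself, so being a cluster of every local convergent conservative lift of $\seq{A}$ implies being a cluster of $\seq{A}$.

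The substantive direction is $(1)\Rightarrow(3)$. Fix an arbitrary local convergent conservative lift $\seq{B}$ of $\seq{A}$, with signature $\sigma^+\supseteq\sigma$. Since $\phi$ is a $\sigma$-formula and $\mathbf{A}_n$ is the $\sigma$-reduct of $\mathbf{B}_n$, the satisfaction sets coincide: $\phi(\mathbf{B}_n)=\phi(\mathbf{A}_n)$. Conservativity means that the Gaifman graphs of $\mathbf{A}_n$ and $\mathbf{B}_n$ agree, hence $\partial_{\seq{B}}\phi(\seq{B})=\partial_{\seq{A}}\phi(\seq{A})$, which is negligible by hypothesis. This disposes of the boundary condition in the definition of a cluster of $\seq{B}$.

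It remains to show that $\lift[\phi(\seq{B})]{\seq{B}}$ is local convergent. The key observation is that the new unary mark $M$ is explicitly $\sigma^+$-definable by $\phi$. Given an arbitrary $s$-local formula $\psi$ in the extended language, let $\psi^\star$ be the formula in $\sigma^+$ obtained by replacing every atomic occurrence $M(t)$ by $\phi(t)$ (after suitable renaming of bound variables). Since $\phi$ is $r$-local, $\psi^\star$ is $(s+r)$-local, and by construction
$$
\langle\psi,\lift[\phi(\seq{B})]{\mathbf{B}_n}\rangle=\langle\psi^\star,\mathbf{B}_n\rangle.
$$
Local convergence of $\seq{B}$ makes the right-hand side converge, so $\lift[\phi(\seq{B})]{\seq{B}}$ is local convergent and $\phi(\seq{A})$ is a cluster of $\seq{B}$. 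Since $\seq{B}$ was arbitrary, $\phi(\seq{A})$ is a universal cluster of $\seq{A}$.

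The only delicate point is the locality bookkeeping in the substitution $M\mapsto\phi$: inside the $s$-neighborhood of the free variables of $\psi$ that determines its truth value (in the lifted structure, whose Gaifman graph equals that of $\mathbf{B}_n$ because $M$ is unary), each substituted atom $\phi(t)$ probes only a further $r$-neighborhood around $t$, so the combined formula $\psi^\star$ depends only on the $(s+r)$-neighborhood of the free variables. Once this is recorded, the rest is immediate.
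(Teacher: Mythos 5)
Your proof is correct, and it reaches the conclusion by a slightly different route than the paper. Both arguments ultimately rest on the same insight --- the mark $M$ is first-order definable by $\phi$, so adding it carries no new information --- but the executions differ. The paper proves $(1)\Rightarrow(2)$ by splitting into the case where $\phi(\seq{A})$ is negligible and the case where it is not, and in the latter case relativizes the quantifiers of an arbitrary local $\psi$ to $\phi$ (forming $\widetilde\psi$) to show that the induced sequence $\seq{A}[\phi(\seq{A})]$ is local convergent, which lets it invoke the alternative characterization of clusters (Lemma~\ref{lem:cluster}); universality is then obtained by the one-line observation that the same argument runs in any conservative lift. You instead verify the original definition of a cluster directly: working in an arbitrary local convergent conservative lift $\seq{B}$ from the outset, you eliminate the mark by substituting $\phi(t)$ for $M(t)$ in any local $\psi$ of the extended language, check that the resulting $\psi^\star$ is $(s+r)$-local (your bookkeeping here is right: conservativity and the fact that $M$ is unary keep the Gaifman graph fixed, and $r$-locality of $\phi$ confines each substituted atom to a further $r$-ball), and conclude that $\lift[\phi(\seq{B})]{\seq{B}}$ is local convergent. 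This buys you three small simplifications: no case split on negligibility, no appeal to Lemma~\ref{lem:cluster}, and universality for free since the lift $\seq{B}$ was arbitrary from the start; the paper's relativization, on the other hand, is the version of the argument that generalizes to the induced-substructure statements it needs elsewhere. Your closure of the cycle via the tautological implications $(2)\Rightarrow(1)$ and $(3)\Rightarrow(2)$ is fine.
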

\begin{proof}
	If $\phi(\seq{A})$ is a (universal) cluster of $\seq{A}$ then 
	$\partial_{\seq{A}}\phi(\seq{A})\approx 0$ (by definition of a cluster).
	
	Conversely, if $\partial_{\seq{A}}\phi(\seq{A})\approx 0$ then either $\phi(\seq{A})$ is negligible (thus $\phi(\seq{A})$ is a cluster) or $\seq{A}[\phi(\seq{A})]$ is local convergent: for every local formula $\psi$ with free variables $x_1,\dots,x_p$, denoting $\hat\psi$ the formula obtained by replacing terms
	$(\exists y)\theta$ by $(\exists y) (\phi(y)\wedge\theta)$ and terms $(\forall y)\theta$ by $(\forall y) (\phi(y)\rightarrow\theta)$ and denoting $\widetilde{\psi}$ the formula
	$\hat{\psi}\wedge\bigwedge_{i=1}^p\phi(x_i)$ we get
	$\widetilde{\psi}(\seq{A}\setminus\partial_{\seq{A}}\phi(\seq{A}))=\psi(\seq{A}[\phi(\seq{A})])$ hence
	$$
	\langle\psi,\mathbf{A}_n[\phi(\mathbf{A}_n)]\rangle=
	\langle\widetilde{\psi},\mathbf{A}_n\rangle+o(1).
	$$
	It follows that $\phi(\seq{A})$ is a cluster of $\seq{A}$.
	As condition (1) holds as well in every conservative lift of $\seq{A}$, it follows that $\phi(\seq{A})$ is a universal cluster of $\seq{A}$ as well.
\end{proof}
\begin{great}
\begin{lemma}
\label{lem:BAuniv}
	Let $\seq{A}$ be a local convergent sequence. Then the equivalence classes of universal clusters of $\seq{A}$ form a Boolean algebra.
\end{lemma}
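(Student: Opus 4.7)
The plan is to verify directly the three conditions that make the equivalence classes of universal clusters a Boolean algebra: the equivalence relation is compatible with the operations, the family is closed under complementation, and it is closed under binary intersection. Well-definedness is immediate: if $\Xseq{X}$ is a universal cluster and $\Xseq{X}'\approx\Xseq{X}$ in $\seq{A}$, then because every conservative lift $\lift{\seq{A}}$ has the same Gaifman graph as $\seq{A}$, the equivalence $\Xseq{X}'\approx\Xseq{X}$ persists in $\lift{\seq{A}}$, so Lemma~\ref{lem:eqcore} applied inside $\lift{\seq{A}}$ shows that $\Xseq{X}'$ is a cluster of $\lift{\seq{A}}$. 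Hence $\Xseq{X}'$ is universal.

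For closure under complementation I would simply invoke Lemma~\ref{lem:corecompl}: if $\Xseq{X}$ is universal, then for every conservative lift $\lift{\seq{A}}$ the sequence $\Xseq{X}$ is a cluster of $\lift{\seq{A}}$, and Lemma~\ref{lem:corecompl} (applied to $\lift{\seq{A}}$ in place of $\seq{A}$) yields that $\Xseq{A}\setminus\Xseq{X}$ is a cluster of $\lift{\seq{A}}$ as well. Thus $\Xseq{A}\setminus\Xseq{X}$ is universal.

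The main step is closure under intersection. Let $\Xseq{X}$ and $\Xseq{Y}$ be universal clusters of $\seq{A}$, and fix an arbitrary conservative lift $\lift{\seq{A}}$. Since $\Xseq{X}$ is universal, $\Xseq{X}$ is a cluster of $\lift{\seq{A}}$, which means $\lift[\Xseq{X}]{\lift{\seq{A}}}$ is local convergent. Now $\lift[\Xseq{X}]{\lift{\seq{A}}}$ is obtained from $\seq{A}$ only by adding unary predicates, so it is itself a conservative lift of $\seq{A}$. By universality of $\Xseq{Y}$, the sequence $\Xseq{Y}$ is a cluster of $\lift[\Xseq{X}]{\lift{\seq{A}}}$, so the further lift adding a mark for $\Xseq{Y}$ is local convergent. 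This is precisely the hypothesis \eqref{it:BA1} of Lemma~\ref{lem:BAclust} applied to the two clusters $\Xseq{X},\Xseq{Y}$ of $\lift{\seq{A}}$, so by \eqref{it:BA2} every Boolean combination of them is a cluster of $\lift{\seq{A}}$; in particular $\Xseq{X}\cap\Xseq{Y}$ is a cluster of $\lift{\seq{A}}$. Since $\lift{\seq{A}}$ was an arbitrary conservative lift of $\seq{A}$, we conclude that $\Xseq{X}\cap\Xseq{Y}$ is a universal cluster.

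The only subtle point is ensuring that adding the mark of $\Xseq{X}$ keeps us inside the class of conservative lifts of $\seq{A}$, so that universality of $\Xseq{Y}$ can be applied; this is automatic because unary marks leave the Gaifman graph unchanged. With that observation, Lemma~\ref{lem:BAclust} does the rest of the work.
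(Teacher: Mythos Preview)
Your proof is correct and follows essentially the same route as the paper's: fix an arbitrary conservative lift, use universality of $\Xseq{X}$ and $\Xseq{Y}$ in turn (exploiting that unary marks preserve conservativity) to obtain a local convergent lift carrying both marks $M_{\Xseq{X}}$ and $M_{\Xseq{Y}}$, and then read off that the Boolean combinations are clusters there. The only cosmetic difference is that the paper argues the last step directly---since $\Xseq{X}\cap\Xseq{Y}$, $\Xseq{X}\cup\Xseq{Y}$, and $\Xseq{A}\setminus\Xseq{X}$ are definable by the formulas $M_{\Xseq{X}}\wedge M_{\Xseq{Y}}$, $M_{\Xseq{X}}\vee M_{\Xseq{Y}}$, $\lnot M_{\Xseq{X}}$ in the doubly-marked lift---rather than invoking Lemma~\ref{lem:BAclust}, and handles complementation in the same breath instead of separately via Lemma~\ref{lem:corecompl}.
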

\end{great}
\begin{proof}
	Let $\Xseq{X},\Xseq{Y}$ be universal clusters of $\seq{A}$, and
	let $\lift{\seq{A}}$ be a local convergent conservative lift of $\seq{A}$. Then the sequence $\lift{\lift[\Xseq{X}]{\lift[\Xseq{Y}]{\seq{A}}}}$ is local convergent. 
	If follows, by considering formulas 
	$\lnot M_{\Xseq{X}}, M_{\Xseq{X}}\vee M_{\Xseq{Y}}$, and $M_{\Xseq{X}}\wedge M_{\Xseq{Y}}$ that $\Xseq{A}\setminus\Xseq{X}$, $\Xseq{X}\cup\Xseq{Y}$ and $\Xseq{X}\cap\Xseq{Y}$ are clusters of $\lift{\lift[\Xseq{X}]{\lift[\Xseq{Y}]{\seq{A}}}}$ hence of $\lift{\seq{A}}$. It follows that
	$\Xseq{A}\setminus\Xseq{X}$, $\Xseq{X}\cup\Xseq{Y}$ and $\Xseq{X}\cap\Xseq{Y}$ are universal clusters of $\seq{A}$.
\end{proof}

\subsection{Pre-Clusters}
\begin{definition}
A sequence $\Xseq{X}\not\approx\Xseq{0}$ is a {\em pre-cluster} of
$\seq{A}$ if $\Xseq{X}\approx\Xseq{0}$ or if it holds
\begin{enumerate}
\item the sequence $\seq{A}[\Xseq{X}]$ is local-convergent;
\item the limit $\lim\nu_{\seq{A}}(\Xseq{X})$ and is strictly positive;
\item for every integer $d$ it holds
$$\limsup\nu_{\seq{A}}(\ball[d]{\seq{A}}(\Xseq{X})\setminus \Xseq{X})=0.$$
\end{enumerate}
\end{definition}

The definition of pre-clusters of $\seq{A}$ is consistent with the notion of equivalence of sequence of  subsets:

\begin{lemma}
Let $\Xseq{X}$ be a pre-cluster of
$\seq{A}$ and let $\Xseq{Y}\approx\Xseq{X}$ in $\seq{A}$.
Then $\Xseq{Y}$ is a pre-cluster of $\seq{A}$.
\end{lemma}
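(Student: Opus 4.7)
The plan is to verify the three defining conditions for $\Xseq{Y}$ in turn, using only the fact that $\Xseq{X}\,\Delta\,\Xseq{Y}$ is negligible in $\seq{A}$, and noting that when $\Xseq{X}\approx\Xseq{0}$ the result is trivial (as then $\Xseq{Y}\approx\Xseq{0}$ too); so assume $\lim\nu_{\seq{A}}(\Xseq{X})=\alpha>0$.

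The measure condition is immediate: since $|\nu_{\mathbf{A}_n}(X_n)-\nu_{\mathbf{A}_n}(Y_n)|\leq\nu_{\mathbf{A}_n}(X_n\,\Delta\,Y_n)\to 0$, we get $\lim\nu_{\seq{A}}(\Xseq{Y})=\alpha>0$. For the boundary-type condition, fix $d$ and note the pointwise inclusion
$$
\ball[d]{\mathbf{A}_n}(Y_n)\setminus Y_n\subseteq (X_n\,\Delta\,Y_n)\cup(\ball[d]{\mathbf{A}_n}(X_n)\setminus X_n)\cup\ball[d]{\mathbf{A}_n}(X_n\,\Delta\,Y_n),
$$
which one checks by splitting on whether a point of $\ball[d]{\mathbf{A}_n}(Y_n)\setminus Y_n$ lies in $\Xseq{X}$ and, if not, whether its witness in $Y_n$ lies in $X_n$. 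All three terms on the right have $\limsup$ measure zero (the first and third by negligibility of $\Xseq{X}\,\Delta\,\Xseq{Y}$, the second by the pre-cluster property of $\Xseq{X}$), so the same holds for the left side.

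The main work is local-convergence of $\seq{A}[\Xseq{Y}]$. Here the plan is to compare both $\seq{A}[\Xseq{X}]$ and $\seq{A}[\Xseq{Y}]$ with the common substructure $\seq{A}[\Xseq{X}\cap\Xseq{Y}]$ via Lemma~\ref{lem:approx0}. Concretely, fix an $r$-local formula $\phi$ with $p$ free variables and $\epsilon>0$. Set $Z_n=X_n\setminus Y_n$; then in the probability space $\mathbf{A}_n[X_n]$ one has
$$
\nu_{\mathbf{A}_n[X_n]}\bigl(\ball[r+1]{\mathbf{A}_n[X_n]}(Z_n)\bigr)\leq \frac{\nu_{\mathbf{A}_n}(\ball[r+1]{\mathbf{A}_n}(X_n\,\Delta\,Y_n))}{\nu_{\mathbf{A}_n}(X_n)},
$$
which tends to $0$ since the numerator vanishes and the denominator tends to $\alpha>0$. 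Hence $Z_n$ is $(r+1,\epsilon)$-negligible in $\mathbf{A}_n[X_n]$ for large $n$. Since $\mathbf{A}_n[X_n]-Z_n=\mathbf{A}_n[X_n\cap Y_n]$ (both the induced structure and the renormalized measure agree), Lemma~\ref{lem:approx0} yields $|\langle\phi,\mathbf{A}_n[X_n]\rangle-\langle\phi,\mathbf{A}_n[X_n\cap Y_n]\rangle|<2p\epsilon$ for large $n$. The symmetric argument with $Y_n\setminus X_n$ inside $\mathbf{A}_n[Y_n]$ gives $|\langle\phi,\mathbf{A}_n[Y_n]\rangle-\langle\phi,\mathbf{A}_n[X_n\cap Y_n]\rangle|<2p\epsilon$ for large $n$. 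Combining, $\langle\phi,\mathbf{A}_n[Y_n]\rangle$ differs from the convergent sequence $\langle\phi,\mathbf{A}_n[X_n]\rangle$ by at most $4p\epsilon$ eventually, for every $\epsilon>0$, so it converges to the same limit. This holds for every local $\phi$, so $\seq{A}[\Xseq{Y}]$ is local-convergent and $\Xseq{Y}$ is a pre-cluster.

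The only subtle point is the verification that $Z_n$ is negligible in the \emph{restricted} space $\mathbf{A}_n[X_n]$ rather than in $\mathbf{A}_n$; this is where the strict positivity $\alpha>0$ is essential, since we divide by $\nu_{\mathbf{A}_n}(X_n)$ when renormalizing. Once this bookkeeping is in place, the rest is a direct application of Lemma~\ref{lem:approx0}.
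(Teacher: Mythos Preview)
Your proof is correct and follows essentially the same approach as the paper. The only difference is one of packaging: for the local-convergence of $\seq{A}[\Xseq{Y}]$ the paper simply invokes Lemma~\ref{lem:eqlim} (equivalence of sequences preserves local convergence), whereas you unpack that lemma by applying Lemma~\ref{lem:approx0} directly to compare both $\seq{A}[\Xseq{X}]$ and $\seq{A}[\Xseq{Y}]$ with $\seq{A}[\Xseq{X}\cap\Xseq{Y}]$; your boundary inclusion is also the same as the paper's (your extra term $X_n\,\Delta\,Y_n$ is redundant, being contained in $\ball[d]{\mathbf{A}_n}(X_n\,\Delta\,Y_n)$).
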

\begin{proof}
That $\seq{A}[\Xseq{Y}]$ is local-convergent follows from 
Lemma~\ref{lem:eqlim}. Also, it is immediate that
$\lim\nu_{\seq{A}}(\Xseq{Y})$ exists and that 
$\lim\nu_{\seq{A}}(\Xseq{Y})=\lim\nu_{\seq{A}}(\Xseq{X})$.

Let $\Xseq{Z}=\Xseq{X}\,\Delta\,\Xseq{Y}$. By assumption, $\Xseq{Z}$ is negligible in $\seq{A}$.

Assume  $\Xseq{X}$  is a pre-cluster. Let $d\in\bbbn$.
Then 
\begin{align*}
\ball[d]{\seq{A}}(\Xseq{Y})\setminus \Xseq{Y} &\subseteq
(\ball[d]{\seq{A}}(\Xseq{X})\cup \ball[d]{\seq{A}}(\Xseq{Z}))\setminus (\Xseq{X}\setminus \Xseq{Z})\\
&\subseteq (\ball[d]{\seq{A}}(\Xseq{X})\setminus \Xseq{X})\cup \ball[d]{\seq{A}}(\Xseq{Z})
\end{align*}
It follows that
$\limsup\nu_{\seq{A}}(\ball[d]{\seq{A}}(\Xseq{Y})\setminus \Xseq{Y})=0$ hence $\Xseq{Y}$ is a pre-cluster.
\end{proof}

\begin{lemma}
Every cluster is a pre-cluster. 
\end{lemma}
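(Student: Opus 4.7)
The plan is to verify the three defining conditions of a pre-cluster directly from the cluster hypotheses. Let $\Xseq{X}$ be a cluster of $\seq{A}$. If $\Xseq{X}\approx\Xseq{0}$ there is nothing to prove, so I may assume otherwise. In this case Lemma~\ref{lem:cluster} already supplies two of the three pre-cluster conditions for free: the sequence $\seq{A}[\Xseq{X}]$ is local-convergent, and $\lim\nu_{\seq{A}}(\Xseq{X})$ exists and is strictly positive. So only the third condition, namely
\[
\limsup\nu_{\seq{A}}(\ball[d]{\seq{A}}(\Xseq{X})\setminus \Xseq{X})=0\qquad\text{for every }d\in\bbbn,
\]
remains to be shown.

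For this, the key observation is a purely set-theoretic inclusion at each coordinate: if $y\in\ball[d]{\mathbf A_n}(X_n)\setminus X_n$, then $y$ sits outside $X_n$ but within distance $d$ of it, so following a shortest path from $y$ into $X_n$ we find a vertex in $\partial_{\mathbf A_n}X_n$ at distance at most $d$ from $y$. Hence
\[
\ball[d]{\mathbf A_n}(X_n)\setminus X_n\ \subseteq\ \ball[d]{\mathbf A_n}(\partial_{\mathbf A_n}X_n),
\]
and taking the sequence $(X_n)$ one obtains the coordinate-wise inclusion $\ball[d]{\seq{A}}(\Xseq{X})\setminus\Xseq{X}\subseteq\ball[d]{\seq{A}}(\partial_{\seq{A}}\Xseq{X})$.

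Since $\Xseq{X}$ is a cluster, condition (2) of the cluster definition asserts that $\partial_{\seq{A}}\Xseq{X}$ is negligible in $\seq{A}$, which by definition means $\limsup\nu_{\seq{A}}(\ball[d]{\seq{A}}(\partial_{\seq{A}}\Xseq{X}))=0$ for every $d$. Combined with the above inclusion this yields the missing condition, and $\Xseq{X}$ is a pre-cluster. There is no real obstacle here: the statement is essentially a bookkeeping consequence of Lemma~\ref{lem:cluster} together with the elementary neighborhood estimate; its role is to record explicitly that the cluster axioms strictly strengthen the pre-cluster axioms.
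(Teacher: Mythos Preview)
Your proof is correct and follows essentially the same route as the paper: the paper's one-line proof records exactly the inclusion $\ball[d]{\seq{A}}(\Xseq{X})\setminus\Xseq{X}\subseteq\ball[d]{\seq{A}}(\partial_{\seq{A}}\Xseq{X})$ that you prove, and the remaining pre-cluster conditions are (as you note) supplied by Lemma~\ref{lem:cluster}. Your version is simply a more explicit unpacking of the same argument.
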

\begin{proof}
This follows from the fact that 
$\ball[d]{\seq{A}}(\Xseq{X})\setminus \Xseq{X}\subseteq \ball[d]{\seq{A}}(\partial_{\seq{A}_n}\Xseq{X})$.
\end{proof}

We now define a standard construction of a cluster from a pre-cluster.
\begin{definition}
Let  $\Xseq{X}$  be a pre-cluster of a local-convergent sequence $\seq{A}$.

The {\em wrapping} of $\Xseq{X}$ in $\seq{A}$ is the sequence $\Xseq{W}$ obtained as follows:

For every $n\in\bbbn$, let $D(n)\in\bbbn\cup\{\infty\}$ be the supremum of integers $d$
such that for every $n'\geq n$ it holds
$\nu_{\mathbf A_{n'}}(\ball[2d+1]{\mathbf{A}_{n'}}(X_{n'})\setminus X_{n'})<1/d$.
Then we define $W_n=\ball[D(n)]{\mathbf{A}_n}(X_n)$.
\end{definition}
Note that $D(n)$ is non-decreasing and unbounded.

\begin{lemma}
For every pre-cluster $\Xseq{X}$  of
$\seq{A}$,
 the wrapping
$\Xseq{W}$ of $\Xseq{X}$ in 
$\seq{A}$ is (up to equivalence) the unique cluster such that
$\Xseq{X}\subseteq \Xseq{W}$ and 
$$\limsup\nu_{\seq{A}}(\Xseq{W}\setminus \Xseq{X})=0.$$
\end{lemma}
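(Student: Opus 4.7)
The plan is to verify the four parts of the claim in turn: (a) $\Xseq{X}\subseteq\Xseq{W}$; (b) $\limsup\nu_{\seq{A}}(\Xseq{W}\setminus\Xseq{X})=0$; (c) $\Xseq{W}$ is a cluster of $\seq{A}$; and (d) uniqueness of such a cluster up to equivalence. I would first observe that pre-cluster condition~(3) forces $D(n)\to\infty$: for each fixed $d_0$, applying the hypothesis with exponent $2d_0+1$ yields some $n_0$ beyond which $\nu_{\mathbf A_{n'}}(\ball[2d_0+1]{\mathbf A_{n'}}(X_{n'})\setminus X_{n'})<1/d_0$ for every $n'\geq n_0$, so $D(n)\geq d_0$ for $n\geq n_0$. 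Part~(a) is then trivial since $W_n\supseteq X_n$, and (b) follows from $W_n\setminus X_n\subseteq\ball[2D(n)+1]{\mathbf A_n}(X_n)\setminus X_n$ combined with the defining inequality $\nu_{\mathbf A_n}(\ball[2D(n)+1]{\mathbf A_n}(X_n)\setminus X_n)<1/D(n)\to 0$.

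For (c) I would invoke Lemma~\ref{lem:cluster}. Condition~(2), $\lim\nu_{\seq{A}}(\Xseq{W})=\lim\nu_{\seq{A}}(\Xseq{X})>0$, is immediate from (b) together with pre-cluster condition~(2). For condition~(3), note that $\partial_{\mathbf A_n}W_n$ consists precisely of the vertices at distance $D(n)+1$ from $X_n$, so for any fixed $d$ and $n$ with $D(n)\geq d$ one has
\[
\ball[d]{\mathbf A_n}(\partial_{\mathbf A_n}W_n)\subseteq\ball[D(n)+d+1]{\mathbf A_n}(X_n)\setminus X_n\subseteq\ball[2D(n)+1]{\mathbf A_n}(X_n)\setminus X_n,
\]
whose measure is below $1/D(n)\to 0$, so $\partial_{\seq{A}}\Xseq{W}$ is negligible. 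Condition~(1), the local convergence of $\seq{A}[\Xseq{W}]$, is the technical core. By Corollary~\ref{cor:sloc} it suffices to treat strongly $r$-local $\phi$ with $p$ free variables. Since $\ball[r]{\mathbf A_n}(\bar v)\subseteq W_n$ for every $\bar v\in X_n^p$ as soon as $D(n)\geq r$, the $r$-neighborhoods of such tuples coincide in $\mathbf A_n$ and $\mathbf A_n[W_n]$; discarding tuples with a coordinate in $W_n\setminus X_n$ (of total measure $O(\nu_{\mathbf A_n}(W_n\setminus X_n))\to 0$) yields
\[
\langle\phi,\mathbf A_n[W_n]\rangle=\alpha_n^{-p}\,\nu_{\mathbf A_n}^{\otimes p}(\phi(\mathbf A_n)\cap X_n^p)+o(1),
\]
where $\alpha_n=\nu_{\mathbf A_n}(W_n)$. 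The main obstacle is establishing convergence of $\nu_{\mathbf A_n}^{\otimes p}(\phi(\mathbf A_n)\cap X_n^p)$; I would do this by a Gaifman-style anchoring at a single distinguished free variable, combining the ambient local convergence of $\seq{A}$ with pre-cluster condition~(1) (the local convergence of $\seq{A}[\Xseq{X}]$), and exploiting that $\ball[r]{\seq{A}}(\Xseq{X})\setminus\Xseq{X}$ has vanishing measure so that tuples straddling the boundary of $X_n$ contribute only $o(1)$.

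For uniqueness, let $\Xseq{W}'$ be any cluster of $\seq{A}$ with $\Xseq{X}\subseteq\Xseq{W}'$ and $\limsup\nu_{\seq{A}}(\Xseq{W}'\setminus\Xseq{X})=0$. Then $\Xseq{W}\Delta\Xseq{W}'\subseteq(\Xseq{W}\setminus\Xseq{X})\cup(\Xseq{W}'\setminus\Xseq{X})$ has vanishing measure, and Fact~\ref{fact:bord} applied with $Z=W_n\Delta W'_n$ yields
\[
\ball[d]{\mathbf A_n}(\partial_{\mathbf A_n}(W_n\Delta W'_n))\subseteq\ball[d+1]{\mathbf A_n}(\partial_{\mathbf A_n}W_n)\cup\ball[d+1]{\mathbf A_n}(\partial_{\mathbf A_n}W'_n),
\]
both right-hand terms being negligible since $\Xseq{W}$ and $\Xseq{W}'$ are clusters. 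Together with the vanishing of $\nu_{\mathbf A_n}(W_n\Delta W'_n)$, this yields $\nu_{\seq{A}}(\ball[d]{\seq{A}}(\Xseq{W}\Delta\Xseq{W}'))\to 0$ for every $d$, so $\Xseq{W}\Delta\Xseq{W}'$ is negligible in $\seq{A}$ and therefore $\Xseq{W}\approx\Xseq{W}'$.
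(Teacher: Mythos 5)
Your handling of parts (a) and (b), of the negligibility of $\partial_{\seq{A}}\Xseq{W}$, and of the uniqueness claim is correct, and it coincides with what the paper itself does: the paper's proof consists precisely of the collar estimate $\ball[d]{\mathbf A_n}(\partial_{\mathbf A_n}W_n)\subseteq\ball[2D(n)+1]{\mathbf A_n}(X_n)\setminus X_n$ together with the triangle inequality and Lemma~\ref{lem:eqcore} for uniqueness.

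The gap sits exactly where you place the ``technical core,'' and the route you sketch does not close it. The tuples on which satisfaction of a strongly $r$-local $\phi$ in $\mathbf A_n$ differs from satisfaction in $\mathbf A_n[X_n]$ are not those meeting the \emph{outer} collar $\ball[r]{\mathbf A_n}(X_n)\setminus X_n$ (which is indeed of vanishing measure), but those meeting the \emph{inner} collar $\ball[r]{\mathbf A_n}(A_n\setminus X_n)\cap X_n$, i.e.\ the points of $X_n$ whose $r$-ball leaves $X_n$. The pre-cluster hypotheses say nothing about this set, and it can carry measure bounded away from $0$: take $\mathbf A_n$ to be a star with roughly $n$ leaves together with roughly $n$ isolated vertices (a local convergent sequence), and let $X_n$ be the leaf set of the star when $n$ is even and a set of $n$ isolated vertices when $n$ is odd. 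All three pre-cluster conditions hold ($\mathbf A_n[X_n]$ is always an independent set, $\nu_{\mathbf A_n}(X_n)\to 1/2$, and every outer collar consists of at most one vertex), yet with $\phi=\exists x_2\,(x_1\sim x_2)$ the quantity $\nu_{\mathbf A_n}(\phi(\mathbf A_n)\cap X_n)$ oscillates between roughly $1/2$ and $0$, and $W_n=\ball[D(n)]{\mathbf A_n}(X_n)$ alternates between the whole star and an independent set, so $\seq{A}[\Xseq{W}]$ is not local convergent. Thus ``ambient convergence of $\seq{A}$ plus convergence of $\seq{A}[\Xseq{X}]$ plus small outer collars'' genuinely does not yield convergence of $\nu_{\mathbf A_n}^{\otimes p}(\phi(\mathbf A_n)\cap X_n^p)$, and no anchoring at a distinguished variable will produce it. To be fair, the paper's own proof is silent on this condition as well --- it verifies only the boundary requirement and then asserts that $\Xseq{W}$ is a cluster --- so you have isolated a real difficulty rather than overlooked a routine step; but as written your argument for condition~(1) of Lemma~\ref{lem:cluster} is not a proof, and the statement seems to require either a stronger hypothesis on the pre-cluster (for instance local convergence of the lifted sequence $\lift[\Xseq{X}]{\seq{A}}$) or a passage to a subsequence.
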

\begin{proof}
For every $d\in\bbbn$ there exists $T(d)$ such that for every 
$n\geq T(d)$ it holds
$\nu_{\mathbf A_n}(\ball[2d_1]{\mathbf{A}_n}(X_n)\setminus X_n)<1/d$.
For $T(d)\leq n<T(d+1)$ we have $W_n=\ball[d]{\mathbf{A}_n}(X_n)$.
Thus, for every $d\in\bbbn$ and every  $T(d')\leq n< T(d'+1)$ (with $d'\geq d$) it holds
$$
\nu_{\mathbf A_n}(\ball[d]{\mathbf{A}_n}(\partial_{\mathbf{A}_n}W_n))
\leq \nu_{\mathbf A_n}(\ball[2d'+1]{\mathbf{A}_n}(X_n)\setminus X_n)<1/d'.
$$
Thus $\partial_{\seq{A}}\Xseq{W}$ is negligible
in $\seq{A}$ hence 
$\Xseq{W}$ is a cluster of
$\seq{A}$.

Moreover, for every $n\geq N(d)$ it holds
$\nu_{\mathbf A_n}(W_n\setminus X_n)<1/d$.

Assume that a cluster $\Xseq{Y}$ of $\seq{A}$ as the same properties. 
Then 
$$
\limsup\nu_{\seq{A}}(\Xseq{W}\,\Delta\,\Xseq{Y})\leq
\limsup\nu_{\seq{A}}(\Xseq{W}\setminus\Xseq{X})+
\limsup\nu_{\seq{A}}(\Xseq{Y}\setminus\Xseq{X})=0.
$$
Hence, according to Lemma~\ref{lem:eqcore}, $\Xseq{W}$ and $\Xseq{Y}$ are equivalent in $\seq{A}$.
\end{proof}

\subsection{Expanding Clusters}
Here we introduce a sequential version of expansion property.
\begin{definition}
A structure $\mathbf{A}$ is {\em $(d,\epsilon,\delta)$-expanding} if, for every $X\subset A$ it holds
$$
\epsilon<\nu_{\mathbf A}(X)<1-\epsilon\quad\Longrightarrow\quad
\nu_{\mathbf A}(\ball[d]{\mathbf{A}}(X))>(1+\delta)\nu_{\mathbf A}(X),
$$
that is
$$
\inf\biggl\{\frac{\nu_{\mathbf A}(\ball[d]{\mathbf{A}}[X]\setminus X)}{\nu_{\mathbf A}(X)}:\quad
\epsilon<\nu_{\mathbf A}(X)<1-\epsilon\biggr\}>\delta.
$$
Note that the left hand size of the above inequality
is similar to the {\em magnification} introduced in \cite{Alon1986}, which is the isoperimetric constant $h_{\rm out}$ defined by
$$
h_{\rm out}=\inf\left\{\frac{|\ball{\mathbf{A}}[X]\setminus X|}{|X|}:\quad 0<\frac{|X|}{|A|}<1/2\right\}.
$$
\end{definition}

\begin{lemma}
\label{lem:cleanexp}
Let $0<\epsilon<1/6$ and
	let $\mathbf A$ be a $(d,\epsilon,\delta)$-expanding structure. Then there exists a subset $Y\subseteq A$ of measure  $\nu_{\mathbf A}(Y)\leq\epsilon$ such that, denoting $\mathbf A'=\mathbf A-Y$, it holds
	$$
	\forall X\subseteq A'\qquad \nu_{\mathbf A'}(X)\leq 1/2\ \Longrightarrow \nu_{\mathbf A'}(\ball[d]{\mathbf A'}(X)\setminus X)\geq \delta\nu_{\mathbf A'}(X).
	$$
\end{lemma}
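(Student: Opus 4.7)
The plan is to construct $Y$ by a greedy peeling: set $Y_0=\emptyset$, and at step $k$, if $\mathbf A-Y_k$ already meets the desired small-set expansion property stop and output $Y=Y_k$; otherwise pick any witness $X_k\subseteq A\setminus Y_k$ of failure (so $\nu_{\mathbf A-Y_k}(X_k)\leq 1/2$ and $\nu_{\mathbf A-Y_k}(\ball[d]{\mathbf A-Y_k}(X_k)\setminus X_k)<\delta\,\nu_{\mathbf A-Y_k}(X_k)$) and set $Y_{k+1}=Y_k\cup X_k$. Each $X_k$ is nonempty and $A$ is finite, so the procedure terminates; the output then satisfies the required property by construction. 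The entire difficulty is to show that $\nu_{\mathbf A}(Y)\leq\epsilon$.

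The combinatorial heart of the argument is the following set-theoretic inclusion: for any disjoint $X,Y\subseteq A$,
\[
\ball[d]{\mathbf A}(X\cup Y)\setminus (X\cup Y)\ \subseteq\ \bigl(\ball[d]{\mathbf A-Y}(X)\setminus X\bigr)\ \cup\ \bigl(\ball[d]{\mathbf A}(Y)\setminus Y\bigr).
\]
Indeed, given $v$ in the left-hand side, either $v\in\ball[d]{\mathbf A}(Y)\setminus Y$, or else every $\mathbf A$-path of length $\leq d$ from $X\cup Y$ to $v$ must avoid $Y$ entirely (any intermediate $y\in Y$ would witness $v\in\ball[d]{\mathbf A}(Y)$), and such a path is a path in $\mathbf A-Y$ starting in $X$, placing $v$ in $\ball[d]{\mathbf A-Y}(X)\setminus X$. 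This inclusion is tight enough to avoid a spurious constant in the next step.

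With the inclusion in hand, I would prove by induction on $k$ the two invariants $\nu_{\mathbf A}(Y_k)\leq\epsilon$ and $\nu_{\mathbf A}(\ball[d]{\mathbf A}(Y_k)\setminus Y_k)\leq\delta\,\nu_{\mathbf A}(Y_k)$, both trivially true for $Y_0=\emptyset$. For the inductive step, the failure of expansion for $X_k$ in $\mathbf A-Y_k$ rewrites, after clearing the common denominator $\nu_{\mathbf A}(A\setminus Y_k)$, as $\nu_{\mathbf A}(\ball[d]{\mathbf A-Y_k}(X_k)\setminus X_k)<\delta\,\nu_{\mathbf A}(X_k)$. Combining this with the inductive bound on the boundary of $Y_k$ via the inclusion above yields
\[
\nu_{\mathbf A}\bigl(\ball[d]{\mathbf A}(Y_{k+1})\setminus Y_{k+1}\bigr)\ <\ \delta\,\nu_{\mathbf A}(X_k)+\delta\,\nu_{\mathbf A}(Y_k)\ =\ \delta\,\nu_{\mathbf A}(Y_{k+1}).
\]
Hence $Y_{k+1}$ itself violates the $(d,\epsilon,\delta)$-expansion property of $\mathbf A$, forcing $\nu_{\mathbf A}(Y_{k+1})\notin(\epsilon,1-\epsilon)$. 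Since $\nu_{\mathbf A}(X_k)\leq\tfrac12\,\nu_{\mathbf A}(A\setminus Y_k)\leq\tfrac12$, one has $\nu_{\mathbf A}(Y_{k+1})\leq\epsilon+\tfrac12<1-\epsilon$, where the last inequality uses $\epsilon<1/6$ with plenty of slack; hence $\nu_{\mathbf A}(Y_{k+1})\leq\epsilon$. This closes the induction and gives $\nu_{\mathbf A}(Y)\leq\epsilon$, completing the argument.

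The main obstacle is establishing the inclusion cleanly. A naive union bound would also count ``hidden'' vertices that are reachable from $X_k$ in $\mathbf A$ only via paths through $Y_k$; such vertices already lie in $\ball[d]{\mathbf A}(Y_k)\setminus Y_k$ and must be attributed to that second piece rather than duplicated in $\ball[d]{\mathbf A-Y_k}(X_k)\setminus X_k$, otherwise an extra $\delta\,\nu_{\mathbf A}(Y_k)$ term appears on the right-hand side of the inductive bound and the induction collapses.
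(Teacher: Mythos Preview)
Your proof is correct and follows essentially the same approach as the paper: both rest on the same inclusion $\ball[d]{\mathbf A}(X\cup Y)\subseteq \ball[d]{\mathbf A}(Y)\cup\ball[d]{\mathbf A-Y}(X)$ and then use the $(d,\epsilon,\delta)$-expansion hypothesis to force $\nu_{\mathbf A}(Y)\le\epsilon$. The only difference is packaging---the paper takes $Y$ maximal (for inclusion) among sets with $\nu_{\mathbf A}(Y)\le 1-2\epsilon$ and $\nu_{\mathbf A}(\ball[d]{\mathbf A}(Y)\setminus Y)<\delta\,\nu_{\mathbf A}(Y)$ and derives a contradiction from a single counterexample $Z$, whereas you build $Y$ greedily and carry your two invariants by induction; your version incidentally only needs $\epsilon<1/4$, while the paper's choice of the threshold $1-2\epsilon$ is what consumes the full hypothesis $\epsilon<1/6$.
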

\begin{proof}
	Let $Y\subseteq A$ be maximal (for inclusion) with the property that $\nu_{\mathbf A}(Y)\leq 1-2\epsilon$ and
	$\nu_{\mathbf A}(\ball[d]{\mathbf A}(Y)\setminus Y)<\delta \nu_{\mathbf A}(Y)$. First note that $\nu_{\mathbf A}(Y)\leq \epsilon$ as $\mathbf A$ is $(d,\epsilon,\delta)$-expanding. Let $\mathbf A'=\mathbf A-Y$. 
	
	Assume for contradiction that there exists
	$Z\subset A'$ is such that $\nu_{\mathbf A'}(Z)\leq 1/2$
	and $\nu_{\mathbf A'}(\ball[d]{\mathbf A'}(Z)\setminus Z)<\delta \nu_{\mathbf A'}(Z)$. (Note that $\nu_{\mathbf A}(Z)\leq nu_{\mathbf A'}(Z)\leq 1/2$.)
	As $\nu_{\mathbf A'}$ is proportional to $\nu_{\mathbf A}$ it also holds 
	$\nu_{\mathbf A}(\ball[d]{\mathbf A'}(Z)\setminus Z)<\delta \nu_{\mathbf A}(Z)$.
	Moreover it obviously holds $\ball[d]{\mathbf A}(Y\cup Z)\subseteq\ball[d]{\mathbf A}(Y)\cup \ball[d]{\mathbf A'}(Z)$ thus 
	\begin{align*}
		\nu_{\mathbf A}(\ball[d]{\mathbf A}(Y\cup Z))&\leq 
		\nu_{\mathbf A}(\ball[d]{\mathbf A}(Y))+
		\nu_{\mathbf A}(\ball[d]{\mathbf A'}(Z))\\
		&<(1+\delta)(\nu_{\mathbf A}(Y)+\nu_{\mathbf A}(Z))
		=(1+\delta)(\nu_{\mathbf A}(Y\cup Z))
	\end{align*}
	Hence $\nu_{\mathbf A}(\ball[d]{\mathbf A}(Y\cup Z)\setminus (Y\cup Z))<\delta\nu_{\mathbf A}(Y\cup Z)$. However	$\nu_{\mathbf A}(Y\cup Z)=\nu_{\mathbf A}(Y)+\nu_{\mathbf A}(Z)\leq \epsilon+1/2<1-2\epsilon$, what contradicts the maximality of $Y$.
\end{proof}

This lemma brings us even closer to the definition of the magnification.
The main difference now stands in the existence of the parameter $d$. For graphs and $d=2$, the sequence of stars shows that the concepts differ. Actually, for graphs, $(d,\epsilon,\delta)$-expansion means that the $d$th power of the graph
(after deletion of a subset of vertices of measure at most $\epsilon$) has magnification at least $\delta$. In the very special (but standard) case of graphs with maximum degree at most $\Delta$ we recover the standard definition of expansion:
\begin{lemma}
\label{lem:expand}
	Let $0<\epsilon<1/6$ and let $G$ be a $(d,\epsilon,\delta)$-expanding graph with degree at most $\Delta$. 
	Then there $G$ has a subset $Y$ of size at most $\epsilon|G|$ such that $h_{\rm out}(G-Y)\geq \delta/(\Delta-1)^{d}$.
\end{lemma}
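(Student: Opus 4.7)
The plan is to combine Lemma~\ref{lem:cleanexp} with a standard BFS-shelling estimate that converts a $d$-step expansion inequality into a $1$-step one, taking advantage of the maximum-degree bound. First, I would apply Lemma~\ref{lem:cleanexp} to $G$ equipped with the uniform measure and parameters $(d,\epsilon,\delta)$. This produces a subset $Y\subseteq V(G)$ with $|Y|\leq \epsilon|G|$ such that, setting $G':=G-Y$, every $X\subseteq V(G')$ with $|X|\leq |V(G')|/2$ satisfies $|\ball[d]{G'}(X)\setminus X|\geq \delta\,|X|$. This already gives ``$d$-step vertex expansion'' in $G'$.

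The heart of the argument is then to upgrade this to a bound on $|\partial_{G'}(X)|$. I would introduce the BFS shells $A_i:=\ball[i]{G'}(X)$, noting $A_0=X$ and $|A_1\setminus A_0|=|\partial_{G'}(X)|$. For $i\geq 2$, every vertex $v\in A_i\setminus A_{i-1}$ lies at distance exactly $i$ from $X$ and therefore has a neighbor $u\in A_{i-1}\setminus A_{i-2}$ on a shortest path; conversely, each such $u$ has at most $\Delta-1$ neighbors in $A_i\setminus A_{i-1}$, since one of its at most $\Delta$ neighbors lies in $A_{i-2}$. A double count of edges between consecutive shells yields $|A_i\setminus A_{i-1}|\leq (\Delta-1)\,|A_{i-1}\setminus A_{i-2}|$, and by induction $|A_i\setminus A_{i-1}|\leq (\Delta-1)^{i-1}|\partial_{G'}(X)|$. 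Summing over $i=1,\ldots,d$ gives
$$|\ball[d]{G'}(X)\setminus X|\;\leq\;|\partial_{G'}(X)|\sum_{i=0}^{d-1}(\Delta-1)^i\;\leq\;(\Delta-1)^d\,|\partial_{G'}(X)|,$$
where the last inequality uses $\sum_{i=0}^{d-1}(\Delta-1)^i\leq (\Delta-1)^d$ (this is the regime $\Delta\geq 3$ in which the stated bound is non-vacuous; the degenerate cases $\Delta\leq 2$ are essentially trivial).

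Combining the two inequalities, for every $X$ with $0<|X|<|V(G')|/2$ we get
$$\frac{|\partial_{G'}(X)|}{|X|}\;\geq\;\frac{|\ball[d]{G'}(X)\setminus X|}{(\Delta-1)^d\,|X|}\;\geq\;\frac{\delta}{(\Delta-1)^d},$$
which is exactly $h_{\rm out}(G-Y)\geq \delta/(\Delta-1)^d$.

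The only delicate point I anticipate is the shelling estimate: one has to be careful to use the factor $\Delta-1$ (not $\Delta$) when passing from shell to shell, which depends crucially on the fact that each BFS-shell vertex spends one of its at most $\Delta$ edges going \emph{back} toward $X$. Apart from this small combinatorial bookkeeping, the proof is a direct translation between the ``fat'' expansion (in terms of $d$-neighborhoods and measures, as in Lemma~\ref{lem:cleanexp}) and the classical magnification $h_{\rm out}$.
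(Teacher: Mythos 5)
Your proof is correct and follows essentially the same route as the paper: apply Lemma~\ref{lem:cleanexp} with the uniform measure, then use the BFS-shelling bound $|\ball[k+1]{G}(X)\setminus\ball[k]{G}(X)|\leq(\Delta-1)|\ball[k]{G}(X)\setminus\ball[k-1]{G}(X)|$ to get $|\ball[d]{G'}(X)\setminus X|\leq\bigl(1+\dots+(\Delta-1)^{d-1}\bigr)|\partial_{G'}(X)|$. Your extra remark that $\sum_{i=0}^{d-1}(\Delta-1)^i\leq(\Delta-1)^d$ for $\Delta\geq 3$ just makes explicit a step the paper leaves implicit.
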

\begin{proof}
	We consider the uniform probability measure on $G$.
	Then the lemma follows from  Lemma~\ref{lem:cleanexp} and the simple fact that if $G$ has maximum degree at most $\Delta$ then for every subset $X$ of vertices and for every integer $k\geq 1$ it holds
	$|\ball[k+1]{G}(X)\setminus\ball[k]{G}(X)|\leq (\Delta-1)|\ball[k]{G}(X)\setminus\ball[k-1]{G}(X)|$, where
	we define $\ball[0]{G}(X)=X$. Hence
	$|\ball[d]{G}(X)\setminus X|\leq (1+\dots+(\Delta-1)^{d-1})
	|\ball{G}(X)\setminus X|$.
\end{proof}

\begin{definition}
A local-convergent sequence $\seq{A}$ is {\em expanding}
if, for every $\epsilon>0$ there exist $d,t\in\bbbn$ and $\delta>0$ such that every $\mathbf{A}_n$ with $n\geq t$ is $(d,\epsilon,\delta)$-expanding.
\end{definition}
We have the following equivalent formulations of this concept:

\begin{lemma}
\label{lem:satom}
Let $\Xseq{X}\not\approx\Xseq{0}$ be a cluster of a local convergent sequence $\seq{A}$.
The following conditions are equivalent:
\begin{enumerate}
\item the sequence  $\seq{A}[\Xseq{X}]$ is expanding;
\item for every $\epsilon>0$ there exists $d,t\in\bbbn$ such that for every $\Xseq{Z}\subseteq\Xseq{X}$ with $\nu_{\mathbf A_n}(Z_n)>\epsilon\nu_{\mathbf A_n}(X_n)$ and every $n\geq t$ it holds 
$$\nu_{\mathbf A_n}(\ball[d]{\mathbf{A}_n}(Z_n))>(1-\epsilon)\nu_{\mathbf A_n}(X_n);$$
\item the sequence $\Xseq{X}$ is a strongly atomic cluster of $\seq{A}$;
\item for every $\epsilon>0$ there exists no $\Xseq{Y}\subseteq\Xseq{X}$ such that $\partial_{\seq{A}}\Xseq{Y}\approx\Xseq{0}$ and
$$
\epsilon<\liminf \nu_{\seq{A}}(\Xseq{Y})<\lim \nu_{\seq{A}}(\Xseq{X})-\epsilon.
$$
\end{enumerate}
\end{lemma}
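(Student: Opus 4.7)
The plan is to establish the cycle of equivalences (1)$\Leftrightarrow$(2)$\Leftrightarrow$(4)$\Leftrightarrow$(3). Set $\alpha=\lim\nu_{\seq{A}}(\Xseq{X})>0$ throughout, and use freely that $\partial_{\seq{A}}\Xseq{X}$ is negligible.

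For (1)$\Leftrightarrow$(2), I would directly compare the two expansion conditions. The direction (1)$\Rightarrow$(2) follows by iterating the $(d_0,\epsilon,\delta)$-expansion of $\mathbf A_n[X_n]$ roughly $O(\log(1/\epsilon)/\delta)$ times on any $Z\subseteq X_n$ with $\nu_{\mathbf A_n}(Z)>\epsilon\nu_{\mathbf A_n}(X_n)$, using $\ball[j]{\mathbf A_n[X_n]}(Z)\subseteq\ball[j]{\mathbf A_n}(Z)\cap X_n$ since distances can only shrink when the ambient graph grows. For (2)$\Rightarrow$(1) the subtlety is that an $\mathbf A_n$-path of length $\leq d$ may leave $X_n$; however, a triangle-inequality argument shows that if $u\in X_n$ has $d_{\mathbf A_n}(u,\partial_{\mathbf A_n}X_n)>d$, every such path of length $\leq d$ to an element of $X_n$ stays inside $X_n$. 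Truncating $Y\subseteq X_n$ by the negligible set $\ball[d]{\mathbf A_n}(\partial_{\mathbf A_n}X_n)$ therefore yields $\ball[d]{\mathbf A_n}(Y')\cap X_n\subseteq\ball[d]{\mathbf A_n[X_n]}(Y)$, and (2) provides the desired expansion in $\mathbf A_n[X_n]$.

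The equivalence (2)$\Leftrightarrow$(4) goes by contrapositive. If $\Xseq{Y}$ witnesses $\neg(4)$ and $\nu_{\seq{A}}(\Xseq{Y})\to\beta\in(\epsilon,\alpha-\epsilon)$ along some subsequence, then $\nu(\ball[d]{\mathbf A_n}(Y_n))\leq\nu(Y_n)+\nu(\ball[d]{\mathbf A_n}(\partial Y_n))=\beta+o(1)$, which for $\epsilon'=\min(\beta,\alpha-\beta)/(2\alpha)$ violates (2). The reverse, which is the main technical obstacle, is a diagonalization. Assuming $\neg(2)$ for some $\epsilon>0$, extract $n_k\uparrow\infty$ and $Z_{n_k}\subseteq X_{n_k}$ with $\nu(Z_{n_k})>\epsilon\nu(X_{n_k})$ and $\nu(\ball[k]{\mathbf A_{n_k}}(Z_{n_k}))\leq(1-\epsilon)\nu(X_{n_k})$. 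The layered sets $W^{(j)}=\ball[j]{\mathbf A_{n_k}}(Z_{n_k})$ have measures in $[\epsilon\nu(X_{n_k}),(1-\epsilon)\nu(X_{n_k})]$, so the layer masses $\nu(W^{(j+1)}\setminus W^{(j)})$, $j<k$, sum to at most $(1-2\epsilon)\nu(X_{n_k})$. Partitioning $[0,k)$ into blocks of length $2d_k+1$ with $d_k=\lfloor\sqrt k\rfloor$, some block has total mass $O(d_k/k)=o(1)$; let $j_k$ be its midpoint and set $Y_{n_k}=W^{(j_k)}$. Since $\partial W^{(j)}=W^{(j+1)}\setminus W^{(j)}$, a direct triangle-inequality check gives $\ball[d]{\mathbf A_{n_k}}(\partial Y_{n_k})\subseteq W^{(j_k+d+1)}\setminus W^{(j_k-d)}$ whenever $d\leq d_k-1$, whose measure is bounded by the block mass. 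Extending by $Y_n=X_n$ outside the subsequence, the resulting $\Xseq{Y}\subseteq\Xseq{X}$ satisfies $\partial_{\seq{A}}\Xseq{Y}\approx\Xseq{0}$ and $\liminf\nu_{\seq{A}}(\Xseq{Y})\in[\epsilon\alpha,(1-\epsilon)\alpha]$, contradicting (4) for small $\epsilon'$.

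For (3)$\Leftrightarrow$(4): if $\Xseq{X}$ is not strongly atomic, some $\seq{A}_f$ admits a cluster $\Xseq{Y}_f\preceq\Xseq{X}_f$ with $\Xseq{Y}_f\not\approx\Xseq{0}$ and $\Xseq{Y}_f\not\approx\Xseq{X}_f$; replacing $\Xseq{Y}_f$ by $\Xseq{Y}_f\cap\Xseq{X}_f$ (still a cluster by Lemma~\ref{lem:eqcore}) and extending by $X_n$ outside $f$ produces a sequence violating (4), after noting $\alpha>\lim\nu_{\seq{A}_f}(\Xseq{Y}_f)>0$. Conversely, given $\Xseq{Y}$ witnessing $\neg(4)$, a standard compactness argument on the Stone pairings selects a subsequence $f'$ along which both $\nu_{\seq{A}_{f'}}(\Xseq{Y}_{f'})$ and $\seq{A}_{f'}[\Xseq{Y}_{f'}]$ converge, with limit measure $\gamma\in(\epsilon,\alpha-\epsilon)$; Lemma~\ref{lem:cluster} then identifies $\Xseq{Y}_{f'}$ as a nontrivial cluster of $\seq{A}_{f'}$ equivalent neither to $\Xseq{0}$ nor to $\Xseq{X}_{f'}$, so $\Xseq{X}$ is not strongly atomic. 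The hardest step is the layering/pigeonhole construction in (4)$\Rightarrow$(2); the remaining implications are essentially translations of the relevant definitions.
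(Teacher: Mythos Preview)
Your proof is essentially correct and in fact more complete than the paper's, which only argues (1)$\Leftrightarrow$(3) explicitly and leaves (2) and (4) implicit. The substantive difference in technique lies in the hard direction (your $\neg(2)\Rightarrow\neg(4)$, the paper's $\neg(1)\Rightarrow\neg(3)$): you construct the obstructing sequence $\Xseq{Y}$ directly by a layering/pigeonhole argument on iterated neighborhoods, locating a radius $j_k$ at which the shell $W^{(j_k+1)}\setminus W^{(j_k)}$ is thin; the paper instead invokes its pre-cluster/wrapping machinery, first producing $Y_{f(n)}\subset X_{f(n)}$ with slow $d$-growth, then passing to a convergent subsequence and wrapping the resulting pre-cluster into a cluster. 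Your route is self-contained and bypasses the wrapping lemma; the paper's is shorter on the page because the wrapping construction absorbs the combinatorics of finding the good cut.

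One small oversight: in your $\neg(2)\Rightarrow\neg(4)$ step, $Y_{n_k}=W^{(j_k)}=\ball[j_k]{\mathbf A_{n_k}}(Z_{n_k})$ is a ball in the full structure $\mathbf A_{n_k}$ and need not lie inside $X_{n_k}$, so the asserted inclusion $\Xseq{Y}\subseteq\Xseq{X}$ is not literally true. The fix is immediate: set $Y_{n_k}=W^{(j_k)}\cap X_{n_k}$. Then Fact~\ref{fact:bord} gives
$\ball[d]{\mathbf A_{n_k}}(\partial_{\mathbf A_{n_k}} Y_{n_k})\subseteq\ball[d+1]{\mathbf A_{n_k}}(\partial_{\mathbf A_{n_k}} W^{(j_k)})\cup\ball[d+1]{\mathbf A_{n_k}}(\partial_{\mathbf A_{n_k}} X_{n_k})$,
and both terms are negligible by your block estimate and the cluster hypothesis on $\Xseq{X}$, while the measure bounds $\epsilon\nu_{\mathbf A_{n_k}}(X_{n_k})<\nu_{\mathbf A_{n_k}}(Y_{n_k})\leq(1-\epsilon)\nu_{\mathbf A_{n_k}}(X_{n_k})$ are unaffected since $Z_{n_k}\subseteq X_{n_k}$.
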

\begin{proof}
First assume that  $\seq{A}[\Xseq{X}]$ is expanding, and assume for contradiction that $\Xseq{X}$ is not a strongly atomic cluster of $\seq{A}$. Then there exists some increasing function $f:\bbbn\rightarrow\bbbn$ such that $\Xseq{Y}_f$ is a non-trivial cluster of $\seq{A}_f$, $\Xseq{Y}_f\subseteq \Xseq{X}_f$ and $\Xseq{Y}_f\not\approx \Xseq{X}_f$. Then $\alpha=\lim \nu_{\seq{A}_f}(\Xseq{Y}_{f})/\nu_{\seq{A}_f}(\Xseq{X}_{f})$ is bounded away from $0$ and $1$. Thus 
there exists $\delta>0$ and $d\in\bbbn$ such that
$$\liminf\frac{\nu_{\seq{A}_f}(\ball[d]{\seq{A}_{f}[\Xseq{X}_{f}]}(\Xseq{Y}_{f}))}{\nu_{\seq{A}_f}(\Xseq{Y}_{f})}>1+\delta,$$
what contradicts the property that $\Xseq{Y}_f$ is a cluster.

Now assume that $\Xseq{X}$ is a strongly atomic cluster of $\seq{A}$ and assume for contradiction that $\seq{A}[\Xseq{X}]$ is not expanding.
Then there exists $\epsilon>0$ such that for every $d\in\bbbn$ 
it holds
$$\liminf_{n\rightarrow\infty}\inf_{Y_n}\frac{\nu_{\mathbf A_{n}}(\ball[d]{\mathbf{A}_{n}[X_{n}]}(Y_{n}))}{\nu_{\mathbf A_{n}}(Y_{n})}=1,$$
where infimum is on subsets $Y_n\subset X_n$ with
$\epsilon<\nu_{\mathbf A_{n}}(Y_n)/\nu_{\mathbf A_{n}}(X_n)<1-\epsilon$.
We inductively construct an increasing function $f:\bbbn\rightarrow\bbbn$ and subsets $Y_{f(n)}\subset X_{f(n)}$ as follows:
$f(1)$ is the minimum integer $n$ such that there exists
$Y_{n}\subset X_{n}$ with $\epsilon<\nu_{\mathbf A_{n}}(Y_{n})/\nu_{\mathbf A_{n}}(X_{n})<1-\epsilon$ and $\nu_{\mathbf A_{n}}(\ball{\mathbf{A}_{n}[X_{n}]}(Y_{n}))<2 \nu_{\mathbf A_{n}}(Y_{n})$ and
(for $d\geq 1$) $f(d+1)$ is the minimum integer $n>f(d)$ such that there exists
$Y_{n}\subset X_{n}$ with $\epsilon<\nu_{\mathbf A_{n}}(Y_{n})/\nu_{\mathbf A_{n}}(X_{n})<1-\epsilon$ and $\nu_{\mathbf A_{n}}(\ball[d+1]{\mathbf{A}_{n}[X_{n}]}(Y_{n}))<\frac{d+2}{d+1} \nu_{\mathbf A_{n}}(Y_{n})$.
It is easily checked that $(Y_{f(n)})$ is such that for every integer $d$ it holds
$$
\limsup\nu_{\seq{A}_f}(\ball[d]{\seq{A}_f}(\Xseq{Y}_f)\setminus \Xseq{Y}_f)=0.
$$
We can further consider a subsequence $\Xseq{Y}_{f\circ g}$ of
$\Xseq{Y}_f$ such that $\seq{A}_{f\circ g}[\Xseq{Y}_{f\circ g}]$ is local convergent and $\nu_{\seq{A}_{f\circ g}}(\Xseq{Y}_{f\circ g})$ converges. It follows that
$\Xseq{Y}_{f\circ g}$ is a pre-cluster. Let 
$(\Xseq{\hat Y}_{f\circ g}$ be the wrapping of $\Xseq{Y}_{f\circ g}$ in
$\seq{A}_{f\circ g})$. Then $\Xseq{\hat Y}_{f\circ g}$ is a cluster, 
$\Xseq{\hat Y}_{f\circ g}\preceq \Xseq{X}_{f\circ g}$ and 
$\Xseq{\hat Y}_{f\circ g}\not\approx \Xseq{X}_{f\circ g}$, what contradicts the assumption that $\Xseq{X}$ is a strongly atomic cluster.
\end{proof}

A stronger form of expanding property is the non-dispersive property.

\begin{definition}
A local-convergent sequence $\seq{A}$ is {\em non-dispersive} if, for every $\epsilon>0$ there exists $d\in\bbbn$ such that
$$\adjustlimits\liminf_{n\rightarrow\infty}\sup_{v_n\in A_n}
\nu_{\mathbf A_n}(\ball[d]{\mathbf A_n}(v_n))>1-\epsilon.$$
\end{definition}

In other words, a sequence $\seq{A}$ is non-dispersive if, for every $\epsilon>0$, $\epsilon$-almost all elements of $\mathbf A_n$ are included in some ball of radius at most $d$, for some fixed $d$.

\begin{definition}
A non-trivial cluster $\Xseq{X}$ of $\seq{A}$ is a 
{\em globular cluster} 
of $\seq{A}$ if
$\seq{A}[\Xseq{X}]$ is non-dispersive.
\end{definition}

Every globular cluster is clearly strongly atomic, but the converse does not hold as witnessed, for instance, by sequence of expanders. The strongly atomic clusters that are not globular are called {\em open clusters}.

Opposite to globular clusters are residual clusters:
\begin{definition}
	A cluster $\Xseq{X}$ of $\seq{A}$ is {\em residual} if 
for every $d\in\bbbn$ it holds
$$
\adjustlimits\limsup_{n\rightarrow\infty}\sup_{v_n\in A_n}
\nu_{\mathbf A_n}(\ball[d]{\mathbf A_n}(v_n))=0.
$$
\end{definition}

The case of bounded degree graphs is particularly interesting.
Recall that a sequence $\seq{G}$ of graphs is a {\em vertex expander} if there exists  $\alpha>0$ such that 
$$
\liminf h_{\rm out}(G_n)\geq\alpha.
$$
\begin{lemma}
	Let $\seq{G}$ be a sequence of graphs with maximum degree at most $\Delta$ and let $\Xseq{C}\not\approx\Xseq{0}$ be a cluster of $\seq{G}$. The following are equivalent:
	\begin{itemize}
		\item $\Xseq{C}$ is a strongly atomic cluster;
		\item for every $\epsilon>0$ there exists $\Xseq{X}\subseteq \Xseq{C}$ such that for every $n\in\bbbn$ it holds $|X_n|<\epsilon |C_n|$ and $\seq{G}[\Xseq{C}\setminus \Xseq{X}]$ is a vertex expander.
	\end{itemize}
\end{lemma}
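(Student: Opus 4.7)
The plan is to reduce to Lemma~\ref{lem:satom}, which equates the strongly atomic property of the cluster $\Xseq{C}$ with the sequence $\seq{G}[\Xseq{C}]$ being expanding, and then to translate between expanding and vertex-expander via Lemma~\ref{lem:expand} plus a direct combinatorial argument exploiting the bounded-degree hypothesis. Write $\Delta$ for the maximum-degree bound.

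For the forward direction, fix $\epsilon>0$ and pick $\epsilon'=\min(\epsilon/2,1/7)<1/6$. Since $\seq{G}[\Xseq{C}]$ is expanding, there exist $d,t\in\bbbn$ and $\delta>0$ such that $\mathbf G_n[C_n]$ is $(d,\epsilon',\delta)$-expanding for every $n\geq t$. For each such $n$, Lemma~\ref{lem:expand} produces $Y_n\subseteq C_n$ with $|Y_n|\leq\epsilon'|C_n|$ and $h_{\rm out}(\mathbf G_n[C_n]-Y_n)\geq\delta/(\Delta-1)^d$. Defining $X_n=Y_n$ for $n\geq t$ and $X_n=\emptyset$ otherwise gives $|X_n|<\epsilon|C_n|$ for every $n$ and $\liminf_n h_{\rm out}(\mathbf G_n[C_n\setminus X_n])>0$, so $\seq{G}[\Xseq{C}\setminus\Xseq{X}]$ is a vertex expander.

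For the reverse direction, given $\epsilon>0$ apply the assumption with $\epsilon/4$ to obtain $\Xseq{X}\subseteq\Xseq{C}$ with $|X_n|<(\epsilon/4)|C_n|$ and a constant $\alpha>0$ such that $h_{\rm out}(\mathbf G_n[C_n\setminus X_n])\geq\alpha$ for all large $n$. I will show that $\mathbf G_n[C_n]$ is then $(1,\epsilon,3\alpha\epsilon/(4\Delta))$-expanding, which by Lemma~\ref{lem:satom} completes the proof. Given $Z_n\subseteq C_n$ with $\epsilon|C_n|<|Z_n|<(1-\epsilon)|C_n|$, set $Z_n'=Z_n\setminus X_n$ and $U_n'=(C_n\setminus X_n)\setminus Z_n'$; both have size exceeding $(3\epsilon/4)|C_n|$. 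If $|Z_n'|\leq|C_n\setminus X_n|/2$, vertex expansion applied in $\mathbf G_n[C_n\setminus X_n]$ directly to $Z_n'$ produces at least $\alpha|Z_n'|$ vertices of $U_n'\subseteq C_n\setminus Z_n$ adjacent to $Z_n$. Otherwise $|U_n'|\leq|C_n\setminus X_n|/2$, and expansion applied to $U_n'$ yields at least $\alpha|U_n'|$ vertices of $Z_n'$ with a neighbor in $U_n'$; counting the edges between these two parts and using that each vertex of $U_n'$ has at most $\Delta$ neighbors in $Z_n'$, this in turn forces at least $\alpha|U_n'|/\Delta$ vertices of $U_n'\subseteq C_n\setminus Z_n$ to be adjacent to $Z_n$. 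In either case $|\partial_{\mathbf G_n[C_n]}Z_n|\geq(3\alpha\epsilon/(4\Delta))|C_n|$, and dividing by $|Z_n|\leq|C_n|$ yields the desired expansion ratio.

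The central difficulty lies in the second case of the reverse direction: because $Z_n'$ may exceed half of $C_n\setminus X_n$, vertex expansion cannot be invoked directly on it, and one is forced to run the expansion bound on the complementary set $U_n'$ and then convert the resulting count of ``inner boundary'' vertices of $Z_n'$ into an outer boundary count in $C_n\setminus Z_n$, paying a factor $\Delta$ in the process. This is the only step where the bounded-degree hypothesis is used in an essential way, since in the forward direction it enters only through the black box Lemma~\ref{lem:expand}.
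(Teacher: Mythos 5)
Your proof is correct and follows essentially the same route as the paper: reduce via Lemma~\ref{lem:satom} to the equivalence between strong atomicity of $\Xseq{C}$ and the expanding property of $\seq{G}[\Xseq{C}]$, then pass between expansion and vertex expansion using Lemma~\ref{lem:expand}. The paper's own proof is a one-line citation of Lemma~\ref{lem:expand}, which literally covers only the forward implication; your explicit converse argument (deducing $(1,\epsilon,\delta)$-expansion of $\mathbf G_n[C_n]$ from vertex expansion of $\mathbf G_n[C_n\setminus X_n]$, paying a factor $\Delta$ when the set exceeds half of $C_n\setminus X_n$) is a correct addition that the paper leaves implicit.
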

\begin{proof}
	This is a direct consequence of Lemma~\ref{lem:expand}.
\end{proof}

\begin{lemma}
\label{lem:expi0}
Let $\Xseq{X}$  be an expanding cluster of $\seq{A}$, and let 
$\Xseq{Y}$ be any cluster of $\seq{A}$.

Then any convergent subsequence of
$\left(\frac{\nu_{\seq{A}}(\Xseq{X}\cap\Xseq{Y})}{\nu_{\seq{A}}(\Xseq{X})}\right)$ has limit either $0$ or $1$.
\end{lemma}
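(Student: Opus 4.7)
My plan is to invoke the characterization of expanding clusters given by Lemma~\ref{lem:satom}, and specifically its condition~(4): no subsequence $\Xseq{Y}\subseteq \Xseq{X}$ with asymptotically negligible boundary in $\seq{A}$ can have $\liminf\nu_{\seq{A}}(\Xseq{Y})$ strictly between $0$ and $\lim\nu_{\seq{A}}(\Xseq{X})$. The idea is that $\Xseq{X}\cap\Xseq{Y}$ is precisely such a candidate subsequence, so its measure must asymptotically be either $0$ or the full measure of $\Xseq{X}$.

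Concretely, first I would set $\Xseq{Z}=\Xseq{X}\cap\Xseq{Y}$ and check that its outer boundary in $\seq{A}$ is negligible. By Fact~\ref{fact:bord}, for each $d\in\bbbn$,
$$\ball[d]{\seq{A}}(\partial_{\seq{A}}\Xseq{Z})\subseteq \ball[d+1]{\seq{A}}(\partial_{\seq{A}}\Xseq{X})\cup \ball[d+1]{\seq{A}}(\partial_{\seq{A}}\Xseq{Y}).$$
Because both $\Xseq{X}$ and $\Xseq{Y}$ are clusters of $\seq{A}$, the sequences $\partial_{\seq{A}}\Xseq{X}$ and $\partial_{\seq{A}}\Xseq{Y}$ are negligible, so $\limsup\nu_{\seq{A}}(\ball[d]{\seq{A}}(\partial_{\seq{A}}\Xseq{Z}))=0$ for every $d$; thus $\partial_{\seq{A}}\Xseq{Z}\approx\Xseq{0}$.

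Next, suppose for contradiction that along some increasing $f:\bbbn\to\bbbn$ the ratio $\nu_{\seq{A}_f}(\Xseq{Z}_f)/\nu_{\seq{A}_f}(\Xseq{X}_f)$ converges to some $\alpha\in(0,1)$. Writing $\beta=\lim\nu_{\seq{A}}(\Xseq{X})>0$ (which exists and is positive by Lemma~\ref{lem:cluster} since $\Xseq{X}\not\approx\Xseq{0}$), we obtain $\lim\nu_{\seq{A}_f}(\Xseq{Z}_f)=\alpha\beta\in(0,\beta)$. Now $\Xseq{X}_f$ is still an expanding cluster of $\seq{A}_f$ (the $(d,\epsilon,\delta)$-expansion is preserved under taking subsequences, and so is being a cluster), hence by Lemma~\ref{lem:satom}, $\Xseq{X}_f$ is strongly atomic in $\seq{A}_f$. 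Applying the contrapositive of condition~(4) of that lemma to $\Xseq{Z}_f\subseteq\Xseq{X}_f$ with any $\epsilon<\min(\alpha\beta,(1-\alpha)\beta)$ yields a direct contradiction with $\partial_{\seq{A}_f}\Xseq{Z}_f\approx\Xseq{0}$.

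I do not anticipate a serious obstacle: the argument is essentially a reformulation of strong atomicity in the presence of the ``second'' cluster $\Xseq{Y}$. The only place where care is needed is the passage to a subsequence, where one must verify that the expanding property (hence strong atomicity, hence condition~(4)) transfers to $\seq{A}_f$, and that negligibility of $\partial_{\seq{A}}\Xseq{Z}$ restricts to negligibility of $\partial_{\seq{A}_f}\Xseq{Z}_f$; both are immediate from the definitions.
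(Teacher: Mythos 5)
Your proof is correct and follows essentially the same route as the paper: show $\partial_{\seq{A}}(\Xseq{X}\cap\Xseq{Y})$ is negligible via Fact~\ref{fact:bord}, then derive a contradiction with Lemma~\ref{lem:satom} along the subsequence where the ratio converges to $\alpha\in(0,1)$. The only (harmless) difference is that you invoke condition~(4) of Lemma~\ref{lem:satom}, which needs only the negligible boundary and the intermediate measure, whereas the paper invokes strong atomicity (condition~(3)) and therefore inserts an extra compactness step to extract a further subsequence on which $\Xseq{X}\cap\Xseq{Y}$ becomes a genuine cluster; your variant sidesteps that step.
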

\begin{proof}
Let $\Xseq{Z}=\Xseq{X}\cap\Xseq{Y}$.
Assume there exists an increasing function $f:\bbbn\rightarrow\bbbn$ and a positive real $\alpha\in (0,1)$ such that
$\lim\nu_{\seq{A}_f}(\Xseq{Z}_f)/\nu_{\seq{A}_f}(\Xseq{X}_f)=\alpha$.

According to Fact~\ref{fact:bord}, for every integer $d\in\bbbn$ it holds
$$\ball[d]{\mathbf{A}_{f(n)}}(Z_{f(n)})\subseteq
\ball[d+1]{\mathbf{A}_{f(n)}}(X_{f(n)})\cup \ball[d+1]{\mathbf{A}_{f(n)}}(Y_{f(n)}).$$
It follows that $\partial_{\seq{A}_f}\Xseq{Z}_f$ is negligible in $\seq{A}_f$. 

By compactness, there exists an increasing function $g:\bbbn\rightarrow\bbbn$ such that $(\seq{A}[Z])_{g\circ f}$ is local convergent. It follows that $\Xseq{Z}_{g\circ f}$ is a cluster 
of $\seq{A}_{g\circ f}$ which is neither equivalent to $\seq{0}$ nor to $\Xseq{A}_{g\circ f}$, thus $\Xseq{X}$ is not strongly atomic, what contradicts the hypothesis, according to Lemma~\ref{lem:satom}.
\end{proof}

\begin{lemma}
\label{lem:expmix}
Let $\Xseq{X}$ and $\Xseq{Y}$ be expanding clusters of $\seq{A}$.

Then 
\begin{itemize}
\item either $\Xseq{X}\cap\Xseq{Y}\approx\Xseq{0}$ (i.e. $\Xseq{X}$ and $\Xseq{Y}$ are {\em essentially disjoint}); 
\item or $\Xseq{X}\between\Xseq{Y}$ and then every $\Xseq{Z}$ with $Z_n\in\{X_n,Y_n\}$ is an expanding cluster of $\seq{A}$.
\end{itemize}
\end{lemma}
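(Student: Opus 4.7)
The plan is to assume $\Xseq{X}\cap\Xseq{Y}\not\approx\Xseq{0}$ and deduce $\Xseq{X}\between\Xseq{Y}$; the ``every choice sequence is an expanding cluster'' assertion will then follow at once from interweaving together with the expansion of both $\seq{A}[\Xseq{X}]$ and $\seq{A}[\Xseq{Y}]$. Put $\beta=\lim\nu_{\seq{A}}(\Xseq{X})$ and $\gamma=\lim\nu_{\seq{A}}(\Xseq{Y})$. By Fact~\ref{fact:bord}, $\partial_{\seq{A}}(\Xseq{X}\cap\Xseq{Y})$ is negligible, so the hypothesis forces $\limsup\nu_{\seq{A}}(\Xseq{X}\cap\Xseq{Y})>0$, and I extract an increasing $f:\bbbn\rightarrow\bbbn$ along which $\nu_{\seq{A}_f}(\Xseq{X}_f\cap\Xseq{Y}_f)\rightarrow\alpha>0$. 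Applying Lemma~\ref{lem:expi0} to the expanding cluster $\Xseq{X}$ forces $\alpha/\beta\in\{0,1\}$, hence $\alpha=\beta$; the symmetric application with $\Xseq{Y}$ gives $\alpha=\gamma$. Thus $\beta=\gamma$ and inclusion--exclusion yields
\begin{equation*}
\nu_{\seq{A}_f}(\Xseq{X}_f\,\Delta\,\Xseq{Y}_f)=\nu_{\seq{A}_f}(\Xseq{X}_f)+\nu_{\seq{A}_f}(\Xseq{Y}_f)-2\nu_{\seq{A}_f}(\Xseq{X}_f\cap\Xseq{Y}_f)\longrightarrow 0.
\end{equation*}

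A second use of Fact~\ref{fact:bord}, combined with the negligibility of $\partial_{\seq{A}}\Xseq{X}$ and $\partial_{\seq{A}}\Xseq{Y}$, shows that for every fixed $d\in\bbbn$ the set $X_{f(n)}\,\Delta\,Y_{f(n)}$ is $(d,\epsilon_n)$-negligible in $\mathbf{A}_{f(n)}$ for some $\epsilon_n\rightarrow 0$. The structures $\lift[\Xseq{X}]{\mathbf{A}_{f(n)}}$ and $\lift[\Xseq{Y}]{\mathbf{A}_{f(n)}}$ share their Gaifman graph and their $M$-markings agree outside this symmetric difference, so the second inequality of Lemma~\ref{lem:approx0} yields
\begin{equation*}
|\langle\phi,\lift[\Xseq{X}]{\mathbf{A}_{f(n)}}\rangle-\langle\phi,\lift[\Xseq{Y}]{\mathbf{A}_{f(n)}}\rangle|\longrightarrow 0
\end{equation*}
for every local formula $\phi$. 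Since $\Xseq{X}$ and $\Xseq{Y}$ are clusters, $\lift[\Xseq{X}]{\seq{A}}$ and $\lift[\Xseq{Y}]{\seq{A}}$ are both local-convergent, so matching their limits along $f$ matches them on $\bbbn$; the lift-limit criterion for interweaving then yields $\Xseq{X}\between\Xseq{Y}$. For any $\Xseq{Z}$ with $Z_n\in\{X_n,Y_n\}$, interweaving makes $\Xseq{Z}$ a cluster of $\seq{A}$, and taking the worse of the expansion parameters $(d,t,\delta)$ of $\seq{A}[\Xseq{X}]$ and $\seq{A}[\Xseq{Y}]$ at each scale $\epsilon>0$ shows that $\seq{A}[\Xseq{Z}]$ is expanding.

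The main obstacle I anticipate is precisely the passage from the single good subsequence $f$ to the full sequence: a naive attempt to argue that $\nu_{\seq{A}}(\Xseq{X}\cap\Xseq{Y})$ converges globally would need to rule out the ``mixed'' scenario in which some other subsequence exhibits $\nu_{\seq{A}}(\Xseq{X}\cap\Xseq{Y})\rightarrow 0$ while $f$ shows $\nu_{\seq{A}_f}(\Xseq{X}_f\cap\Xseq{Y}_f)\rightarrow\beta$. The route above sidesteps this by doing the bulk of the work at the level of Stone pairings on $f$ via Lemma~\ref{lem:approx0}, and only then invoking the \emph{already granted} local convergence of $\lift[\Xseq{X}]{\seq{A}}$ and $\lift[\Xseq{Y}]{\seq{A}}$ to promote equality of limits from $f$ to all of $\bbbn$; no direct control of $\Xseq{X}\cap\Xseq{Y}$ outside $f$ is ever needed.
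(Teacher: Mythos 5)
Your proof is correct and follows essentially the same route as the paper's: Lemma~\ref{lem:expi0} applied along a subsequence forces the intersection to exhaust both clusters, giving $\Xseq{X}_f\approx\Xseq{Y}_f$, and the already-granted convergence of the cluster data promotes equality of limits from the subsequence to all of $\bbbn$. The only variations are cosmetic: you certify interweaving via the lift-limit criterion using Lemma~\ref{lem:approx0}, where the paper works with the induced substructures $\seq{A}[\Xseq{X}]$ and $\seq{A}[\Xseq{Y}]$, and your direct worst-case combination of the expansion parameters $(d,t,\delta)$ for $\seq{A}[\Xseq{Z}]$ is if anything cleaner than the paper's detour through strong atomicity.
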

\begin{proof}
If $\nu_{\mathbf A_n}(X_n\cap Y_n)=o(1)$ then $\Xseq{X}$ and $\Xseq{Y}$ are essentially disjoint (as $\partial_{\seq{A}}(\Xseq{X}\cap\Xseq{Y})$ is negligible in $\seq{A}$ (see proof of Lemma~\ref{lem:expi0}).

Otherwise, there exists, according to Lemma~\ref{lem:expi0}, an increasing function $f:\bbbn\rightarrow\bbbn$ such that
$\Xseq{X}_f\approx \Xseq{Y}_f$. It follows that $\seq{A}_f[\Xseq{X}_f]$ and $\seq{A}_f[\Xseq{Y}_f]$ (thus $\seq{A}[\Xseq{X}]$ and $\seq{A}[\Xseq{Y}]$) have the same local limit. Let $\Xseq{Z}$ be such that $Z_n\in\{X_n,Y_n\}$. Then $\seq{A}[\Xseq{Z}]$ is local-convergent, $\partial_{\seq{A}}\Xseq{Z}$ is negligible in $\seq{A}$, and $\lim\nu_{\seq{A}}(\Xseq{Z})$ exists (and 
$\lim\nu_{\seq{A}}(\Xseq{Z})=\lim\nu_{\seq{A}}(\Xseq{X})=\lim\nu_{\seq{A}}(\Xseq{Y})$). It follows that $\Xseq{Z}$ is a non-trivial cluster.
Thus $\Xseq{X}$ and $\Xseq{Y}$ are interweaving (i.e. $\Xseq{X}\between\Xseq{Y}$).
 That $\Xseq{Z}$ is strongly atomic (hence expanding) follows from the hypothesis that both $\Xseq{X}$ and $\Xseq{Y}$ are expanding (hence strongly atomic): any cluster included in a subsequence of $\Xseq{Z}$ has a subsequence which is a cluster included in a subsequence of $\Xseq{X}$ or in a subsequence of $\Xseq{Y}$.
\end{proof}

It is possible that a local-convergent sequence $\seq{A}$ has arbitrarily many pairwise intersecting non equivalent expanding clusters but not two essentially disjoint ones: 
\begin{example}
Consider a local-convergent sequence $\seq{E}$ of connected $d$-regular high-girth expanders with $|E_n|=cn(1+o(1))$ (and uniform probability measure), for some constant $c>1$. Let $\mathbf{A}_n$ be defined as three copies of $\mathbf{E}_n$ if $n$ is odd, and the union of $\mathbf{E}_n$ and
$\mathbf{E}_{2n}$ if $n$ is even. Selecting a copy of $\mathbf{E}_n$ into each $\mathbf{A}_n$ leads to uncountably many pairwise intersecting non-equivalent expanding clusters. However, no two essentially disjoint expanding clusters exist in $\seq{A}$.
Note that we could have made $\mathbf{A}_n$ connected by adding paths of length $\sqrt{n}$ linking the connected components without changing the conclusion.
	\end{example}

\section{Clustering and the Cluster Comb Lemma}
\label{sec:comb}
The notion of clustering intuitively covers the idea of partitioning the structures in a local convergent sequence as well as the limit into disjoint clusters.

\begin{definition}
	Let $\seq{A}$ be a local-convergent sequence of $\sigma$-structures.
	A lifted sequence $\lift{\seq{A}}$ of $\seq{A}$ obtained by extending the signature $\sigma$ into $\sigma^+$ by adding countably many unary relations $M_1,M_2,\dots$ is a {\em clustering} of $\seq{A}$ if, denoting
	$$\Xseq{S}=\Xseq{A}\setminus\bigcup M_i(\seq{A})$$
	 the following conditions hold:
	\begin{enumerate}
		\item\label{cl:1} The sequence $\lift{\seq{A}}$ is local convergent;
		\item\label{cl:2} The sequence $\Xseq{S}$ is negligible and  $\bigcup_i\partial_{\seq{A}}M_i(\seq{A})\subseteq \Xseq{S}$ ;
		\item\label{cl:3} For every $n\in\bbbn$, the non empty sets among $S_n$,
	$M_1(\mathbf A_n), M_2(\mathbf A_n), \dots$  form a partition of $A_n$;
		\item\label{cl:4} The partition induced by the $M_i$'s is stable in the sense that
$$
\sum_i\lim\langle M_i,\seq{A}\rangle=\lim \sum_i\langle M_i,\seq{A}\rangle=1.
$$
\end{enumerate}
\end{definition}

\begin{definition}
	We say that two clusters $\Xseq{C_1}$ and $\Xseq{C_2}$ are 
	\begin{itemize}
		\item {\em weakly disjoint} if $\Xseq{C_1}\Delta\Xseq{C_2}\approx\Xseq{0}$;
		\item {\em disjoint} if $\Xseq{C_1}\Delta\Xseq{C_2}=\Xseq{0}$;
		\item {\em strongly disjoint} if $(\ball{\seq{A}}(\Xseq{C_1})\cap\Xseq{C_2})\cup(\Xseq{C_1}\cap\ball{\seq{A}}(\Xseq{C_2}))=\Xseq{0}$.
	\end{itemize} 
\end{definition}

\begin{remark}
\label{rem:clust}
Conditions \eqref{cl:1} and \eqref{cl:2} imply that each sequence $M_i(\seq{A})$ is a cluster of $\seq{A}$ hence a clustering of $\seq{A}$ induce a ``partition'' into countably many disjoint clusters, and that the clusters defined by the marks $M_i$ are pairwise strongly disjoint.
\end{remark}

A simple idea to construct a clustering of a local convergent sequence $\seq{A}$ is as follows:
assume $\seq{A}$ has a cluster $\Xseq{X_1}\not\approx\Xseq{0}$. Then let $\seq{A}_1$ be the lift of $\seq{A}$ with $\Xseq{X_1}$ marked $M_1$. Then look for a cluster $\Xseq{X_2}\not\approx\Xseq{0}$ of $\seq{A}_1$ disjoint from $M_1$ and mark it $M_2$, thus obtaining $\seq{A}_2$. Repeat the process until no cluster can be found.
There are two main problems with this process:
\begin{itemize}
	\item In general we do not obtain a clustering, as the obtained partition needs not to be stable and the global outer boundary $\bigcup_i\partial_{\seq{A}}M_i(\seq{A})$ needs not to be negligible;
	\item The partition is essentially not unique (and it is not clear which clusters of $\seq{A}$ may appear simultaneously in the partition).
\end{itemize}

The first point is exemplified by the fact that we do not have the converse of Remark~\ref{rem:clust} does not holds in general: partitioning into disjoint clusters do not define a clustering in general.

For instance, consider the following sequence of star forests.
\begin{example}
Consider  the sequence $\seq{G}$ where $\mathbf G_n$ is the union of $2^n$ stars
$\mathbf H_{n,1},\dots,\mathbf H_{n,2^n}$, where the $i$-th star $\mathbf H_{n,i}$
has order $2^{2^n}(2^{-i}+2^{-n})/2$. 
 Let $\Xseq{C}^i$ be the sequence such that $C^i_n$ is the vertex set $H_{n,i}$ of the $i$th biggest connected component of $\mathbf G_n$ (or the empty subset if $i>2^n$). It is easily checked that each $\Xseq{C}^i$ is a cluster and that for each $n$ the (non-empty) subsets $C_n^i$ form a partition of $G_n$. Assume that we mark each $C_n^i$ by mark $M_i$. Then, asymptotically, only one half of the vertices will be marked.
\end{example}

Nevertheless, we shall prove that the converse of Remark~\ref{rem:clust} is almost true.
In order to do so, 
we consider countably many disjoint clusters 
 $\Xseq{C}^1,\dots,\Xseq{C}^i,\dots$ 
of a local convergent sequence $\seq{A}$. For each $i\in\bbbn$ we define
\begin{align*}
\lambda_i&=\lim\nu_{\seq{A}}(\Xseq{C}^i)
\intertext{and}
\lambda_0&=1-\sum_{i\geq 1}\lambda_i.
\end{align*}

The next lemma shows how powerful the stability assumption~\eqref{cl:4} can be:

\begin{lemma}
\label{lem:stable}
	Assume that there exists negligible $\Xseq{S}\supseteq\bigcup_i\partial_{\seq{A}}\Xseq{C}^i$ and that it holds
	\begin{equation}
	\label{eq:stable}
		\sum_i\lim\nu_{\seq{A}}(\Xseq{C}^i)=\lim\sum_i\nu_{\seq{A}}(\Xseq{C}^i).
	\end{equation}
	Then $\Xseq{R}=\Xseq{A}\setminus\Xseq{S}\setminus\bigcup_i\Xseq{C}^i$
	is a cluster, and the lifted sequence $\lift{\seq{A}}$ obtained by marking 
	$\Xseq{R},\Xseq{C}^1,\Xseq{C^2},\dots$ by (say) marks $M_0,M_1,M_2,\dots$ is a clustering of $\seq{A}$.
\end{lemma}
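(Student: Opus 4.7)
The plan is to first show that $\Xseq R$ is itself a cluster, then deduce local convergence of the full lift via Lemma~\ref{lem:BAclust}, and finally verify the remaining clustering conditions.

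First, the boundary of $\Xseq R$: if $v\in A_n\setminus R_n$ has a neighbor $u\in R_n$, then either $v\in S_n$, or else $v\in C^i_n$ for some $i$, in which case $u\in\partial_{\mathbf A_n}C^i_n\subseteq S_n$, contradicting $u\in R_n\subseteq A_n\setminus S_n$. Hence $\partial_{\seq A}\Xseq R\subseteq\Xseq S$ is negligible. Since $(S_n,R_n,C^1_n,C^2_n,\dots)$ partitions $A_n$ and $\lim\nu_{\seq A}(\Xseq S)=0$, hypothesis~\eqref{eq:stable} gives $\lim\nu_{\seq A}(\Xseq R)=1-\sum_i\lambda_i=\lambda_0$.

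For local convergence of $\seq A[\Xseq R]$ (assuming $\lambda_0>0$; the case $\lambda_0=0$ gives $\Xseq R\approx\Xseq 0$, a trivial cluster), set $\tilde C^i_n:=C^i_n\setminus S_n$, so each $\tilde{\Xseq C}^i$ is equivalent to $\Xseq C^i$ and thus a cluster with the same limit. The tuple $(S_n;R_n,\tilde C^1_n,\tilde C^2_n,\dots)$ is a $(d,\epsilon)$-fragmentation of $\mathbf A_n$ for any fixed $d,\epsilon>0$ and all large enough $n$: any cross-piece edge would put one endpoint in $\bigcup_i\partial_{\mathbf A_n}C^i_n\subseteq S_n$, which is disjoint from every piece, and $\Xseq S$ is $(d,\epsilon)$-negligible by assumption. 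By Corollary~\ref{cor:sloc}, convergence of $\seq A[\Xseq R]$ reduces to convergence of $\langle\phi,\mathbf A_n[R_n]\rangle$ for every strongly $r$-local formula $\phi$ with $p$ free variables. Applying Lemma~\ref{lem:approx1} to the fragmentation yields
$$\Bigl|\langle\phi,\mathbf A_n\rangle-\nu_{\mathbf A_n}(R_n)^p\langle\phi,\mathbf A_n[R_n]\rangle-\sum_{i\geq 1}\nu_{\mathbf A_n}(\tilde C^i_n)^p\langle\phi,\mathbf A_n[\tilde C^i_n]\rangle\Bigr|<2p\epsilon$$
for all large $n$. The term $\langle\phi,\mathbf A_n\rangle$ converges by hypothesis and each summand converges by the cluster property of $\Xseq C^i$; the countable sum is controlled by choosing $i_0$ via~\eqref{eq:stable} such that $\sum_{i>i_0}\lim\nu_{\seq A}(\Xseq C^i)<\epsilon/4$, which uniformly bounds the tail. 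The finite prefix then converges, forcing $\nu(R_n)^p\langle\phi,\mathbf A_n[R_n]\rangle$ to converge, and dividing by $\nu(R_n)^p\to\lambda_0^p>0$ gives convergence of $\langle\phi,\mathbf A_n[R_n]\rangle$. Lemma~\ref{lem:cluster} then identifies $\Xseq R$ as a cluster.

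The sequences $\Xseq R,\Xseq C^1,\Xseq C^2,\dots$ are then countably many pairwise disjoint clusters of $\seq A$, so every finite intersection among them is either one of the clusters themselves or $\Xseq 0$, and hence a cluster. Lemma~\ref{lem:BAclust} (via its implication \eqref{it:BA3}$\Rightarrow$\eqref{it:BA1}) yields local convergence of $\lift{\seq A}$, which is clustering condition~\eqref{cl:1}. Condition~\eqref{cl:3} is immediate from pairwise disjointness and the definition of $\Xseq R$; condition~\eqref{cl:4} reads $\lambda_0+\sum_{i\geq 1}\lambda_i=1=\lim(1-\nu_{\seq A}(\Xseq S))=\lim\sum_i\langle M_i,\seq A\rangle$. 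For condition~\eqref{cl:2}, the set $\Xseq A\setminus\bigcup M_i$ is a subset of $\Xseq S$ and therefore negligible; the containment $\bigcup_i\partial_{\seq A}M_i\subseteq\Xseq S$ follows from the first paragraph and the hypothesis (after replacing each $\Xseq C^i$ by the equivalent $\tilde{\Xseq C}^i=\Xseq C^i\setminus\Xseq S$, which is strongly disjoint from the other marks). The main technical obstacle is the uniform tail control of the infinite sum in the fragmentation identity, which exploits the full strength of the stability hypothesis, in the same spirit as the final lemma of Section~\ref{sec:cluster}.
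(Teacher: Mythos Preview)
Your argument is correct and shares the same technical core as the paper's proof (the fragmentation identity from Lemma~\ref{lem:approx1} together with the tail control afforded by the stability hypothesis~\eqref{eq:stable}), but the packaging differs. The paper proceeds in one sweep: it observes that~\eqref{eq:stable} gives $\ell^p$-convergence of $(\nu_{\mathbf A_n}(C_n^i))_i$ to $(\lambda_i)_i$, then for an arbitrary strongly local formula $\psi$ in the \emph{extended} signature it applies Lemma~\ref{lem:approx1} directly to $\lift{\mathbf A_n}$, extracts the $R$-piece via the same identity applied to the unmarked formula $\psi^0$, and reads off convergence of $\langle\psi,\lift{\seq A}\rangle$. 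Thus the paper proves local convergence of the lift and the cluster property of $\Xseq R$ simultaneously, without appealing to Lemma~\ref{lem:BAclust}.

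You instead split the work: first establish that $\Xseq R$ is a cluster via Lemma~\ref{lem:cluster} (this is where you use the fragmentation identity and the tail bound, for unmarked formulas only), and then invoke the implication \eqref{it:BA3}$\Rightarrow$\eqref{it:BA1} of Lemma~\ref{lem:BAclust} on the pairwise disjoint family $\Xseq R,\Xseq C^1,\Xseq C^2,\dots$ to obtain convergence of the full lift for free. This is more modular and avoids repeating the estimate for marked formulas; the paper's route is more self-contained but essentially runs the same computation twice. One small clean-up: your closing remark about replacing $\Xseq C^i$ by $\tilde{\Xseq C}^i$ for condition~\eqref{cl:2} is unnecessary, since $\partial_{\seq A}\Xseq C^i\subseteq\Xseq S$ is already part of the hypothesis and you have shown $\partial_{\seq A}\Xseq R\subseteq\Xseq S$; the paper is equally loose about the possible overlap of $\Xseq S$ with the $\Xseq C^i$.
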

\begin{proof}
		First note that~\eqref{eq:stable} easily implies that $\nu_{\seq{A}}(\Xseq{C}^i)$ converges to $(\lambda_i)_{i\in\bbbn}$ in $\ell^p$-norm for $p\geq 1$.
	Let $\phi_1,\phi_2,\dots$ be  strongly $r$-local formulas with $p$ free variables in the language of $\sigma$. Then for any fixed $N\in\bbbn$ it holds
		\begin{align*}
		&\left|\biggl(\sum_{i\geq 1}\nu_{\mathbf A_n}(C_n^i)^p\langle\phi^i,\mathbf A_n[C_n^i]\rangle\biggr)-\biggl(\sum_{i\geq 1}\lambda_i^p\lim\langle\phi^i,\seq{A}[\Xseq{C}^i]\rangle\biggr)\right|\leq\\
		&\qquad\qquad\qquad \sum_{i\geq 1}|\nu_{\mathbf A_n}(C_n^i)^p-\lambda_i^p|+\sum_{i\geq 1}\lambda_i^p\bigl|\lim\langle\phi^i,\seq{A}[\Xseq{C}^i]\rangle-\langle\phi^i,\mathbf A_n[C_n^i]\rangle\bigr|\\
		&\qquad\qquad\|\nu_{\seq{A}}(\Xseq{C}^i)-(\lambda_i)_{i\in\bbbn}\|_p+
		\sum_{i=1}^N \bigl|\lim\langle\phi^i,\seq{A}[\Xseq{C}^i]\rangle-\langle\phi^i,\mathbf A_n[C_n^i]\rangle\bigr|+\sum_{i>N}\lambda_i^p.\\		
	\end{align*}
	It follows that
	\begin{equation}
	\label{eq:convs}
	\lim_{n\rightarrow\infty}\sum_{i\geq 1}\nu_{\mathbf A_n}(C_n^i)^p\langle\phi^i,\mathbf A_n[C_n^i]\rangle=\sum_{i\geq 1}\lambda_i^p\lim\langle\phi^i,\seq{A}[\Xseq{C}^i]\rangle.	
	\end{equation}

	Let $\psi$ be a strongly $r$-local formula with $p$ free variables in the language of $\sigma^+=\sigma\cup\{M_0,M_1,M_2,\dots\}$.
	For $\zeta$ non negative integer, let $\psi^\zeta$ be the formula obtained from $\psi$ by replacing each term $M_i(t)$ by {\tt true} if $i=\zeta$ and 
	{\em false} otherwise.
	According to Lemma~\ref{lem:approx1} it holds
\begin{align*}
\langle\psi,\lift{\mathbf A_n}\rangle&=\nu_{\mathbf A_n}(R_n)^p\langle\psi^0,\mathbf A_n[R_n]\rangle+\sum_{i\geq 1}\nu_{\mathbf A_n}(C_n^i)^p\langle\psi^i,\mathbf A_n[C_n^i]\rangle,\\
\langle\psi^0,\mathbf A_n\rangle&=\nu_{\mathbf A_n}(R_n)^p\langle\psi^0,\mathbf A_n[R_n]\rangle+\sum_{i\geq 1}\nu_{\mathbf A_n}(C_n^i)^p\langle\psi^0,\mathbf A_n[C_n^i]\rangle.
\end{align*}
Thus, according to \eqref{eq:convs} it holds
$$
	\lim_{n\rightarrow\infty}\sum_{i\geq 1}\nu_{\mathbf A_n}(C_n^i)^p\langle\psi^0,\mathbf A_n[C_n^i]\rangle=\sum_{i\geq 1}\lambda_i^p\lim\langle\psi^0,\seq{A}[\Xseq{C}^i]\rangle.	
$$
Hence $\lim_{n\rightarrow\infty} \nu_{\mathbf A_n}(R_n)^p\langle\psi^0,\mathbf A_n[R_n]\rangle$ exists and
$$
\lim_{n\rightarrow\infty} \nu_{\mathbf A_n}(R_n)^p\langle\psi^0,\mathbf A_n[R_n]\rangle=\lim \langle\psi^0,\seq{A}\rangle-\sum_{i\geq 1}\lambda_i^p\lim\langle\psi^0,\seq{A}[\Xseq{C}^i]\rangle.
$$
It follows that $\lim \langle\psi,\lift{\seq{A}}\rangle$ exists and
$$
\lim \langle\psi,\lift{\seq{A}}\rangle=\lim \langle\psi^0,\seq{A}\rangle-\sum_{i\geq 1}\lambda_i^p\lim\langle\psi^0,\seq{A}[\Xseq{C}^i]\rangle+\sum_{i\geq 1}\lambda_i^p\lim\langle\psi^i,\seq{A}[\Xseq{C}^i].
$$
Hence $\lift{\seq{A}}$ is a clustering of $\seq{A}$.
\end{proof}

To handle cases where~\eqref{eq:stable} does not hold, we need to introduce the notion of clip:

\begin{definition}
A {\em clip} us a non-decreasing function $F:\bbbn\rightarrow\bbbz^+$ such that $F\gg 1$ (i.e.
$\lim_{n\rightarrow\infty}F(n)=\infty$) and such that for every integers $n\leq n'$ it holds
\begin{equation}
\sum_{i=1}^{F(n)}\bigl|\nu_{\mathbf A_{n'}}(C_{n'}^i)-\lambda_i\bigr|\leq\sum_{i>F(n)}\lambda_i.
\end{equation}
\end{definition}
First, a few remarks are in order. The function $F:\bbbn\rightarrow\bbbz^+$ defined by
$$
F(n)=\min\biggl(n,\max\Bigl\{t\leq n:\ \forall n'\geq n\ 
\sum_{i=1}^t\bigl|\nu_{\mathbf A_{n'}}(C_{n'}^i)-\lambda_i\bigr|\leq\sum_{i>t}\lambda_i\Bigr\}\biggr)
$$
is a clip, as for $t=0$ the inequality holds and as for every $k\in\bbbn$ there exists $n\in\bbbn$ such that $F(n)\geq\min(n,k)$ (as the left-hand side of the inequality tends to $0$ as $n'\rightarrow\infty$). Thus clips always exist.

Secondly, remark that if $1\ll G\leq F$ and $F$ is a clip then $G$ is a clip as well, as 
$$
\sum_{i=1}^{G(n)}\bigl|\nu_{\mathbf A_{n'}}(C_{n'}^i)-\lambda_i\bigr|
\leq
\sum_{i=1}^{F(n)}\bigl|\nu_{\mathbf A_{n'}}(C_{n'}^i)-\lambda_i\bigr|\leq\sum_{i>F(n)}\lambda_i\leq\sum_{i>G(n)}\lambda_i.
$$
\begin{lemma}
\label{lem:clip1}
Let $F$ be a clip. Then
$$
\lim_{n\rightarrow\infty}\sum_{i=1}^{F(n)}\nu_{\mathbf A_n}(C_n^i)=1-\lambda_0.
$$
\end{lemma}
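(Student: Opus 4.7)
The plan is to apply the defining inequality of a clip with $n'=n$, and then let $n\to\infty$, using that $F(n)\to\infty$ together with the convergence of $\sum_{i\geq 1}\lambda_i=1-\lambda_0$.

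More concretely: first, taking $n'=n$ in the defining inequality of a clip gives
$$
\sum_{i=1}^{F(n)}\bigl|\nu_{\mathbf A_n}(C_n^i)-\lambda_i\bigr|\leq \sum_{i>F(n)}\lambda_i .
$$
By the triangle inequality this upper-bounds $\bigl|\sum_{i=1}^{F(n)}\nu_{\mathbf A_n}(C_n^i)-\sum_{i=1}^{F(n)}\lambda_i\bigr|$. Second, since $\sum_{i\geq 1}\lambda_i=1-\lambda_0<\infty$ and $F(n)\to\infty$ as $n\to\infty$, the tail $\sum_{i>F(n)}\lambda_i$ tends to $0$ and the partial sum $\sum_{i=1}^{F(n)}\lambda_i$ tends to $1-\lambda_0$. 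Combining the two observations yields
$$
\lim_{n\to\infty}\sum_{i=1}^{F(n)}\nu_{\mathbf A_n}(C_n^i)=1-\lambda_0,
$$
as required.

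There is essentially no obstacle here: the clip condition is tailored exactly so that the deviation of the first $F(n)$ measures from their limits is controlled by a tail of the convergent series $\sum_i\lambda_i$, and since $F(n)\to\infty$ both the deviation bound and the truncation error vanish in the limit. The only mild point to be careful about is that we need $F(n)\to\infty$ (which is part of the definition of a clip) in order to conclude that $\sum_{i>F(n)}\lambda_i\to 0$.
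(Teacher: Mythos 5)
Your proof is correct and follows essentially the same route as the paper: both instantiate the clip inequality at $n'=n$, convert it (via the triangle inequality) into the two-sided bound $\sum_{i=1}^{F(n)}\lambda_i-\sum_{i>F(n)}\lambda_i\leq\sum_{i=1}^{F(n)}\nu_{\mathbf A_n}(C_n^i)\leq\sum_{i=1}^{F(n)}\lambda_i+\sum_{i>F(n)}\lambda_i$, and conclude using $F(n)\to\infty$ and the summability of $\sum_i\lambda_i$. Nothing is missing.
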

\begin{proof}
It follows directly from the definition of a clip that for every integer $n$ it holds
$$\sum_{i=1}^{F(n)}\lambda_i-\sum_{i>F(n)}\lambda_i\leq \sum_{i=1}^{F(n)} \nu_{\mathbf A_n}(C_n^i)\leq \sum_{i=1}^{F(n)}\lambda_i+\sum_{i>F(n)}\lambda_i.$$
A $\lim_{n\rightarrow\infty}\sum_{i\geq F(n)}\lambda_i=0$, we deduce that 
$$\lim_{n\rightarrow\infty}\sum_{i=1}^{F(n)}\nu_{\mathbf A_n}(C_n^i)=\sum_{i\geq 1}\lambda_i=1-\lambda_0.$$
\end{proof}
Given a clip $F$, we define  $\Xseq{R}$ by
$$
R_n=A_n\setminus\bigcup_{i=1}^{F(n)}C_n^i.
$$
As for every integers $i$ and $d$ it holds
$$
\lim\nu_{\seq{A}}(\ball[d]{\seq{A}}(\partial_{\seq{A}} \Xseq{C^i}))=0
$$
there exists a function $T:\bbbn\times\bbbn\rightarrow\bbbn$ such that for every integers $i,d$ and $n\geq T(i,d)$ it holds
$$
\nu_{\mathbf A_n}(\ball[d]{\mathbf A_n}(\partial_{\mathbf A_n} C_n^i))\leq\frac{2^{-i}}{d}.
$$
Define
$$
M(a)=\max_{1\leq i\leq a}\max_{1\leq d\leq a}T(i,j).
$$
 Define also $G:\bbbn\rightarrow\bbbz^+$ by
$$
G(n)=\min(F(n),\max\{i\in\bbbn:\ M(i)\leq n\}). 
$$
Obviously, $1\ll G\leq F$ thus $G$ is a clip.
This clip has the following property:
\begin{lemma}
The sequence $\Xseq{S}$ defined by
$$
S_n=\bigcup_{i=1}^{G(n)}\partial_{\mathbf A_n} C_n^i
$$
is negligible.
\end{lemma}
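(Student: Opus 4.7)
The plan is to show, for each fixed $d\in\bbbn$, that $\lim_{n\to\infty}\nu_{\mathbf A_n}(\ball[d]{\mathbf A_n}(S_n))=0$, which is the definition of negligibility. The natural first step is the union bound
\[
\nu_{\mathbf A_n}(\ball[d]{\mathbf A_n}(S_n))\;\le\;\sum_{i=1}^{G(n)}\nu_{\mathbf A_n}(\ball[d]{\mathbf A_n}(\partial_{\mathbf A_n}C_n^i)),
\]
after which one cannot simply invoke the estimate $2^{-i}/d$ term by term, since naively summing it over $i$ only gives $1/d$, which does not vanish as $n\to\infty$ for fixed $d$. The key idea is to split the sum at a parameter $N$ and let $N$ tend to infinity only after $n$ does.

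First I would fix $d$ and an arbitrary $N\in\bbbn$. For $n$ large enough, $G(n)\ge\max(d,N)$, because $G$ is a clip and hence $G\gg 1$. I then split the right-hand side as
\[
\sum_{i=1}^{N}\nu_{\mathbf A_n}(\ball[d]{\mathbf A_n}(\partial_{\mathbf A_n}C_n^i))+\sum_{i=N+1}^{G(n)}\nu_{\mathbf A_n}(\ball[d]{\mathbf A_n}(\partial_{\mathbf A_n}C_n^i)).
\]
The first, finite, sum tends to $0$ as $n\to\infty$, because for each fixed $i$ the sequence $\partial_{\seq{A}}\Xseq{C}^i$ is negligible (each $\Xseq{C}^i$ being a cluster of $\seq{A}$).

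The heart of the argument is the tail estimate, and it is here that the clip construction pays off. Since $M(a)=\max_{1\le i,d'\le a}T(i,d')$, for every $i$ and $d$ with $i\le G(n)$ and $d\le G(n)$ we have $T(i,d)\le M(G(n))$; but by the very definition of $G$, we have $M(G(n))\le n$. Hence for every such $i$ the defining inequality $n\ge T(i,d)$ holds, and the bound
\[
\nu_{\mathbf A_n}(\ball[d]{\mathbf A_n}(\partial_{\mathbf A_n}C_n^i))\;\le\;\frac{2^{-i}}{d}
\]
applies simultaneously for all $i$ in the range $N+1\le i\le G(n)$ (once $n$ is large enough that $G(n)\ge\max(d,N)$). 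Summing, the tail is at most $\sum_{i>N}2^{-i}/d=2^{-N}/d$.

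Combining the two pieces, $\limsup_{n\to\infty}\nu_{\mathbf A_n}(\ball[d]{\mathbf A_n}(S_n))\le 2^{-N}/d$ for every $N$, hence equals $0$. Thus $\Xseq{S}$ is negligible. I do not anticipate a significant obstacle; the only subtle point is ensuring that the single choice of $G$ dominates the error uniformly in $i$ up to $G(n)$, which is precisely what forcing $G(n)\le\max\{i:M(i)\le n\}$ was designed to guarantee.
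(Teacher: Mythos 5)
Your proof is correct, but it takes a slightly different route from the paper's. The paper avoids any splitting of the sum: for $n$ large enough that $G(n)\geq d$ it enlarges the radius, writing $\nu_{\mathbf A_n}(\ball[d]{\mathbf A_n}(S_n))\leq\nu_{\mathbf A_n}(\ball[G(n)]{\mathbf A_n}(S_n))\leq\sum_{i=1}^{G(n)}\nu_{\mathbf A_n}(\ball[G(n)]{\mathbf A_n}(\partial_{\mathbf A_n}C_n^i))\leq\sum_{i=1}^{G(n)}2^{-i}/G(n)<1/G(n)$, so the growing denominator $G(n)$ coming from the $T(i,d')$ bound at radius $d'=G(n)$ does all the work in a single line. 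You instead keep the radius fixed at $d$, split the sum at a fixed $N$, kill the head using the negligibility of each individual $\partial_{\seq{A}}\Xseq{C}^i$ (which holds since each $\Xseq{C}^i$ is a cluster), and bound the tail by $2^{-N}/d$ via the same uniform estimate $T(i,d)\leq M(G(n))\leq n$ that underlies the paper's argument. Both proofs hinge on exactly the property of the clip $G$ that you identify at the end; yours is the standard two-parameter limit argument and uses one extra (freely available) fact, while the paper's is more economical because the radius enlargement makes the whole sum uniformly small without isolating a head.
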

\begin{proof}
Let $d\in\bbbn$. For sufficiently large $n$ it holds
\begin{align*}
\nu_{\mathbf A_n}(\ball[d]{\mathbf A_n}(S_n))
&\leq\nu_{\mathbf A_n}(\ball[G(n)]{\mathbf A_n}(S_n))\\
&\leq\sum_{i=1}^{G(n)}\nu_{\mathbf A_n}(\ball[G(n)]{\mathbf A_n}(\partial C_n^i))\\
&\leq\sum_{i=1}^{G(n)}\frac{2^{-i}}{G(n)}<\frac{1}{G(n)}.
\intertext{Hence}
\lim\nu_{\seq{A}}(\ball[d]{\seq{A}}(\Xseq{S}))&=0,
\end{align*}
that is: $\Xseq{S}$ is negligible.
\end{proof}

Define the subset sequences $\Xseq{D}^i$ by
$$
D^i_n=\begin{cases}
C^i_n&\text{if }n\geq G(i)\\
\emptyset&\text{otherwise}
\end{cases}
$$ 
and let $\Xseq{R}$ be defined by $R_n=A_n\setminus\bigcup_{i\geq 1} D_n^i$.
\begin{lemma}
Either $\lambda_0=0$ and $\Xseq{R}$ is negligible, or
$\lambda_0>0$ and $\Xseq{R}$ is a cluster.
\end{lemma}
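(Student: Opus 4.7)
The plan is to verify the dichotomy directly by combining the measure computation, the boundary control, and a reduction to Lemma~\ref{lem:stable}. First I compute $\lim \nu_{\seq A}(\Xseq R) = \lambda_0$. The sequences $\Xseq D^i$ are pairwise disjoint, as each $\Xseq D^i$ is a sub-sequence of $\Xseq C^i$ and the $\Xseq C^i$'s are disjoint; moreover, for each fixed $i$, the equality $D^i_n = C^i_n$ holds for all sufficiently large $n$, so $\nu_{\mathbf A_n}(D^i_n) \to \lambda_i$. The clip property of $G$, together with the identity $\sum_{i \geq 1} \lambda_i = 1 - \lambda_0$ and the tail bound $\sum_{i > G(n)} \lambda_i \to 0$, provides the dominated convergence needed to swap the limit with the sum, yielding $\lim_n \sum_i \nu_{\mathbf A_n}(D^i_n) = 1 - \lambda_0$ and hence $\lim \nu_{\seq A}(\Xseq R) = \lambda_0$.

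Next I show that $\partial_{\seq A} \Xseq R$ is negligible. Any element $v \in \partial_{\mathbf A_n} R_n$ lies in some active $D^i_n = C^i_n$ and has a neighbor in $R_n$; this neighbor lies outside $C^i_n$, so $v \in \partial_{\mathbf A_n} C^i_n$. Thus $\partial_{\mathbf A_n} R_n$ is contained in the union of the $\partial_{\mathbf A_n} C^i_n$'s over the active indices at stage $n$, paralleling the construction of $\Xseq S$. For any fixed $d$ and $n$ large enough that $G(n) \geq d$, the definition of the majorant $M$ (which dominates all the thresholds $T(i,d)$ for $i,d \leq G(n)$) gives the bound $\nu_{\mathbf A_n}(\ball[d]{\mathbf A_n}(\partial_{\mathbf A_n} C^i_n)) \leq \nu_{\mathbf A_n}(\ball[G(n)]{\mathbf A_n}(\partial_{\mathbf A_n} C^i_n)) \leq 2^{-i}/G(n)$; summing the geometric series in $i$ yields $\nu_{\mathbf A_n}(\ball[d]{\mathbf A_n}(\partial_{\mathbf A_n} R_n)) < 1/G(n) \to 0$.

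If $\lambda_0 = 0$, combining $\nu_{\seq A}(\Xseq R) \to 0$ with the negligibility of $\partial_{\seq A} \Xseq R$ via $\ball[d]{\mathbf A_n}(R_n) \subseteq R_n \cup \ball[d-1]{\mathbf A_n}(\partial_{\mathbf A_n} R_n)$ immediately yields that $\Xseq R$ is negligible. If $\lambda_0 > 0$, I appeal to the alternative characterization of Lemma~\ref{lem:cluster}: positive limit measure and negligible boundary have already been established, so it remains only to certify local convergence of $\seq A[\Xseq R]$. For this I invoke Lemma~\ref{lem:stable} on the family $\Xseq D^1, \Xseq D^2, \dots$: the negligible envelope of $\bigcup_i \partial_{\seq A} \Xseq D^i$ is provided by Step~2, and the stability identity $\sum_i \lim \nu_{\seq A}(\Xseq D^i) = \lim \sum_i \nu_{\seq A}(\Xseq D^i) = 1 - \lambda_0$ is precisely the swap established in Step~1. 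Lemma~\ref{lem:stable} then produces a clustering whose residual mark differs from $\Xseq R$ only by a negligible set, and an application of Lemma~\ref{lem:eqcore} upgrades this to the conclusion that $\Xseq R$ itself is a cluster of $\seq A$.

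The main obstacle is the uniform boundary control in Step~2: each individual $\partial_{\mathbf A_n} C^i_n$ has a $d$-neighborhood measure which vanishes only pointwise in $i$, while the number of active indices at stage $n$ grows without bound. The entire design of the clip $G$, and in particular its trimming below the threshold dictated by the growth function $M$, is engineered precisely to force the exponential decay bound $2^{-i}/G(n)$, so that the aggregate measure over all active indices still vanishes uniformly in the radius $d$ as $n \to \infty$.
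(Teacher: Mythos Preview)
Your proof is correct and follows essentially the same route as the paper: both compute $\lim\nu_{\seq A}(\Xseq R)=\lambda_0$ via the clip property (Lemma~\ref{lem:clip1}), both control $\partial_{\seq A}\Xseq R$ through the negligible set $\Xseq S=\bigcup_i\partial_{\seq A}\Xseq D^i$ constructed just before, and both rely on Lemma~\ref{lem:stable} for the limit--sum interchange. The only difference is packaging: you invoke the full conclusion of Lemma~\ref{lem:stable} (residual is a cluster) and then transfer it to $\Xseq R$ via Lemma~\ref{lem:eqcore}, whereas the paper writes out the limiting formula for $\langle\phi,\seq A[\Xseq R]\rangle$ explicitly and cites Lemma~\ref{lem:stable} only for the swap. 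One harmless slip: in Step~2 your element $v\in\partial_{\mathbf A_n}R_n$ lies \emph{inside} $C^i_n$, so it is the neighbour $w$ (not $v$) that belongs to $\partial_{\mathbf A_n}C^i_n$; this only changes the inclusion to $\partial_{\mathbf A_n}R_n\subseteq\ball{\mathbf A_n}(S_n)$, which is equally sufficient for negligibility (and the paper's own statement $\partial\Xseq R\subseteq\Xseq S$ carries the same imprecision).
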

\begin{proof}
Note that 
$$\Xseq{S}=\bigcup_{i\geq 1}\partial\Xseq{D}^i\supseteq \partial\Xseq{R}.$$
In particular, $\partial\Xseq{R}$ is negligible. According to Lemma~\ref{lem:clip1}, we have
$$
\lim_{n\rightarrow\infty}\sum_{i=1}^{G(n)}\nu_{\mathbf A_n}(D_n^i)=1-\lambda_0,
$$
thus
$\lim\nu_{\seq{A}}(\Xseq{R})=\lambda_0$.
Consider a strongly $r$-local formula $\phi$ with $p$ free variable.
For every $\epsilon>0$ there exists $n_0$ such that for every $n\geq n_0$ it holds $\nu_{\mathbf A_n}(\ball[r]{\mathbf A_n}(S_n))<\epsilon$. It follows that
$$
\Bigl|\langle\phi,\mathbf{A}_n\rangle-\nu_{\mathbf{A}_n}(R_n)^p \langle\phi,\mathbf{A}_n[R_n]\rangle-\sum_{i\geq 1}
\nu_{\mathbf A_n}(D_n^i)^p\langle\phi,\mathbf{A}_n[D_n^i]\rangle\Bigr|<3p\epsilon.
$$
Thus, if $\lambda_0>0$ we have
\begin{align*}
\lim\langle\phi,\seq{A}[\Xseq{R}]\rangle&=
\frac{1}{\lambda_0^p}\lim_{n\rightarrow\infty}\Bigl(
\langle\phi,\mathbf{A}_n\rangle-
\sum_{i\geq 1}\lambda_i^p \langle\phi,\mathbf{A}_n[{D}^i_n]\rangle
\Bigr)\\
&=\frac{1}{\lambda_0^p}\Bigl(
\lim\langle\phi,\seq{A}\rangle-
\sum_{i\geq 1}\lambda_i^p \lim\langle\phi,\seq{A}[\Xseq{D}^i]\rangle
\Bigr).
\end{align*}
(Note that we can safely exchange limit and sum here because the partition into 
$\Xseq{R}$ and the $\Xseq{D}^i$'s is stable, see Lemma~\ref{lem:stable}.)
It follows that either $\lambda_0$ and $\Xseq{R}$ is negligible, or
$\lambda_0>0$ and $\Xseq{R}$ is a cluster.
\end{proof}
\begin{great}
\begin{lemma}[Cluster Comb Lemma]
\label{lem:comb}
Let $\seq{A}$ be a local convergent sequence of $\sigma$-structures, and let $\Xseq{C}^1,\dots,\Xseq{C}^i,\dots$ be countably many strongly disjoint clusters of $\seq{A}$.

Let $\sigma^+$ be the signature $\sigma$ augmented by unary relations $M_i$ ($i\in\{0,1,2,\dots\}$). Then there exist a clustering $\seq{A}^+$ of $\seq{A}$ with the property
that for $i=1,\dots$, the marks $M_i$ comb the clusters $\Xseq{C}^i$  in the sense that there exists a non decreasing function $G:\bbbn\rightarrow\bbbn$ with $g\gg 1$ with
\begin{equation}
M_i(\mathbf A_n)=\begin{cases}
C_n^i&\text{if }n\geq G(i)\\
\emptyset&\text{otherwise}
\end{cases}
\end{equation}
(In particular $M_i(\seq{A})\approx\Xseq{C}^i$).
\end{lemma}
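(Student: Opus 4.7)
The plan is to package what has already been proved in the paragraphs immediately preceding the lemma statement. The preceding construction supplies a clip and, from it, disjoint sequences $\Xseq{D}^i$ together with a residual $\Xseq{R}$; what remains is to re-index the clip to fit the combing statement and to verify the clustering axioms. Set $\lambda_i = \lim \nu_{\seq{A}}(\Xseq{C}^i)$ and $\lambda_0 = 1 - \sum_{i\geq 1}\lambda_i$. The strong disjointness hypothesis ensures that for $i \neq j$ the boundaries $\partial_{\seq{A}}\Xseq{C}^i$ and $\partial_{\seq{A}}\Xseq{C}^j$ are pairwise disjoint and disjoint from every $\Xseq{C}^k$, so the two asymptotic quantities to control simultaneously are the cumulative outer boundary of the marked clusters and the tail of the partial measures.

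First I would borrow the clip $G^\ast(n) = \min(F(n), \max\{i : M(i)\leq n\})$ constructed above, where $M$ is the boundary control function built from the negligibility data $T(i,d)$ of each $\partial_{\seq{A}}\Xseq{C}^i$. This clip guarantees simultaneously that $\Xseq{S} = \bigcup_{i\leq G^\ast(n)}\partial_{\mathbf{A}_n} C_n^i$ is negligible and, by Lemma~\ref{lem:clip1}, that $\sum_{i\leq G^\ast(n)}\nu_{\mathbf{A}_n}(C_n^i) \to 1-\lambda_0$. The combing function required by the lemma statement is then the right inverse $G(i) = \min\{n : G^\ast(n) \geq i\}$, which is non-decreasing and satisfies $G \gg 1$ because $G^\ast$ is unbounded. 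Setting $M_i(\mathbf{A}_n^+) = C_n^i$ when $n \geq G(i)$ and $M_i(\mathbf{A}_n^+) = \emptyset$ otherwise recovers exactly the sequences $\Xseq{D}^i$ of the preceding construction; in particular $M_i(\seq{A}^+) \approx \Xseq{C}^i$ and the $M_i(\seq{A}^+)$ are pairwise strongly disjoint because the $\Xseq{C}^i$ are.

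To finish, I would handle the residual $\Xseq{R} = \Xseq{A} \setminus \bigcup_{i\geq 1}M_i(\seq{A}^+)$ via the lemma proved just before the statement: if $\lambda_0 = 0$ then $\Xseq{R}$ is negligible and I introduce no symbol $M_0$ (so $\Xseq{R}$ merges with $\Xseq{S}$), while if $\lambda_0 > 0$ then $\Xseq{R}$ is a cluster and I add a new unary symbol $M_0$ marking it. In either case condition~\eqref{cl:2} holds thanks to the negligibility built into the clip, conditions~\eqref{cl:3} and~\eqref{cl:4} follow from the disjointness of the $\Xseq{D}^i$ together with Lemma~\ref{lem:clip1}, and local convergence, condition~\eqref{cl:1}, is precisely what Lemma~\ref{lem:stable} delivers when applied to the countable family $\{\Xseq{R}\}\cup\{\Xseq{D}^i\}_{i\geq 1}$: its two hypotheses (negligibility of the global boundary and the stability equation~\eqref{eq:stable}) are exactly what the clip construction has been tailored to produce.

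The main obstacle, and the reason the naive choice $G^\ast(n) = n$ does not suffice, is the simultaneous control of two opposing pressures: marking too many clusters at stage $n$ inflates the cumulative boundary and destroys negligibility, while marking too few lets the partial measures undershoot $\sum_i \lambda_i$ by more than $\sum_{i > G^\ast(n)}\lambda_i$ and destroys stability. Verifying that the two threshold functions entering $G^\ast$ both individually tend to infinity, so that their minimum still satisfies $G^\ast \gg 1$ and the combing function $G$ is well-defined, is the technical heart of the argument; once this is in hand, the rest of the proof is assembly of the preceding lemmas.
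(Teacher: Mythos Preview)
Your proposal is correct and follows essentially the same route as the paper: both assemble the preceding clip construction (the function $G^\ast$, the truncated clusters $\Xseq{D}^i$, the negligibility of $\bigcup_{i\leq G^\ast(n)}\partial_{\mathbf A_n}C_n^i$, and Lemma~\ref{lem:clip1}) into the clustering axioms. Your write-up is in fact slightly cleaner than the paper's own proof in two respects: you make explicit that the combing function required by the statement is the right inverse $G(i)=\min\{n:G^\ast(n)\geq i\}$ of the clip (the paper silently overloads the symbol $G$), and you invoke Lemma~\ref{lem:stable} directly for condition~\eqref{cl:1}, whereas the paper re-performs the Stone pairing decomposition by hand.
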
	
\end{great}

\begin{proof}
Denote $\Xseq{S}=\Xseq{A}\setminus\bigcup_i M_i(\seq{A})$, $\Xseq{D}^i=M_i(\seq{A})$ and $\Xseq{R}=M_0(\seq{A})$.

	Remark that we have the property that $\seq{A}\approx \seq{A}-\Xseq{S}$, which is the disjoint union of $\seq{A}[R]$ and all the
$\seq{A}[\Xseq{D}^i]$. Mark vertices of $\Xseq{R}$ by $M_0$,
vertices of $\Xseq{D}^i$ by $M_i$, and further mark vertices in $N_n$ by mark $M_S$.
It is easily checked that the proportion of $A_n$ marked by some 
mark $M_i$ for $0\leq i\leq k$ tends to $\sum_{i=0}^k\lambda_i$ as $n\rightarrow\infty$, and that this value tends to $1$ as $k\rightarrow\infty$. Consider the signature $\sigma^+$ extended by these marks, and let $\seq{A}^+$ be the  sequence of marked structures. Let 
$\phi$ be an $r$-local strongly local formula with $p$ free variables. Then
$$
\langle\phi,\mathbf{A}_n^+\rangle=\lambda_0^p\langle\phi,\mathbf{A}_n^+[R_n]\rangle+\sum_{i\geq 1}\lambda_i^p\langle\phi,\mathbf{A}_n^+[D_n^i]\rangle.
$$
Denote by $\phi_i$ the formula derived from $\phi$ by replacing each $M_i$ with {\tt true} and every $M_j$ with $j\neq i$ with {\rm false}.
Notice that $\phi_i$ is an $r$-local strongly local formula in the language of the original signature $\sigma$,  that
$\phi(\mathbf{A}_n^+[R_n])=\phi_0(\mathbf{A}_n[R_n])$ and that 
$\phi(\mathbf{A}_n^+[D_n^i])=\phi_i(\mathbf{A}_n[D_n^i])$ (for $i\in\bbbn$). Hence
$$
\langle\phi,\mathbf{A}_n^+\rangle=\lambda_0^p\langle\phi_0,\mathbf{A}_n[R_n]\rangle+\sum_{i\geq 1}\lambda_i^p\langle\phi_i,\mathbf{A}_n[D_n^i]\rangle.
$$
Thus $\seq{A}^+$ is a local convergent sequence.
\end{proof}

Remark: if one only assumes that the clusters $\Xseq{C}^i$ are almost disjoint (meaning $\Xseq{C}^i\Delta\Xseq{C}^j$ negligible if $i\neq j$) then we get the same conclusion, except that the second item is weakened to $\Xseq{D}^i\approx\Xseq{C}^i$. The idea is to define the clusters $\Xseq{Z}^i=\Xseq{C}^i\setminus\bigcup_{j<i}\ball{\seq{A}}(\Xseq{C}^j)$ that are strongly disjoints and equivalent to the original clusters.

\section{The Clustering Problem}
It is not clear which clusters of $\seq{A}$ can be ``captured'' in general from the only information available from local convergence, and whether it is possible to mark these  clusters in a constructive way.

The answer to this question is that we can always capture all the (countably many) globular clusters and that we can explicitly define the marking based on the knowledge of some of the limit Stone pairing and basic Fourier analysis, and a subtle cut method to handle the non-commutativity of countable sums and limits.
 This will be the motivation of the final part of this paper. This part demonstrates pleasing mathematical paradox: in order to achieve a more concrete result we first have to generalize.
 
\part{Effective Construction of the Globular Clusters}
\section{The Representation Theorem and Some Consequences}
\label{sec:rep}

Let $\mathcal B$ be the Lindenbaum--Tarski algebra defined by ${\rm FO}^{\rm local}(\sigma)$ and let $S_\sigma$ be the Stone dual of $\mathcal B$, which is a Polish space, whose topology is generated by its clopen subsets. Recall that the duality of $\mathcal B$ and $S_\sigma$ is expressed by the existence of a mapping $K$ from ${\rm FO}^{\rm local}(\sigma)$ to the family of all the clopen subsets of $S$ such that $K(\phi\vee\psi)=K(\phi)\cup K(\psi)$, $K(\phi\wedge\psi)=K(\phi)\cap K(\psi)$, $K(\neg\phi)=S\setminus K(\phi)$, and $K(\phi)=K(\psi)$ if and only if $\phi$ and $\psi$ are logically equivalent. For a local formula $\phi$, we further denote by $k(\phi)$ the indicator function of $K(\phi)$, which is obviously continuous on $S$. Note that the $\sigma$-algebra of Borel subsets of $S_\sigma$ turns $S_\sigma$ into a standard Borel space. 

The following representation theorem has been proved in \cite{CMUC} (in the case where finite structures are only considered with uniform measures).
The extension to the general case (finite structures endowed with a probability measure) is easy, and we do not prove it here.

\begin{theorem}
\label{thm:rep}
For every finite structure $\mathbf A$ there is a unique 
probability measure $\mu_{\mathbf A}$ on $S_\sigma$ such that for every finite $\sigma$-structure $\mathbf A$ and every local formula $\phi$ it holds
$$
\langle\phi,\mathbf A\rangle=\int_{S_\sigma} k(\phi)\,{\rm d}\mu_{\mathbf{A}}.
$$

Moreover, for every two finite structures $\mathbf A$ and $\mathbf B$, it holds $\mu_{\mathbf A}=\mu_{\mathbf B}$ if and only if the structures obtained from $\mathbf A$ and $\mathbf B$ by removing connected components without non-zero weight elements are isomorphic as weighted structures.

Denote by $\mathfrak M_\sigma$ the closure of the space of all the probability measures $\mu_{\mathbf A}$ for finite $\mathbf A$ (with respect to weak topology). 

Then, a sequence $\seq{A}=(\mathbf{A}_n)_{n\in\bbbn}$ of finite $\sigma$-structure is local-convergent if and only if the sequence 
$(\mu_{\mathbf{A}_n})_{n\in\bbbn}$ of probability measures
converges weakly, and then the limit probability measure is the unique 
probability measure $\mu_{\lim\seq{A}}$ 
such that for every local formula $\phi$ it holds
$$
\int_{S_\sigma} k(\phi)\,{\rm d}\mu_{\lim\seq{A}}=\langle\phi,\lim\seq{A}\rangle.
$$
\end{theorem}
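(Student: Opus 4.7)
The plan is to build $\mu_{\mathbf A}$ directly from Stone duality and then invoke standard measure-theoretic and functional-analytic machinery. First I would define a set function on the Boolean algebra of clopens of $S_\sigma$ by declaring $\mu_{\mathbf A}(K(\phi)):=\langle\phi,\mathbf A\rangle$. Well-definedness follows from the duality (equal clopens correspond to logically equivalent formulas, hence give equal Stone pairings), finite additivity from the fact that $K(\phi)\cap K(\psi)=\emptyset$ iff $\phi\wedge\psi$ is unsatisfiable, and normalization from $\langle\top,\mathbf A\rangle=1$. Because $S_\sigma$ is a compact totally disconnected Polish space, every clopen cover admits a finite subcover, so this finitely additive probability is automatically a premeasure on the clopen algebra. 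Carath\'eodory's extension theorem then produces the unique Borel probability $\mu_{\mathbf A}$ extending it, and the identity $\langle\phi,\mathbf A\rangle=\int k(\phi)\,{\rm d}\mu_{\mathbf A}$ holds by construction for every local $\phi$; uniqueness of $\mu_{\mathbf A}$ comes from the fact that clopens generate the Borel $\sigma$-algebra.

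For the ``moreover'' clause, the forward direction is trivial since isomorphic weighted structures yield identical Stone pairings. For the converse, I would appeal to Corollary~\ref{cor:sloc} to reduce to strongly local formulas. The key observation is that for every pointed finite $\sigma$-structure $(\mathbf H,r)$ of radius $d$ there is a strongly $d$-local formula $\phi_{\mathbf H,r}(x)$ satisfied by $v\in A$ exactly when there is an isomorphism $\mathbf A[\ball[d]{\mathbf A}(v)]\to\mathbf H$ sending $v$ to $r$. Running over an enumeration of such $(\mathbf H,r)$ and letting $d\to\infty$, the values $\langle\phi_{\mathbf H,r},\mathbf A\rangle$ determine, for each isomorphism type of rooted connected component, the total $\nu_{\mathbf A}$-mass of elements sitting in a component of that type. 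Since $\mathbf A$ is finite, this information recovers the multiset of connected components (with their weights) up to isomorphism, modulo components carrying no positive-weight element; these latter are exactly the ones invisible to Stone pairings, and hence are precisely the ambiguity in the ``moreover'' clause.

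For the convergence statement, recall that the space of Borel probabilities on the compact Polish space $S_\sigma$ is itself compact metrizable under the weak topology (Prokhorov). By the Portmanteau characterization, weak convergence $\mu_{\mathbf A_n}\to\mu$ is equivalent to $\int f\,{\rm d}\mu_{\mathbf A_n}\to\int f\,{\rm d}\mu$ for every $f\in C(S_\sigma)$. Since $S_\sigma$ is totally disconnected, the linear span of $\{k(\phi):\phi\in{\rm FO}^{\rm local}(\sigma)\}$ is a unital subalgebra of $C(S_\sigma)$ (using $k(\phi)k(\psi)=k(\phi\wedge\psi)$) separating points, hence uniformly dense by Stone--Weierstrass. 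Thus weak convergence of the $\mu_{\mathbf A_n}$ is equivalent to convergence of $\langle\phi,\mathbf A_n\rangle$ for every local $\phi$, i.e.~local-convergence; the limit measure $\mu_{\lim\seq{A}}$ is then uniquely determined by the displayed integral identity and belongs to $\mathfrak M_\sigma$ by definition.

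The step I expect to be the main obstacle is the ``moreover'' clause: the formal measure-theoretic construction and the weak-convergence equivalence are essentially automatic once Stone duality is in place, but the combinatorial reconstruction of the weighted structure from local statistics requires some care, in particular to correctly identify the residual ambiguity as exactly the isolated zero-weight components and to handle the passage from rooted to unrooted isomorphism classes.
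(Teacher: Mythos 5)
The paper itself does not prove Theorem~\ref{thm:rep}; it quotes it from \cite{CMUC} and declares the extension to weighted structures routine, so there is no in-paper argument to compare against. Your construction of $\mu_{\mathbf A}$ (a finitely additive set function on the clopen algebra via Stone duality, countable additivity for free from compactness of $S_\sigma$, Carath\'eodory extension, uniqueness from the fact that the clopens generate the Borel $\sigma$-algebra) and your treatment of the convergence statement (Stone--Weierstrass density of the span of the $k(\phi)$ in $C(S_\sigma)$, Prokhorov compactness plus Riesz representation for existence of the limit measure) are the standard route and are correct.

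The one step that does not go through as written is the converse direction of the ``moreover'' clause. Your formulas $\phi_{\mathbf H,r}$ have a single free variable, so their Stone pairings determine only, for each \emph{unweighted} rooted ball type, the total $\nu_{\mathbf A}$-mass of elements realizing that type; this does not determine the weighted isomorphism class. For instance, a triangle with weights $(0.1,0.2,0.3)$ plus an isolated vertex of weight $0.4$, versus a triangle with weights $(0.15,0.15,0.3)$ plus an isolated vertex of weight $0.4$, agree on every single-variable local formula (a first-order formula cannot see the measure, and the mass of each ball-type class is the same), yet they are not isomorphic as weighted structures; they are separated only by formulas with at least two free variables such as $x_1\sim x_2$. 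So the inference ``this information recovers the multiset of connected components (with their weights)'' is false for the information you have actually extracted. The repair is to use strongly local formulas with $p$ free variables pinning each $x_i$ to a prescribed role inside a prescribed component type: their Stone pairings are sums over the components of that type of monomials in the element weights, i.e.\ joint moments of the weight vectors, and since $\mathbf A$ is finite these moments determine the multiset of weight vectors, hence the weighted isomorphism class modulo components carrying no positive-weight element. (Also, Corollary~\ref{cor:sloc} is not the relevant reduction here --- it concerns convergence; since $\mu_{\mathbf A}=\mu_{\mathbf B}$ already gives equality of \emph{all} local Stone pairings, no reduction is needed, only a richer family of test formulas.)
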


Recall that a bounded sequence of positive finite measures $\mu_n$ on a metric space $S$ {\em converges weakly} to the finite positive measure $\mu$ if for any bounded continuous function $f:S\rightarrow\bbbr$ it holds $\int f\,{\rm d}\mu_n\rightarrow\int f\,{\rm d}\mu$. This is denoted by $\mu_n\Rightarrow \mu$,

Thus for every continuous function $f:S_\sigma\rightarrow\bbbr$, and for every local convergent sequence $\seq{A}$ it holds
$\mu_{\mathbf{A}_n}\Rightarrow\mu_{\lim\seq{A}}$ and thus
\begin{equation}
\label{eq:lim}
\int_{S_\sigma} f\,{\rm d}\mu_{\lim\seq{A}}=\lim_{n\rightarrow\infty}\int_S f\,{\rm d}\mu_{\mathbf{A}_n}.
\end{equation}
(Note, however that \eqref{eq:lim} does not hold for general Borel functions $f:S\rightarrow\bbbr$.)
When considering random variables, one equally uses the terms {\em convergence in distribution}, {\em weak convergence}, or {\em convergence in law}.
In our setting, we will use the term  ``weak convergence'' when referring to convergence of probability measures on a Stone space, and we then use the notation $\mu_n\Rightarrow\mu$; we will use the term ``convergence in distribution'' (or ``convergence in law'') when referring to convergence random variables with values in $\bbbr^k$, and then we use the notation $X_n\xrightarrow{\mathcal D}X$. In this latter case, we use the term {\em distribution} (or {\em law}) of $X$ for the related probability function on $\bbbr^k$. In the case of a (scalar) random variable $X$, the distribution can be alternatively described by means of its {\em cumulative distribution function} $F_X$ defined by $F_X(x)=\mathbb[X\leq x]$.  

One of the important aspects of the study of local convergence is to determine (or even characterize) those parameters $F$ that are {\em local-continuous} in the sense that if a sequence $\seq{A}=(\mathbf A_n)_{n\in\bbbn}$ of finite structures is local convergent then so is the sequence $(F(\mathbf A_n))_{n\in\bbbn}$ of the associated parameters. Of course, every continuous real function $f\in C(S_\sigma)$ defines a local continuous parameter $\mathbf A\mapsto \int_{S_\sigma} f\,{\rm d}\mu_{\mathbf A}$. But we shall explicit some 
local continuous parameters that are not of this form. As we shall see such parameters will be of prime importance for clustering structures in a local convergent sequence.

\begin{definition}
Let $\mathbf A$ be a $\sigma$-structure and let $\phi$ be a first order formula with free variables $x_1,\dots,x_p$ (with $p\geq 1$).
Denote by $\phi^v(\mathbf A)$ the set
$$\phi^v(\mathbf A)=\Bigl\{
(u_1,\dots,u_{p-1})\in A^{p-1}:\ \mathbf A\models(v,u_1,\dots,u_{p-1})\Bigr\}.$$

The {\em local Stone pairing} of $\phi$ and $\mathbf A$ at $v$ is
\begin{align*}
\langle\psi,\mathbf{A}\rangle_{v}&={\rm Pr}(\mathbf A\models\psi(v,X_2,\dots,X_p))\\
&=\nu_{\mathbf A}^{\otimes (p-1)}(\phi^v(\mathbf A))
\end{align*}
Hence if $\nu_{\mathbf A}(\{v\})\neq 0$ we get that 
the local Stone pairing of $\phi$ and $\mathbf A$ at $v$ is nothing but the conditional probability
${\rm Pr}(\mathbf A\models\psi(X_1,X_2,\dots,X_p)|X_1=v)$.
\end{definition}

In our setting, every finite structure is considered as a probability space and thus the local Stone pairing of a formula $\phi$ and finite structure $\mathbf A$  defines a random variable
$$
\langle \phi,\mathbf A\rangle_\bullet: v\mapsto \langle \phi,\mathbf A\rangle_v.
$$

The (admittedly technical) Lemma~\ref{lem:locSP} will be the key tool for our estimation of clustering parameters. As it proceeds by means of Fourier analysis, we take time to recall some basics.

Given a random variable $\mathbf X$ with values in $\bbbr^k$ and law $P$, the {\em characteristic function} 
of $\mathbf X$ or $P$ is 
$$\gamma(\mathbf t)=\mathbb E[e^{i\mathbf t\cdot\mathbf X}]=\int e^{i\mathbf t\cdot\mathbf x}\,{\rm d}P(\mathbf x)\qquad\text{for every $\mathbf t\in\bbbr^k$},$$
where $\mathbf t\cdot\mathbf x$ denotes the usual inner product of two vectors $\mathbf x$ and $\mathbf t$ in $\bbbr^k$.

A standard Taylor expansion of $E[e^{i\mathbf t\cdot\mathbf X}]$ gives the following expression of the characteristic function as an infinite series:
$$
\gamma(\mathbf t)=\sum_{w_1\geq 0}\dots\sum_{w_k\geq 0}\mathbb E[X_1^{w_1}\dots X_k^{w_k}]\frac{it_1^{w_1}\dots t_k^{w_k}}{w_1!\dots w_k!}.
$$
A main property of characteristic functions is that they fully characterise distribution laws, and that they relate convergence in law of distributions to pointwise convergence of characteristic functions. Precisely, we have:
\begin{theorem}[L\'evy's continuity theorem]
	If $P_n$ are random laws on $\bbbr^k$ whose characteristic
functions $\gamma_n(\mathbf t)$ converge for all $\mathbf t$ to some $\gamma(\mathbf t)$, where $f$ is continuous at $0$
along each coordinate axis, then $P_n$ converges in law to a law $P$ with characteristic
function $f$.
\end{theorem}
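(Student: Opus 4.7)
The plan is to prove L\'evy's continuity theorem by the standard tightness-plus-uniqueness route. First I would establish the key tightness estimate: for any probability measure $P$ on $\bbbr^k$ with characteristic function $\gamma$, and for each coordinate axis $j$ and each $u>0$,
\[
P\bigl(\{\mathbf x : |x_j|\geq 2/u\}\bigr)\leq \frac{1}{u}\int_{-u}^{u}\bigl(1-\mathrm{Re}\,\gamma(t\,\mathbf e_j)\bigr)\,{\rm d}t.
\]
This follows from Fubini applied to $\int_{-u}^{u}(1-e^{it x_j})\,{\rm d}t=2u-\tfrac{2}{x_j}\sin(u x_j)$ together with the elementary bound $1-\sin(y)/y\geq \tfrac12\mathbf{1}_{|y|\geq 2}$.

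Second, I would deduce tightness of $\{P_n\}$. Fix $\epsilon>0$. Since $\gamma$ is continuous at $0$ along each axis and $\gamma(0)=1$ (which is forced by $\gamma_n(0)=1\to \gamma(0)$), for each $j=1,\dots,k$ one can choose $u_j>0$ so that $\tfrac{1}{u_j}\int_{-u_j}^{u_j}(1-\mathrm{Re}\,\gamma(t\mathbf e_j))\,{\rm d}t<\epsilon/(2k)$. By pointwise convergence $\gamma_n\to\gamma$ and dominated convergence (integrand bounded by $2$ on a bounded interval), the same inequality with $\gamma$ replaced by $\gamma_n$ holds with $\epsilon/k$ on the right, for all $n\geq n_0(j,\epsilon)$. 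Combining the $k$ axes, $P_n(\{\mathbf x:\|\mathbf x\|_\infty\geq R\})<\epsilon$ uniformly in $n$ for a suitable $R=R(\epsilon)$, handling the finitely many initial indices separately. Hence $\{P_n\}$ is tight.

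Third, I would invoke Prokhorov's theorem: every subsequence of $\{P_n\}$ has a further subsequence that converges weakly to some probability measure $P^\star$ on $\bbbr^k$. For any such subsequential limit, weak convergence together with the fact that $\mathbf x\mapsto e^{i\mathbf t\cdot\mathbf x}$ is bounded and continuous gives that the characteristic function of $P^\star$ equals $\lim_n \gamma_n(\mathbf t)=\gamma(\mathbf t)$ for every $\mathbf t\in\bbbr^k$. Since the characteristic function uniquely determines a probability law on $\bbbr^k$ (by Fourier inversion applied to convolutions with Gaussian approximations of the identity, which is the one classical input I would quote), every subsequential limit equals the same measure $P$ with characteristic function $\gamma$. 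A standard subsequence argument then yields $P_n\Rightarrow P$ for the whole sequence, and $\gamma$ is automatically continuous everywhere as the characteristic function of a probability measure.

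The main obstacle is the tightness step: the only hypothesis on the limit $\gamma$ is pointwise convergence plus continuity at the origin, so one cannot use dominated convergence globally. The tightness inequality above is precisely the device that converts "$\gamma$ continuous at $0$" into a uniform control of the tails of $P_n$, and obtaining it in a form that survives taking limits in $n$ is the delicate point; everything else (Prokhorov and uniqueness of characteristic functions) is then routine.
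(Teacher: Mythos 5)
Your proof is correct. Note, though, that the paper does not prove this statement at all: it is quoted as a classical background result (the reference to ``$f$'' in the statement is a typo for $\gamma$), so there is no in-paper argument to compare against. Your argument is the standard textbook route and is complete: the truncation inequality
$P(\{|x_j|\geq 2/u\})\leq \tfrac{1}{u}\int_{-u}^{u}(1-\mathrm{Re}\,\gamma(t\mathbf e_j))\,{\rm d}t$ is derived correctly (the constant works out because $1-\sin(y)/y\geq \tfrac12$ for $|y|\geq 2$ and the Fubini computation yields $2\int(1-\sin(ux_j)/(ux_j))\,{\rm d}P$), and you correctly identify that the hypothesis of continuity at $\mathbf 0$ \emph{along each coordinate axis} is exactly what is needed, since tightness is checked one coordinate at a time. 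The passage from $\gamma$ to $\gamma_n$ via dominated convergence on the bounded interval $[-u_j,u_j]$, Prokhorov's theorem, the identification of every subsequential limit via uniqueness of characteristic functions, and the subsequence principle are all standard and correctly deployed. Your closing remark rightly isolates the tightness step as the only place where the continuity-at-the-origin hypothesis enters; without it the theorem is false (e.g.\ $P_n$ Gaussian with variance $n$ has $\gamma_n\to\mathbf 1_{\{\mathbf t=\mathbf 0\}}$, which is not continuous at $\mathbf 0$ and the sequence escapes to infinity), so your emphasis is well placed.
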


Note that there is a one-to-one correspondence between cumulative distribution functions and characteristic functions. If $X$ is a (scalar) random variable we have
\begin{theorem}[L\'evy]
\label{thm:Levy}
	If $\gamma$ is the characteristic function of a scalar random variable with cumulative distribution function $F$, then for two points $a<b$ such that $F$ is continuous at $a$ and $b$ it holds
	$$
	F(b)-F(a)=\frac{1}{2\pi}\lim_{T\rightarrow\infty}\int_{-T}^T\frac{e^{-ita}-e^{-itb}}{it}\gamma(t)\,{\rm d}t.
	$$
	Moreover, if $a$ is an atom of $X$ (that is a discontinuity point of $F$) then
	$$
	F(a)-F(a-0)=\lim_{T\rightarrow\infty}\frac{1}{2T}\int_{-T}^Te^{-ita}\gamma(t)\,{\rm d}t.
	$$
	\end{theorem}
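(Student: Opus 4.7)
The plan is to use the standard Fourier-inversion argument: substitute the definition $\gamma(t)=\int_{\bbbr}e^{itx}\,{\rm d}F(x)$ into the right-hand side of each formula, apply Fubini's theorem to swap the order of integration (legitimate because we are integrating a bounded function over a finite interval against a probability measure), recognise the inner integral as a Dirichlet-type kernel, and then pass to the limit $T\to\infty$ by bounded (or dominated) convergence.

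For the first identity, after substitution and swapping, the inner integral becomes
\[
\int_{-T}^T\frac{e^{it(x-a)}-e^{it(x-b)}}{it}\,{\rm d}t
=2\int_0^T\frac{\sin(t(x-a))}{t}\,{\rm d}t-2\int_0^T\frac{\sin(t(x-b))}{t}\,{\rm d}t,
\]
the cosine contributions vanishing by oddness of $t\mapsto \cos(tu)/t$ about the origin. As $T\to\infty$, each summand tends to $\pi\,{\rm sgn}(x-c)$ for $c\in\{a,b\}$, so the whole expression converges pointwise to $2\pi$ on $(a,b)$, to $0$ outside $[a,b]$, and to $\pi$ at $x\in\{a,b\}$. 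Since the partial Dirichlet integrals are uniformly bounded in both $T$ and the argument, bounded convergence applies. Assuming $a$ and $b$ are continuity points, $F$ gives no mass to $\{a,b\}$, so the limit equals $2\pi(F(b)-F(a))$, giving the first formula after dividing by $2\pi$.

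For the second identity the situation is simpler: the inner integral $\frac{1}{2T}\int_{-T}^T e^{it(x-a)}\,{\rm d}t$ equals $\frac{\sin(T(x-a))}{T(x-a)}$ (with value $1$ at $x=a$), which is bounded by $1$, tends to $0$ for every $x\neq a$, and equals $1$ at $x=a$; dominated convergence then yields $\int\mathbf{1}_{\{x=a\}}\,{\rm d}F(x)=F(a)-F(a-0)$, namely the mass of the atom at $a$.

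The main technical obstacle, needed to justify bounded convergence in the first part, is the uniform bound $\sup_{T>0,\,u\in\bbbr}\bigl|\int_0^T\frac{\sin(tu)}{t}\,{\rm d}t\bigr|<\infty$. This is a classical estimate (proved by summation by parts over successive half-periods of $\sin(tu)$, using alternation of signs and monotone decrease of $1/t$) and can simply be quoted; once it is in place, the remaining manipulations are routine and reduce to tracking the sign function and the behaviour of $F$ at continuity versus discontinuity points.
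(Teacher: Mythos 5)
The paper does not prove this statement: it quotes it as the classical L\'evy inversion theorem and only uses it later (in the proof of Theorem~\ref{thm:bigmain}) to recover the point masses $p(\lambda)$ from the characteristic function. Your argument is the standard and correct proof of the inversion formula --- substitute $\gamma(t)=\int e^{itx}\,{\rm d}F(x)$, apply Fubini (the kernel $\frac{e^{-ita}-e^{-itb}}{it}=\int_a^b e^{-itu}\,{\rm d}u$ is bounded by $b-a$, so this is legitimate), identify the inner integral as a difference of Dirichlet integrals converging to $\pi\,{\rm sgn}(x-a)-\pi\,{\rm sgn}(x-b)$, and conclude by bounded convergence using the uniform bound on $\sup_{T,u}\bigl|\int_0^T\frac{\sin(tu)}{t}\,{\rm d}t\bigr|$; the atom formula follows from the elementary $\frac{1}{2T}\int_{-T}^Te^{it(x-a)}\,{\rm d}t=\frac{\sin(T(x-a))}{T(x-a)}\to\mathbf 1_{\{x=a\}}$. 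The only point worth phrasing carefully is that the cosine contributions must be cancelled as a \emph{pair} (the difference $\frac{\cos(t(x-a))-\cos(t(x-b))}{t}$ is bounded and odd, whereas each term separately is non-integrable at $0$), which your bounded-kernel setup already guarantees.
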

Note that this inversion theorem extends to the case of random vectors.

\begin{lemma}[Continuity of joint distribution of local Stone pairing]
\label{lem:locSP}

	Let  $\phi_1,\dots,\phi_d$ be local formulas (with $p_1,\dots,p_d$ free variables).
	
	For $\mu\in \mathfrak M_\sigma$ and
	 $\mathbf t\in\bbbr^d$ define
	\begin{align*}
	\gamma(\mu,\mathbf t)&=\sum_{w_1\geq 0}\dots\sum_{w_d\geq 0}\left(\int_{S_\sigma} k(\psi_{\mathbf w})\,{\rm d}\mu\right)\,\prod_{j=1}^d\frac{(i t_j)^{w_j}}{w_j!},
	\intertext{where $\psi_{(0,\dots,0})$ is true statement, and for $\mathbf w\neq(0,\dots,0)$ we define} 
\psi_{\mathbf w}&:=\bigwedge_{i=1}^d\bigwedge_{j=1}^{w_i}\phi_i(x_1,x_{n_{i,j}+1},\dots,x_{n_{i,j}+p_i-1})
\intertext{with}
n_{i,j}&=\left(\sum_{\ell=1}^{i-1}w_\ell p_\ell\right)+(j-1)p_i+1.
	\end{align*}
	Then, the following properties hold:
	\begin{enumerate}
		\item for every $\mu\in\mathfrak M_\sigma$, the mapping $\mathbf t\mapsto\gamma(\mu,\mathbf t)$ is the characteristic function of a $d$-dimensional random variable $\mathbf D(\mu)$;
		\item the mapping $\mu\mapsto \mathbf D(\mu)$ is continuous in the sense that if $\mu_n$ converges weakly to $\mu$ then $D(\mu_n)$ converges in distribution to $D(\mu)$, that is:
		$$\mu_n\Rightarrow\mu\qquad\Longrightarrow\qquad\mathbf D(\mu_n)\xrightarrow{\mathcal D}\mathbf D(\mu);$$
		\item for every finite structure $\mathbf A$ (with associated probability measure $\mu_{\mathbf A}\in\mathfrak M_\sigma$)
	the $d$-dimensional random variable $$\mathbf D_{\mathbf A}=(\langle\phi_1,\mathbf A\rangle_\bullet,\dots,\langle\phi_d,\mathbf A\rangle_\bullet)$$ has the same distribution as $\mathbf D(\mu_{\mathbf A})$.
	\end{enumerate}
\end{lemma}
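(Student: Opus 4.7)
The plan is to establish part~(3) first by a direct moment computation, then to bootstrap to parts~(1) and~(2) via L\'evy's continuity theorem, exploiting the fact that $\mathfrak M_\sigma$ is by definition the weak closure of the finite-structure measures.

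For part~(3), fix a finite structure $\mathbf A$ and note that $\mathbf D_{\mathbf A}$ takes values in the compact cube $[0,1]^d$. Expanding $\mathbb E[e^{i\mathbf t\cdot\mathbf D_{\mathbf A}}]$ as a Taylor series and interchanging sum and expectation (justified by boundedness and dominated convergence), the characteristic function reads
\[
\mathbb E[e^{i\mathbf t\cdot\mathbf D_{\mathbf A}}]=\sum_{\mathbf w}\Bigl(\prod_{j=1}^d\frac{(it_j)^{w_j}}{w_j!}\Bigr)\,\mathbb E\Bigl[\prod_{i=1}^d\langle\phi_i,\mathbf A\rangle_\bullet^{w_i}\Bigr].
\]
The key combinatorial identity is that the mixed moment equals $\langle\psi_{\mathbf w},\mathbf A\rangle$. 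Indeed, for a fixed $v$ the quantity $\langle\phi_i,\mathbf A\rangle_v^{w_i}$ is the probability that $w_i$ independent fresh $(p_i{-}1)$-tuples all satisfy $\phi_i(v,\cdot)$; taking expectation over $v\sim\nu_{\mathbf A}$ and using the independence across $i$ and across the $w_i$ copies shows that this equals the Stone pairing of the formula $\psi_{\mathbf w}$ described in the statement (whose variable-naming convention is precisely the bookkeeping that all auxiliary tuples are disjoint and share only $x_1$). Applying Theorem~\ref{thm:rep} gives $\langle\psi_{\mathbf w},\mathbf A\rangle=\int_{S_\sigma}k(\psi_{\mathbf w})\,{\rm d}\mu_{\mathbf A}$, so the characteristic function of $\mathbf D_{\mathbf A}$ is exactly $\gamma(\mu_{\mathbf A},\mathbf t)$; this proves~(3) and, en passant, shows that $\gamma(\mu_{\mathbf A},\cdot)$ is a genuine characteristic function.

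For parts~(1) and~(2), the uniform bound $0\leq k(\psi_{\mathbf w})\leq 1$ yields $|\gamma(\mu,\mathbf t)|\leq\prod_j e^{|t_j|}$ for every $\mu\in\mathfrak M_\sigma$, so the series defining $\gamma(\mu,\cdot)$ converges absolutely and uniformly on compact subsets of $\bbbr^d$; in particular $\gamma(\mu,\cdot)$ is continuous. Given any sequence $\mu_n\Rightarrow\mu$ in $\mathfrak M_\sigma$, the continuity of each $k(\psi_{\mathbf w})$ together with weak convergence yields $\int k(\psi_{\mathbf w})\,{\rm d}\mu_n\to\int k(\psi_{\mathbf w})\,{\rm d}\mu$ for every $\mathbf w$; the uniform tail domination then allows dominated convergence on the series index $\mathbf w$, giving $\gamma(\mu_n,\mathbf t)\to\gamma(\mu,\mathbf t)$ pointwise. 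To obtain~(1), specialise to a sequence $\mu_n=\mu_{\mathbf A_n}$ of finite-structure measures converging to $\mu$ (which exists by definition of $\mathfrak M_\sigma$). By~(3) each $\gamma(\mu_n,\cdot)$ is the characteristic function of $\mathbf D_{\mathbf A_n}$, and these random variables live in $[0,1]^d$, hence form a tight family. L\'evy's continuity theorem (its continuity-at-the-origin hypothesis being supplied by the absolute bound above) then produces a limit distribution which we call $\mathbf D(\mu)$ and whose characteristic function is $\gamma(\mu,\cdot)$. Part~(2) is now immediate: for an arbitrary $\mu_n\Rightarrow\mu$ in $\mathfrak M_\sigma$, the same pointwise convergence of characteristic functions together with continuity of $\gamma(\mu,\cdot)$ at the origin delivers $\mathbf D(\mu_n)\xrightarrow{\mathcal D}\mathbf D(\mu)$ via L\'evy once more.

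The substantive work is the moment identification in~(3); one must carefully read off from the definition of $\psi_{\mathbf w}$ that the auxiliary tuple-variables are pairwise disjoint and share only $x_1$, so that averaging over a uniform assignment really reproduces $\mathbb E[\prod_i\langle\phi_i,\mathbf A\rangle_\bullet^{w_i}]$. Once that is correctly accounted for, every remaining interchange of infinite sums with limits is controlled by the uniform bound $|k(\psi_{\mathbf w})|\leq 1$ and is routine. No delicate estimate is required; the argument is essentially a careful translation between moments, Stone pairings, and characteristic functions.
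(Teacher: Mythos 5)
Your proposal is correct and follows essentially the same route as the paper: part~(3) is established by the same moment identification $\mathbb E_v[\prod_i\langle\phi_i,\mathbf A\rangle_v^{w_i}]=\langle\psi_{\mathbf w},\mathbf A\rangle$, and parts~(1) and~(2) are then deduced by reducing to finite-structure sequences and invoking L\'evy's continuity theorem. If anything, your justification of the pointwise convergence $\gamma(\mu_n,\mathbf t)\to\gamma(\mu,\mathbf t)$ (weak convergence termwise plus dominated convergence on the series, using $|k(\psi_{\mathbf w})|\leq 1$) is spelled out more carefully than in the paper, which merely asserts it.
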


\begin{proof}
	We shall prove the three items in reverse order.
	
	Let us prove (3). For any finite structure $\mathbf A$ and any vector $\mathbf w=(w_1,\dots,w_d)\in\bbbn^d$,  
let $N=n_{d,w_d}+p_d-1$. Then it holds
\begin{align*}
\psi_{\mathbf w}(\mathbf A)&=\Bigl\{\mathbf x\in A^N:\ (\forall i\in [d]\ \forall j\in [w_i])\  (x_1,x_{n_{i,j}+1},\dots,x_{n_{i,j}+p_i-1})\in\phi_i(\mathbf A)\Bigr\}\\
&=\bigcup_{v\in A}\{v\}\times\bigl\{\mathbf x\in A^{N-1}:\ (\forall i\in [d]\ \forall j\in [w_i])\  (x_{n_{i,j}},\dots,x_{n_{i,j}+p_i-2})\in\phi_i^v(\mathbf A)\bigr\}\\
&=\bigcup_{v\in A}\{v\}\times\overbrace{\phi_1^v(\mathbf A)\times\dots\times\phi_1^v(\mathbf A)}^{w_1\text{ times}}\times\dots\times\overbrace{\phi_d^v(\mathbf A)\times\dots\times\phi_d^v(\mathbf A)}^{w_d\text{ times}}.
\intertext{Thus}
	\langle\psi_{\mathbf w},\mathbf A\rangle&=\nu_{\mathbf A}^{\otimes N}(\psi_{\mathbf w}(\mathbf A))\\
	&=\sum_{v\in A}\nu_{\mathbf A}(\{v\})\ \bigl(\nu_{\mathbf A}^{\otimes (p_1-1)}(\phi_1^v(\mathbf A))\bigr)^{w_1}\dots\bigl(\nu_{\mathbf A}^{\otimes (p_d-1)}(\phi_d^v(\mathbf A))\bigr)^{w_d}\\
	&=\mathbb E_v[\langle\phi_1,\mathbf A\rangle_v^{w_1}\dots \langle\phi_d,\mathbf A\rangle_v^{w_d}].
\end{align*}

If follows that the characteristic function $\gamma_{\mathbf A}$ of $\mathbf D_{\mathbf A}$
is equal to:
$$
\gamma_{\mathbf A}(\mathbf t)=\mathbb E[e^{i\mathbf t\cdot \mathbf D_{\mathbf A}}]=\sum_{w_1\geq 0}\dots\sum_{w_d\geq 0}
\langle\psi_{\mathbf w},\mathbf A\rangle
\prod_{j=1}^d \frac{(it_j)^{w_j}}{w_j!}=\gamma(\mu_{\mathbf A},\mathbf t).
$$
(Note that as all the moments $\langle\psi_{\mathbf w},\mathbf A\rangle$ are bounded by $1$ the above series converges for every (complex) vector $\mathbf t$.) As they have the same characteristic functions, the random variables $\mathbf D_{\mathbf A}$ and $\mathbf D(\mu_{\mathbf A})$ have the same distribution.

Let us now prove (1) and (2). 
It is sufficient to consider the case where $\mu_n=\mu_{\mathbf A_n}$ for some local convergent sequence $\seq{A}$. As $\mu_n\Rightarrow\mu$, the fonctions $\gamma(\mu_n,\mathbf t)$ converge pointwise to the function
$\gamma(\mu,\mathbf t)$. Moreover, $\gamma(\mu,\mathbf t)$ is clearly continuous at $\mathbf t=\mathbf 0$ hence, according to L\'evy's continuity theorem, the random variables $\mathbf D_{\mathbf A_n}$ converge in distribution to a random variable $\mathbf D$ with characteristic function $\gamma(\mu,\mathbf t)$.
\end{proof}

\begin{remark}[for an interested reader]
	The formula defining $\psi_{\mathbf w}$ and the equality
	of $\langle\psi_{\mathbf w},\mathbf A\rangle$ and $\mathbb E_v[\langle\phi_1,\mathbf A\rangle_v^{w_1}\dots \langle\phi_d,\mathbf A\rangle_v^{w_d}]$ are generalization of the following simple fact:
	For a graph $G$ and a vertex $v$ of $G$, $\langle x_1\sim x_2,G\rangle_v$ (where $\sim$ denotes adjacency) is the probability that a random vertex $x_2$ is adjacent to $x_1=v$, that is ${\rm deg}(v)/|G|$, and $\langle x_1\sim x_2,G\rangle$ is the average of $\langle x_1\sim x_2,G\rangle_v$ over all vertices of $G$, that is 
	$\langle x_1\sim x_2,G\rangle=\mathbb E[\langle x_1\sim x_2,G\rangle_v]$.
	Similarly,  $\langle (x_1\sim x_2)\wedge (x_1\sim x_3),G\rangle_v$ is the probability that random $x_2$ is adjacent to $v$ and random $x_3$ is adjacent to $v$. As $x_2$ and $x_3$ are independent random vertices, this is nothing but the square of ${\rm deg}(v)/|G|$. Hence
	$\langle (x_1\sim x_2)\wedge (x_1\sim x_3),G\rangle=(\langle x_1\sim x_2,G\rangle_v)^2]$. The same way, for every integer $k$, it holds
	$$
	\langle (x_1\sim x_2)\wedge\dots\wedge (x_1\sim x_{k+1}),G\rangle=(\langle x_1\sim x_2,G\rangle_v)^k].
	$$
\end{remark}

In this paper, we shall be interested in random variables that are a bit more complicated, but definable as a limit of local Stone pairing.
In this context we will need the following complement to Lemma~\ref{lem:locSP}.
\begin{lemma}
\label{lem:locSP2}
Let	$\mu\in\mathfrak M_\sigma$ and let $(\phi_{\ell,1})_{\ell\in\bbbn}$, \dots, $(\phi_{\ell,d})_{\ell\in\bbbn}$ be sequences of local formulas (with $p_1,\dots,p_d$ free variables, respectively) such that for every integer $1\leq i\leq d$ it holds
	$$
	\phi_{1,i}\rightarrow\phi_{2,i}\rightarrow\dots\rightarrow\phi_{\ell,i}\rightarrow\dots
	$$
	(where $\rightarrow$ stands for logical implication).

Let $\mathbf D_\ell(\mu)$ be a $d$-dimensional random variable with characteristic function
  $\gamma_\ell(\mu,\mathbf t)$, which is the function associated to $\phi_{\ell,1},\dots,\phi_{\ell,d}$ as in 
Lemma~\ref{lem:locSP}.

 Then, as $\ell\rightarrow\infty$, the random variables $\mathbf D_\ell(\mu)$ converge in distribution to a random variable $\mathbf D_\infty(\mu)$, whose characteristic function $\gamma_\infty(\mu,\mathbf t)$ is the pointwise limit of the functions $\gamma_\ell(\mu,\mathbf t)$.
\end{lemma}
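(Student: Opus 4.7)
The plan is to invoke L\'evy's continuity theorem: it suffices to show that $\gamma_\ell(\mu,\mathbf t)$ converges pointwise on $\bbbr^d$ to some function $\gamma_\infty(\mu,\mathbf t)$ which is continuous at $\mathbf t=\mathbf 0$ along each coordinate axis; the random variable $\mathbf D_\infty(\mu)$ would then be the one produced by L\'evy's theorem with characteristic function $\gamma_\infty(\mu,\cdot)$.

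First I would exploit the implication hypothesis $\phi_{\ell,i}\rightarrow\phi_{\ell+1,i}$ to conclude that, for every fixed multi-index $\mathbf w\in\bbbn^d$, the formula $\psi_{\mathbf w}^{(\ell)}$ (which is a finite conjunction of instances of the $\phi_{\ell,i}$'s) satisfies $\psi_{\mathbf w}^{(\ell)}\rightarrow\psi_{\mathbf w}^{(\ell+1)}$. Stone duality translates this into an increasing sequence of clopen sets $K(\psi_{\mathbf w}^{(\ell)})\subseteq S_\sigma$, so the indicator functions $k(\psi_{\mathbf w}^{(\ell)})$ are non-decreasing in $\ell$. Monotone convergence applied to $\mu$ then shows that
$$a_{\mathbf w}:=\lim_{\ell\rightarrow\infty}\int_{S_\sigma}k(\psi_{\mathbf w}^{(\ell)})\,{\rm d}\mu$$
exists in $[0,1]$.

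Second, I would pass to the limit inside the series defining $\gamma_\ell(\mu,\mathbf t)$. The crucial observation is that each summand is dominated, uniformly in $\ell$, by $\prod_{j=1}^d |t_j|^{w_j}/w_j!$, and the total
$$\sum_{\mathbf w\in\bbbn^d}\prod_{j=1}^d\frac{|t_j|^{w_j}}{w_j!}=\exp\Bigl(\textstyle\sum_{j=1}^d|t_j|\Bigr)$$
is finite for every $\mathbf t$. Dominated convergence against the counting measure on $\bbbn^d$ therefore yields
$$\gamma_\ell(\mu,\mathbf t)\ \longrightarrow\ \gamma_\infty(\mu,\mathbf t):=\sum_{\mathbf w\geq 0}a_{\mathbf w}\prod_{j=1}^d\frac{(it_j)^{w_j}}{w_j!}$$
pointwise. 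The same domination gives uniform convergence of this series on compact subsets of $\bbbr^d$; since every partial sum is a polynomial in $\mathbf t$, $\gamma_\infty(\mu,\cdot)$ is continuous on all of $\bbbr^d$, in particular at the origin.

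With pointwise convergence of the characteristic functions and continuity of the limit at $\mathbf 0$ both established, L\'evy's continuity theorem directly produces a $d$-dimensional random variable $\mathbf D_\infty(\mu)$ with characteristic function $\gamma_\infty(\mu,\cdot)$ such that $\mathbf D_\ell(\mu)\xrightarrow{\mathcal D}\mathbf D_\infty(\mu)$, which is exactly the conclusion sought. There is no serious obstacle in this argument; the only point that deserves a moment of attention is the monotonicity of the indicators $k(\psi_{\mathbf w}^{(\ell)})$ in $\ell$, which is what legitimizes applying monotone convergence coordinate-wise before swapping with the outer sum via dominated convergence.
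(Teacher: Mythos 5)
Your proposal is correct and follows essentially the same route as the paper: monotonicity of the moments $\int k(\psi_{\ell,\mathbf w})\,{\rm d}\mu$ in $\ell$ (from the implication hypothesis via Stone duality), term-by-term passage to the limit in the series defining $\gamma_\ell(\mu,\mathbf t)$, continuity of the limit at $\mathbf t=\mathbf 0$, and L\'evy's continuity theorem. You merely make explicit the domination argument that the paper leaves implicit in its ``it follows that'' step.
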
	
\begin{proof}
Let 
$$\psi_{\ell,\mathbf w}:=\bigwedge_{i=1}^d\bigwedge_{j=1}^{w_i}\phi_{\ell,i}(x_1,x_{n_{i,j}+1},\dots,x_{n_{i,j}+p_i-1})$$
(as in Lemma~\ref{lem:locSP}).

	For each vector $\mathbf w\in\bbbn^d$ the sequence 
$\bigl(\int_S k(\psi_{\ell,\mathbf w})\,{\rm d}\mu\bigr)_{\ell\in\bbbn}$ is non-decreasing and bounded by $1$ hence converging. It follows that the functions $\gamma_\ell(\mu,\mathbf t)$ converge pointwise as $\ell\rightarrow\infty$ to 
$$
	\gamma_\infty(\mu,\mathbf t)=\sum_{w_1\geq 0}\dots\sum_{w_d\geq 0}\left(\lim_{\ell\rightarrow\infty}\int_S k(\psi_{\ell,\mathbf w})\,{\rm d}\mu\right)\,\prod_{j=1}^d\frac{(i t_j)^{w_j}}{w_j!},
$$
which is continuous at $\mathbf t=\mathbf 0$. Thus the theorem follows from L\'evy's continuity theorem.
\end{proof}

Note that if $\seq{A}$ is a local convergent sequence of finite structures, it holds
\begin{align*}
\mathbf D_{\ell}(\mu_{\mathbf A_n})&\xrightarrow{\mathcal D}\mathbf D_{\ell}(\mu_{\lim\seq{A}})&\text{as $n\rightarrow\infty$}\\
\mathbf D_{\ell}(\mu_{\lim\seq{A}})&\xrightarrow{\mathcal D}\mathbf D_{\infty}(\mu_{\lim\seq{A}})&\text{as $\ell\rightarrow\infty$} 	
\end{align*}
However, although  $D_{\ell}(\mu_{\mathbf{A}_n})\xrightarrow{\mathcal D}\mathbf D_{\infty}(\mu_{\mathbf{A}_n})$ as $\ell\rightarrow\infty$, it is not true in general that $\mathbf D_{\infty}(\mu_{\mathbf{A}_n})$ converges in distribution to $\mathbf D_{\infty}(\mu_{\lim\seq{A}})$ as $n\rightarrow\infty$.

\begin{definition}
Assume $\mathbf{A}$ be a $\sigma$-structures. 
The {\em $1$-point random lift distribution} of $\mathbf A$ is the probability distribution over (isomorphism classes of) $\sigma^\bullet$-structures (where $\sigma^\bullet$ is the signature obtained from $\sigma$ by adding a unary symbol $M_1$), corresponding to the marking a random elements $X_1$ of $\mathbf A$, drawn from $A$  according to probability distribution $\nu_{\mathbf A}$. We denote by $\Pi$ the map from 
the space ${\rm Rel}(\sigma)$ of isomorphism classes of finite $\sigma$-structures (with domain endowed with a probability measure) to the space $P({\rm Rel}(\sigma))$ of probability distributions over ${\rm Rel}(\sigma)$, which maps a $\sigma$-structure $\mathbf A$ to its
     $1$-point random lift distribution $\Pi(\mathbf A)$.
\end{definition}

Recall that in the context of structures with a domain endowed with a probability measure, the notion of isomorphism  is more involved than standard isomorphism of structures with no associated probability measure.

Let $\mathbf A$ and $\mathbf B$ be $\sigma$-structures, and let $N_A$ (resp. $N_B$) be the union of all the connected components of $\mathbf A$ (resp. $\mathbf B$) without any element of positive measure. Then $\mathbf A$ and $\mathbf B$ are {\em isomorphic} if there exists a bijective mapping $f:A\setminus N_A\rightarrow B\setminus N_B$ preserving the measure (i.e. such that
$\nu_{\mathbf A}=\nu_{\mathbf B}\circ f$) and all the relations both ways
(i.e. $\mathbf A\models R(v_1,\dots,v_n)\iff \mathbf B\models R(f(v_1),\dots,f(v_n))$).

\begin{remark}
The $1$-point random lift corresponds to marking a vertex by $M_1$. Thus the obtained structure is a ``rooted'' structure. We choose this terminology in view of generalization to multiple and iterated random rooting.	
\end{remark}

The space ${\rm Rel}(\sigma)$, endowed with topology defined by local convergence, can be identified (via the continuous injection $\iota^\sigma:\mathbf A\mapsto\mu_{\mathbf A}$ of the representation theorem)
to an open subspace of the Polish space $P(S_\sigma)$, the space of all probability measures on $S_\sigma$ (with weak-$\ast$ topology). We denote by
$\mathfrak M_\sigma$ the closure of $\iota^\sigma({\rm Rel}(\sigma))$.  Similarly, the space ${\rm Rel}(\sigma^\bullet)$ can be identified via injection $\iota^{\sigma^\bullet}$ to an open subspace of $P(S_{\sigma^\bullet})$ with closure $\mathfrak M_{\sigma^\bullet}$. The pushfoward
$\iota^{\sigma^\bullet}_\ast:{\rm P}({\rm Rel}(\sigma))\rightarrow {\rm P}(\mathfrak M_\sigma)$
of $\iota^{\sigma^\bullet}$, defined by 
$$\iota^{\sigma^\bullet}_\ast(\zeta)=\zeta\circ(\iota^{\sigma^\bullet})^{-1}$$
is a continuous injection from ${\rm P}({\rm Rel}(\sigma))$ to ${\rm P}(\mathfrak M_\sigma)$.

The following result makes possible to transfer results about unrooted structures to $1$-point random lifts (i.e. randomly rooted structures). It is a non-trivial refining of Representation Theorem~\ref{thm:rep} and it is the main result of this section.
\begin{great}
\begin{theorem}[1-point random lift theorem]
\label{thm:1lift}
There exists a (unique) continuous function $\widetilde{\Pi}:\mathfrak M_\sigma\rightarrow{\rm P}(\mathfrak M_{\sigma^\bullet})$ such that the following diagram commutes:
\begin{center}
\includegraphics{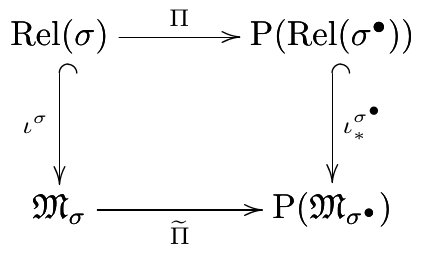}
\end{center}
\end{theorem}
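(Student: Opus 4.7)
The plan is to exploit the compactness of $P(\mathfrak M_{\sigma^\bullet})$ together with Lemma~\ref{lem:locSP}. Uniqueness will be immediate, since $\iota^\sigma({\rm Rel}(\sigma))$ is dense in $\mathfrak M_\sigma$ by definition and the target space is Hausdorff; it therefore suffices to show that the map $\mu_{\mathbf A}\mapsto\iota^{\sigma^\bullet}_*(\Pi(\mathbf A))$ defined on $\iota^\sigma({\rm Rel}(\sigma))$ admits a continuous extension to all of $\mathfrak M_\sigma$.

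To this end, I would first translate formulas in the extended signature back to formulas in $\sigma$. For each $\phi(x_1,\dots,x_p)\in {\rm FO}^{\rm local}(\sigma^\bullet)$, let $\tilde\phi(x_0,x_1,\dots,x_p)$ be the formula in the language of $\sigma$ obtained by replacing every atomic subformula $M_1(t)$ with the equality $t=x_0$. The main technical check is to verify that $\tilde\phi$ remains $r$-local whenever $\phi$ is; this should follow from the facts that $M_1$, being unary, does not alter the Gaifman graph, and that equality is $0$-local. The defining property of the translation is the pointwise identity $\langle\phi,\mathbf A^v\rangle=\langle\tilde\phi,\mathbf A\rangle_v$, where $\mathbf A^v$ denotes $\mathbf A$ enriched with $M_1=\{v\}$. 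Setting $F_\phi(\mu^\bullet):=\int k(\phi)\,d\mu^\bullet$, integration over the random rooting then gives
$$
\int F_\phi\,d\bigl(\iota^{\sigma^\bullet}_*(\Pi(\mathbf A))\bigr)=\mathbb E_{v\sim\nu_{\mathbf A}}[\langle\tilde\phi,\mathbf A\rangle_v]=\langle\tilde\phi,\mathbf A\rangle=\int k(\tilde\phi)\,d\mu_{\mathbf A},
$$
whose right-hand side is continuous in $\mu_{\mathbf A}\in\mathfrak M_\sigma$. More generally, applying Lemma~\ref{lem:locSP} to the tuple $\tilde\phi_1,\dots,\tilde\phi_d$ will yield continuous dependence of the joint law of $(F_{\phi_1},\dots,F_{\phi_d})$ on $\mu_{\mathbf A}$.

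To conclude, I would observe that the family $\{F_\phi:\phi\in {\rm FO}^{\rm local}(\sigma^\bullet)\}$ contains the constant~$1$ (take $\phi$ a tautology), is closed under products ($F_\phi F_\psi=F_{\phi\otimes\psi}$ in the sense of Section~\ref{sec:local}), and separates points of $\mathfrak M_{\sigma^\bullet}$ by Theorem~\ref{thm:rep}. Since $\mathfrak M_{\sigma^\bullet}$ is a compact Polish space (being a closed subspace of $P(S_{\sigma^\bullet})$ for the compact Stone space $S_{\sigma^\bullet}$), Stone--Weierstrass guarantees that the real span of $\{F_\phi\}$ is dense in $C(\mathfrak M_{\sigma^\bullet})$, so this family forms a convergence-determining class on $P(\mathfrak M_{\sigma^\bullet})$. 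For any local-convergent sequence $\seq{A}$ with $\mu_{\mathbf A_n}\Rightarrow\mu$ in $\mathfrak M_\sigma$, compactness of $P(\mathfrak M_{\sigma^\bullet})$ lets me extract weak limits of $P_n:=\iota^{\sigma^\bullet}_*(\Pi(\mathbf A_n))$, and the computation above forces any such limit $P$ to satisfy $\int F_\phi\,dP=\int k(\tilde\phi)\,d\mu$ for every $\phi$; this uniquely determines $P$. Defining $\widetilde\Pi(\mu):=P$ and verifying the same identity on all of $\mathfrak M_\sigma$ then gives continuity. The hardest step will be the Gaifman-locality check for $\tilde\phi$ and, more generally, keeping careful track of the free-variable bookkeeping in the translation; the rest is a standard combination of Theorem~\ref{thm:rep}, Lemma~\ref{lem:locSP}, and Stone--Weierstrass.
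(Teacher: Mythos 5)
Your proposal is correct, and its skeleton coincides with the paper's: the same translation $\phi\mapsto\tilde\phi$ replacing $M_1(t)$ by an equality with a fresh variable, the same pointwise identity $\langle\phi,\mathbf A^v\rangle=\langle\tilde\phi,\mathbf A\rangle_v$, and the same reduction (density of $\iota^\sigma({\rm Rel}(\sigma))$ in $\mathfrak M_\sigma$ plus compactness of the target) to showing that $\zeta_n=\iota^{\sigma^\bullet}_\ast(\Pi(\mathbf A_n))$ converges weakly for every local convergent $\seq{A}$. Where you genuinely diverge is in how that weak convergence is established. The paper fixes a continuous $F$ on the compact space $\mathfrak M_{\sigma^\bullet}$, uses uniform continuity to replace $F$ by a function constant on the preimages of a box partition of $[0,1]^d$ under $\mu\mapsto\bigl(\int k(\phi_1)\,{\rm d}\mu,\dots,\int k(\phi_d)\,{\rm d}\mu\bigr)$, and then invokes Lemma~\ref{lem:locSP} to get convergence of the measures of the boxes; this is a quantitative $\epsilon$-argument resting on the joint-distribution convergence of the vector of local Stone pairings. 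You instead observe that the test functions $F_\phi$ form, on $\mathfrak M_{\sigma^\bullet}$, a unital point-separating family closed under products, apply Stone--Weierstrass, and reduce everything to convergence of the scalars $\int F_\phi\,{\rm d}\zeta_n=\langle\tilde\phi,\mathbf A_n\rangle$, which is immediate from local convergence of $\seq{A}$ since $\tilde\phi$ (and more generally $\widetilde{\phi_{i_1}\otimes\cdots\otimes\phi_{i_k}}$) is a local $\sigma$-formula. This is a method-of-moments argument that is arguably cleaner and does not actually need the Fourier-analytic content of Lemma~\ref{lem:locSP}, only the locality check on the translated formulas. One point worth making explicit in a final write-up: the multiplicativity $F_\phi F_\psi=F_{\phi\otimes\psi}$ holds on $\mathfrak M_{\sigma^\bullet}$ by continuity from the dense set of measures of finite structures, but not on all of $P(S_{\sigma^\bullet})$, so it is essential (and sufficient) that Stone--Weierstrass is applied on the compact subspace $\mathfrak M_{\sigma^\bullet}$ rather than on the full space of probability measures on the Stone space.
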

\end{great}
\begin{proof}
	Consider an enumeration
	$\phi_1,\dots,\phi_d,\dots$ of local formulas with respect to signature $\sigma^\bullet$. To each formula $\phi_i$ with $p\geq 0$ free variables we associate the local formula	$\psi_i$ (with respect to signature $\sigma$) with $p+1$ free variables by replacing each free variable $x_i$ by $x_{i+1}$, and then each	term $M_1(t)$ by the term $t=x_1$. Consider $\sigma^\bullet$-structures $\mathbf A^+$ obtained by marking a single element $v\in A$ in a $\sigma$-structure $\mathbf A$. Then it holds
	$$
	\langle\phi_i,\mathbf A^+\rangle=\langle\psi_i,\mathbf A\rangle_v.
	$$
	
	In order to prove Theorem~\ref{thm:1lift}, it is sufficient to prove that
	if $(\mathbf A_n)_{n\in\bbbn}$ is a local convergent	 sequence, then the measures $\rho_\ast^{\sigma^\bullet}\circ\,\Pi(\mathbf A_n)$ converge weakly. This is sufficient as for every $\mu\in\mathfrak M_\sigma$ we can then define $\widetilde{\Pi}(\mu)$ as the weak limit of the measures $\rho_\ast^{\sigma^\bullet}\circ\,\Pi(\mathbf A_n)$, where $(\mathbf A_n)_{n\in\bbbn}$ is any sequence of finite $\sigma$-structures such that $\mu_{\mathbf A_n}\Rightarrow\mu$. This proves Theorem~\ref{thm:1lift}.
	
	Thus let $(\mathbf A_n)_{n\in\bbbn}$ be a local convergent	 sequence and let $\zeta_n=\rho_\ast^{\sigma^\bullet}\circ\,\Pi(\mathbf A_n)$. 	
	 The topology of $\mathfrak M_{\sigma^\bullet}$ can be metrized by
	 means of the following metric:
	  for $\mu_1,\mu_2\in \mathfrak M_{\sigma^\bullet}$, we define the distance ${\rm d}(\mu_1,\mu_2)$ by
	  $$
{\rm d}(\mu_1,\mu_2)=\inf\left\{\epsilon>0:\ \forall 1\leq i\leq 1/\epsilon\ \left|\int_{S_{\sigma^\bullet}}k(\phi_i)\,{\rm d}\mu_1-  
\int_{S_{\sigma^\bullet}}k(\phi_i)\,{\rm d}\mu_2\right|\leq\epsilon\right\}.$$
Let $F:\mathfrak M_{\sigma^\bullet}\rightarrow[0,1]$ be continuous, and let $\epsilon>0$.
	As $\mathfrak M_{\sigma^\bullet}$ is compact there is $\alpha>0$ such that for every $\mu_1,\mu_2\in\mathfrak M_{\sigma^\bullet}$ it holds 
	$${\rm d}(\mu_1,\mu_2)\leq\alpha\quad\Longrightarrow\quad |F(\mu_1)-F(\mu_2)|\leq\epsilon.$$
		
Let $d=\lceil 1/\alpha\rceil$.
Consider the continuous map $p:\mathfrak M_{\sigma^\bullet}\rightarrow [0,1]^d$ defined by 
$$
p(\mu)=\left(\int_{S_{\sigma^\bullet}}k(\phi_1)\,{\rm d}\mu,\dots,
\int_{S_{\sigma^\bullet}}k(\phi_d)\,{\rm d}\mu\right).
$$
Consider a partition $B_1,\dots,B_{d^d}$ of $[0,1]^d$ into $d^d$ boxes of side $1/d$ (``Rubik cube type partition''). Let $C_j=p^{-1}(B_j)$, and let 
$t_j=F(\mu_j)$ for an arbitrary (fixed) choice of $\mu_j\in C_j$.
According to Lemma~\ref{lem:locSP}, the sequence of tuples $(\langle\psi_1,\mathbf A_n\rangle_\bullet,\dots,\langle\psi_d,\mathbf A_n\rangle_\bullet)$ converges in distribution.
Thus for every box $C_j$ the value
\begin{align*}
\int_{C_j}{\rm d}\zeta_n(\mu)&={\rm Pr}[\mu\in C_j]&\text{($\mu$ dist. wrt $\zeta_n$)}\\
&={\rm Pr}[(\langle\phi_1,\mathbf A_n^+\rangle,\dots,\langle\phi_d,\mathbf A_n^+\rangle)\in B_j]&\text{($\mathbf A^+_n$ dist. wrt $\Pi(\mathbf A_n)$)}\\
&={\rm Pr}[(\langle\psi_1,\mathbf A_n\rangle_v,\dots,\langle\psi_d,\mathbf A_n\rangle_v)\in B_j]&\text{($v$ dist. wrt $\nu_{\mathbf A_n}$)}
\end{align*}
converges as $n\rightarrow\infty$.
For every $1\leq j\leq d^d$ and for every $\mu_1,\mu_2\in C_j$ it holds ${\rm d}(\mu_1,\mu_2)\leq 1/d$ hence for every $\mu\in C_j$ it holds $|F(\mu_1)-t_j|\leq \epsilon$.
Thus it holds
$$
\left|\int_{C_j}F(\mu)\,{\rm d}\zeta_n(\mu)-t_j \int_{C_j}{\rm d}\zeta_n(\mu)\right|\leq \epsilon  \int_{C_j}{\rm d}\zeta_n(\mu).
$$
As the sets $C_j$ form a partition of $S_{\sigma^\bullet}$ and as $\zeta_n$ is a probability measure, we get
$$
\left|\int_{S_{\sigma^\bullet}}F(\mu)\,{\rm d}\zeta_n(\mu)-\sum_{j=1}^{d^d}t_j \int_{C_j}{\rm d}\zeta_n(\mu)\right|\leq \epsilon.
$$
Hence for sufficiently large $n$, $\int_{S_{\sigma^\bullet}}F(\mu)\,{\rm d}\zeta_n(\mu)$ concentrates in an interval of size at most $2\epsilon$. By letting $\epsilon\rightarrow 0$ we conclude that $\int_{S_{\sigma^\bullet}}F(\mu)\,{\rm d}\zeta_n(\mu)$ converges, hence (as this holds for every continuous function $F$) that $\zeta_n$ is weakly convergent.
\end{proof}

\begin{remark}
Actually, along the same lines we could prove more: for the linear operator $\widehat\Pi:{\rm P}({\rm Rel}(\sigma))\rightarrow{\rm P}({\rm Rel}(\sigma^\bullet))$ defined by
$$\widehat{\Pi}(\zeta)=\int_{{\rm Rel}(\sigma)}\Pi(\mathbf A)\,{\rm d}\zeta(\mathbf A),$$
there exists a (unique) continuous linear map $\widetilde{\widehat{\Pi}}$ such that the following diagram commutes:
\begin{center}
\includegraphics{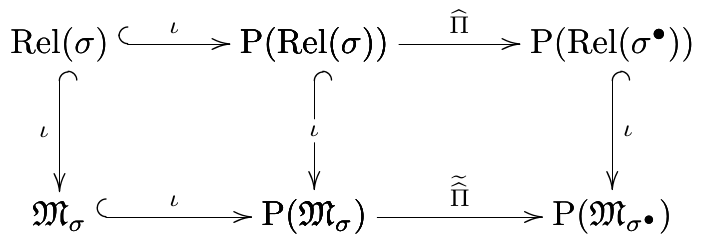}
\end{center}
\end{remark}

\section{Spectrum Driven Clustering}
\label{sec:glob}
We shall now make use of the abstract results of Section~\ref{sec:rep} to compute the globular clusters of a local convergent sequence.

In some sense globular clusters corresponds to the non-zero measure connected components at the limit. Although we do not have, in general, a nice limit structure for a local-convergent sequence of structures, we shall see that nevertheless we can track globular clusters and give an explicit formula for their limit size. 

To achieve this, we shall first show that the moments of the distribution of the limit sizes of the globular clusters may be computed from Stone pairing, and then we shall deduce the distribution of the limit sizes of the globular clusters by standard Fourier analysis.

\subsection{Spectrum}
\label{sec:spectrum}
We start our analysis by the study of the limit sizes of the globular clusters.

Let $\phi_d$ be the formula ${\rm dist}(x_1,x_2)\leq d$.
Let $\seq{A}$ be a local convergent sequence of $\sigma$-structures, and let $D_{d,n}:A_n\rightarrow[0,1]$ be the random variable
$$
D_{d,n}(v)=\langle\phi_d,\mathbf A_n\rangle_v=\nu_{\mathbf A_n}(\ball[d]{\mathbf A_n}(v)).
$$
As obviously $\phi_d$ implies $\phi_{d+1}$ it follows from Lemma~\ref{lem:locSP2} that there exists random variables $D_d$ and $D$ such that $D_{d,n}\xrightarrow{\mathcal D}D_d$ and $D_d\xrightarrow{\mathcal D} D$ (which are limits in distribution, for $n\rightarrow\infty$ and $d\rightarrow\infty$, respectively).

\begin{remark}
The random variables $D_{d,n}$ have here a concrete meaning, as the measure of the radius $d$ ball centered at a random vertex. However, there is no particular meaning for the sample space of random variables $D_d$ and $D$ (as the existence of these were simply derived from convergence of characteristic functions). 

Even if we intuitively interpret the random variables $D_d$ and $D$ as if they were built on a similar limit sample space, we have to take care in our argumentation that this interpretation is not {\em a priori} justified.
\end{remark}

We denote respectively by $F_{d,n}, F_d$ and $F$ the cumulative distribution functions of $D_{d,n}, D_d$ and $D$. 
According to Froda's theorem, each $F_d$ (and $F$) has at most countably many discontinuities. 
As $D_d\xrightarrow{\mathcal D}D$ (as $d\rightarrow\infty$) the functions $F_d$ converge pointwise to $F$ at every continuity point of $F$. Similarly, for each $d\in\bbbn$,
 as $D_{n,d}\xrightarrow{\mathcal D}D_d$ (as $n\rightarrow\infty$) the functions $F_{n,d}$ converge pointwise to $F_d$ at every continuity point of $F_d$.
We define $\Lambda_d$ (resp. $\Lambda$) as the (at most countable) set of discontinuities of $F_d$ (resp. $F$), and let $\mathcal R=[0,1]\setminus\bigl(\Lambda\cup\bigcup_{d\in\bbbn}\Lambda_d\bigr)$. In other words, $\mathcal R$ is the (cocountable) set of points where  all the considered limit cumulative functions ($F$ and $F_d$ for $d\in\bbbn$) are continuous.

\begin{remark}
If $d_1<d_2$ then for every integer $n$ and ever real $t$ it holds
$$F_{d_1,n}(t)={\rm Pr}(D_{d_1,n}\leq t)\geq {\rm Pr}(D_{d_2,n}\leq t)=F_{d_2,n}(t)$$ 
and thus also $F_{d_1}\geq F_{d_2}\geq F$.	
\end{remark}

We now take time for a useful simple lemma.
\begin{lemma}
\label{lem:interv}
Let $t_1<t_2$ be in $\mathcal R$, and let $n$ and $d_1<d_2$ be integers.

 Then
$$F_{d_2,n}(t_2)-F_{d_1,n}(t_1)\leq {\rm Pr}(t_1<D_{d_1,n}\leq D_{d_2,n}\leq t_2)\leq F_{d_1,n}(t_2)-F_{d_1,n}(t_1).$$
Hence
if $|F(t_j)-F_{d_1}(t_j)|<\epsilon$ and $|F_{d_i}(t_j)-F_{d_i,n}(t_j)|<\epsilon$ hold for $i,j\in\{1,2\}$ then
$$|{\rm Pr}(t_1<D_{d_1,n}\leq D_{d_2,n}\leq t_2)-(F(t_2)-F(t_1))|<4\epsilon.$$
\end{lemma}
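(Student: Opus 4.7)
The plan is to exploit the monotonicity $D_{d_1,n}(v)\leq D_{d_2,n}(v)$, which is immediate since $d_1<d_2$ implies $\ball[d_1]{\mathbf A_n}(v)\subseteq\ball[d_2]{\mathbf A_n}(v)$ and hence $\nu_{\mathbf A_n}(\ball[d_1]{\mathbf A_n}(v))\leq\nu_{\mathbf A_n}(\ball[d_2]{\mathbf A_n}(v))$. This monotonicity makes the event $\{t_1<D_{d_1,n}\leq D_{d_2,n}\leq t_2\}$ equal to the conjunction $\{D_{d_1,n}>t_1\}\cap\{D_{d_2,n}\leq t_2\}$, which I will use for both bounds.

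For the upper bound, I will note that $\{D_{d_1,n}>t_1\}\cap\{D_{d_2,n}\leq t_2\}\subseteq\{D_{d_1,n}>t_1\}\cap\{D_{d_1,n}\leq t_2\}$ (again using $D_{d_1,n}\leq D_{d_2,n}$), whose probability is exactly $F_{d_1,n}(t_2)-F_{d_1,n}(t_1)$. For the lower bound, I will write
\[
{\rm Pr}(\{D_{d_2,n}\leq t_2\}\cap\{D_{d_1,n}>t_1\})\geq {\rm Pr}(D_{d_2,n}\leq t_2)-{\rm Pr}(D_{d_1,n}\leq t_1)=F_{d_2,n}(t_2)-F_{d_1,n}(t_1),
\]
which gives the first displayed inequality of the lemma.

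For the second assertion, I will combine the hypotheses via the triangle inequality: from $|F_{d_i}(t_j)-F_{d_i,n}(t_j)|<\epsilon$ and $|F(t_j)-F_{d_1}(t_j)|<\epsilon$ (and the monotonicity $F\leq F_{d_2}\leq F_{d_1}$ in the index $d$, which yields $|F(t_j)-F_{d_2}(t_j)|<\epsilon$ as well) I obtain $|F_{d_i,n}(t_j)-F(t_j)|<2\epsilon$ for $i,j\in\{1,2\}$. Applying this at both endpoints bounds each of the quantities $F_{d_1,n}(t_2)-F_{d_1,n}(t_1)$ and $F_{d_2,n}(t_2)-F_{d_1,n}(t_1)$ within $4\epsilon$ of $F(t_2)-F(t_1)$. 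Since the target probability is sandwiched between these two quantities, it too lies within $4\epsilon$ of $F(t_2)-F(t_1)$.

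No step is really an obstacle here; the only subtlety is noticing the monotonicity $F\leq F_{d_2}\leq F_{d_1}$ (already recorded in the preceding remark) so that the single hypothesis $|F(t_j)-F_{d_1}(t_j)|<\epsilon$ suffices to control the distance between $F$ and $F_{d_2}$, thereby avoiding the need for a separate hypothesis on $F_{d_2}$.
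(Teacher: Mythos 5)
Your proof is correct and follows essentially the same route as the paper: the upper bound via ${\rm Pr}(t_1<D_{d_1,n}\leq D_{d_2,n}\leq t_2)\leq{\rm Pr}(t_1<D_{d_1,n}\leq t_2)$, the lower bound via the monotonicity $D_{d_1,n}\leq D_{d_2,n}$ (the paper subtracts the overshoot event ${\rm Pr}(t_1<D_{d_1,n}\leq t_2<D_{d_2,n})\leq F_{d_1,n}(t_2)-F_{d_2,n}(t_2)$, which is the same computation as your ${\rm Pr}(A\cap B)\geq{\rm Pr}(A)-{\rm Pr}(B^c)$), and the final sandwich by the triangle inequality. Your explicit use of $F\leq F_{d_2}\leq F_{d_1}$ to control $|F(t_j)-F_{d_2}(t_j)|$ from the hypothesis on $F_{d_1}$ alone is exactly the intended reading of the statement.
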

\begin{proof}
	The right inequality is obvious as ${\rm Pr}(t_1<D_{d_1,n}\leq D_{d_2,n}\leq t_2)\leq {\rm Pr}(t_1<D_{d_1,n}\leq t_2)$. For the left inequality, note that 
	\begin{align*}
	{\rm Pr}(t_1<D_{d_1,n}\leq D_{d_2,n}\leq t_2)&={\rm Pr}(t_1<D_{d_1,n}\leq t_2)-{\rm Pr}(t_1<D_{d_1,n}\leq t_2<D_{d_2,n})\\
	&\leq {\rm Pr}(t_1<D_{d_1,n}\leq t_2)-({\rm Pr}(D_{d_1,n}\leq t_2)-{\rm Pr}(D_{d_1,n}\leq t_2))\\
	&=(F_{d_1,n}(t_2)-F_{d_1,n}(t_1))-(F_{d_1,n}(t_2)-F_{d_2,n}(t_2))
	\end{align*}
\end{proof}

We are now approaching the final steps of our cluster analysis. This is admittedly technical and we shall
need further several lemmas in order to prove Theorem~\ref{thm:main}.

However the intuition for our proof is easy and can be outlined as follows: if we would have a proper explicit limit structure, the random variable $D$ would intuitively correspond to the measure of the connected component of a random element. Thus we expect $D$ to be a discrete random variable, and that the probability that $D=\lambda$ is the measure of the union of all connected components of measure $\lambda$ hence an integral multiple of $\lambda$.   
The aim of this part is to show that this intuitive notion of limit connected components is captured by the concept of globular clusters. This setting will not only ground the above intuition, but will also allow to track the formation of the limit connected components down to the structures in the sequence.

Hence our first step is to prove that $D$ is a purely discrete random variable, that is that its cumulative distribution function $F$ is constant except at its (at most countably many) discontinuity points. This we shall do now.

\begin{lemma}
\label{lem:Fdisc}
The spectrum distribution of a local-convergent sequence of finite structures is discrete and its associated mass probability function
$p:[0,1]\rightarrow[0,1]$ defined by

$$
p(x)=F(x)-\lim_{\epsilon\rightarrow 0}F(x-\epsilon).
$$
is such that that for every $x\in[0,1]$, either $p(x)=0$ or $p(x)\geq x$.
\end{lemma}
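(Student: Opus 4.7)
Plan: I would reduce the lemma to the following key inequality: for any two continuity points $a<b$ of $F$ and of every $F_d$ (i.e., $a,b$ in the cocountable set $\mathcal R$), the mass $F(b)-F(a)$ lies in $\{0\}\cup[a,\infty)$. Granted this, both conclusions of the lemma follow by standard arguments. For the atom lower bound, if $p(x_0)>0$ for some $x_0\in(0,1]$, take $a_k\uparrow x_0$ and $b_k\downarrow x_0$ in $\mathcal R$; then $F(b_k)-F(a_k)\to p(x_0)>0$, so eventually the difference is $\geq a_k$, yielding $p(x_0)\geq x_0$. For discreteness, fix $\epsilon>0$ and partition $(\epsilon,1]$ into bins of width less than $\delta$ with endpoints in $\mathcal R$; by the key inequality each bin has mass $0$ or $\geq\epsilon$, so at most $1/\epsilon$ bins are nonzero. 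As $\delta\downarrow 0$ this forces all mass on $(\epsilon,1]$ onto at most $1/\epsilon$ atoms, and then $\epsilon\downarrow 0$ gives $F$ purely atomic on $(0,1]$.

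To establish the key inequality I would combine Lemma~\ref{lem:interv} with a stabilization argument. By that lemma, for any $\eta>0$ we can choose $d_1<d_2$ with $d_2\geq 2d_1$ and $n$ large enough that
$$
\bigl|{\rm Pr}(a<D_{d_1,n}\leq D_{d_2,n}\leq b)-(F(b)-F(a))\bigr|<4\eta.
$$
Since $D_d\to D$ in distribution with $D_d\in[0,1]$, bounded convergence gives $\mathbb E[D_{d_2}-D_{d_1}]\to 0$ as $d_1,d_2\to\infty$; by Markov's inequality the proportion of vertices $v$ with $D_{d_2,n}(v)-D_{d_1,n}(v)>\eta$ can be made arbitrarily small. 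Hence, up to an $O(\eta)$ error in measure, one may restrict attention to the \emph{stabilized event} $E_n:=\{v:a<D_{d_1,n}(v)\leq D_{d_2,n}(v)\leq b,\ D_{d_2,n}(v)-D_{d_1,n}(v)\leq\eta\}$, on which the $d_1$-ball and the $d_2$-ball around $v$ have almost identical measure.

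On $E_n$ each ball $\ball[d_1]{\mathbf A_n}(v)$ is an approximate globular cluster of measure in $(a,b]$, and the plan is a packing/covering argument: pick a maximal pairwise-disjoint family $\ball[d_1]{\mathbf A_n}(v_1),\dots,\ball[d_1]{\mathbf A_n}(v_k)$ with $v_i\in E_n$; since each ball has measure exceeding $a$, one has $k\leq 1/a$, and by maximality every $v\in E_n$ lies within distance $2d_1\leq d_2$ of some $v_i$. The main obstacle is then showing that $\nu_{\mathbf A_n}(E_n)\geq a-O(\eta)$ whenever $E_n$ is nonempty. The idea is that stabilization forces each $\ball[d_1]{\mathbf A_n}(v_i)$ to be essentially contained in $E_n$: for $v'\in\ball[d_1]{\mathbf A_n}(v_i)$ the inclusion $\ball[d_1]{\mathbf A_n}(v')\subseteq\ball[2d_1]{\mathbf A_n}(v_i)\subseteq\ball[d_2]{\mathbf A_n}(v_i)$ bounds $D_{d_1,n}(v')\leq b+\eta$ from above, while a symmetric-difference estimate controlled by $D_{d_2,n}(v_i)-D_{d_1,n}(v_i)\leq\eta$ gives the matching lower bound $D_{d_1,n}(v')>a-O(\eta)$ for all but an $O(\eta)$-fraction of $v'$. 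This yields $\nu_{\mathbf A_n}(E_n)\geq\sum_i\nu_{\mathbf A_n}(\ball[d_1]{\mathbf A_n}(v_i))-O(k\eta)>ka-O(\eta)\geq a-O(\eta)$; passing to $n\to\infty$ and then $\eta\to 0$ completes the proof of the key inequality and hence of the lemma.
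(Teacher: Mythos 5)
Your reduction to the key inequality ($F(b)-F(a)\in\{0\}\cup[a,\infty)$ for continuity points $a<b$) and the derivation of both conclusions from it are exactly the paper's, and so is the central idea: exhibit a witness $v$ with $a<D_{d_1,n}(v)\le D_{d_2,n}(v)\le b$ and show that the ball $\ball[d_1]{\mathbf A_n}(v)$, of measure $>a$, sits inside the relevant level set. Where you genuinely diverge is in how that containment is established. The paper takes $d_1=d$, $d_2=2d$ and observes that \emph{every} $x\in\ball[d]{\mathbf A_n}(v)$ satisfies the asymmetric pair $D_{d,n}(x)\le D_{2d,n}(v)\le t_2$ and $D_{2d,n}(x)\ge D_{d,n}(v)>t_1$; the measure of $\{x:\ t_1<D_{2d,n}(x)\ \text{and}\ D_{d,n}(x)\le t_2\}$ is then within $F_{d,n}(t_2)-F_{2d,n}(t_2)<4\epsilon$ of ${\rm Pr}(t_1<D_{2d,n}\le t_2)$, so a single vertex suffices and there is no exceptional set to manage. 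You instead enforce a pointwise stabilization $D_{d_2,n}-D_{d_1,n}\le\eta$ for most vertices via expectations and Markov, and run a packing argument over several ball centers. Both devices are sound but superfluous for this lemma --- the packing is precisely the machinery the paper reserves for the integrality statement (Lemma~\ref{lem:integ}) --- and two points need tightening. First, $D_{d_1}$ and $D_{d_2}$ are only limits in distribution on no common sample space (the paper warns about this explicitly), so your $\mathbb E[D_{d_2}-D_{d_1}]$ must be read at finite $n$ as $\langle\phi_{d_2},\mathbf A_n\rangle-\langle\phi_{d_1},\mathbf A_n\rangle$, which does tend to $0$ as claimed. Second, after discarding the unstabilized vertices your ball lands only in $\{a-O(\eta)<D_{d_1,n}\le D_{d_2,n}\le b+O(\eta)\}$, not in $E_n$ itself, so what you actually obtain is a lower bound on $F(b'')-F(a'')$ for slightly enlarged endpoints $a''<a<b<b''$ in $\mathcal R$; you must then invoke continuity of $F$ at $a$ and $b$ when letting $\eta\to0$. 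These gaps are routine to close, so your proof is correct in substance; it simply does with three tools what the paper does with one.
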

\begin{proof}
We shall prove that for $t_1<t_2$ in $\mathcal R$, either $F(t_1)=F(t_2)$ or $F(t_2)-F(t_1)\geq t_1$.
It will follow,
by cutting the interval $[t_1, t_2]$  recursively,   that $F$ is constant except at its discontinuity points, and that the mass probability function $p$ satisfies $p(x)\geq x$ at every point $x$ where $p(x)\neq 0$.

So let $t_1<t_2$ be in $\mathcal{R}$ and such that $F(t_1)<F(t_2)$, and
 let $0<\epsilon<(F(t_2)-F(t_1))/4$.
As $D_d\xrightarrow{\mathcal D}D$ (as $d\rightarrow\infty$) there exists $d$ such that 
 $|F_d(t_1)-F(t_1)|<\epsilon$ and $|F_d(t_2)-F(t_2)|<\epsilon$. 
 Moreover, as $D_{n,d}\xrightarrow{\mathcal D}D_d$ (for fixed $d$ and $n\rightarrow\infty$)
  there exists $n$ such that
 $|F_{d,n}(t_1)-F_d(t_1)|<\epsilon$,
 $|F_{d,n}(t_2)-F_d(t_2)|<\epsilon$,
  $|F_{2d,n}(t_1)-F_{2d}(t_1)|<\epsilon$
  and
 $|F_{2d,n}(t_2)-F_{2d}(t_2)|<\epsilon$.
 
 According to Lemma~\ref{lem:interv} it holds
 \begin{align*}
 	{\rm Pr}(t_1<D_{d,n}\leq D_{2d,n}\leq t_2)-(F(t_2)-F(t_1))&<4\epsilon
 \end{align*}
 thus ${\rm Pr}(t_1<D_{d,n}\leq D_{2d,n}\leq t_2)>0$.
Hence there exists $v\in A_n$ such that $t_1<D_{d,n}(v)\leq D_{2d,n}(v)\leq t_2$.
For every $x\in\ball[d]{\mathbf A_n}(v)$ it holds
$\ball[d]{\mathbf A_n}(x)\subseteq \ball[2d]{\mathbf A_n}(v)$ hence $D_{d,n}(x)\leq D_{2d,n}(v)\leq t_2$. Also, $\ball[2d]{\mathbf A_n}(x)\supseteq \ball[d]{\mathbf A_n}(v)$ thus $D_{2d,n}(x)\geq D_{d,n}(v)>t_1$.
As this holds for every $x\in \ball[d]{\mathbf A_n}(v)$, we get $\ball[d]{\mathbf A_n}(v)\subseteq \{x: t_1<D_{2d,n}(x)\text{ and }D_{d,n}(x)\leq t_2\}$. 
Thus we have
\begin{align*}
\nu_{\mathbf A_n}(\ball[d]{\mathbf A_n}(v))-{\rm Pr}(t_1<D_{2d,n}\leq t_2)&\leq {\rm Pr}(t_1<D_{2d,n}\text{ and }D_{d,n}\leq t_2)
-{\rm Pr}(t_1<D_{2d,n}\leq t_2) \\
&\leq  {\rm Pr}(D_{d,n}\leq t_2)-{\rm Pr}(D_{2d,n}\leq t_2) \\
&=F_{d,n}(t_2)-F_{2d,n}(t_2)\\
&<4\epsilon.
\end{align*}

Hence
${\rm Pr}(t_1<D_{2d,n}\leq t_2)>\nu_{\mathbf A_n}(\ball[d]{\mathbf A_n}(v))-4\epsilon>t_1-4\epsilon$. Hence $F(t_2)-F(t_1)>t_1-8\epsilon$. By letting  $\epsilon\rightarrow 0$, we get $F(t_2)-F(t_1)\geq t_1$ as claimed.
\end{proof}

Recall that $\Lambda$ is the set of discontinuities of $F$, that is the set of $x\in[0,1]$ such that $p(x)\neq 0$. Note that
it follows from Lemma~\ref{lem:Fdisc} that 
for every integer $z$ there exists at most $z$ values
 $\lambda\in\Lambda$ with $\lambda\geq 1/z$.

The next lemma will ground our intuition that $p(\lambda)$ should be an integral multiple of $\lambda$. Indeed, we will prove later that $p(\lambda)/\lambda$ is the number of disjoint globular clusters with limit measure $\lambda$.

\begin{lemma}
\label{lem:integ}
Let $\lambda\in\Lambda$. Then 
$p(\lambda)/\lambda\in\bbbn$.
\end{lemma}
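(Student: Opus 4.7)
The plan is, for each sufficiently small $\epsilon>0$, to exhibit inside some $\mathbf A_n$ a disjoint family of $k=k_\epsilon$ ``almost isolated'' balls, each of measure within $\epsilon$ of $\lambda$, whose union has measure within $O(\epsilon)$ of $p(\lambda)$. This will force $|k\lambda-p(\lambda)|=O(\epsilon/\lambda)$; since $k\in\bbbn$ and $\lambda>0$ is fixed, letting $\epsilon\to 0$ then identifies $p(\lambda)/\lambda$ as an integer. To set this up, I use the countability of $\Lambda$ to pick continuity points $t_1,t_2\in\mathcal R$ with $\lambda-\epsilon/2<t_1<\lambda<t_2<\lambda+\epsilon/2$ and $\Lambda\cap(t_1,t_2)=\{\lambda\}$, so that $F(t_2)-F(t_1)=p(\lambda)$. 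After fixing a large constant $L$ (say $L=10$), I would pick $d$ and then $n$ large enough that all $F_{jd,n}(t_i)$ for $j\in\{1,3,L-2,L\}$ and $i\in\{1,2\}$ are within $\epsilon$ of $F(t_i)$, and define
\[
X_n=\{v\in A_n:\ D_{d,n}(v)>t_1\ \text{and}\ D_{Ld,n}(v)\leq t_2\}.
\]
Lemma~\ref{lem:interv} applied with $(d_1,d_2)=(d,Ld)$ will give $\nu_{\mathbf A_n}(X_n)=p(\lambda)+O(\epsilon)$.

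Next I would greedily pick a maximal family $\{v_1,\dots,v_k\}\subseteq X_n$ whose $d$-balls $\ball[d]{\mathbf A_n}(v_i)$ are pairwise disjoint. A key separation step shows $d(v_i,v_j)>(L-1)d$ for $i\neq j$: otherwise $\ball[d]{\mathbf A_n}(v_j)\subseteq\ball[Ld]{\mathbf A_n}(v_i)\setminus\ball[d]{\mathbf A_n}(v_i)$, a set of measure $<t_2-t_1<\epsilon$, contradicting $\nu_{\mathbf A_n}(\ball[d]{\mathbf A_n}(v_j))>t_1>\epsilon$ once $\epsilon<\lambda/2$. With $L\geq 6$, the $2d$-balls $\ball[2d]{\mathbf A_n}(v_i)$ are then pairwise disjoint, while maximality gives $X_n\subseteq Y_n:=\bigcup_i\ball[2d]{\mathbf A_n}(v_i)$. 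The hard part will be bounding $\nu_{\mathbf A_n}(Y_n\setminus X_n)$ \emph{independently of} $k$: for $u\in\ball[2d]{\mathbf A_n}(v_i)\setminus X_n$, either $D_{d,n}(u)\leq t_1$, in which case $v_i\in\ball[2d]{\mathbf A_n}(u)$ forces $D_{3d,n}(u)>t_1$ and $u$ lies in the global set $\{D_{d,n}\leq t_1<D_{3d,n}\}$ of measure $O(\epsilon)$; or $D_{Ld,n}(u)>t_2$, in which case the inclusion $\ball[(L-2)d]{\mathbf A_n}(u)\subseteq\ball[Ld]{\mathbf A_n}(v_i)$ forces $D_{(L-2)d,n}(u)\leq t_2$ and $u$ lies in $\{D_{(L-2)d,n}\leq t_2<D_{Ld,n}\}$, again $O(\epsilon)$. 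Crucially, both of these annuli have globally $O(\epsilon)$ mass via Lemma~\ref{lem:interv}, so no factor of~$k$ arises.

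To conclude, the disjointness of the $\ball[2d]{\mathbf A_n}(v_i)$ together with $v_i\in X_n$ gives $\nu_{\mathbf A_n}(\ball[2d]{\mathbf A_n}(v_i))\in(t_1,t_2]$, so $\nu_{\mathbf A_n}(Y_n)\in(kt_1,kt_2]$ and $|\nu_{\mathbf A_n}(Y_n)-k\lambda|<k\epsilon$. Combined with $\nu_{\mathbf A_n}(Y_n)=p(\lambda)+O(\epsilon)$ and the trivial bound $k<2/\lambda$, this yields $|k-p(\lambda)/\lambda|=O(\epsilon/\lambda^2)\to 0$ as $\epsilon\to 0$, forcing $p(\lambda)/\lambda\in\bbbn$. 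The main obstacle is precisely this scale-matching trick: the definition of $X_n$ must use two very different scales $d\ll Ld$ with the packing carried out at the intermediate scale $2d$, so that both ``exit modes'' of vertices in $Y_n\setminus X_n$ land in a narrow annulus whose mass is globally controlled by Lemma~\ref{lem:interv} independently of~$k$.
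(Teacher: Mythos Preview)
Your proposal is correct and follows essentially the same approach as the paper's own proof: both define a ``good'' set of vertices via two-scale ball-measure constraints around $\lambda$, extract a maximal well-separated family of representatives, show that the separation is automatically much larger than the defining scale (so the representatives' balls are disjoint and cover the good set), and conclude by sandwiching $k\lambda$ near $p(\lambda)$. The differences are cosmetic: the paper uses scales $d,2d,3d,4d$ and bounds via ${\rm Pr}(t_1<D_{2d,n}\leq t_2)$ from both sides, whereas you use scales $d,2d,3d,(L-2)d,Ld$ and instead control $\nu(Y_n\setminus X_n)$ by two global annuli $\{D_{d,n}\leq t_1<D_{3d,n}\}$ and $\{D_{(L-2)d,n}\leq t_2<D_{Ld,n}\}$---a slightly cleaner packaging of the same estimate.
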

\begin{proof}
Let $0<\epsilon<\lambda^2/11$.
Fix $t_1,t_2\in\mathcal R$ with $0<t_1<\lambda<t_2$, $t_2-t_1<\epsilon$, and such that $\lambda$ is the only discontinuity point of $F$ on $[t_1,t_2]$ (hence $p(\lambda)=F(t_2)-F(t_1)$).

Then there exist $\delta=\delta(\epsilon,t_1,t_2)$ such that for every $d\geq \delta$ it holds
$|F(t_1)-F_{kd}{t_i}|<\epsilon$  for every $1\leq k\leq 4$ and every $i\in\{1,2\}$, and
there exists 
$\eta=\eta(\epsilon,t_1,t_2,d)$ such that for every $n\geq \eta$
it holds $|F_{kd,n}(t_i)-F_{kd}(t_i)|<\epsilon$ for every $1\leq k\leq 4$ and every $i\in\{1,2\}$.

We prove by contradiction that no two vertices $v_1,v_2\in A_n$ exist such that
\begin{gather*}
	t_1<D_{d,n}(v_1)\leq D_{4d,n}(v_1)\leq t_2,\\
	t_1<D_{d,n}(v_2)\leq D_{4d,n}(v_2)\leq t_2,\\
	2d<{\rm dist}(v_1,v_2)\leq 3d.
\end{gather*}
Assume the contrary.
Then $\ball[4d]{\mathbf A_n}(v_1)$ contains the disjoint union
of $\ball[d]{\mathbf A_n}(v_1)$ and $\ball[d]{\mathbf A_n}(v_2)$ thus
$$\nu_{\mathbf A_n}(\ball[4d]{\mathbf A_n}(v_1))> 2t_1>t_1+\lambda-\epsilon=(t_1+\epsilon)+(\lambda-2\epsilon)>t_2,$$
 contradicting $D_{4d,n}(v_1)\leq t_2$.

Let $S=S_{t_1,t_2,d}(n)$ be a maximal set of vertices $v\in A_n$, 
pairwise at distance greater than $3d$, and such that 
$$	t_1<D_{d,n}(v)\leq D_{4d,n}(v)\leq t_2.$$
First note that for $v,v'\in S$ the balls
$\ball[d]{\mathbf A_n}(v)$ and $\ball[d]{\mathbf A_n}(v')$ do not intersect hence
$$
1\geq\nu_{\mathbf A_n}\bigl(\bigcup_{v\in S}\ball[d]{\mathbf A_n}(v)\bigr)=\sum_{v\in S}D_{d,n}(v)>t_1|S|>(\lambda-\epsilon)|S|.
$$
Thus $|S|<1/(\lambda-\epsilon)$.

Also every vertex $w$ such that 
$t_1<D_{d,n}(w)\leq D_{4d,n}(w)\leq t_2$ belongs to
$\bigcup_{v\in S}\ball[2d]{\mathbf A_n}(v)=\ball[2d]{\mathbf A_n}(S)$. 
It follows that 
${\rm Pr}(t_1<D_{d,n}\leq D_{4d,n}\leq t_2)\leq t_2 |S|$.
Also, $${\rm Pr}(t_1<D_{2d,n}\leq t_2)\geq \nu_{\mathbf A_n}(\ball[d]{\mathbf A_n}(S))=\sum_{s\in S}D_{d,n}(s)> t_1|S|>\lambda |S|-\epsilon/(\lambda-\epsilon).$$

As
$${\rm Pr}(t_1<D_{2d,n}\leq t_2)
\leq {\rm Pr}(t_1<D_{d,n}\leq D_{4d,n}\leq t_2)
\leq t_2|S|
<\lambda |S|+\epsilon/(\lambda-\epsilon),$$
we get
$$|{\rm Pr}(t_1<D_{2d,n}\leq t_2)-\lambda|S||<\epsilon/(\lambda-\epsilon).$$
As
$$|{\rm Pr}(t_1<D_{2d,n}\leq t_2)-p(\lambda)|<4\epsilon$$
we deduce
$$|p(\lambda)-\lambda|S||<(4+1/(\lambda-\epsilon))\epsilon.$$

As $\epsilon<\lambda^2/11$, it holds  
$|p(\lambda)-|S|\lambda|<\lambda/2$, thus 
 $|S|=|S_{t_1,t_2,d}(n)|$ is constant for all the values
$t_1,t_2,d,n$
consistent with $0<\epsilon<\lambda^2/11$. Denoting $m(\lambda)$ this common value of $|S_{t_1,t_2,d}(n)|$, and by 
 letting $\epsilon\rightarrow 0$, we get 
 $p(\lambda)=m(\lambda)\lambda$ thus $p(\lambda)/\lambda\in\bbbn$.
\end{proof}

We now define several functions, which will be of key  importance in our precise definition and analysis of the globular clusters.

Let us fix $\lambda\in\Lambda$.

\begin{trivlist}
\item {\bf Definition of $\epsilon_z$.}
For $z\in\bbbn$, we define
\begin{equation}
\epsilon_z=2^{-z}.	
\end{equation}
\item {\bf Definition of $z_0(\lambda)$.}
We define 
\begin{equation}
	z_0(\lambda)=\lceil 5-2\log_2\lambda\rceil.
\end{equation}
(Thus $\epsilon_{z_0(\lambda)}\leq \lambda^2/32$.)
\item {\bf Definition of $\alpha_z(\lambda)$ and $\beta_z(\lambda)$.}
We define
$$\alpha_1(\lambda)<\alpha_2(\lambda)<\dots<\lambda<\dots<\beta_2(\lambda)<\beta_1(\lambda),$$
such that $\Lambda\cap[\alpha_1(\lambda),\beta_1(\lambda)]=\{\lambda\}$, every $\alpha_z(\lambda)$ and $\beta_z(\lambda)$ belong to $\mathcal R$, and such that for every $z\in\bbbn$ it holds
\begin{equation}
|\beta_z(\lambda)-\alpha_z(\lambda)|<\epsilon_z.
\end{equation}

\item {\bf Definition of $\delta_z(\lambda)$.}
As $D_d\xrightarrow{\mathcal D}D$ (as $d\rightarrow\infty$) we can define integers $\delta_1(\lambda)<\delta_2(\lambda)<\dots$ such that for every $z\in\bbbn$ and every $d\geq\delta_z(\lambda)$ it holds
\begin{align}
	|F_d(\alpha_z(\lambda))-F(\alpha_z(\lambda))|&<\epsilon_z\\
	|F_d(\beta_z(\lambda))-F(\beta_z(\lambda))|&<\epsilon_z
\end{align}

\item {\bf Definition of $\eta_z(\lambda)$.}
As $D_{n,d}\xrightarrow{\mathcal D}D_d$ (for fixed $d$ and as $n\rightarrow\infty$) we can define integers
$\eta_1(\lambda)<\eta_2(\lambda)<\dots$ such that for every $z\in\bbbn$, every 
$n\geq\eta_z(\lambda)$ and every integer $k\in\{1,\dots 8\}$ it holds
\begin{align}
|F_{k\delta_{z}(\lambda),n}(\alpha_{z}(\lambda))-F_{k\delta_{z}(\lambda)}(\alpha_{z}(\lambda))|&<\epsilon_z\\
|F_{k\delta_{z}(\lambda),n}(\beta_{z}(\lambda))-F_{k\delta_{z}(\lambda)}(\beta_{z}(\lambda))|&<\epsilon_z	
\end{align}
\end{trivlist}

We now define some sequences of sets.
The sets $Z_n^{\lambda,z}$ will anticipate our construction of globular clusters, by giving a rough approximate of them. Then the set $S_n^\lambda$ will collect a ``center'' for each of the ``component'' of size $\lambda$.

\begin{trivlist}
\item {\bf Definition of $Z_n^{\lambda,z}$.}
For $n,z\in\bbbn$ we define subset
$Z^{\lambda,z}_n$ as follows:
\begin{itemize}
	\item If $n<\eta_z$  then $Z^{\lambda,z}_n=\emptyset$;
	\item Otherwise, 
$Z^{\lambda,z}_n$ is the set of all elements of $A_n$ such that 
\begin{align}
D_{8\delta_z,n}(v)&\leq \beta_z(\lambda)\label{eq:Z1}\\	
D_{\delta_{z'},n}(v)&>\alpha_{z'}(\lambda)&(\forall z'\in\{z_0(\lambda),\dots,z\})\label{eq:Z2}
\end{align}
\end{itemize}
\item {\bf Definition of $S_n^{\lambda}$.}
We define 
$S_n^\lambda$ as a maximal set of vertices $v\in Z_n^{\lambda,z}$, pairwise at distance at least $7\delta_z$, where
$z$ is (implicitly) defined by  $\eta_z\leq n<\eta_{z+1}$.
\end{trivlist}

We take time for few remarks:
\begin{remark}
Note that \eqref{eq:Z1} implies
$D_{8\delta_{z'},n}(v)\leq \beta_{z'}(\lambda)$ for every $1\leq z'\leq z$. Also \eqref{eq:Z2} becomes clearly more and more restrictive as $z$ grows.
 Hence for every $z\geq z_0(\lambda)$ and every $n\in\bbbn$ such that
$\eta_z\leq n<\eta_{z+1}$ it holds
\begin{equation}
	Z^{\lambda,z_0(\lambda)}_n\supseteq Z^{\lambda,z_0(\lambda)+1}_n\supseteq\dots\supseteq Z^{\lambda,z}_n\supsetneq Z^{\lambda,z+1}_n=Z^{\lambda,z+2}_n=\dots=\emptyset.
\end{equation}
\end{remark}

\begin{remark}
According to the definitions of $\delta_z$ and $\eta_z$, it holds
$$|F_{8\delta_z(\lambda),n}(\beta_z(\lambda)- F(\beta_z(\lambda)|<2\epsilon_z$$
\end{remark}

\begin{remark}
If $z'<z''$ then, according to Lemma~\ref{lem:interv} it holds
$$|{\rm Pr}(\alpha_{z'}(\lambda)<D_{\delta_{z'}(\lambda),n}\leq D_{\delta_{z''}(\lambda),n}\leq \alpha_{z''}(\lambda))|<4\epsilon_{z'}$$
Thus $${\rm Pr}(D_{\delta_z(\lambda),n}>\alpha_z(\lambda))-{\rm Pr}\bigl(\bigwedge_{z'=z_0}^zD_{\delta_{z'}(\lambda),n}>\alpha_{z'}(\lambda)\bigr)<4\sum_{z'=z_0}^z\epsilon_{z'}=2^{2-z_0}.$$
It follows that
$$
\nu_{\mathbf A_n}(Z_n^{\lambda,z})\geq p(\lambda)-4\epsilon_z-2^{2-z_0}.
$$
	
\end{remark}
We now prove that, as wanted, the number of elements of $S_n^\lambda$ is (for sufficiently large $\lambda$) the anticipated number of globular clusters of size $\lambda$.
\begin{lemma}
	For every $\lambda\in\mathcal R$ and every $n\geq \eta_{\lceil\lambda^{-1}\rceil}$ it holds
	$|S_n^\lambda|=p(\lambda)/\lambda$.
\end{lemma}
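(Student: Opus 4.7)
My plan is to sandwich the integer $|S_n^\lambda|$ between two bounds that both collapse to $m(\lambda)=p(\lambda)/\lambda$. Given $n\geq \eta_{\lceil\lambda^{-1}\rceil}$, define $z$ by $\eta_z\leq n<\eta_{z+1}$; then $z\geq \lceil\lambda^{-1}\rceil$, so both $\epsilon_z$ and the ``tail'' $2^{2-z_0(\lambda)}$ will be much smaller than $\lambda$. The argument will exploit the disjoint packing/covering produced by the maximality of $S_n^\lambda$, combined with the third remark above and Lemma~\ref{lem:interv}.

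For the lower bound, maximality of $S_n^\lambda$ means every element of $Z_n^{\lambda,z}$ lies within distance $<7\delta_z$ of some $v\in S_n^\lambda$, hence $Z_n^{\lambda,z}\subseteq \bigcup_{v\in S_n^\lambda}\ball[8\delta_z]{\mathbf{A}_n}(v)$, and each such ball has $\nu_{\mathbf{A}_n}$-measure at most $\beta_z(\lambda)$ by~\eqref{eq:Z1}. Combined with the third remark's estimate $\nu_{\mathbf{A}_n}(Z_n^{\lambda,z})\geq p(\lambda)-4\epsilon_z-2^{2-z_0(\lambda)}$, this yields
\[
|S_n^\lambda|\geq\frac{p(\lambda)-4\epsilon_z-2^{2-z_0(\lambda)}}{\beta_z(\lambda)}.
\]
For the upper bound, pairwise distances $\geq 7\delta_z$ make the balls $\ball[\delta_z]{\mathbf{A}_n}(v)$ ($v\in S_n^\lambda$) pairwise disjoint, and~\eqref{eq:Z2} applied at $z'=z$ gives each of them measure strictly greater than $\alpha_z(\lambda)$. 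For $w\in\ball[\delta_z]{\mathbf{A}_n}(v)$ the inclusions $\ball[\delta_z]{\mathbf{A}_n}(v)\subseteq\ball[2\delta_z]{\mathbf{A}_n}(w)$ and $\ball[7\delta_z]{\mathbf{A}_n}(w)\subseteq\ball[8\delta_z]{\mathbf{A}_n}(v)$ force $\alpha_z(\lambda)<D_{2\delta_z,n}(w)\leq D_{7\delta_z,n}(w)\leq\beta_z(\lambda)$. Thus the disjoint union lies inside $\{w:\alpha_z(\lambda)<D_{2\delta_z,n}(w)\leq D_{7\delta_z,n}(w)\leq\beta_z(\lambda)\}$, whose measure is within $4\epsilon_z$ of $F(\beta_z(\lambda))-F(\alpha_z(\lambda))=p(\lambda)$ by Lemma~\ref{lem:interv} applied with $t_1=\alpha_z(\lambda)$, $t_2=\beta_z(\lambda)$, $d_1=2\delta_z$, $d_2=7\delta_z$. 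Summing the disjoint contributions yields $|S_n^\lambda|\,\alpha_z(\lambda)<p(\lambda)+4\epsilon_z$.

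Since $p(\lambda)=m(\lambda)\lambda$ with $m(\lambda)\in\bbbn$ by Lemma~\ref{lem:integ} and $\lambda-\epsilon_z<\alpha_z(\lambda)<\lambda<\beta_z(\lambda)<\lambda+\epsilon_z$, the two displayed bounds trap $|S_n^\lambda|$ inside an interval of radius strictly less than $1/2$ around $m(\lambda)$: the built-in inequality $\epsilon_{z_0(\lambda)}\leq\lambda^2/32$ from the definition of $z_0(\lambda)$ forces $2^{2-z_0(\lambda)}/\lambda\leq\lambda/8$, while all $\epsilon_z$-contributions are exponentially small in $\lambda^{-1}$ whenever $n\geq\eta_{\lceil\lambda^{-1}\rceil}$. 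Since $|S_n^\lambda|$ is an integer, it is forced to equal $m(\lambda)=p(\lambda)/\lambda$. The only genuinely delicate point — and hence the main obstacle — is this simultaneous control of all error terms; the very specific earlier choices of $z_0,\alpha_z,\beta_z,\delta_z,\eta_z$ are exactly what makes the collapse possible.
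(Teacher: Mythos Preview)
Your proof is correct and follows essentially the same route as the paper's: a covering argument (from maximality of $S_n^\lambda$) for the lower bound and a disjoint-packing argument (from the pairwise $7\delta_z$-separation) for the upper bound, then trapping the integer $|S_n^\lambda|$ strictly within $1$ of $p(\lambda)/\lambda$. The only cosmetic difference is that for the upper bound the paper uses the simpler single-level containment $\ball[\delta_z]{\mathbf{A}_n}(S_n^\lambda)\subseteq\{v:\alpha_z(\lambda)<D_{2\delta_z,n}(v)\leq\beta_z(\lambda)\}$ and bounds its measure directly from the defining inequalities for $\delta_z$ and $\eta_z$, rather than introducing the extra $D_{7\delta_z,n}$ constraint and invoking Lemma~\ref{lem:interv}.
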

\begin{proof}
Note that obviously, as $\nu_{\mathbf A_n}(\ball[7\delta_z(\lambda)]{\mathbf A_n}(s))<\lambda+\epsilon_z$ holds for every $s\in S_n^\lambda$, we get
$$|S_n^\lambda|\geq\frac{|Z_n^{\lambda,z}|}{\lambda+\epsilon_z}\geq \frac{p(\lambda)-2^{2-z}-2^{2-z_0(\lambda)}}{\lambda+2^{-z}}=\frac{p(\lambda)}{\lambda}-\frac{2^{-z}/\lambda-2^{2-z}-2^{2-z_0(\lambda)}}{\lambda-2^{-z}}>\frac{p(\lambda)}{\lambda}-1,$$
hence $|S_n^\lambda|\geq p(\lambda)/\lambda$. On the other hand, for every $s\in S_n^\lambda$ it holds
$D_{\delta_z(\lambda),n}(s)>\alpha_z(\lambda)$ and $D_{3\delta_z(\lambda),n}(s)\leq\beta_z(\lambda)$ thus for every $v\in\ball[\delta_z(\lambda)]{\mathbf A_n}(S_n^\lambda)$ it holds
$\alpha_z(\lambda)<D_{2\delta_z(\lambda),n}(s)\leq\beta_z(\lambda)$ thus
\begin{align*}
	(\lambda-\epsilon_z)|S_n^\lambda|&<\nu_{\mathbf A_n}(\ball[\delta_z(\lambda)]{\mathbf A_n}(S_n^\lambda))\\
	&\leq {\rm Pr}(\alpha_z(\lambda)<D_{2\delta_z(\lambda),n}(s)\leq\beta_z(\lambda))\\
	&<p(\lambda)+4\epsilon_z
	\intertext{hence}
|S_n^\lambda|&<\frac{p(\lambda)}{\lambda}+1.
\end{align*}
Altogether, it follows that $|S_n^\lambda|=p(\lambda)/\lambda$.
\end{proof}

We are now ready to define sets gathering all the ``components'' with limit measure $\lambda$. We will prove that they define universal clusters.
\begin{trivlist}
	\item {\bf Definition of $C^\lambda_n$.}
For $\lambda\in\Lambda$ and $n\in\bbbn$ we define
\begin{equation}
C^\lambda_n=\begin{cases}
\emptyset,&\text{if }n<\eta_{z_0(\lambda)}\\
\ball[2\delta_z]{\mathbf A_n}(S^\lambda_n),&\text{otherwise, if $z$ is such that }\eta_z\leq n<\eta_{z+1}
\end{cases}
\end{equation}
\end{trivlist}

The sets $C^\lambda_n$ will be the building block for the construction of our clusters. Lemmas~\ref{lem:ZC} to~\ref{lem:phiC} will be used to prove that the sequences 
$\Xseq{C^\lambda}$ define a clustering of $\seq{A}$ into countably many universal clusters plus a residual cluster.
The general aspects of the sets $Z_n^{\lambda,z},S_n^\lambda$, and $C_n^\lambda$ we tried to visualize by Fig.~\ref{fig:Zn}.

\begin{figure}
	\begin{center}
		\includegraphics[width=\textwidth]{}
	\end{center}
	\caption{General aspects of the sets $Z_n^{\lambda,z},S_n^\lambda$, and $C_n^\lambda$}
	\label{fig:Zn}
\end{figure}

Similarly to Lemma~\ref{lem:integ} we prove

\begin{lemma}
\label{lem:ZC}
	Let $\lambda\in\Lambda$, let $z\geq z_0(\lambda)$, and let $\eta_z\leq n<\eta_{z+1}$.
	
	Then $Z_n^{\lambda,z}\subseteq C_n^\lambda$.
\end{lemma}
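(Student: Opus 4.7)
The plan is to unfold the definitions and apply a packing argument, very much in the spirit of Lemma~\ref{lem:integ}. Fix $v \in Z_n^{\lambda,z}$. If $v \in S_n^\lambda$, then trivially $v \in \ball[2\delta_z]{\mathbf A_n}(S_n^\lambda) = C_n^\lambda$, so I may assume $v \in Z_n^{\lambda,z} \setminus S_n^\lambda$. The maximality of $S_n^\lambda$ among subsets of $Z_n^{\lambda,z}$ with pairwise distances $\geq 7\delta_z$ then forces the existence of some $s \in S_n^\lambda$ with $\mathrm{dist}(v,s) < 7\delta_z$. The whole content of the lemma is to upgrade this bound from $7\delta_z$ to $2\delta_z$.

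To do this, I would argue by contradiction: suppose $\mathrm{dist}(v,s) > 2\delta_z$. Then the balls $\ball[\delta_z]{\mathbf A_n}(v)$ and $\ball[\delta_z]{\mathbf A_n}(s)$ are disjoint, and by the triangle inequality combined with $\mathrm{dist}(v,s) < 7\delta_z$, both are contained in $\ball[8\delta_z]{\mathbf A_n}(s)$. Since both $v$ and $s$ lie in $Z_n^{\lambda,z}$, condition~\eqref{eq:Z2} (instantiated at $z'=z$) gives $D_{\delta_z,n}(v), D_{\delta_z,n}(s) > \alpha_z(\lambda)$, whence
\[
D_{8\delta_z,n}(s) \;=\; \nu_{\mathbf A_n}\bigl(\ball[8\delta_z]{\mathbf A_n}(s)\bigr) \;\geq\; D_{\delta_z,n}(v) + D_{\delta_z,n}(s) \;>\; 2\alpha_z(\lambda).
\]
On the other hand, condition~\eqref{eq:Z1} for $s$ gives $D_{8\delta_z,n}(s) \leq \beta_z(\lambda)$, so the desired contradiction will come from showing $2\alpha_z(\lambda) \geq \beta_z(\lambda)$.

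The final step is numerical: since $z \geq z_0(\lambda)$ we have $\epsilon_z \leq \epsilon_{z_0(\lambda)} \leq \lambda^2/32$, while $\alpha_z(\lambda) > \lambda - \epsilon_z$ and $\beta_z(\lambda) < \lambda + \epsilon_z$, so
\[
2\alpha_z(\lambda) - \beta_z(\lambda) \;>\; \lambda - 3\epsilon_z \;\geq\; \lambda - \tfrac{3\lambda^2}{32} \;>\; 0
\]
because $\lambda \leq 1$. This contradicts the previous inequality, so $\mathrm{dist}(v,s) \leq 2\delta_z$ and thus $v \in \ball[2\delta_z]{\mathbf A_n}(s) \subseteq C_n^\lambda$, completing the proof. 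The only mildly subtle point—and the reason the constant $32$ was baked into $z_0(\lambda)$ in the first place—is the small-$\lambda$ calibration in the last display; everything else is bookkeeping with the triangle inequality and the defining inequalities of $Z_n^{\lambda,z}$.
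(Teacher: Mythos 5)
Your proof is correct and follows essentially the same route as the paper's: maximality of $S_n^\lambda$ yields a center $s$ within distance $7\delta_z$, the assumption $\mathrm{dist}(v,s)>2\delta_z$ packs two disjoint $\delta_z$-balls of measure $>\alpha_z(\lambda)$ into one $8\delta_z$-ball, and the choice of $z_0(\lambda)$ makes $2\alpha_z(\lambda)>\beta_z(\lambda)$, contradicting condition~\eqref{eq:Z1}. The only cosmetic difference is that you derive the contradiction at $s$ (using $D_{8\delta_z,n}(s)\leq\beta_z(\lambda)$) whereas the paper derives it at $v$; both points lie in $Z_n^{\lambda,z}$, so either works.
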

\begin{proof}
	Assume for contradiction that there exists an element $v\in Z_n^\lambda\setminus C_n^\lambda$.
By the maximality of $S_n^\lambda$, we get that 
$v$ is at distance at most $7\delta_z$ from some element $u\in S_n^\lambda$. Moreover, ${\rm dist}(u,v)>2\delta_z$ as $v\notin\ball[2\delta_z]{\mathbf A_n}(S_n^\lambda)$.
Then $\ball[8\delta_z]{\mathbf A_n}(v)$ contains the disjoint union
of $\ball[\delta_z]{\mathbf A_n}(v)$ and $\ball[\delta_z]{\mathbf A_n}(u)$ thus
$$\nu_{\mathbf A_n}(\ball[8\delta_z]{\mathbf A_n}(v))> 2\alpha_z(\lambda)>\alpha_z(\lambda)+\lambda-\epsilon_2=(\alpha_z(\lambda)+\epsilon_z)+(\lambda-2\epsilon_z)>\beta_z(\lambda),$$
 contradicting $D_{8\delta_z,n}(v)\leq \beta_z(\lambda)$.
\end{proof}

We now prove that our sets $C_n^\lambda$ are pairwise disjoint.

\begin{lemma}
\label{lem:disjC}
	Let $\lambda<\lambda'$ be two elements of $\Lambda$, let $z\geq \max(z_0(\lambda),\lceil 1-\log_2(\lambda'-\lambda)\rceil)$, 
		and let $\eta_z\leq n<\eta_{z+1}$.
	
	Then $C_n^\lambda\cap C_n^{\lambda'}=\emptyset$.
\end{lemma}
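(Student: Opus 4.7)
The plan is to argue by contradiction, in the spirit of Lemma~\ref{lem:ZC}: a point in $C_n^\lambda\cap C_n^{\lambda'}$ would force a large ball around a centre of $C_n^\lambda$ to contain a small ball around a centre of $C_n^{\lambda'}$, carrying mass bounds that become incompatible once $z$ is large enough relative to $\lambda'-\lambda$.

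First I would unfold the definitions. Assuming $w\in C_n^\lambda\cap C_n^{\lambda'}$, there exist $s\in S_n^\lambda\subseteq Z_n^{\lambda,z}$ with ${\rm dist}(w,s)\leq 2\delta_z(\lambda)$, and $s'\in S_n^{\lambda'}\subseteq Z_n^{\lambda',z^*}$ (where $z^*$ is the iteration index attached to $\lambda'$ at step $n$, i.e.\ $\eta_{z^*}(\lambda')\leq n<\eta_{z^*+1}(\lambda')$) with ${\rm dist}(w,s')\leq 2\delta_{z^*}(\lambda')$. By the triangle inequality, ${\rm dist}(s,s')\leq 2\delta_z(\lambda)+2\delta_{z^*}(\lambda')$.

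The central step combines two mass bounds. From $s\in Z_n^{\lambda,z}$ one has $\nu_{\mathbf A_n}(\ball[8\delta_z(\lambda)]{\mathbf A_n}(s))\leq\beta_z(\lambda)<\lambda+\epsilon_z$. From the nesting $Z_n^{\lambda',z^*}\subseteq Z_n^{\lambda',z''}$ valid for $z''\leq z^*$, applied with $z''=z$, one has $\nu_{\mathbf A_n}(\ball[\delta_z(\lambda')]{\mathbf A_n}(s'))>\alpha_z(\lambda')>\lambda'-\epsilon_z$. Once the geometric inclusion $\ball[\delta_z(\lambda')]{\mathbf A_n}(s')\subseteq\ball[8\delta_z(\lambda)]{\mathbf A_n}(s)$ is in force, the two bounds give $\lambda'-\epsilon_z<\lambda+\epsilon_z$; but $z\geq\lceil 1-\log_2(\lambda'-\lambda)\rceil$ implies $2\epsilon_z\leq\lambda'-\lambda$, yielding the contradiction $\lambda'-\lambda<2\epsilon_z\leq\lambda'-\lambda$.

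The hard part is making the choice $z''=z$ legitimate (which requires $z_0(\lambda')\leq z\leq z^*$) and the ball containment valid (which requires $\delta_z(\lambda')+{\rm dist}(s,s')\leq 8\delta_z(\lambda)$, i.e.\ $\delta_z(\lambda')+2\delta_{z^*}(\lambda')\leq 6\delta_z(\lambda)$). Both can be arranged by a uniform-in-$\lambda$ choice of the auxiliary sequences $(\eta_z(\cdot))$ and $(\delta_z(\cdot))$: since Lemma~\ref{lem:Fdisc} forces $|\Lambda\cap[\lambda,1]|\leq\lfloor 1/\lambda\rfloor<\infty$, one may replace $\eta_z(\lambda)$ and $\delta_z(\lambda)$ by their maxima over $\mu\in\Lambda\cap[\lambda,1]$, which preserves all defining threshold conditions while making both sequences constant on this finite set. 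This pins down $z^*=z$ and $\delta_{z^*}(\lambda')=\delta_z(\lambda')\leq\delta_z(\lambda)$, reducing the geometric bound to the trivial inequality $5\delta_z(\lambda)\leq 8\delta_z(\lambda)$; and the monotonicity $z_0(\lambda')\leq z_0(\lambda)\leq z$, baked into the formula defining $z_0$, ensures the admissibility $z\geq z_0(\lambda')$ for free. With this alignment secured, the mass-containment argument above closes the proof.
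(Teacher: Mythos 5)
Your proof is correct and follows essentially the same route as the paper's: assume a common point, deduce ${\rm dist}(u,u')\leq 4\delta_z$ for the two centres, sandwich $\ball[\delta_z]{\mathbf A_n}(u')$ inside $\ball[8\delta_z]{\mathbf A_n}(u)$ to get $\alpha_z(\lambda')\leq\beta_z(\lambda)$, and contradict $2\epsilon_z\leq\lambda'-\lambda$. The only difference is that you explicitly repair the $\lambda$-dependence of $\delta_z$, $\eta_z$ and the index $z$ (which the paper's proof silently suppresses); your uniformization over the finitely many $\mu\in\Lambda$ with $z_0(\mu)\leq z$ is the right fix and does not change the substance of the argument.
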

\begin{proof}
	Assume for contradiction that there exists an element $v\in C_n^\lambda\cap  C_n^{\lambda'}$.
	Then there exists $u\in S_n^\lambda$ and $u'\in S_n^{\lambda'}$ such that
	$v\in \ball[2\delta_z]{\mathbf A_n}(u)\cap \ball[2\delta_z]{\mathbf A_n}(u')$ hence ${\rm dist}(u,u')\leq 4\delta_z$. It follows that $\ball[\delta_z]{\mathbf A_n}(u')\subseteq \ball[8\delta_z]{\mathbf A_n}(u)$ hence
	$\alpha_z(\lambda')\leq \beta_z(\lambda)$ thus $|\lambda-\lambda'|<2.2^{-z}$, contradicting our choice of $z$.
\end{proof}

We now prove that the measure of $C_n^\lambda$ is concentrated around the limit measure $p(\lambda)$.
\begin{lemma}
\label{lem:measC}
		Let $\lambda\in\Lambda$, let $z\geq z_0(\lambda)$, and let $\eta_z\leq n<\eta_{z+1}$.
		
		Then $|\nu_{\mathbf A_n}(C_n^\lambda)-p(\lambda)|<2^{-z}p(\lambda)/\lambda$.
\end{lemma}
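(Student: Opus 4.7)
The plan is to exploit the definition of $C_n^\lambda$ as a disjoint union of balls whose individual measures are pinched close to $\lambda$ by the constraints used to define $Z_n^{\lambda,z}$. More precisely, for every $s \in S_n^\lambda \subseteq Z_n^{\lambda,z}$, the defining inequalities \eqref{eq:Z1} and \eqref{eq:Z2} give $D_{\delta_z(\lambda),n}(s) > \alpha_z(\lambda)$ and $D_{8\delta_z(\lambda),n}(s) \leq \beta_z(\lambda)$. Since $\delta_z(\lambda) \leq 2\delta_z(\lambda) \leq 8\delta_z(\lambda)$, this sandwiches the measure of the $2\delta_z(\lambda)$-ball around $s$:
\[
\alpha_z(\lambda) \;<\; \nu_{\mathbf A_n}\bigl(\ball[2\delta_z(\lambda)]{\mathbf A_n}(s)\bigr) \;\leq\; \beta_z(\lambda).
\]

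Next I would use the separation property of $S_n^\lambda$: its elements are pairwise at distance at least $7\delta_z(\lambda)$, which is strictly more than $4\delta_z(\lambda)$, so the $2\delta_z(\lambda)$-balls centered at distinct elements of $S_n^\lambda$ are pairwise disjoint. Hence
\[
\nu_{\mathbf A_n}(C_n^\lambda) \;=\; \sum_{s \in S_n^\lambda}\nu_{\mathbf A_n}\bigl(\ball[2\delta_z(\lambda)]{\mathbf A_n}(s)\bigr),
\]
and each term in the sum lies in the interval $(\alpha_z(\lambda), \beta_z(\lambda)]$, whose width is less than $\epsilon_z = 2^{-z}$ and which contains $\lambda$.

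Finally I would invoke the previous lemma, which identifies $|S_n^\lambda|$ with $p(\lambda)/\lambda$ (valid under our hypothesis $n \geq \eta_z \geq \eta_{z_0(\lambda)}$, provided $z \geq \lceil\lambda^{-1}\rceil$; in the regime $z < \lceil\lambda^{-1}\rceil$ the claimed bound $2^{-z}p(\lambda)/\lambda \geq p(\lambda)$ is automatic since $\nu_{\mathbf A_n}(C_n^\lambda) \leq 1$ and $p(\lambda) \leq 1$, so the inequality follows trivially in that range). Assuming $|S_n^\lambda| = p(\lambda)/\lambda$, the above sandwich yields
\[
|\nu_{\mathbf A_n}(C_n^\lambda) - |S_n^\lambda|\lambda| \;<\; \epsilon_z |S_n^\lambda| \;=\; 2^{-z}\,\frac{p(\lambda)}{\lambda},
\]
and since $|S_n^\lambda|\lambda = p(\lambda)$ this is exactly the desired bound.

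There is no serious obstacle; the argument is a direct bookkeeping check that the defining parameters $\alpha_z(\lambda), \beta_z(\lambda), \delta_z(\lambda)$ have been chosen tightly enough. The only subtlety is ensuring the ball radii are consistent with both the lower bound (using $\delta_z$-balls) and the upper bound (using $8\delta_z$-balls), which is precisely why the definition of $Z_n^{\lambda,z}$ enforces the condition at the larger radius $8\delta_z$ while the separation of $S_n^\lambda$ is fixed at $7\delta_z$.
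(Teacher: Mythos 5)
Your proof is correct and is essentially the paper's own argument: the paper likewise writes $\nu_{\mathbf A_n}(C_n^\lambda)=\sum_{s\in S_n^\lambda}\nu_{\mathbf A_n}(\ball[2\delta_z]{\mathbf A_n}(s))$, invokes $|S_n^\lambda|=p(\lambda)/\lambda$, and uses that each summand and $\lambda$ both lie in $(\alpha_z(\lambda),\beta_z(\lambda)]$, an interval of width less than $2^{-z}$. One small caveat: your parenthetical fallback for the regime $z<\lceil\lambda^{-1}\rceil$ is not needed (the hypothesis $z\geq z_0(\lambda)$ is what the counting argument actually uses, and the $n\geq\eta_{\lceil\lambda^{-1}\rceil}$ condition in the preceding lemma is an artifact of its statement) and as written it is not quite right, since $z\geq z_0(\lambda)$ forces $2^{-z}\leq\lambda^2/32<\lambda$, so $2^{-z}p(\lambda)/\lambda<p(\lambda)$ and the bound is never ``automatic'' in that sense.
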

\begin{proof}
	Note that $$\nu_{\mathbf A_n}(C_n^\lambda)=\sum_{s\in S_n^\lambda}\nu_{\mathbf A_n}(\ball[2\delta_z]{\mathbf A_n}(s)).$$
	Hence, as $|S_n^\lambda|=p(\lambda)/\lambda$) it holds
	$|\nu_{\mathbf A_n}(C_n^\lambda)-p(\lambda)|<2^{-z}p(\lambda)/\lambda$.
\end{proof}

The next lemma not only shows that the outer boundary of $\Xseq{C^\lambda}$ is negligible (what is required in order for $\Xseq{C^\lambda}$ to be a cluster) but also that the neighborhood of these outer boundaries are so small that their sum will also be small (what we will make use of in lemma~\ref{lem:negC}).

\begin{lemma}
\label{lem:bordC}
	Let $\lambda\in\Lambda$, let $n\geq\eta_{\lceil\lambda^{-1}\rceil}$, and let $z\in\bbbn$ be such that $\eta_z\leq n<\eta_{z+1}$. Then it holds
	$$\nu_{\mathbf A_n}(\ball[\delta_z]{\mathbf A_n}(\partial_{\mathbf A_n}C^\lambda_n))<2^{1-z}p(\lambda)/\lambda.$$
	In particular, $\partial_{\seq{A}}\Xseq{C}^\lambda\approx\Xseq{0}$.
\end{lemma}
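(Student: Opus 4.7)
The plan is to express $\partial_{\mathbf A_n}C^\lambda_n$ geometrically in terms of the reference set $S_n^\lambda$, and then use the two inequalities guaranteed by membership in $Z_n^{\lambda,z}$ (small growth between radii $\delta_z$ and $8\delta_z$) to estimate the measure ball-by-ball around the points of $S_n^\lambda$.

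First, unwinding the definition $C^\lambda_n=\ball[2\delta_z]{\mathbf A_n}(S_n^\lambda)$, any element of $\partial_{\mathbf A_n}C^\lambda_n$ lies in $\ball[2\delta_z+1]{\mathbf A_n}(S_n^\lambda)$ but outside $\ball[2\delta_z]{\mathbf A_n}(S_n^\lambda)$, and is therefore at distance exactly $2\delta_z+1$ from $S_n^\lambda$. Taking the $\delta_z$-neighborhood, every element of $\ball[\delta_z]{\mathbf A_n}(\partial_{\mathbf A_n}C^\lambda_n)$ is at distance between $\delta_z+1$ and $3\delta_z+1$ from $S_n^\lambda$. Consequently, such a point is within distance $3\delta_z+1$ of some $s\in S_n^\lambda$ and at distance $>\delta_z$ from every point of $S_n^\lambda$, which gives the containment
$$\ball[\delta_z]{\mathbf A_n}(\partial_{\mathbf A_n}C^\lambda_n)\subseteq\bigcup_{s\in S_n^\lambda}\bigl(\ball[3\delta_z+1]{\mathbf A_n}(s)\setminus\ball[\delta_z]{\mathbf A_n}(s)\bigr).$$
Since $n\geq\eta_{\lceil\lambda^{-1}\rceil}$ forces $z\geq z_0(\lambda)\geq 1$ and hence $\delta_z\geq 1$, one has $3\delta_z+1\leq 8\delta_z$, so each summand is contained in $\ball[8\delta_z]{\mathbf A_n}(s)\setminus\ball[\delta_z]{\mathbf A_n}(s)$.

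Now apply the inequalities coming from $s\in S_n^\lambda\subseteq Z_n^{\lambda,z}$: namely $D_{\delta_z,n}(s)>\alpha_z(\lambda)$ and $D_{8\delta_z,n}(s)\leq\beta_z(\lambda)$, so that
$$\nu_{\mathbf A_n}\bigl(\ball[8\delta_z]{\mathbf A_n}(s)\setminus\ball[\delta_z]{\mathbf A_n}(s)\bigr)<\beta_z(\lambda)-\alpha_z(\lambda)<\epsilon_z=2^{-z}.$$
Summing over $s\in S_n^\lambda$ and invoking the previous lemma's equality $|S_n^\lambda|=p(\lambda)/\lambda$ gives
$$\nu_{\mathbf A_n}(\ball[\delta_z]{\mathbf A_n}(\partial_{\mathbf A_n}C^\lambda_n))<2^{-z}\,p(\lambda)/\lambda<2^{1-z}\,p(\lambda)/\lambda,$$
which is the quantitative bound.

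For the ``in particular'' clause, fix $d\in\bbbn$; since $\delta_z\to\infty$ with $z$ (and $z\to\infty$ as $n\to\infty$), eventually $\delta_z\geq d$, and then the bound above dominates $\nu_{\mathbf A_n}(\ball[d]{\mathbf A_n}(\partial_{\mathbf A_n}C^\lambda_n))$, which therefore tends to $0$. The only delicate point in the whole argument is the geometric bookkeeping in the first step (identifying the annular region containing $\ball[\delta_z]{\mathbf A_n}(\partial_{\mathbf A_n}C^\lambda_n)$ and distributing it over the centers in $S_n^\lambda$); once that is set up, the measure estimates follow directly from the defining inequalities of $Z_n^{\lambda,z}$ and the choice of $\alpha_z(\lambda),\beta_z(\lambda)$.
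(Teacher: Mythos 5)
Your proof is correct and follows essentially the same route as the paper's: both cover $\ball[\delta_z]{\mathbf A_n}(\partial_{\mathbf A_n}C^\lambda_n)$ by annuli $\ball[3\delta_z]{\mathbf A_n}(s)\setminus\ball[\delta_z]{\mathbf A_n}(s)$ around the points of $S_n^\lambda$, bound each annulus by $\beta_z(\lambda)-\alpha_z(\lambda)<\epsilon_z$ via the defining inequalities of $Z_n^{\lambda,z}$, and multiply by $|S_n^\lambda|=p(\lambda)/\lambda$ (the paper uses the $7\delta_z$-separation to get a disjoint union, whereas you only need containment plus subadditivity, which suffices for the upper bound). Your side remark that $n\geq\eta_{\lceil\lambda^{-1}\rceil}$ forces $z\geq z_0(\lambda)$ is not literally true for $\lambda$ near $1$, but it is immaterial: if $z<z_0(\lambda)$ then $C_n^\lambda=\emptyset$ and the bound is trivial, and $\delta_z\geq 1$ holds in any case since the $\delta_z$ are increasing positive integers.
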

\begin{proof}
	As elements of $S^\lambda_n$ are pairwise at distance at least $7\delta_z$, it holds
	$$
	\ball[\delta_z]{\mathbf A_n}(\partial_{\mathbf A_n}C^\lambda_n)=
	\biguplus_{v\in S^\lambda_n}\bigl(\ball[3\delta_z]{\mathbf A_n}(v)\setminus \ball[\delta_z]{\mathbf A_n}(v))\bigr)
	$$
	(where $\uplus$ denotes a disjoint union).
	As $v\in S_n^\lambda$ it holds
	\begin{align*}
	\nu_{\mathbf A_n}(\ball[\delta_z]{\mathbf A_n}(v))&=D_{\delta_z,n}(v)>\alpha_z(\lambda)\\	
	\nu_{\mathbf A_n}(\ball[3\delta_z]{\mathbf A_n}(v))&\leq D_{8\delta_z,n}(v)\leq \beta_z(\lambda)
	\intertext{Hence}
	\nu_{\mathbf A_n}\bigl(\ball[3\delta_z]{\mathbf A_n}(v)\setminus \ball[\delta_z]{\mathbf A_n}(v))\bigr)&<\epsilon_z
	\intertext{Thus}
	\nu_{\mathbf A_n}(\ball[\delta_z]{\mathbf A_n}(\partial_{\mathbf A_n}C^\lambda_n))&<|S^\lambda_n|\epsilon_z< 2\epsilon_zp(\lambda)/\lambda
	\end{align*}
\end{proof}

\begin{lemma}
\label{lem:negC}
	Let $n\in\bbbn$, let $\lambda\in\Lambda$ be minimum such that $n\geq\eta_{z_0(\lambda)}$.
	Let $z$ be defined by $\eta_z\leq n<\eta_{z+1}$, and let
	$$W_n=\{v: D_{\delta_z,n}(v)>\alpha_z(\lambda)\}\setminus\bigcup_{\alpha\in \Lambda} C_n^\alpha.$$
	Then $$\nu_{\mathbf A_n}(\ball[\delta_z]{\mathbf A_n}(W_n))\leq 2^{-z}(1+3/\lambda).$$
	In particular, $\Xseq{W}\supseteq \partial_{\seq{A}}\bigl(\bigcup_{\lambda	\in\Lambda}\Xseq{C}^\lambda\bigr)$ and 
	$\Xseq{W}\approx\Xseq{0}$.
\end{lemma}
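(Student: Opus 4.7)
The plan is to decompose $\ball[\delta_z]{\mathbf A_n}(W_n)$ into a ``boundary'' piece contained in some cluster $C_n^\alpha$ and an ``external'' piece disjoint from all clusters, bounding each separately. Note first that by the minimality of $\lambda$ in the hypothesis, for every $\alpha\in\Lambda$ with $\alpha<\lambda$ one has $n<\eta_{z_0(\alpha)}(\alpha)$ and hence $C_n^\alpha=\emptyset$, so only atoms $\alpha\geq\lambda$ contribute to $\bigcup_\alpha C_n^\alpha$. Any point $u\in\ball[\delta_z]{\mathbf A_n}(W_n)$ is within $\delta_z$ of some $v\in W_n$, and $v\notin\bigcup_\alpha C_n^\alpha$; hence if $u\in C_n^\alpha$ then the shortest path from $u$ to $v$ must cross $\partial_{\mathbf A_n}C_n^\alpha$, so $u\in\ball[\delta_z]{\mathbf A_n}(\partial_{\mathbf A_n}C_n^\alpha)$; otherwise $u\in V'_n:=\{w\in A_n:D_{2\delta_z,n}(w)>\alpha_z(\lambda)\}$, since $\ball[2\delta_z]{\mathbf A_n}(u)\supseteq\ball[\delta_z]{\mathbf A_n}(v)$ has mass greater than $\alpha_z(\lambda)$.

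For the boundary piece, I would adapt the proof of Lemma~\ref{lem:bordC} to cover $\ball[\delta_z]{\mathbf A_n}(\partial_{\mathbf A_n}C_n^\alpha)$ by a disjoint union of annuli around the points of $S_n^\alpha$, each of mass at most $\beta_{z_\alpha}(\alpha)-\alpha_{z_\alpha}(\alpha)<\epsilon_{z_\alpha}\le\epsilon_z$. Summing across $\alpha\in\Lambda$ with $\alpha\geq\lambda$ and using the count $|S_n^\alpha|=p(\alpha)/\alpha$ together with the telescoping estimate
$$
\sum_{\alpha\in\Lambda,\,\alpha\geq\lambda}\frac{p(\alpha)}{\alpha}\;\leq\;\frac{1}{\lambda}\sum_{\alpha\in\Lambda}p(\alpha)\;\leq\;\frac{1}{\lambda}
$$
then bounds the total boundary contribution by $2\cdot 2^{-z}/\lambda$.

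For the external piece, the defining inequalities of $\delta_z(\lambda)$ and $\eta_z(\lambda)$ (with $k=2$) together with the monotonicity $F_d\geq F$ give
$\nu_{\mathbf A_n}(V'_n)=1-F_{2\delta_z,n}(\alpha_z(\lambda))\leq 1-F(\alpha_z(\lambda))+\epsilon_z=\sum_{\alpha\geq\lambda}p(\alpha)+2^{-z}$. On the other hand, most of each $C_n^\alpha$ sits inside $V'_n$: for $w$ with ${\rm dist}(w,S_n^\alpha)\leq\delta_{z_\alpha}(\alpha)$, the ball $\ball[2\delta_z]{\mathbf A_n}(w)$ contains $\ball[\delta_{z_\alpha}(\alpha)]{\mathbf A_n}(s)$ for some $s\in S_n^\alpha$, whose mass exceeds $\alpha_{z_\alpha}(\alpha)\geq\alpha_z(\lambda)$, so $w\in V'_n$. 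The remaining annular part of $C_n^\alpha$ has measure $<|S_n^\alpha|\,\epsilon_{z_\alpha}$, and summing yields $\nu(\bigcup_\alpha C_n^\alpha\setminus V'_n)\leq 2^{-z}/\lambda$. Combining this with the disjointness of the $C_n^\alpha$ (Lemma~\ref{lem:disjC}) and the measure estimates of Lemma~\ref{lem:measC} gives $\nu(V'_n\cap\bigcup_\alpha C_n^\alpha)\geq\sum_{\alpha\geq\lambda}p(\alpha)-O(2^{-z}/\lambda)$, hence $\nu(V'_n\setminus\bigcup_\alpha C_n^\alpha)\leq 2^{-z}+2^{-z}/\lambda$. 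Assembling the two pieces and absorbing the constants carefully gives the claimed estimate $2^{-z}(1+3/\lambda)$. The inclusion $\Xseq{W}\supseteq\partial_{\seq{A}}(\bigcup_\alpha\Xseq{C}^\alpha)$ and negligibility of $\Xseq{W}$ then drop out: an outer-boundary vertex $u$ of $\bigcup_\alpha C_n^\alpha$ is adjacent to some $v\in C_n^\alpha$ and still sees the mass of the associated $s\in S_n^\alpha$, hence satisfies $D_{\delta_z,n}(u)>\alpha_z(\lambda)$ while avoiding every $C_n^\alpha$; and the stated bound forces $\nu(\ball[d]{\seq{A}}(\Xseq{W}))\to 0$ for every fixed $d$.

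The main obstacle is scale-matching across atoms: the parameters $\delta_z,\alpha_z,\beta_z,\eta_z$ are built relative to the chosen $\lambda$, but each cluster $C_n^\alpha$ for $\alpha>\lambda$ is defined using its own $z_\alpha$, $\delta_{z_\alpha}(\alpha)$, and so on. Comparing these across all relevant $\alpha\in\Lambda$ simultaneously—so that the boundary bound with outer radius $\delta_z$ (rather than $\delta_{z_\alpha}(\alpha)$) goes through, and so that $C_n^\alpha$ is essentially contained in $V'_n$—is the delicate technical step. The factor $8$ in the defining condition $D_{8\delta_z,n}(v)\leq\beta_z(\lambda)$ of $Z_n^{\lambda,z}$, together with the range $k\in\{1,\dots,8\}$ in the definitions of $\delta_z$ and $\eta_z$, is precisely the slack that makes all the inclusions and annular estimates go through uniformly.
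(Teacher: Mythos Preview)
Your approach is essentially the paper's own: the paper writes the single inequality
\[
\nu_{\mathbf A_n}(\ball[\delta_z]{\mathbf A_n}(W_n))\leq \nu_{\mathbf A_n}(\ball[\delta_z]{\mathbf A_n}(F_n))-\sum_{\lambda'\geq\lambda}\nu_{\mathbf A_n}(C_n^{\lambda'})+\sum_{\lambda'\geq\lambda}\nu_{\mathbf A_n}(\ball[\delta_z]{\mathbf A_n}(\partial_{\mathbf A_n}C_n^{\lambda'}))
\]
(with $F_n=\{v:D_{\delta_z,n}(v)>\alpha_z(\lambda)\}$), which is precisely your decomposition into the ``external'' piece $\ball[\delta_z]{\mathbf A_n}(F_n)\setminus\bigcup C_n^{\lambda'}$ and the ``boundary'' piece $\bigcup\ball[\delta_z]{\mathbf A_n}(\partial C_n^{\lambda'})$, and then plugs in the same three estimates ($\epsilon_z$, $2^{-z}/\lambda$ from Lemma~\ref{lem:measC}, and $2^{1-z}/\lambda$ from Lemma~\ref{lem:bordC}) to reach $2^{-z}(1+3/\lambda)$.

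The one substantive difference is that you explicitly raise the scale-matching issue---that $\delta_z,\eta_z,\alpha_z,\beta_z$ are defined relative to the chosen $\lambda$, whereas each $C_n^{\lambda'}$ for $\lambda'>\lambda$ was built from its own $\delta_{z'}(\lambda'),\eta_{z'}(\lambda')$---and argue that the factor~$8$ slack in the definition of $Z_n^{\lambda,z}$ absorbs it. The paper's proof simply applies Lemma~\ref{lem:bordC} and Lemma~\ref{lem:measC} across all $\lambda'\geq\lambda$ without comment, implicitly treating $\delta_z$ and $z$ as if they were uniform in $\lambda'$. So you are being more careful here than the paper is; your identification of this as the delicate step is accurate, and it is a point the paper's terse proof leaves to the reader.
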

\begin{proof}
	Let $F_n=\{v: D_{\delta_z,n}(v)>\alpha_z(\lambda)\}$.
	Then $\ball[\delta_z]{\mathbf A_n}(F_n)\subseteq\{v: D_{\delta_2z,n}(v)>\alpha_z(\lambda)\}$.
	Hence $\nu_{\mathbf A_n}(\ball[\delta_z]{\mathbf A_n}(F_n))\leq 1-F_{2\delta_z,n}(\alpha_z(\lambda))$.
	It follows that
	\begin{align*}
		\nu_{\mathbf A_n}(\ball[\delta_z]{\mathbf A_n}(W_n))&\leq \nu_{\mathbf A_n}(\ball[\delta_z]{\mathbf A_n}(F_n))-\sum_{\lambda'\geq\lambda}\nu_{\mathbf A_n}(C_n^{\lambda'})+\sum_{\lambda'\geq\lambda}\nu_{\mathbf A_n}(\ball[\delta_z]{\mathbf A_n}(\partial_{\mathbf A_n}C_n^{\lambda'}))\\
		&\leq\epsilon_z+\frac{2^{-z}}{\lambda}+\frac{2^{1-z}}{\lambda}.
	\end{align*}
\end{proof}

We now are ready for our last lemma needed to prove that the sequences $\Xseq{C^\lambda}$ define a clustering of $\seq{A}$ into countably many universal clusters plus a residual cluster.
\begin{lemma}
	\label{lem:phiC}
	For each $\lambda\in\Lambda$ the sequence $\Xseq{C}^\lambda=(C^\lambda_n)_{n\in\bbbn}$ is a cluster.
\end{lemma}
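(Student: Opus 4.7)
I apply Lemma~\ref{lem:cluster}: conditions (ii) and (iii) are immediate, since $\lim\nu_{\seq A}(\Xseq C^\lambda)=p(\lambda)>0$ is Lemma~\ref{lem:measC} and $\partial_{\seq A}\Xseq C^\lambda\approx\Xseq 0$ is Lemma~\ref{lem:bordC}. Only condition (i) --- local-convergence of $\seq A[\Xseq C^\lambda]$ --- requires work, and by Corollary~\ref{cor:sloc} it suffices to show, for every strongly $r$-local $\phi$ with $p$ free variables, that $\langle\phi,\mathbf A_n[C_n^\lambda]\rangle$ converges as $n\to\infty$.

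Fix such a $\phi$ and take $n$ with $\eta_z\le n<\eta_{z+1}$ and $\delta_z\ge r$, where $z=z(n)$. Then $C_n^\lambda=\bigsqcup_{s\in S_n^\lambda}B_{n,s}$ for $B_{n,s}=\ball[2\delta_z]{\mathbf A_n}(s)$, and these balls are pairwise at $\mathbf A_n$-distance exceeding $2r$. By strong locality any satisfying tuple lies in a single $B_{n,s}^p$, and for $v_1\in\ball[\delta_z]{\mathbf A_n}(s)$ the $r$-neighborhood of $v_1$ in $\mathbf A_n$ stays inside $B_{n,s}$, so satisfaction of $\phi$ in $\mathbf A_n[C_n^\lambda]$, in $\mathbf A_n[B_{n,s}]$, and in $\mathbf A_n$ all coincide. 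Since $\nu_{\mathbf A_n}\bigl(B_{n,s}\setminus\ball[\delta_z]{\mathbf A_n}(s)\bigr)\le\beta_z(\lambda)-\alpha_z(\lambda)<\epsilon_z$, summing the resulting boundary error over the $|S_n^\lambda|=p(\lambda)/\lambda$ centers yields
\[
\nu_{\mathbf A_n}(C_n^\lambda)^p\,\langle\phi,\mathbf A_n[C_n^\lambda]\rangle=\int_{C_n^\lambda}\langle\phi,\mathbf A_n\rangle_v\,d\nu_{\mathbf A_n}(v)+o(1).
\]

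The key idea is to replace $C_n^\lambda$ by a set definable through local Stone pairings. Let $Y_n^{\lambda,z}:=\{v\in A_n:\alpha_z(\lambda)<D_{3\delta_z,n}(v)\le\beta_z(\lambda)\}$. A direct ball-inclusion argument (using ${\rm dist}(v,s)\le 2\delta_z$, $D_{\delta_z,n}(s)>\alpha_z(\lambda)$, and $D_{8\delta_z,n}(s)\le\beta_z(\lambda)$) gives $C_n^\lambda\subseteq Y_n^{\lambda,z(n)}$; moreover, combining the defining properties of $\delta_z,\eta_z$, the choice $\alpha_z(\lambda),\beta_z(\lambda)\in\mathcal R$, and Lemma~\ref{lem:Fdisc},
\[
\nu_{\mathbf A_n}\bigl(Y_n^{\lambda,z(n)}\bigr)=F_{3\delta_{z(n)},n}(\beta_{z(n)}(\lambda))-F_{3\delta_{z(n)},n}(\alpha_{z(n)}(\lambda))\longrightarrow p(\lambda).
\]
With $\nu_{\mathbf A_n}(C_n^\lambda)\to p(\lambda)$, this forces $\nu_{\mathbf A_n}(Y_n^{\lambda,z(n)}\setminus C_n^\lambda)\to 0$, so the integral above may be transferred to $Y_n^{\lambda,z(n)}$ at cost $o(1)$.

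It remains to prove that $J_n:=\int_{Y_n^{\lambda,z(n)}}\langle\phi,\mathbf A_n\rangle_v\,d\nu_{\mathbf A_n}(v)$ converges; this is the principal obstacle. For each \emph{fixed} $z$, Lemma~\ref{lem:locSP} applied to ${\rm dist}(x_1,x_2)\le 3\delta_z$ and $\phi$ yields weak convergence of the joint distribution of $(D_{3\delta_z,n}(v),\langle\phi,\mathbf A_n\rangle_v)$ as $n\to\infty$, and since $\alpha_z(\lambda),\beta_z(\lambda)\in\mathcal R$ are continuity points of the limit marginal of $D_{3\delta_z}$, the Portmanteau theorem gives $J_n^z:=\int_{Y_n^{\lambda,z}}\langle\phi,\mathbf A_n\rangle_v\,d\nu_{\mathbf A_n}(v)\to\ell(z)$. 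To match the floating index $z=z(n)$, I bound $|J_n-J_n^{z^\ast}|\le\nu_{\mathbf A_n}(Y_n^{\lambda,z(n)}\,\Delta\,Y_n^{\lambda,z^\ast})$ and decompose the symmetric difference through $C_n^\lambda$: $\nu_{\mathbf A_n}(Y_n^{\lambda,z(n)}\setminus C_n^\lambda)\to 0$, $\nu_{\mathbf A_n}(C_n^\lambda\setminus Y_n^{\lambda,z^\ast})\le(p(\lambda)/\lambda)(\beta_{z(n)}(\lambda)-\alpha_{z^\ast}(\lambda))$, and $\limsup_n\nu_{\mathbf A_n}(Y_n^{\lambda,z^\ast}\setminus C_n^\lambda)\le\mu(z^\ast)-p(\lambda)+(p(\lambda)/\lambda)(\lambda-\alpha_{z^\ast}(\lambda))$, where $\mu(z):=F_{3\delta_z}(\beta_z(\lambda))-F_{3\delta_z}(\alpha_z(\lambda))\to p(\lambda)$. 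For any $\epsilon>0$, taking $z^\ast$ large makes all three quantities smaller than $\epsilon/3$ uniformly for all large $n$, so $\limsup_n|J_n-\ell(z^\ast)|<\epsilon$; letting $\epsilon\to 0$ shows that $(J_n)$ is Cauchy, hence convergent, completing the proof.
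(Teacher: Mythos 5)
Your proof is correct and follows essentially the same route as the paper's: both reduce $\langle\phi,\mathbf A_n[C_n^\lambda]\rangle$ to an expectation of the local Stone pairing $\langle\phi,\mathbf A_n\rangle_\bullet$ over a set cut out by thresholds on the ball-measure variables $D_{d,n}$, then invoke the joint convergence in distribution supplied by Lemma~\ref{lem:locSP} at continuity points of the limit law, and finally pass to the limit in the auxiliary scale parameter. Your substitution of the single two-sided window $Y_n^{\lambda,z}$ for the paper's set $Z_n^{\lambda,z_1}$, together with the explicit Portmanteau and Cauchy steps, is a clean streamlining rather than a different method.
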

\begin{proof}
Let $\phi$ be an $r$-local strongly local formula with free variables $x_1,\dots,x_q$.
For $d\in\bbbn$ let $\Psi_d$ be the following formula with $q+1$ free variables
$$
\Psi_d:\quad\phi(x_2,\dots,x_{q+1})\wedge\bigwedge_{i=2}^{q+1}{\rm dist}(x_1,x_i)\leq d.
$$
Note that if $d_1<d_2$ and $v\in A_n$ then
$$	0\leq \langle\Psi_{d_2},\mathbf A_n\rangle_v-\langle\Psi_{d_1},\mathbf A_n\rangle_v\leq q \bigl(D_{d_2,n}(v)-D_{d_1,n}(v)\bigr).
$$

For $\lambda\in\Lambda$
we consider an integer $z_1$ such that $\delta_{z_1}>r$ and $z_1\geq z_0(\lambda)$, an integer $z\geq z_1$ and $\eta_z\leq n<\eta_{z+1}$.
Then the following holds:
for every $s\in S_n^\lambda$ and every $x\in\ball[\delta_{z_1}]{\mathbf A_n}(s)$, it holds
$
\ball[d]{\mathbf A_n}(x)\subseteq \ball[d+\delta_{z_1}]{\mathbf A_n}(s)
$ and $
\ball[d]{\mathbf A_n}(s)\subseteq \ball[d+\delta_{z_1}]{\mathbf A_n}(x)
$ we get
$$
\langle\Psi_{d-\delta_{z_1}},\mathbf A_n\rangle_s\leq \langle\Psi_{d},\mathbf A_n\rangle_x\quad\text{and}\quad
\langle\Psi_{d},\mathbf A_n\rangle_x\leq \langle\Psi_{d+\delta_{z_1}},\mathbf A_n\rangle_s.
$$
It follows that 
\begin{align*}
\langle\Psi_{2\delta_{z_1}},\mathbf A_n\rangle_x-
\langle\Psi_{2\delta_{z_1}},\mathbf A_n\rangle_s&\leq
\langle\Psi_{3\delta_{z_1}},\mathbf A_n\rangle_s-
\langle\Psi_{2\delta_{z_1}},\mathbf A_n\rangle_s\\
&\leq q\, {\rm Pr}(2\delta_{z_1}\leq {\rm dist}(x,s)\leq 3\delta_{z_1})\\
&\leq q\, (D_{8\delta_{z},n}(s)-D_{\delta_{z_1},n}(s))\\
&< q (\beta_{z}(\lambda)-\alpha_{z_1}(\lambda))\\
&< q (\epsilon_{z_1}+\epsilon_z)
\intertext{and}
\langle\Psi_{2\delta_{z_1}},\mathbf A_n\rangle_s-
\langle\Psi_{2\delta_{z_1}},\mathbf A_n\rangle_x&\leq
\langle\Psi_{2\delta_{z_1}},\mathbf A_n\rangle_s-
\langle\Psi_{\delta_{z_1}},\mathbf A_n\rangle_s\\
&\leq q\, {\rm Pr}(\delta_{z_1}\leq {\rm dist}(x,s)\leq 2\delta_{z_1})\\
&< q (\epsilon_{z_1}+\epsilon_z).
\intertext{Thus}
|\langle\Psi_{2\delta_{z_1}},\mathbf A_n\rangle_x-
\langle\Psi_{2\delta_{z_1}},\mathbf A_n\rangle_s|&<q (\epsilon_{z_1}+\epsilon_z).
\end{align*}
Also,
\begin{align*}
|\langle\Psi_{2\delta_{z}},\mathbf A_n\rangle_s-
\langle\Psi_{2\delta_{z_1}},\mathbf A_n\rangle_s|
&\leq q (D_{\delta_z,n}(s)-D_{\delta_{z_1},n}(s))\\
&\leq q (D_{8\delta_z,n}(s)-D_{\delta_{z_1},n}(s))\\
&\leq q (\beta_z(\lambda)-\alpha_{z_1}(\lambda))\\
&<q (\epsilon_{z_1}+\epsilon_{z}).
\end{align*}
Moreover, 
$$
|\langle\Psi_{2\delta_{z}},\mathbf A_n\rangle_s-\langle\Psi_{2\delta_{z}},\mathbf A_n-\partial_{\mathbf A_n}C_n^\lambda\rangle_s|<4q\epsilon_{z}p(\lambda)/\lambda.
$$
and 
$$
\langle\phi,\mathbf A_n[C_n^\lambda]\rangle=\frac{\sum_{s\in S_n^\lambda}\langle\Psi_{2\delta_z},\mathbf A_n-\partial_{\mathbf A_n}C_n^\lambda\rangle_s}{\nu_{\mathbf A_n}(C_n^\lambda)^p}.
$$

Thus, as
$$
|\nu_{\mathbf A_n}(C_n^\lambda)-\lambda|<\epsilon_z p(\lambda)/\lambda,
$$
it holds

\begin{align*}
	\mathbb E[\langle\Psi_{2\delta_{z_1}},A_n\rangle_v\,{\mathbf 1}_{Z_n^{\lambda,z_1}}(v)]
	&= \frac{\sum_{v\in Z_n^{\lambda,z_1}}\nu_{\mathbf A_n}(v)\,\langle\Psi_{2\delta_{z_1}},\mathbf A_n\rangle_v}{\nu_{\mathbf A_n}(Z_n^{\lambda,z_1})}\\
	&\approx \frac{1}{\lambda}\sum_{v\in C_n^\lambda}\langle\Psi_{2\delta_{z}},\mathbf A_n\rangle_v\\
	&\approx \sum_{s\in S_n^\lambda}\langle\Psi_{2\delta_{z}},\mathbf A_n\rangle_s\\
	&\approx \lambda^p\langle\phi,\mathbf A_n[C_n^\lambda]\rangle
\end{align*}
Let $H_{z_1,n}$ be the (multivariate) cumulative distribution function of 
$$(\langle\Psi_{\delta_{z_1}},\mathbf A_n\rangle_\bullet,1-D_{\delta_{z_0(\lambda)}},\dots, 1-D_{\delta_{z_1}}, D_{8\delta_{z_1}}).$$
According to its definition we have
$v\in Z_n^{\lambda,z_1}$ if and only if 
$$(1-D_{\delta_{z_0(\lambda)}},\dots, 1-D_{\delta_{z_1}}, D_{8\delta_{z_1}})\in [0,1-\alpha_{z_0(\lambda)}(\lambda)]\times\dots\times [0,1-\alpha_{z_1}(\lambda)]\times [0,\beta_{z_1}(\lambda)].$$
Thus
$${\rm Pr}[\langle\Psi_{\delta_{z_1}},\mathbf A_n\rangle_v\leq x\text{ and }v\in Z_n^{\lambda,z_1}]=H_{n,z_1}(x,1-\alpha_{z_0(\lambda)}(\lambda),\dots,1-\alpha_{z_1}(\lambda),\beta_{z_1}(\lambda)).$$

It follows that
\begin{align*}
\mathbb E[\langle\Psi_{2\delta_{z_1}},A_n\rangle_v\,{\mathbf 1}_{Z_n^{\lambda,z_1}}(v)]&=\int_0^1 {\rm Pr}[\langle\Psi_{\delta_{z_1}},\mathbf A_n\rangle_v\leq x\text{ and }v\in Z_n^{\lambda,z_1}]\,{\rm d}x\\
&=\int_0^1 1-H_{n,z_1}(x,1-\alpha_{z_0(\lambda)}(\lambda),\dots,1-\alpha_{z_1}(\lambda),\beta_{z_1}(\lambda))\,{\rm d}x.
\end{align*}

According to Lemma~\ref{lem:locSP} there exists a (vector) random variable $\mathbf V_{z_1}$ such that 
$$(\langle\Psi_{\delta_{z_1}},\mathbf A_n\rangle_\bullet,1-D_{\delta_{z_0(\lambda)}},\dots, 1-D_{\delta_{z_1}}, D_{8\delta_{z_1}})\xrightarrow{\mathcal D}\mathbf V_{z_1}.$$
Let $H$ be the cumulative distribution function of $\mathbf V_{z_1}$. Then, as $n\rightarrow\infty$ it holds
$$
	\lim_{n\rightarrow\infty}\mathbb E[\langle\Psi_{2\delta_{z_1}},A_n\rangle_v\,{\mathbf 1}_{Z_n^{\lambda,z_1}}(v)]=\int_0^1 1-H(x,1-\alpha_{z_0(\lambda)}(\lambda),\dots,1-\alpha_{z_1}(\lambda),\beta_{z_1}(\lambda))\,{\rm d}x.
$$

As 
	$|\mathbb E[\langle\Psi_{2\delta_{z_1}},A_n\rangle_v\,{\mathbf 1}_{Z_n^{\lambda,z_1}}(v)]-\lambda^p\langle\phi,\mathbf A_n[C_n^\lambda]\rangle|$ goes to $0$ when $z_1$ goes to infinity (and $n$ grows in consequence),
	we get that $\langle\phi,\mathbf A_n[C_n^\lambda]\rangle$ converges hence
	$\seq{C}^\lambda$ is a cluster.
\end{proof}

We are now ready to prove our first clustering result:
\begin{lemma}
	 	Let $\seq{A}$ be a local convergent sequence of $\sigma$-structures. 
	 	Let $\sigma^+$ be the signature obtained from $\sigma$ by the addition of countably many unary symbols $M_R$ and $M_{i}$ ($i\in\bbbn$).
	 	Then marking by $M_i$ the cluster $C_n^{\lambda_i}$ (where
$\lambda_1>\lambda_2>\dots$ are the elements of $\Lambda$ order in decreasing order) and by $M_0$ the sequence of sets 
$$\Xseq{R}=\Xseq{A}\setminus\Xseq{W}\setminus\bigcup_{\lambda\in\Lambda}\Xseq{C}^\lambda$$
	we obtain clustering $\lift{\seq{A}}$ of $\seq{A}$ with the following properties:
 	\begin{itemize}
 		\item For every $i\in\bbbn$, $\bigl(M_{i}(\lift{\mathbf A_n})\bigr)_{n\in\bbbn}$  is a universal globular cluster, and $M_i(\lift{\mathbf A_n})$ asymptotically consists in a set inducing $p(\lambda_i)/\lambda_i$ 
 		 disjoint connected substructures, each  of measure $\lambda_i+o(1)$ in $\mathbf A_n$.
 		\item $\bigl(M_R(\mathbf A_n^+)\bigr)_{n\in\bbbn}$  is a residual cluster.
 	\end{itemize}
\end{lemma}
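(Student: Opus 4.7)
The plan is to assemble the structural lemmas already established (Lemmas~\ref{lem:ZC}--\ref{lem:phiC}, together with Lemmas~\ref{lem:Fdisc}--\ref{lem:integ} on the spectrum) into a verification of the four clustering axioms, and then invoke the Cluster Comb Lemma~\ref{lem:comb}. First I would verify strong disjointness: Lemma~\ref{lem:disjC} gives that $C_n^{\lambda_i}\cap C_n^{\lambda_j}=\emptyset$ once $n$ is large, and Lemma~\ref{lem:bordC} shows that the $\delta_z$-neighborhoods of their boundaries are negligible. Since genuine strong disjointness (edge-separation) is needed for Lemma~\ref{lem:comb}, I would thin each $\Xseq{C}^{\lambda_i}$ by deleting elements of the negligible sequence $\Xseq{W}$ from Lemma~\ref{lem:negC}; by Lemma~\ref{lem:eqcore} this does not alter the cluster up to equivalence.

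Second, I would establish stability of the partition. The spectrum is purely discrete (Lemma~\ref{lem:Fdisc}) and $\sum_{\lambda\in\Lambda}p(\lambda)\leq 1$, with the defect corresponding precisely to the residual mass. Lemma~\ref{lem:measC} yields $\lim\nu_{\seq{A}}(\Xseq{C}^{\lambda_i})=p(\lambda_i)$, and Lemma~\ref{lem:negC} shows $\Xseq{W}\approx\Xseq{0}$, so
\[
\lim_n\nu_{\mathbf A_n}(R_n)\;=\;1-\sum_{i}p(\lambda_i),
\]
which gives condition~\eqref{cl:4} of the clustering definition. Then the local convergence of $\lift{\seq{A}}$ is essentially Lemma~\ref{lem:stable} applied to the strongly disjoint clusters $\Xseq{C}^{\lambda_i}$ after thinning, using that the family $\{\partial_{\seq{A}}\Xseq{C}^{\lambda_i}\}_i$ is dominated by the negligible $\Xseq{W}$.

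Third, I would prove universality of each $\Xseq{C}^{\lambda_i}$. The whole construction of $Z_n^{\lambda,z}$, $S_n^\lambda$ and $C_n^\lambda$ is driven only by the Gaifman-distance functions $D_{d,n}(v)=\nu_{\mathbf A_n}(\ball[d]{\mathbf A_n}(v))$, which are invariant under any conservative lift. The proof of Lemma~\ref{lem:phiC} used only the convergence in distribution supplied by Lemma~\ref{lem:locSP2} applied to local formulas $\phi$ and the ${\rm dist}(x_1,x_i)\leq d$ witnesses; applied verbatim to any local $\phi^+$ in the extended signature of a conservative lift $\lift{\seq{A}}$ it shows that $\seq{A}[\Xseq{C}^{\lambda_i}]$ (respectively, $\lift{\seq{A}}[\Xseq{C}^{\lambda_i}]$) is local convergent. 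Combined with the negligibility of $\partial_{\seq{A}}\Xseq{C}^{\lambda_i}$ (Lemma~\ref{lem:bordC}, which again uses only $D_{d,n}$), this gives that $\Xseq{C}^{\lambda_i}$ is a cluster of every conservative lift, i.e.~universal. Globularity, in the sense asserted in the statement, then follows from the decomposition $C_n^{\lambda_i}=\biguplus_{s\in S_n^{\lambda_i}}\ball[2\delta_z]{\mathbf A_n}(s)$: there are $|S_n^{\lambda_i}|=p(\lambda_i)/\lambda_i$ (Lemma~\ref{lem:integ}) disjoint ``blobs'', each a ball of radius $2\delta_z$ whose measure $\lambda_i+o(1)$ captures essentially all the mass (since $D_{8\delta_z,n}(s)\leq\beta_z(\lambda_i)=\lambda_i+o(1)$), hence each individual blob is a non-dispersive globular cluster.

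Finally, I would check that $\Xseq{R}$ is a residual cluster. By construction, every $v\in R_n$ satisfies $D_{\delta_z,n}(v)\leq\alpha_z(\lambda)+o(1)$ uniformly over all $\lambda\in\Lambda$ with $\lambda\geq 2^{-z}$, and for smaller $\lambda$ the value of $D_{\delta_z,n}(v)$ is controlled directly by the tail $\sum_{\lambda<2^{-z}}p(\lambda)$; consequently $\sup_{v\in R_n}\nu_{\mathbf A_n}(\ball[d]{\mathbf A_n}(v))\to 0$ for every fixed $d$. The main obstacle in the plan is the simultaneous handling of countably many clusters: the $\ell^1$-type interchange of $\sum_i$ and $\lim_n$ invoked in Lemma~\ref{lem:stable} rests crucially on the discreteness of the spectrum (Lemma~\ref{lem:Fdisc}) and the fact that only finitely many atoms exceed any fixed threshold; without this, the Comb Lemma's stability hypothesis would fail and one would merely obtain convergence along a subsequence rather than a bona fide clustering.
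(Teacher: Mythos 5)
Your proposal is correct and follows essentially the same route as the paper, whose own proof is only three sentences long: it cites Lemma~\ref{lem:stable} for the clustering property, observes that universality is automatic because the construction of $Z_n^{\lambda,z}$, $S_n^\lambda$ and $C_n^\lambda$ depends only on the functions $D_{d,n}$, which are unchanged in any conservative lift, and declares residuality of $\Xseq{R}$ obvious. Your write-up simply makes explicit the verification of the hypotheses of Lemma~\ref{lem:stable} (via Lemmas~\ref{lem:disjC}, \ref{lem:measC}, \ref{lem:bordC}, \ref{lem:negC} and the discreteness of the spectrum), which the paper leaves implicit.
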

\begin{proof}	
That $\lift{A}$ is clustering follows from Lemma~\ref{lem:stable}.
That $\seq{C}^\lambda$ is a universal cluster is trivial as the constructions and proofs can be achieved the same way (with same result) in any conservative lift of $\seq{A}$.
The sequence $\Xseq{R}$ is obviously residual.
\end{proof}

We are now ready to prove Theorem~\ref{thm:main}, which we state now in the following more precise form.
\begin{great}
 \begin{theorem}
 \label{thm:bigmain}
  	Let $\seq{A}$ be a local convergent sequence of $\sigma$-structures. Then there exists a signature
  $\sigma^+$ (obtained from $\sigma$ by the addition of countably many unary symbols $M_{i,j,k}$ ($i\in\bbbn$, $1\leq j\leq a_i$, $1\leq k\leq b_{i,j}$), of a unary symbol $M_R$ and of unary symbol $M_S$), a sequence $\lambda_1>\lambda_2>\dots$ a positive reals and a clustering $\lift{\seq{A}}$ of $\seq{A}$ with the following properties:
 	\begin{itemize}
 		\item For every $i\in\bbbn$, $1\leq j\leq a_i$, and $1\leq k\leq b_{i,j}$,
 	   $\Xseq{G^{i,j,k}}=M_{i,j,k}(\lift{\seq{A}})$ is a 
 	   globular cluster of $\seq{A}$ such that $\lim\nu_{\seq{A}}(\Xseq{G}^{i,j,k})=\lambda_i$, that is a cluster such that for every positive real $\epsilon$ there is an integer $d$ which satisfies
 	   $$
 	   \lambda_i-\epsilon<\liminf_{n\rightarrow\infty}\max_{v_n\in G^{i,j,k}_n}\nu_{\mathbf A_n}(\ball[d]{v_n})\leq\lim_{n\rightarrow\infty}\nu_{\mathbf A_n}(G_n^{i,j,k})=\lambda_i.
 	   $$
 	   \item $\Xseq{R}=M_{R}(\lift{\seq{A}})$ is a 
 	   residual cluster of $\seq{A}$, that is a cluster such that
 	   for every integer $d$ it holds
 	   $$
 	  \limsup_{n\rightarrow\infty}\max_{v_n\in G^{i,j,k}_n}\nu_{\mathbf A_n}(\ball[d]{v_n})=0.
 	   $$
 	   \item The sequence $\Xseq{S}$ is negligible, that is such that for every integer $d$ it holds
 	  $$
 	  \limsup_{n\rightarrow\infty}\nu_{\mathbf A_n}(\ball[d]{S_n})=0.
 	  $$
 		\item The marks partition the sets $A_n$ is a stable way, that is
 	$$\lim\nu_{\seq{A}}(\Xseq{R})+\sum_{i\geq 1}\lim\nu_{\seq{A}}(\Xseq{G}^{i,j,k})=1.$$
 	\item Clusters $\Xseq{G}^{i,j,k}$ and $\Xseq{G}^{i',j',k'}$ are interweaving (i.e. $\Xseq{G}^{i,j,k}\between \Xseq{G}^{i',j',k'}$) if and only if $i=i'$ and $j=j'$.
 	 \item The clusters $\bigcup_{k=1}^{b_{i,j}}\Xseq{G}^{i,j,k}$ (grouping interweaving clusters) are universal.
 	 \item The number $N_i=\sum_{j=1}^{a_i}b_{i,j}$ of clusters with limit measure $\lambda_i$ is 
$$
N_i=\frac{1}{\lambda_i}
\lim_{T\rightarrow\infty}\frac{1}{2T}\int_{-T}^{+T}  
\biggl[\sum_{w\geq 1}\biggl(
\lim_{d\rightarrow\infty}\int_{S_\sigma}
k(\psi_{d,w})\,{\rm d}\mu\biggr)
\,\frac{(is)^w}{w!}\biggr] e^{-i\lambda_i s}
\,{\rm d}s,
$$
where $\psi_{d,w}$ is the formula
$$
\psi_{d,w}(x_1,\dots,x_{w+1}):=\bigwedge_{i=1}^w {\rm dist}(x_1,x_i)\leq d.
$$
 	\end{itemize}	
  \end{theorem}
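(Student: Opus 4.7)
The plan is to leverage the construction already completed: the preceding lemma yields universal clusters $\Xseq{C}^{\lambda_i}$ for $\lambda_i\in\Lambda$, a residual cluster $\Xseq{R}$, and the negligible sequence $\Xseq{W}$ (plus, by Lemma~\ref{lem:negC}, the union $\Xseq{S}=\Xseq{W}\cup\bigcup_{i\geq 1}\partial_{\seq{A}}\Xseq{C}^{\lambda_i}$). What remains is to refine each $\Xseq{C}^{\lambda_i}$ into its \emph{constituents} and to read off the counting formula. By construction $C_n^{\lambda_i}=\biguplus_{s\in S_n^{\lambda_i}}\ball[2\delta_z]{\mathbf A_n}(s)$ is a disjoint union of $|S_n^{\lambda_i}|=p(\lambda_i)/\lambda_i$ balls, each of measure $\lambda_i+o(1)$ and well separated (elements of $S_n^{\lambda_i}$ are pairwise at distance $>7\delta_z$). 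The plan is to declare each such ball a globular cluster and then group them into interweaving classes.

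Concretely, for every fixed $i$ label the centers arbitrarily as $s_{n,1},\dots,s_{n,N_i}$ and set $G_n^{i,k}=\ball[2\delta_z(n)]{\mathbf A_n}(s_{n,k})$. Because the Lindenbaum--Tarski algebra of strongly local formulas is countable, a standard diagonal/compactness extraction produces an increasing $f:\bbbn\to\bbbn$ along which each $\seq{A}_f[\Xseq{G}^{i,k}_f]$ is local convergent and each $\nu_{\seq{A}_f}(\Xseq{G}^{i,k}_f)$ converges (necessarily to $\lambda_i$). The proof of Lemma~\ref{lem:phiC} applied to each individual center (rather than to all of $S_n^{\lambda_i}$ collectively) shows that the full sequence $\Xseq{G}^{i,k}$ is already a cluster, and non-dispersivity is immediate since $G_n^{i,k}=\ball[2\delta_z]{\mathbf A_n}(s_{n,k})$, so it is a globular cluster. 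Then along the extracted subsequence we partition $\{1,\dots,N_i\}$ into equivalence classes $j=1,\dots,a_i$ of size $b_{i,j}$, where $k\sim k'$ iff $\lim\seq{A}_f[\Xseq{G}^{i,k}_f]=\lim\seq{A}_f[\Xseq{G}^{i,k'}_f]$; by the characterization of interweaving proved earlier this is precisely $\Xseq{G}^{i,j,k}\between\Xseq{G}^{i,j,k'}$. The labels $(i,j,k)$ are then pulled back to the original sequence by a relabeling argument: since the number of equivalence classes and their multiplicities stabilise on the subsequence and the marks are interchangeable within a fixed $j$, the original sequence is exhausted by finitely many relabelings, each of which gives a local convergent lift.

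Once the globular sequences $\Xseq{G}^{i,j,k}$ and the residual $\Xseq{R}$ are in hand, they are pairwise strongly disjoint (up to modifying them inside the negligible $\Xseq{S}$ as in Lemma~\ref{lem:comb}), and stability $\sum_i N_i\lambda_i+\lim\nu_{\seq{A}}(\Xseq{R})=1$ holds because $\sum_{\lambda\in\Lambda}p(\lambda)=1-\lim\nu_{\seq{A}}(\Xseq{R})$ (the spectrum distribution is purely discrete by Lemma~\ref{lem:Fdisc} and the residue is exactly the mass at $0$ in $D$). Applying the Cluster Comb Lemma~\ref{lem:comb} to the countable family $\{\Xseq{G}^{i,j,k}\}\cup\{\Xseq{R}\}$ produces a clustering $\lift{\seq{A}}$ with marks $M_{i,j,k}$, $M_R$, $M_S$ as required. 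Universality of the unions $\bigcup_{k}\Xseq{G}^{i,j,k}$ (equivalently, of $\Xseq{C}^{\lambda_i}$ up to the interweaving grouping) follows because the entire construction $s\mapsto\ball[2\delta_z]{}(s)$ depends only on the Gaifman distance, so it is preserved by every conservative lift, and the interweaving partition depends only on the Stone pairings in $\sigma$, which a conservative lift can only refine; then Lemma~\ref{lem:BAuniv} promotes the union of interweaving classes to a universal cluster.

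Finally, the counting formula. Since the spectrum $D$ of $\seq{A}$ is a purely discrete random variable with atoms exactly at $\Lambda$ and mass $p(\lambda_i)$ at $\lambda_i$, and since the characteristic function of $D$ is the pointwise limit
\[
\gamma_\infty(\mu_{\lim\seq{A}},s)=\sum_{w\geq 0}\biggl(\lim_{d\to\infty}\int_{S_\sigma}k(\psi_{d,w})\,{\rm d}\mu\biggr)\frac{(is)^w}{w!}
\]
obtained by applying Lemma~\ref{lem:locSP2} to the implications $\phi_d\to\phi_{d+1}$, the Lévy atom-inversion formula (Theorem~\ref{thm:Levy}) yields
\[
p(\lambda_i)=\lim_{T\to\infty}\frac{1}{2T}\int_{-T}^{T}e^{-i\lambda_i s}\gamma_\infty(\mu_{\lim\seq{A}},s)\,{\rm d}s,
\]
and dividing by $\lambda_i$ gives $N_i$. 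The chief technical obstacle is the second paragraph: making the labeling of the $N_i$ centers coherent along the entire original sequence (not merely a subsequence) while preserving local convergence of all marks simultaneously; this is precisely the role of the Cluster Comb Lemma, which absorbs the inevitable discrepancy between finite and limit counts into the negligible part $\Xseq{S}$.
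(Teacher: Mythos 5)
Your overall architecture matches the paper's: start from the clustering of the preceding lemma, split each $\Xseq{C}^{\lambda_i}$ into its $N_i=p(\lambda_i)/\lambda_i$ components, group the components into interweaving classes indexed by $j$, and recover $N_i$ from the atom mass $p(\lambda_i)$ via L\'evy's inversion formula applied to $\gamma_\infty$ (this last part of your argument is correct and is exactly what the paper does). The gap is in how you make the labeling of the $N_i$ components coherent along the \emph{whole} sequence. First, the claim that the proof of Lemma~\ref{lem:phiC} ``applied to each individual center'' shows that an arbitrarily labeled $\Xseq{G}^{i,k}$ is already a cluster is false: that proof converges only because it aggregates over all of $S_n^{\lambda_i}$ (via the expectation over $Z_n^{\lambda,z_1}$ and Lemma~\ref{lem:locSP}); if $C_n^{\lambda_i}$ contains two components with different asymptotic profiles and your arbitrary labeling swaps them between even and odd $n$, the induced sequence $\seq{A}[\Xseq{G}^{i,k}]$ oscillates and is not local convergent. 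Second, your repair --- extract a subsequence on which every ball converges, define the classes $j$ there, then ``pull back by finitely many relabelings'' --- is not an argument: the equivalence classes are defined only on the subsequence, and nothing tells you, for an $n$ outside it, which component goes into which class, nor why any such assignment yields convergence of all the marks along the full original sequence. The Cluster Comb Lemma cannot absorb this: it handles the tail problem of marking countably many already-constructed strongly disjoint clusters, not the coherence of labels inside a single $\Xseq{C}^{\lambda_i}$.

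What is needed (and what the paper's admittedly terse proof supplies) is an \emph{intrinsic} separation criterion: if some local formula $\phi$ satisfies $\lim_n\min_i\langle\phi,\mathbf B_{n,i}\rangle\neq\lim_n\max_i\langle\phi,\mathbf B_{n,i}\rangle$ over the components $\mathbf B_{n,1},\dots,\mathbf B_{n,k_n}$ of $\mathbf A_n[C_n^{\lambda}]$, split $C^\lambda_n$ by a threshold on $\langle\phi,\cdot\rangle$; membership of a component in a sub-cluster is then decided at every $n$ by a Stone-pairing inequality, so local convergence of the refined marking follows from the convergence in distribution of the local Stone pairings (Lemma~\ref{lem:locSP}), and universality is preserved because the criterion uses only $\sigma$-formulas and Gaifman distance. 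Iterating until no separating formula remains leaves classes whose components all share the same asymptotic measure and profile, i.e.\ genuinely interweaving clusters, within which an arbitrary assignment of the index $k$ is harmless. With that replacement your remaining steps (strong disjointness up to $\Xseq{S}$, stability, the Comb Lemma, and the L\'evy computation of $N_i$) go through.
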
	
\end{great}

\begin{proof}
By construction, the number of connected components of $\mathbf{A}_n[C^\lambda_n]$ is asymptotically 
$p(\lambda)/\lambda$ and each of these connected components has asymptotically measure $\lambda$.
Let $\mathbf B_{n,1},\dots,\mathbf B_{n,k_n}$ be the connected components of $\mathbf A_n[C_n^\lambda]$.
If there is a local formula $\phi$ such that
$$\lim_{n\rightarrow\infty}\min_i\langle\phi, \mathbf B_i\rangle\neq
\lim_{n\rightarrow\infty}\min_i\langle\phi, \mathbf B_i\rangle$$
we can break $C^\lambda$ into smaller universal clusters.
At the end of the day, we get a clustering of $\seq{A}$ into countably many clusters, such that each cluster $\Xseq{C}^i$ has asymptotically $k_i$ connected components with same asymptotic measure and same asymptotic profile. It follows that $\Xseq{C}^i$ it the disjoint union of $k_i$ interweaving clusters.

The statement giving the number $N_i$ of clusters with measure $\lambda_i$ is due to the equality $N_i=p(\lambda_i)\lambda_i$ and the application of L\'evy's theorem (Theorem~\ref{thm:Levy}) for the computation of $p(\lambda_i)$ from the characteristic function $\gamma_\infty(\mu,t)$ associated to the formulas ${\rm dist}(x_1,x_2)\leq d$ by Lemma~\ref{lem:approx1}.
\end{proof}

A direct consequence of Theorem~\ref{thm:bigmain} stands in the following complete characterization of the globular clusters of a local convergent sequence.
\begin{great}
\begin{theorem}
We have the following complete characterization of the globular clusters of a local convergent sequence $\seq{A}$: For a sequence $\Xseq{X}$ of subsets of $\seq{A}$ the following are equivalent
 	 \begin{enumerate}
 	 	\item $\Xseq{X}$ is a globular cluster of $\seq{A}$;
 	 	\item there exists a negligible sequence $\Xseq{N}$ 
 	 and integers $i,j$ (with $1\leq j\leq a_i$) such that for every integer $n$ it holds
 	$$X_n\Delta N_n\in\{G_n^{i,j,1},G_n^{i,j,2},\dots,G_n^{i,j,b_{i,j}}\}.$$
 \end{enumerate}
\end{theorem}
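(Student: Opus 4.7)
The easy direction $(2)\Rightarrow(1)$ is straightforward. If for each $n$ we have $X_n\,\Delta\,N_n=G_n^{i,j,k_n}$ for some $k_n\in\{1,\dots,b_{i,j}\}$ and a negligible sequence $\Xseq{N}$, let $\Xseq{Z}=(G_n^{i,j,k_n})_{n\in\bbbn}$. Since the clusters $\Xseq{G}^{i,j,1},\dots,\Xseq{G}^{i,j,b_{i,j}}$ are pairwise interweaving, $\Xseq{Z}$ is itself a cluster with the same local limit, in particular globular of measure $\lambda_i$. By Lemma~\ref{lem:eqcore}, $\Xseq{X}\approx\Xseq{Z}$ is then a cluster; the non-dispersive property transfers across symmetric differences with negligible sequences, since ball measures in $\seq{A}[\Xseq{X}]$ and $\seq{A}[\Xseq{Z}]$ agree in the limit.

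For the hard direction $(1)\Rightarrow(2)$, let $\lambda=\lim\nu_{\seq{A}}(\Xseq{X})>0$. The first step is to show $\lambda\in\Lambda$. Applying non-dispersiveness of $\seq{A}[\Xseq{X}]$, for every $\epsilon>0$ there exist $d$ and, for all large $n$, a vertex $v_n\in X_n$ with $\nu_{\mathbf A_n[X_n]}(\ball[d]{\mathbf A_n[X_n]}(v_n))\geq 1-\epsilon$. Because $\partial_{\seq{A}}\Xseq{X}$ is negligible, $\ball[d]{\mathbf A_n}(v_n)\setminus X_n$ lies in the $d$-neighborhood of $\partial_{\mathbf A_n}X_n$, which is also negligible. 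Combining these facts yields $D_{d,n}(v_n)\in[(1-\epsilon)\lambda,\lambda]+o(1)$, and moreover every $u_n$ in $\ball[d]{\mathbf A_n[X_n]}(v_n)$ satisfies $D_{2d,n}(u_n)\in[(1-\epsilon)\lambda,\lambda]+o(1)$ (the $2d$-ball around $u_n$ contains the $d$-ball around $v_n$, yet cannot escape $X_n$ by more than a negligible set). Integrating, the limit spectral distribution $F$ places mass at least $(1-\epsilon)\lambda$ inside $[(1-\epsilon)\lambda,\lambda]$; letting $\epsilon\to 0$ and invoking Lemma~\ref{lem:Fdisc}, $\lambda$ must be an atom of $F$, hence $\lambda=\lambda_i\in\Lambda$ for some $i\geq 1$.

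Now choose $z\geq z_0(\lambda_i)$ large enough that $(1-\epsilon_z)\lambda_i>\alpha_z(\lambda_i)$ and $\lambda_i<\beta_z(\lambda_i)-\epsilon_z$, take $d=\delta_z$ in the above, and restrict to $\eta_z\leq n<\eta_{z+1}$. The witness $v_n$ meets the defining inequalities of $Z_n^{\lambda_i,z}$, hence by Lemma~\ref{lem:ZC} lies in $C_n^{\lambda_i}=\ball[2\delta_z]{\mathbf A_n}(S_n^{\lambda_i})$; pick a center $s_n\in S_n^{\lambda_i}$ within distance $2\delta_z$ of $v_n$ and set $B_n=\ball[2\delta_z]{\mathbf A_n}(s_n)$. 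Any point in $\ball[d]{\mathbf A_n}(v_n)\setminus B_n$ lies in the $(d+2\delta_z)$-neighborhood of $\partial_{\mathbf A_n}B_n$, which is negligible by Lemma~\ref{lem:bordC}. Combined with $\nu_{\mathbf A_n}(X_n\cap\ball[d]{\mathbf A_n}(v_n))\geq(1-\epsilon_z)\lambda_i+o(1)$ and $\nu_{\mathbf A_n}(B_n),\nu_{\mathbf A_n}(X_n)\to\lambda_i$, this gives $\nu_{\mathbf A_n}(X_n\,\Delta\,B_n)=O(\epsilon_z)+o(1)$. Taking $z\to\infty$ after $n\to\infty$, a diagonal argument shows that $\Xseq{X}\,\Delta\,\Xseq{B}$ is negligible, so $\Xseq{X}\approx\Xseq{B}$. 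Each $B_n$ coincides, up to a negligible modification, with some $G_n^{i,j,k_n}$: the index $i$ is fixed by the measure $\lambda_i$; the index $j$ is determined uniquely by matching the local profile $\lim\seq{A}[\Xseq{X}]=\lim\seq{A}[\Xseq{G}^{i,j,1}]$; the index $k_n$ may vary with $n$ since interweaving clusters are indistinguishable by local formulas. We thus obtain $X_n\,\Delta\,N_n=G_n^{i,j,k_n}$ with $\Xseq{N}$ negligible, which is exactly (2).

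The main obstacle is the argument of the previous paragraph: ruling out that $\Xseq{X}$ could be a union of several essentially disjoint blobs of smaller size totalling $\lambda_i$. The non-dispersive property is precisely what excludes this ``multi-blob'' scenario, forcing $X_n$ to concentrate in a single $d$-ball around a canonical center $s_n\in S_n^{\lambda_i}$. A secondary technical difficulty is the bookkeeping of the four parameters $\epsilon,d,z,n$: one must pick $z=z(\epsilon)$ so that $\delta_z$ dominates $d$ and the spectral gap $\beta_z(\lambda_i)-\alpha_z(\lambda_i)$ dominates $\epsilon\lambda_i$, then let $n$ go to infinity past $\eta_z$, and only finally let $\epsilon\to 0$, in order to upgrade the bound $\nu_{\mathbf A_n}(X_n\,\Delta\,B_n)=O(\epsilon)+o(1)$ (with $B_n$ depending on $z$) into the negligibility of the sequence $\Xseq{X}\,\Delta\,\Xseq{B}$.
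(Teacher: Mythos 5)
Your overall strategy for $(1)\Rightarrow(2)$ --- locate a center of $\Xseq{X}$ via non-dispersiveness, show $\lambda\in\Lambda$ by a spectral mass argument, and then match $X_n$ to a single ball $\ball[2\delta_z]{\mathbf A_n}(s_n)$ of the construction --- is genuinely different from the paper's. The paper instead invokes Lemma~\ref{lem:expmix}: a globular cluster is strongly atomic, hence expanding, so $\Xseq{X}$ is either essentially disjoint from or interweaving with each $\Xseq{G}^{i,j,k}$; a measure count over the stable partition forces exactly one interweaving class $(i,j)$, and non-dispersiveness then selects a single connected component of $Z_n^{i,j}$. Your route is more geometric and avoids the dichotomy lemma, but it has a genuine gap at its central step.

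The gap is the sentence ``the witness $v_n$ meets the defining inequalities of $Z_n^{\lambda_i,z}$.'' Membership in $Z_n^{\lambda_i,z}$ requires $D_{\delta_{z'},n}(v_n)>\alpha_{z'}(\lambda_i)$ for \emph{every} $z'\in\{z_0(\lambda_i),\dots,z\}$, and $\alpha_{z'}(\lambda_i)$ lies within $\epsilon_{z'}$ of $\lambda_i$, so already the ball of radius $\delta_{z_0(\lambda_i)}$ around $v_n$ would have to capture almost all of $X_n$. But the radii $\delta_{z'}$ are calibrated to the spectrum of $\seq{A}$ and bear no a priori relation to the non-dispersiveness radii $d(\epsilon)$ of $\seq{A}[\Xseq{X}]$; your instruction ``take $d=\delta_z$ in the above'' silently assumes $\delta_{z'}\geq d(\epsilon)$ for all relevant $z'$, which can fail (think of an $X_n$ whose mass is only captured at radius $d\gg\delta_{z_0(\lambda_i)}$ around any vertex). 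For the particular witness the claim may simply be false. The repair is to argue about the set $U_n=\ball[d]{\mathbf A_n[X_n]}(v_n)$ rather than about $v_n$ itself: every $u\in U_n$ has $D_{d',n}(u)>\alpha_{z'}(\lambda_i)$ for all $d'\geq 2d$, while the calibration $|F_{\delta_{z'}}(\alpha_{z'}(\lambda_i))-F(\alpha_{z'}(\lambda_i))|<\epsilon_{z'}$ bounds by $\epsilon_{z'}+o(1)$ the measure of vertices whose $\delta_{z'}$-ball stays below $\alpha_{z'}(\lambda_i)$ although some larger ball exceeds it; summing over $z'$ leaves a subset of $U_n$ of measure at least $(1-\epsilon)\lambda_i-2^{1-z_0(\lambda_i)}-o(1)>0$ inside $Z_n^{\lambda_i,z}$, which is enough to produce the center $s_n$ and finish as you do. A similar, more minor, care is needed in your spectral step: to transfer the mass of $F_{2d}$ on $[(1-\epsilon)\lambda,\lambda]$ down to $F$ you must use that the upper bound $D_{d',n}(u)\leq\lambda+o(1)$ holds for \emph{every} $d'$ (because $\ball[d']{\mathbf A_n}(u)\subseteq X_n\cup\ball[d']{\mathbf A_n}(\partial_{\mathbf A_n}X_n)$), since $F_{d'}$ only decreases to $F$ and mass could otherwise escape upward past $\lambda$.
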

\end{great}
\begin{proof}
If $\Xseq{X}$ is obtained by interweaving clusters from 
$\{G_n^{i,j,1},G_n^{i,j,2},\dots,G_n^{i,j,b_{i,j}}\}$, then $\Xseq{X}$ is a cluster, which is obviously globular. Hence (2)$\Rightarrow$(1).
Conversely, let $\Xseq{X}$ be a globular cluster. 
As the partition is stable there exists, for every $\epsilon>0$, integers $i_0$ and $n_0$ such that for every $n\geq n_0$ it holds 
$$\sum_{i>i_0}\sum_{j=1}^{a_i}\nu_{\mathbf A_n}(Z_n^{i,j})<\epsilon.$$
Then notice that $\Xseq{X}\cap\Xseq{R}\approx\Xseq{0}$ as $\Xseq{R}$ is residual and $\Xseq{X}$ is not. 
According to Lemma~\ref{lem:expmix}, for each $i,j,k$ it either holds
$\Xseq{X}\cap\Xseq{G}^{i,j,k}\approx\Xseq{0}$ or $\Xseq{X}\between\Xseq{G}^{i,j,k}$.
Let $n_1\geq n_0$ be such that for every $n\geq n_0$ 
and every integers $i,j$ with $i\leq i_0$ and $\Xseq{X}\cap \Xseq{Z}^{i,j}\approx 0$ it holds
$$
\nu_{\mathbf A_n}(R_n\cap X_n)<\epsilon
\text{ and }
\nu_{\mathbf A_n}(R_n\cap Z_n^{i,j})<\frac{\epsilon}{\sum_{i=1}^{i_0}a_i}.
$$
Then, letting $\epsilon<\lim\nu_{\seq{A}}(\Xseq{X})/4$ we get that there exists integers $i,j,k$ such that $\Xseq{X}\between\Xseq{G}^{i,j,k}$. It follows that $\Xseq{X}\between\Xseq{G}^{i',j',k'}$ if and only if $i=i'$ and $j=j'$. Thus
for every $(i',j')\neq (i,j)$ it holds $\Xseq{X}\cap \Xseq{Z}^{i,j}\approx 0$, and thus it holds $\liminf\nu_{\seq{A}}(\Xseq{X}\cap\Xseq{Z}^{i,j})\geq 1-3\epsilon$. Letting $\epsilon\rightarrow 0$ we get that $\Xseq{Z}^{i,j}\setminus\Xseq{X}$ is negligible.
As $\Xseq{X}$ is globular and as $\Xseq{Z}^{i,j}$ consists in connected components with same positive limit measure as $\Xseq{X}$ selecting from $Z^{i,j}_n$ a connected component with maximal intersection with $X_n$ we get a globular cluster $\Xseq{Y}$ such that $\Xseq{Y}\approx\Xseq{X}$.
\end{proof}

\addtocontents{toc}{\bigskip}
\section{Conclusion and Future Work}

In this paper we have shown that a the local convergence of a sequence of finite structures is enough to obtain properties that cannot be expressed directly by means of a first-order formula: one can cluster the sequence into countably many globular clusters and a residual cluster.
It is perhaps surprising that one can do so just from local convergence. The obtained clustering is natural and continuous. We believe that this analysis may be of interest in cluster analysis itself if only by the concepts that naturally arose in this study.

On the other hand, we feel that
 this is only the beginning of the story. Particularly because of their connection to expanders, we would like to further refine our clustering and find further expanding (non globular) clusters. However this will require to consider a stronger notion of convergence, such as generalized local-global convergence. Our generalization of local-global convergence extends the notion of local-global convergence based on the colored neighborhood metric of Bollob\'as and Riordan \cite{bollobas2011sparse}, which was introduced by Hatami, Lov\'asz, and Szegedy \cite{hatami2014limits}. This will be the subject of a forthcoming paper.


\begin{thebibliography}{10}
\providecommand{\url}[1]{{#1}}
\providecommand{\urlprefix}{URL }
\expandafter\ifx\csname urlstyle\endcsname\relax
  \providecommand{\doi}[1]{DOI~\discretionary{}{}{}#1}\else
  \providecommand{\doi}{DOI~\discretionary{}{}{}\begingroup
  \urlstyle{rm}\Url}\fi

\bibitem{Aldous1981}
Aldous, D.: Representations for partially exchangeable arrays of random
  variables.
\newblock J. Multivar. Anal. \textbf{11}, 581--598 (1981)

\bibitem{Alon1986}
Alon, N.: Eigenvalues and expanders.
\newblock Combinatorica \textbf{6}(2), 83--96 (1986)

\bibitem{Benjamini2001}
Benjamini, I., Schramm, O.: Recurrence of distributional limits of finite
  planar graphs.
\newblock Electron. J. Probab. \textbf{6}(23), 13pp (2001)

\bibitem{bobkov2000vertex}
Bobkov, S., Houdr{\'e}, C., Tetali, P.: {$\lambda_\infty$}, vertex isoperimetry
  and concentration.
\newblock Combinatorica \textbf{20}(2), 153--172 (2000)

\bibitem{bollobas2011sparse}
Bollob{\'a}s, B., Riordan, O.: Sparse graphs: metrics and random models.
\newblock Random Structures \& Algorithms \textbf{39}(1), 1--38 (2011)

\bibitem{Borgs20081801}
Borgs, C., Chayes, J., Lov\'asz, L., S\'os, V., Vesztergombi, K.: Convergent
  sequences of dense graphs {I}: Subgraph frequencies, metric properties and
  testing.
\newblock Adv. Math. \textbf{219}(6), 1801--1851 (2008).
\newblock \doi{10.1016/j.aim.2008.07.008}

\bibitem{ElekSze}
Elek, G., Szegedy, B.: Limits of hypergraphs, removal and regularity lemmas.
  {A} non-standard approach.
\newblock arXiv:0705.2179v1 [math.CO] (2007)

\bibitem{Everitt2011}
Everitt, B., Landau, S., Leese, M., Stahl, D.: Cluster Analysis, 5th edn.
\newblock Wiley (2011)

\bibitem{Gaifman1982}
Gaifman, H.: On local and non-local properties.
\newblock In: Proceedings of the Herbrand Symposium, Logic Colloquium '81
  (1982)

\bibitem{hatami2014limits}
Hatami, H., Lov{\'a}sz, L., Szegedy, B.: Limits of locally--globally convergent
  graph sequences.
\newblock Geometric and Functional Analysis \textbf{24}(1), 269--296 (2014)

\bibitem{Hoover1979}
Hoover, D.: Relations on probability spaces and arrays of random variables.
\newblock Tech. rep., Institute for Advanced Study, Princeton, NJ (1979)

\bibitem{Lov'asz2006}
Lov\'asz, L., Szegedy, B.: Limits of dense graph sequences.
\newblock J. Combin. Theory Ser. B \textbf{96}, 933--957 (2006)

\bibitem{CMUC}
Ne{\v s}et{\v r}il, J., Ossona~de Mendez, P.: A model theory approach to
  structural limits.
\newblock Commentationes Mathematic{\ae} Universitatis Carolin{\ae}
  \textbf{53}(4), 581--603 (2012)

\bibitem{modeling}
Ne{\v s}et{\v r}il, J., Ossona~de Mendez, P.: Modeling limits in hereditary
  classes: Reduction and application to trees (2015).
\newblock Submitted

\end{thebibliography}
\providecommand{\noopsort}[1]{}\providecommand{\noopsort}[1]{}

\end{document}